\definecolor{shadecolor}{rgb}{1,0.9,0.7}
\newtheorem*{namedtheorem}{\theoremname}
\newcommand{\theoremname}{testing}
\newenvironment{named}[1]{\renewcommand\theoremname{#1}
\begin{namedtheorem}}
{\end{namedtheorem}}
\newtheorem{theorem}{Theorem}[section]
\newtheorem{lemma}[theorem]{Lemma}
\newtheorem{proposition}[theorem]{Proposition}
\theoremstyle{definition}
\newtheorem{definition}[theorem]{Definition}
\newtheorem{construction}[theorem]{Construction}
\newtheorem{assumption}[theorem]{Assumption}
\newtheorem{assumptions}[theorem]{Assumptions}
\newtheorem{example}[theorem]{Example}
\theoremstyle{remark}
\newtheorem{remark}[theorem]{Remark}
\newtheorem{notation}[theorem]{Notation}
\numberwithin{equation}{section}
\numberwithin{figure}{section}
\newcommand{\NN} {\mathbb{N}}
\newcommand{\ZZ} {\mathbb{Z}}
\newcommand{\QQ} {\mathbb{Q}}
\newcommand{\RR} {\mathbb{R}}
\newcommand{\CC} {\mathbb{C}}
\newcommand{\PP} {\mathbb{P}}
\renewcommand{\AA} {\mathbb{A}}
\newcommand{\GG} {\mathbb{G}}
\newcommand\cM{\mathcal{M}}
\newcommand\cO{\mathcal{O}}
\newcommand\cX{\mathcal{X}}
\newcommand\fM{\mathfrak{M}}
\newcommand {\shA}  {\mathcal{A}}
\newcommand {\shI}  {\mathcal{I}}
\newcommand {\shJ}  {\mathcal{J}}
\newcommand {\shK}  {\mathcal{K}}
\newcommand {\shM}  {\mathcal{M}}
\newcommand {\shS}  {\mathcal{S}}
\newcommand {\shU}  {\mathcal{U}}
\newcommand {\shP}  {\mathcal{P}}
\newcommand {\shX}  {\mathcal{X}}
\newcommand {\foC}  {\mathfrak{C}}
\newcommand {\foM}  {\mathfrak{M}}
\newcommand {\foX}  {\mathfrak{X}}
\newcommand {\fob}  {\mathfrak{b}}
\newcommand {\fod}  {\mathfrak{d}}
\newcommand {\foj}  {\mathfrak{j}}
\newcommand {\fom}  {\mathfrak{m}}
\newcommand {\fop}  {\mathfrak{p}}
\newcommand {\fot}  {\mathfrak{t}}
\newcommand {\fou}  {\mathfrak{u}}
\newcommand {\Aut}  {\operatorname{Aut}}
\newcommand\bu{\mathbf{u}}
\newcommand\bA{\mathbf{A}}
\newcommand\bsigma{{\boldsymbol{\sigma}}}
\newcommand\bomega{{\boldsymbol{\omega}}}
\newcommand\btau{{\boldsymbol{\tau}}}
\newcommand {\can} {{\mathrm{can}}}
\newcommand {\codim} {\operatorname{codim}}
\newcommand {\colim} {\operatorname{colim}}
\newcommand {\coker} {\operatorname{coker}}
\newcommand {\Div}  {\operatorname{Div}}
\newcommand {\ev}  {{\operatorname{ev}}}
\newcommand {\fs} {\mathrm{fs}}
\newcommand {\GL}  {\operatorname{GL}}
\newcommand {\gp}  {{\operatorname{gp}}}
\newcommand {\Hom}  {\operatorname{Hom}}
\newcommand {\id}  {\operatorname{id}}
\newcommand {\im}  {\operatorname{im}}
\newcommand {\inc}  {\mathrm{in}}
\newcommand {\inte}  {\mathrm{int}}
\newcommand {\Int}  {\operatorname{Int}}
\renewcommand {\ker } {\operatorname{ker}}
\newcommand {\kk} {\Bbbk}
\newcommand {\lra}  {\longrightarrow}
\newcommand {\M} {\mathcal{M}}
\renewcommand {\max} {{\operatorname{max}}}
\renewcommand{\O}  {\mathcal{O}}
\newcommand {\out}  {\mathrm{out}}
\renewcommand{\P}  {\mathscr{P}}
\newcommand {\PL} {\operatorname{PL}}
\newcommand {\pr}  {\operatorname{pr}}
\newcommand {\Proj} {\operatorname{Proj}}
\renewcommand {\red}  {{\operatorname{red}}}
\newcommand {\rk} {\operatorname{rk}}
\newcommand {\scrD}  {\mathscr{D}}
\newcommand {\scrB}  {\mathscr{B}}
\newcommand {\scrM}  {\mathscr{M}}
\newcommand {\scrP}  {\mathscr{P}}
\newcommand {\scrS}  {\mathscr{S}}
\newcommand {\sat}  {{\operatorname{sat}}}
\newcommand {\Sing} {\operatorname{Sing}}
\newcommand {\Spec} {\operatorname{Spec}}
\newcommand {\Star} {\mathrm{Star}}
\newcommand {\trop}  {\mathrm{trop}}
\newcommand {\tors}  {\mathrm{tors}}
\newcommand {\ul} {\underline}
\newcommand {\virt} {\mathrm{virt}}
\newcommand\arr{\ifinner\to\else\longrightarrow\fi}
\def\mydate{\ifcase\month \or January\or February\or March\or
April\or May\or June\or July\or August\or September\or October\or 
November\or December\fi \space\number\day,\space\number\year}
\newcommand{\Cones}{\mathbf{Cones}}
\newcommand\ocM{\overline{\mathcal{M}}}
\begin{document}

\title{The canonical wall structure and intrinsic mirror symmetry}

\author{Mark Gross} \address{DPMMS, Centre for Mathematical Sciences,
Wilberforce Road, Cambridge, CB3 0WB, UK}
\email{mgross@dpmms.cam.ac.uk}

\author{Bernd Siebert} 
\address{\tiny Department of Mathematics, The Univ.\ of Texas at Austin,
2515 Speedway, Austin, TX 78712, USA}
\email{siebert@math.utexas.edu}

\begin{abstract}
As announced in \cite{Utah} in 2016, we construct and prove consistency of the
\emph{canonical wall structure}. This construction starts with a log Calabi-Yau
pair $(X,D)$ and produces a wall structure, as defined in \cite{Theta}. Roughly
put, the canonical wall structure is a data structure which encodes an
algebro-geometric analogue of counts of Maslov index zero disks. These
enumerative invariants are defined in terms of the punctured invariants of
\cite{ACGSII}. There are then two main theorems of the paper. First, we prove
consistency of the canonical wall structure, so that, using the setup of
\cite{Theta}, the canonical wall structure gives rise to a mirror family.
Second, we prove that this mirror family coincides with the intrinsic mirror
constructed in \cite{Assoc}. While the setup of this paper is narrower than that
of \cite{Assoc}, it gives a more detailed description of the mirror.
\end{abstract}
\thanks{M.G. was supported by the ERC grant MSAG,  EPSRC grant EP/N03189X/1
and a Royal Society Wolfson Research Merit Award.
Research by B.S. was partially supported by NSF grant DMS-1903437.}
\thanks{AMS MSC: 14J33, 14N35, 14A21}

\date{\today}
\maketitle

\setcounter{tocdepth}{2}
\tableofcontents



\addtocontents{toc}{\protect\setcounter{tocdepth}{1}}
\section*{Introduction}

The mathematical investigation of the mirror phenomenon observed in physics
\cite{GrPl,CLS,COGP} has long been vexed by the basic question how broadly
mirror pairs exist. Mirror symmetry is certainly too wide a phenomenon to even
imagine an answer that is immediately satisfying from all perspectives. The
authors have nevertheless long been convinced that there is a fundamental
mechanism underlying all mirror phenomena that is both key to general
constructions of mirror pairs and to establish various mirror correspondences
between them.

\subsubsection*{Intrinsic mirror symmetry}
Following a long line of developments and refinements around the SYZ philosophy
of dual torus fibrations \cite{SYZ, GrossTMS, KS1, Fukaya, PartI, KS2, Annals,
Auroux, GHK, Tu, Abouzaid1, Abouzaid2}, the authors in \cite{Assoc} proposed an
intrinsic mirror construction for logarithmic Calabi-Yau varieties.

The construction takes as input a pair $(X,D)$ of a normal crossings divisor $D$
on a smooth variety $X$, projective over a point or a curve, such that
$X\setminus D$ supports a holomorphic volume form with at most logarithmic poles
along $D$. We assume further that $D$ has a zero-dimensional stratum. The mirror
is constructed as the affine or projective spectrum of a newly defined degree
zero relative quantum cohomology ring for the pair $(X,D)$ that is expected to
only depend on $X\setminus D$. The definition of the relative quantum cohomology
ring has been made possible by the development of punctured Gromov-Witten
invariants, a variant of logarithmic Gromov-Witten invariants \cite{JAMS, Chen,
AC} admitting negative contact orders, in joint work of the authors with
Abramovich and Chen \cite{ACGSII}. The structure coefficients of relative
quantum cohomology involve invariants with two positive and one negative or zero
contact orders.

In our opinion this is a very clean and satisfying construction from the
algebraic geometric point of view: It generalizes many of the known geometric
mirror constructions to a large, \emph{birationally distinguished} class of
algebraic varieties.\footnote{It is a certain joke of history that a slight
twist to one of the first objects arising in the context of mirror symmetry,
namely quantum cohomology, turns out to hold an answer to the puzzling
fundamental question in mirror symmetry in such great generality. } For example,
maximally unipotent degenerations of smooth proper Calabi-Yau varieties, the
original object of study in mirror symmetry, and Fano manifolds with an
anticanonical divisor with zero-dimensional stratum both fall in this class of
varieties.

There is also a clear tentative interpretation of relative quantum cohomology in
terms of symplectic geometry as a degree zero symplectic cohomology ring
\cite{Pascaleff, Pomerleano, GanatraPomerleano, Pomerleano2}. This link both
points to possible generalizations of the construction to a purely symplectic
framework and to higher degree relative quantum cohomology rings \cite{GPoS}, as
well as to a proof of homological mirror symmetry \cite{Ko} in this framework
\cite{PS}.

\subsubsection*{The canonical wall structure and the SYZ interpretation of
intrinsic mirror symmetry}
The present paper puts the intrinsic mirror construction of
\cite{Assoc} into the context of our long-term program aiming at an
algebraic-geometric implementation of the SYZ picture of mirror symmetry. Our
program is based on a tropicalization of the geometry, leading to a polyhedral
manifold $B$ with an integral affine structure with singularities (a
\emph{polyhedral affine manifold}) together with a wall structure. The
polyhedral affine manifold provides the central fiber of the mirror family, the
uncorrected mirror, while the wall structure carries the quantum corrections to
build a consistent deformation of this uncorrected mirror. In \cite{Annals} we
gave an algorithmic inductive construction of the wall structure for cases with
affine singularities that are locally rigid in some sense, purely in terms of
affine geometry. We have long suspected, but could make precise only in very
limited cases \cite{GPS,GHK}, that for the construction of the mirror of a pair
$(X,D)$ the wall structure should be constructible from the tropicalizations of
rational curves on $X$ virtually intersecting $D$ at one point; the enumerative
invariants carried by the wall structure should then be the corresponding
punctured Gromov-Witten invariants, see e.g.\ \cite[\S4]{ICM}. This picture is
the algebraic-geometric analogue of the quantum corrections on the symplectic
side from Maslov index zero pseudoholomorphic disks with boundaries on
Lagrangian fibers that give rise to the bounding cochains in \cite{FOOO}.

With punctured Gromov-Witten theory sufficiently developed \cite{ACGSII}, we
have now been able to make this picture a reality. Moreover, using the general
framework of constructing families from consistent wall structures worked out in
\cite{Theta}, we give an alternative and technically simpler proof of the
associativity of the degree zero relative quantum cohomology ring from
\cite{Assoc} in the case of interest for mirror symmetry, where $(X,D)$ has
logarithmic Kodaira dimension zero and $D$ has a zero-dimensional stratum. We
emphasize that only the present paper makes the clear connection of the
intrinsic mirror construction to the SYZ picture. The assumption that
$D$ has a zero-dimensional stratum indeed provides families of Lagrangian tori
in $X\setminus D$ degenerating to the zero dimensional stratum. Thus this
assumption can be both viewed as a replacement of the existence of Lagrangian
torus fibrations or, in a degeneration situation, of the maximally unipotent
monodromy assumption \cite{Morrison2}.

The present paper also provides the link to the algorithmic construction of
walls \cite{Annals}, and to previous mirror constructions in two-dimensions
\cite{GHK} and for cluster varieties \cite{GHKK}, also giving rise to rich
combinatorial structures, see e.g.~\cite{Cheung, ChMN, CheungMandel, Magee, Mandel1,
RietschWilliams, Prince}. All these previous constructions generalize known and
tested mirror constructions such as \cite{Batyrev,BB, HoriVafa, GrossBB}, and
they have also been independently tested, see e.g.\ \cite{PartII, GrossP2, Lau,
RS, Mandel2, BBMN, HK}, thus providing further evidence that \cite{Assoc} really does
produce mirror pairs. Wall structures also give powerful methods for explicit
computations \cite{Annals, GPS, HDTV}, and in fact, all examples of mirror pairs
we have computed explicitly were first obtained via their wall structures.

Another motivation is that the wall structures contain a lot of information not
directly accessible from the relative quantum cohomology ring in \cite{Assoc}.
For example, our wall structures suggest to generalize Mikhalkin-style tropical
correspondence theorems for curve counting from toric ambient geometries to
logarithmic Calabi-Yau varieties. In this correspondence, straight lines are to
be replaced by the \emph{broken lines} reviewed in \S\ref{subsec:broken lines}.
Broken lines are certain piecewise straight lines with possible bends when
crossing a wall due to interaction with the tropical disks carried by the wall.
The present paper also gives a systematic treatment of the geometric
information carried by the wall structure, which sometimes contains interesting
enumerative information in its own right \cite{GPS, Bousseau4, HDTV, Tim}.

\subsubsection*{Statement of main results}
We now describe the results of the paper in more detail. In \S\ref{sec:basic} we
define the relevant polyhedral affine manifold $B$ as the dual intersection
complex of $D$, study the relevant affine geometry and discuss the additional
conditions compared to \cite{Assoc} to fulfill the assumptions on $B$ in
\cite{Theta}. As in \cite{Assoc} we distinguish the \emph{absolute} and
\emph{relative case}. In both cases $X$ is a smooth variety over an
algebraically closed field $\kk$ of characteristic zero and $D\subset X$ is a
normal crossing divisor. In the absolute case $X$ is projective over $\kk$ while
in the relative case we have a projective morphism $g: X\to S$ with $S$ an
affine curve or spectrum of a DVR, $g$ smooth away from a closed point $0\in S$ and $g^{-1}(0)\subseteq
D$. In other words, $g$ induces a log smooth morphism $(X,\M_X)\to (S,\M_S)$
when endowing $X,S$ with their respective divisorial log structures. Let
$\Sigma(X)$ denote the tropicalization of $(X,D)$ introduced in
\S\ref{subsec:basic trop}. 

\begin{named}{Assumption T}
The pair $(X,D)$ fulfills Assumptions~\ref{ass:absolute} and
Assumptions~\ref{ass:relative} related to its tropicalization $\Sigma(X)$ in the
absolute and relative cases, respectively.
\end{named}

We show in Proposition~\ref{prop:dlt CY models work} that Assumption~T holds for
resolutions of log Calabi-Yau minimal models with connected $D$ having a
zero-dimensional stratum, using a result of Koll\'ar and Xu \cite{KX}. While the
existence of log minimal models is not yet generally known in dimensions greater
than three, a recent result of Birkar paraphrased in Theorem~\ref{Thm: Birkar}
shows their existence in the relevant situation under a technical assumption
\cite[Cor.~1.5]{Birkar}. The upshot is that we expect Assumption~T to be
fulfilled for all practical purposes for cases of interest in mirror symmetry.

\S\ref{sec:punctured log maps} reviews some material concerning
punctured maps, their moduli theory and their tropicalizations. A
key result for this paper is Lemma~\ref{lem:key tropical lemma} classifying
those tropical punctured maps that later appear in walls and broken lines.

After these preparations, \S\ref{sec:wall structures and log broken lines}
introduces the two main players of this paper, the canonical wall structure
$\scrS_\can$ on $B$ and logarithmic broken lines, both defined in terms of
certain punctured Gromov-Witten invariants. Walls are defined in terms of
punctured Gromov-Witten invariants with one non-zero contact order. There are in
fact two canonical wall structures $\scrS_\can^{\mathrm{undec}}$,
$\scrS_\can$ (Constructions~\ref{const:wall construction} and
\ref{const:decorated wall structure}), where $\scrS_\can^{\mathrm{undec}}$ only
fixes the total curve class while $\scrS_\can$ prescribes a curve class for each
vertex of the tropical disk (``decorated wall type''). The refinement concerning
curve classes is necessary for some proofs. The formulas defining the walls in
terms of punctured Gromov-Witten invariants are stated in \eqref{lem:wall
virtual dim}--\eqref{eq:wall def} and in \eqref{eq:refined wall def} for the
undecorated and the decorated cases, respectively. A similar definition, but
with punctured invariants with two rather than one contact orders, one of which is negative, provides the notion of \emph{logarithmic broken line}.

At this point we have in principle the same objects as in \cite{Theta}, but we
neither know that the wall structure is consistent nor that our logarithmic
broken lines have anything in common with the broken lines from \cite{Theta},
which are defined algebraic-combinatorially from the canonical wall structure
$\scrS_\can$. Our first main result, covered in \S\ref{sec:correspondence}, is
that these two notions of broken lines indeed agree.

\begin{named}{Theorem~A}
Let $(X,D)$ fulfill Assumption~T. Then the broken lines for the canonical wall
structures $\scrS_\can$ and $\scrS_\can^{\mathrm{undec}}$ are exactly the logarithmic broken lines.
\end{named}

Theorem~A follows from the bijection between logarithmic broken line types and
families of broken lines of fixed combinatorial type (Proposition~\ref{prop:one
to one}), with equality of the corresponding coefficients (Theorem~\ref{thm:main
correspondence theorem}).

The proof of the decisive Theorem~\ref{thm:main correspondence theorem} relies
on a gluing formula for punctured Gromov-Witten invariants that gives the
crucial interpretation of bending at a wall as attaching a number $l$ of genus
zero punctured maps with one puncture (``bubbles'') to a genus zero punctured
map with two punctures, by adding an irreducible component with trivial
numerical information and $l+2$ punctures and identifying $l+1$ pairs of
punctures to nodes. This gluing problem is simplified a lot since broken lines
only interact with walls in a cell of codimension zero or one. The codimension
zero case corresponds to gluing in a zero-dimensional stratum of $D$, with the
added component to the stable map necessarily contracted. In codimension one the
gluing happens along a one-dimensional stratum. Luckily, an argument by Koll\'ar
shows that Assumption~T implies that a one-dimensional stratum is actually
isomorphic to $\PP^1$ with stratified structure given by two points
(Proposition~\ref{prop:polyhedral pseudo}). Both cases can then be treated in a
rather straightforward manner by the numerical gluing formula for punctured
invariants proved by Yixian Wu \cite{Wu}, which applies when all gluing strata
are toric. For the reader's convenience and to fix notations we recall this
formula in Appendix~\ref{App: gluing formula}.

The essential property of wall structures needed for constructing deformations
is a certain notion of ``consistency''. Consistency says that the schemes
obtained by gluing local standard models for the deformation in codimension zero
and one in a way prescribed by the walls containing any given codimension two
subset $\foj\subset B$ has enough global regular functions. We prove this
property by restricting to the wall structure on the star of $\foj$ given by the
walls containing $\foj$, and then showing that the theta functions obtained from
sums over logarithmic broken lines with fixed endpoint provide such functions.
Thus consistency comes from invariance properties of certain sums of punctured
invariants. This reverses the logic in \cite{Theta} where we assume consistency
to construct theta functions, due to a lack of an a priori interpretation of
broken lines. We obtain our second main result, Theorem~\ref{thm:consistency} in
the body of the text.

\begin{named}{Theorem B}
Let $(X,D)$ fulfill Assumption~T. Then the canonical wall structures
$\scrS_\can$ and $\scrS_\can^{\mathrm{undec}}$ are consistent in the sense of
\cite{Theta}.
\end{named}

\cite{Theta} now provides a ring of theta functions that serves as the affine or
projective coordinate ring of our mirror family, depending if we are in the
absolute or relative case. The last section, \S\ref{sec:intrinsic mirror
symmetry}, is devoted to proving that this ring agrees with the relative quantum
cohomology of \cite{Assoc}.

\begin{named}{Theorem C}
Let $(X,D)$ fulfill Assumption~T. The ring of theta functions associated to the
canonical wall structure $\scrS_\can$ via \cite{Theta} agrees with the degree zero
relative quantum cohomology ring from \cite{Assoc}. In particular, the canonical
wall structure in connection with \cite{Theta} produces the intrinsic mirror
family from \cite{Assoc}.
\end{named}

This theorem is stated as an equality of structure coefficients in
Theorem~\ref{thm:assoc comparison}. The key step in the proof is to split the
structure coefficients from \cite{Assoc} according to types of tropical
punctured maps with two unbounded and one bounded leg. A dimension count
shows that such tropical punctured maps split into two connected components when
removing the vertex containing the bounded leg, with each connected component
leading to a tropical punctured map as they appear in broken lines. The relation
to the structure coefficients of the rings defined by the theta functions then
boils down to another application of Yixian Wu's gluing formula \cite{Wu}, in a
particularly simple situation with a zero-dimensional gluing stratum and
transversality of the tropical gluing situation, see Lemma~\ref{lem:product
gluing}.

\subsubsection*{Related work}
Under the assumption that $X\setminus D$ is affine and contains a
full-dimensional algebraic torus Sean Keel and Tony Yu gave an alternative
construction of the degree zero relative quantum cohomology ring with Berkovich
non-archimedean methods \cite{KeelYu}. The presence of the algebraic torus makes
it possible to avoid negative contact orders by tropically extending negative
contact order legs out to infinity, so that contact orders become positive.
\cite{KeelYu} also construct wall structures with the same assumptions, via a
different, but presumably equivalent, approach to ours, by counting the effect
on analytic cylinders interacting with the wall.

The paper \cite{HDTV} of the first author with H\"ulya Arg\"uz has been written
in parallel to the present work. Generalizing \cite{GPS} to higher dimensions,
this work treats the case that $(X,D)$ is a blowing up of a toric variety in
hypersurfaces contained in its toric boundary with $D$ the strict transform of
that toric boundary. The main result says that in this situation our canonical
wall structure agrees with an algorithmically constructed consistent wall
structure, following \cite{Annals}. Thus \cite{HDTV} provides a rich source of
explicit examples. Because of this, we refer the interested reader to that paper
for examples more complicated than the ones given in this paper.

Honglu Fan, Longting Wu and Fenglong You, in the case of a smooth boundary
divisor, and Hsian-Hua Tseng and Fenglong You in the case of a normal crossings
divisor, made an alternative proposal for a
relative quantum cohomology ring based on orbifold Gromov-Witten invariants
\cite{FWY1, FWY2}. As pointed out by Dhruv Ranganathan, their invariants do not
have the correct invariance properties under log \'etale modifications
to immediately agree with our relative quantum cohomology ring. 
However, since the first version of the current paper, 
\cite{BNR} has achieved a comparison result between orbifold invariants
and log invariants with non-negative contact orders.
It remains to be
seen if a modification of their definitions could also give
negative contact orders. In any
case, there are no wall structures in this picture since walls are genuinely
tropical objects while orbifold Gromov-Witten theory is not known to be linked to
tropical geometry.

In two dimensions, Pierrick Bousseau \cite{Bousseau,Bousseau2,Bousseau3} gave a
higher genus interpretation for quantum versions of wall structures. It remains
an interesting question as to whether there is a higher-dimensional
generalization of this work.

Yoel Groman and Umut Varolgunes kindly informed us of their work in progress
aiming at a construction of mirrors based on symplectic cohomology type
invariants for compact subsets of symplectic manifolds. One of their geometric
frameworks rely on decomposing a symplectic manifold into simpler pieces using a
multiple cut configuration as in \cite[Def.~13]{TMZ}. A multiple cut
configuration gives rise to an SC symplectic degeneration, which is a symplectic
analogue of a semi-stable degeneration. Therefore, this approach is a symplectic
hybrid version of both the patching construction from \cite{Annals} and the
symplectic cohomology interpretation of \cite{Assoc}. It presently does not
involve a wall structure.

\subsubsection*{Acknowledgements}
Our various collaborations have played a key role in the development of the
ideas presented here. Paul Hacking and Sean Keel helped influence our
understanding of wall structures. The collaboration with Dan Abramovich and Qile
Chen developing punctured invariants and a context for developing gluing
formulae lies at the technical heart of this paper. Indeed, it was already in
2011 in conversations with Abramovich and Chen that it became clear that there
would be new types of logarithmic invariants which might be useful for
completing our program.

We further thank Yixian Wu for discussions on the numerical gluing formula, and
H\"ulya Arg\"uz, with the collaboration \cite{HDTV} providing the opportunity to
sharpen some of the statements in the current paper. We thank Caucher Birkar for
providing a partial solution to the existence of log minimal models following a
question by M.G.

We have also benefited from many conversations about wall structures and
scattering diagrams across the years with Man-Wai Cheung, Maxim Kontsevich,
Dhruv Ranganathan, Tony Yue Yu and many others. Finally, we would like to thank
Sam Johnston for pointing out a missed case in the original argument of
\S\ref{sec:intrinsic mirror symmetry}, as well as Evgeny Goncharov,
Jeff Hicks and Ben Morley for pointing out typos.


\addtocontents{toc}{\protect\setcounter{tocdepth}{2}}

\section{The basic setup}
\label{sec:basic}

\subsection{The tropicalization of a log Calabi-Yau pair}
\label{subsec:basic trop}
We use the same setup for the log Calabi-Yau case as in \cite{Assoc}, \S1.
Explicitly, we fix a non-singular variety $\ul{X}$ of dimension $n$ and a simple
normal crossings divisor $D\subseteq \ul{X}$ yielding a divisorial log structure
on $\ul{X}$, log smooth over $\Spec\kk$. We write $X$ as this log scheme, and
$D=D_1+\cdots+D_s$ the decomposition of $D$ into its irreducible 
components. We consider the \emph{absolute case}, in which $X$ is
projective over $S=\Spec\kk$, and the \emph{relative case}, in which one has in
addition a projective log smooth morphism $g:X\rightarrow S$ with $S$ a
regular one-dimensional scheme over $\Spec\kk$ with a divisorial 
log structure coming from a single closed point $0\in S$. We include here
the case that $S$ is the spectrum of a complete DVR, or is an affine curve.
Necessarily $g^{-1}(0)\subseteq D$, but we do not require equality.

To avoid combinatorial complexities and to fit with the hypotheses of
\cite{Theta}, we will assume that for any index set $I\subseteq\{1,\ldots,s\}$,
the (possibly empty) stratum $X_I:=\bigcap_{i\in I} D_i$ of $D$ is
connected. This can always be achieved via a log \'etale birational modification
of $X$. In this case, the \emph{tropicalization} $\Sigma(X)$ (see
\cite[\S2.1.4]{ACGSI}) has a simple description, following \cite[Ex.\
1.4]{Assoc}, as a polyhedral cone complex in $\Div_D(X)^*_{\RR}$. Here
$\Div_D(X)$ is the group of divisors supported on $D$ and $\Div_D(X)^*_{\RR}$ is
the dual vector space. If $D_1^*,\ldots,D_s^*$ is the basis dual to
$D_1,\ldots,D_s$, then
\[
\Sigma(X):=\left\{\sum_{i\in I} \RR_{\ge 0} D_i^*\,|\,I\subseteq\{1,\ldots,s\},
X_I\not=\emptyset\right\}.
\]
For $\rho=\sum_{i\in I} \RR_{\ge 0}D_i^*\in\Sigma(X)$, we will often write $X_{\rho}$
instead of $X_I$ in the sequel, keeping with the convention
of \cite{ACGSI,ACGSII}. We usually view $X_{\rho}$ as a log scheme,
with the strict closed embedding $X_{\rho}\hookrightarrow X$.
We write $|\Sigma(X)|$ for the support
of this polyhedral cone complex.

For a stratum $X_I=X_{\rho}$, we write $\partial X_I$ or
$\partial X_{\rho}$ for the reduced
divisor on $X_I$ given as
\[
\sum_{j\not\in I} D_j\cap X_I.
\]

The pair $(X,D)$ is
\emph{log Calabi-Yau} if the logarithmic canonical class
$K_X+D$ is
numerically equivalent to an effective $\QQ$-divisor supported on $D$.
We then fix once and for all an explicit representation
\begin{equation}
\label{eq:KXD}
K_X+D\equiv_{\QQ} \sum_i a_i D_i 
\end{equation}
with the $a_i\ge 0$. In general, our mirror construction depends
on this choice in that it determines the set of good divisors below
and hence the Kontsevich-Soibelman skeleton. For the situations 
considered in this paper, however, we expect this skeleton to be
well-defined. This is known to be the case if $(X,D)\rightarrow S$ is
a degeneration of Calabi-Yau manifolds with $D$ the fibre over $0\in
S$, see \cite{NX}. More generally, see
\cite[Ex.~1.23]{Assoc} for further discussion
on this point. Recall that $K_X+D$ is the first Chern class
of the sheaf $\Omega^1_X=\Omega^1_{\ul{X}}(\log D)$ 
of logarithmic differential forms.
We call $D_i$ \emph{good} if $a_i=0$.
We call a stratum $X_I$ \emph{good} if $a_i=0$ for all $i\in I$.
We write
\[
\partial^{\mathrm{good}} X_I:=\sum_{j\not\in I, a_j=0} D_j\cap X_I,\quad\quad
\partial^{\mathrm{bad}} X_I:=\sum_{j\not\in I, a_j>0} D_j\cap X_I,
\]
so that $\partial X_I = \partial^{\mathrm{good}} X_I + \partial^{\mathrm{bad}} 
X_I$.

The \emph{Kontsevich-Soibelman skeleton} of $X$ is the pair $(B,\P)$
where 
\[
\P:=\{\sigma\in\Sigma(X)\,|\,\hbox{$X_{\sigma}$ is good}\}
\]
and
\[
B=\bigcup_{\sigma\in\P}\sigma.
\]
Thus $B$ is a topological space with polyhedral cone decomposition $\P$
with all cones standard simplicial cones.

We denote by $\P^{[i]}$ the set of $i$-dimensional cones of $\P$ and
$\P_{\max}$ the set of maximal cones.

In \cite{Assoc}, we were able to construct a ring from the above data
with no further assumptions. However, for the setup of wall crossing
structures in \cite{Theta}, we need to impose some additional 
hypotheses on the pair $(X,D)$ and the map to $S$. We list these
assumptions here, which will be in force for the remainder of the
paper, and discuss in \S\ref{sec:MMP} the naturality of these assumptions.

\begin{assumptions}[The absolute case] 
\label{ass:absolute}
We assume that
\begin{enumerate}
\item $\P$ contains an $n$-dimensional cone, where $n=\dim X$.
\item 
Whenever $\rho\in\P^{[n-1]}$, $\sigma\in\Sigma(X)$ with 
$\dim\sigma=n$ and $\rho\subseteq\sigma$, we also have $\sigma\in\P$.
Put another way, a good one-dimensional stratum $X_{\rho}$ only
intersects good divisors.
\item Whenever $\rho\in\P$ with $\dim X_{\rho}>1$, $\partial^{\mathrm{good}} 
X_{\rho}$ is connected.
\end{enumerate}
\end{assumptions}

\begin{assumptions}[The relative case]
\label{ass:relative}
We assume that, with $g:X\rightarrow S$ in the relative case and $\dim X=n$: 
\begin{enumerate}
\item
Conditions (1)--(3) of Assumptions~\ref{ass:absolute} hold for
the pair $(X,D)$.
\item For any $\rho\in \P$, $g|_{X_{\rho}}$ has 
geometrically connected fibres.
(Note $g|_{X_{\rho}}$ may be constant.)
\end{enumerate}
\end{assumptions}

\begin{proposition}
\label{prop:polyhedral pseudo}
If $(X,D)$ satisfies Assumptions~\ref{ass:absolute} or \ref{ass:relative}
in the absolute or relative cases respectively, then $(B,\P)$ is a 
pseudomanifold in the sense of satisfying conditions (1)--(5)
of \cite[Constr.\ 1.1]{Theta}. Further, for each one-dimensional
good stratum $X_{\rho}$, either (a) $X_{\rho}\cong \PP^1$ and
$\partial X_{\rho}$ consists of two points, or (b) we are in the
relative case and $g|_{X_{\rho}}$ is an isomorphism, with $\partial X_{\rho}$
a single point.
\end{proposition}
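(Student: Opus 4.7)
The plan is to first establish the dichotomy for one-dimensional good strata via an adjunction computation and then use this dichotomy, combined with a propagation argument on $\Sigma(X)$, to verify the five pseudomanifold axioms of \cite[Constr.~1.1]{Theta}. The dichotomy is really the geometric heart of the proposition, since the codimension-one incidence axiom reduces directly to knowing $|\partial X_\rho|$ for each one-dimensional good stratum.

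For $\rho\in\P^{[n-1]}$, iterated snc adjunction along $X_\rho = \bigcap_{i\in I_\rho} D_i$ yields
\[
K_{X_\rho} + \partial X_\rho \ =\ (K_X+D)|_{X_\rho}.
\]
Writing $K_X+D\equiv_\QQ \sum_i a_i D_i$ with $a_i\ge 0$ as in \eqref{eq:KXD}, goodness of $X_\rho$ forces $a_i=0$ for $i\in I_\rho$. Moreover, Assumption~\ref{ass:absolute}(2) says that any $D_j$ with $j\notin I_\rho$ meeting $X_\rho$ in a point extends $\rho$ to an $n$-dimensional cone of $\P$, so $D_j$ is good and $a_j=0$. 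Hence $\deg(K_{X_\rho}+\partial X_\rho)=0$.

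In the absolute case $X_\rho$ is a smooth projective curve, so $\deg\partial X_\rho = 2-2g(X_\rho)$ with $\partial X_\rho$ effective. Granting that $\rho$ is a face of some maximal cone in $\P$ (established below), $|\partial X_\rho|\ge 1$, forcing $g(X_\rho)=0$ and $|\partial X_\rho|=2$; this is case~(a). In the relative case, $g|_{X_\rho}$ is either constant, reducing to the absolute argument, or non-constant and hence a dominant morphism of smooth curves, therefore finite; Assumption~\ref{ass:relative}(2) then forces its fibres to be connected, so it is an isomorphism $X_\rho\stackrel{\sim}{\to} S$ whose logarithmic boundary $\partial X_\rho = g^{-1}(0)\cap X_\rho$ is a single point, which is case~(b). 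The delicate elliptic-disconnection scenario --- in which $X_\rho$ could a priori be a genus-one curve with $\partial X_\rho=\emptyset$ --- is where Koll\'ar's argument from the introduction enters: the connectedness of the Kontsevich--Soibelman skeleton of a log Calabi--Yau dlt minimal model with a $0$-dimensional stratum rules out such isolated elliptic components.

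With the dichotomy in hand, the pseudomanifold axioms follow by propagation on $\Sigma(X)$. Assumption~\ref{ass:absolute}(1) provides an initial maximal cone $\sigma_0\in\P$ of dimension $n$; the dichotomy applied to each codimension-one face of $\sigma_0$ produces exactly one adjacent $n$-cone in $\P$ (or none, in the boundary case of the relative situation), and iterating this, combined with the connectedness of $\partial^{\mathrm{good}} X_\tau$ from Assumption~\ref{ass:absolute}(3) for strata of higher dimension, gives purity of $\P$ as well as the required connectedness of stars. The codimension-one incidence condition is precisely $|\partial X_\rho|\in\{1,2\}$. The main obstacle throughout is the elliptic exclusion; once that is granted, the remainder is essentially combinatorial bookkeeping of strata governed by Assumptions~\ref{ass:absolute} and~\ref{ass:relative}.
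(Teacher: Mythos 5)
Your overall plan — adjunction to establish the genus-zero dichotomy, then derive the pseudomanifold axioms — is the same skeleton as the paper's argument, but as written the proof has a circular dependency that you acknowledge but do not actually resolve. To apply adjunction and conclude $g(X_\rho)=0$, $|\partial X_\rho|=2$ you need $\partial X_\rho\ne\emptyset$, i.e.\ that $\rho\in\P^{[n-1]}$ is a face of some $n$-dimensional cone of $\P$; this is precisely condition (3), which you then propose to deduce from the dichotomy. You defer this to something "established below," but the only thing offered below is a one-sentence gesture at propagation and an appeal to Koll\'ar's connectedness result for dlt minimal models. Neither fills the gap.

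The appeal to Koll\'ar--Xu is misplaced here: Proposition~\ref{prop:polyhedral pseudo} is about an arbitrary pair satisfying Assumptions~\ref{ass:absolute}/\ref{ass:relative}, not a dlt log Calabi--Yau minimal model, so the structural results on dual complexes of such models do not directly apply. What the paper actually does is \emph{first} prove condition (3) by induction on $\dim X$, adapting the method of \cite[Thm.~10]{Kollar2} to this setting: given a maximal $\rho\in\P$ with $\dim\rho<n$, one starts from an $n$-dimensional cone provided by Assumption~\ref{ass:absolute}(1), uses the connectedness of $\partial^{\mathrm{good}}X$ from Assumption~\ref{ass:absolute}(3) to find a chain of good divisors $D_1,\ldots,D_r$ connecting $X_\sigma$ to $X_\rho$, and repeatedly applies the inductive hypothesis to the pairs $(D_i,\partial D_i)$ (which inherit Assumptions (2),(3) automatically). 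Only once this is done does one know $\partial X_\rho\ne\emptyset$, and then the adjunction/degree computation you give in your second paragraph closes the dichotomy. Your write-up needs this chain argument spelled out and placed \emph{before} the dichotomy, or alternatively the dichotomy derived within the base case $\dim X=1$ of the induction, where Assumption~\ref{ass:absolute}(1) directly gives $D\ne\emptyset$ — which is in fact how the paper packages it.

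One smaller point: your sentence that the propagation "produces exactly one adjacent $n$-cone (or none)" and that "iterating" gives purity is really just a restatement of what condition (4) asserts once (3) is known; it does not itself establish (3). The content of (4) is exactly $|\partial X_\rho|\in\{1,2\}$ (using Assumption~\ref{ass:absolute}(2) to see that all components of $\partial X_\rho$ are good), and (5) is exactly Assumption~\ref{ass:absolute}(3) applied to cones of dimension $\le n-2$ — both fine once (3) is in hand.
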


\begin{proof}
First, conditions (1) and (2) of \cite[Constr.\ 1.1]{Theta}
follow immediately from the construction of $\P$ as a fan
in $\Div_D(X)^*_{\RR}$. Note this description is possible precisely
because we have made the assumption that the $X_I$ are connected. 

Condition (3) states that every $\rho\in\P$ is contained in an
$n$-dimensional cone $\sigma\in\P$. In other words:
\begin{equation}
\label{eq:induction hypothesis}
\hbox{Every good stratum of $X$ contains a zero-dimensional good stratum.}
\end{equation}
We will show \eqref{eq:induction hypothesis} 
by adapting an argument
of Koll\'ar \cite[Thm.\ 10]{Kollar2}. First note that
by adjunction, for $\rho\in\P$, we may inductively write
\[
K_{X_{\rho}}+\partial X_{\rho}\equiv_{\QQ}\sum_{i: D_i\not\supseteq X_{\rho}}
a_i (D_i\cap X_{\rho}).
\]
Thus the pair $(X_{\rho},\partial X_{\rho})$ may also be viewed as
a log Calabi-Yau variety in our sense, with Assumptions
\ref{ass:absolute}, (2),(3) holding automatically for this pair.

We now prove \eqref{eq:induction hypothesis} by induction on $\dim X$, 
with base case $\dim X=1$. In this case, \eqref{eq:induction hypothesis}
is obvious, but we observe more.
In the absolute case, $X$ is a compact non-singular curve and
$D$ is non-empty. By Assumptions~\ref{ass:absolute}, (2),
all components of $D$ must be good, and thus $K_X+D\equiv 0$. Thus
the only possibility is that $X\cong \PP^1$ and $D$ consists of
two distinct points. In the relative case, by Assumptions~\ref{ass:relative},
(2), $X\rightarrow S$ has connected fibres and hence is an isomorphism,
and thus $D$ consists of one point lying over $0\in S$.

Now assume \eqref{eq:induction hypothesis} is true when $\dim X<n$, and consider
the case $\dim X=n$. Suppose
that $\rho$ is maximal in $\P$, i.e., is not contained in a larger cone
of $\P$, but with $\dim\rho<n$. 
By assumption, there exists an $n$-dimensional cone $\sigma\in\P$.
This gives strata $X_{\rho}$, $X_{\sigma}$ with $\dim X_{\rho}>0$,
$\dim X_{\sigma}=0$. After reordering the indices $1,\ldots,s$, we
may assume $X_{\sigma}\subseteq D_1$ and $X_{\rho}\subseteq D_r$,
and by connectivity of $\partial^{\mathrm{good}}X$ there exists
a sequence of good divisors $D_2,\ldots,D_{r-1}$ such that
$D_i\cap D_{i+1}\not=\emptyset$ for $1\le i \le r-1$.
Noting that $(D_1,\partial D_1)$
satisfies Assumptions~\ref{ass:absolute}, we see by the
inductive hypothesis that necessarily the good stratum $D_1\cap D_2$ 
of $(D_1,\partial D_1)$
contains a zero-dimensional good stratum. Thus the same is true of
$(D_2,\partial D_2)$. Continuing in this way, we see the same
is true of $(D_r,\partial D_r)$, and hence again by the inductive
hypothesis $X_{\rho}$ contains a zero-dimensional good stratum.

Note this argument also shows that all good one-dimensional strata satisfy
conditions (a) or (b) in the statement of the proposition. 
Condition (4) of \cite[Constr.\ 1.1]{Theta}, which states that every
$n-1$-dimensional cone $\rho$ of $\P$ is contained in one or two
$n$-dimensional cones, thus follows immediately. In case (a), $\rho$ is
contained in two maximal cones and in case (b), $\rho$ is contained
in one maximal cone.

Finally, Condition (5) of \cite[Constr.\ 1.1]{Theta} follows
immediately from the assumed connectedness of 
$\partial^{\mathrm{good}} X_{\tau}$ for $\tau\in\P$ with
$\dim\tau\le n-2$.
\end{proof}

\begin{example}
\label{ex:running example 1}
We will illustrate some features of our construction via a simple example.
This is a special case of the blow-ups of toric varieties considered
more generally in \cite{HDTV}, and we refer the reader to that paper for
more details and other more interesting examples.

Write $(\overline X,\overline D)$ for the toric pair given by $\overline
X=\PP^1\times\PP^1\times\PP^1$ and $\overline D$ its toric boundary.
Label the six toric boundary divisors as $\overline D_{i,0}$, $\overline
D_{i,\infty}$, where $\overline D_{i,0}$ indicates the product obtained by
replacing the $i^{th}$ factor in $\PP^1\times\PP^1\times\PP^1$ with
$\{0\}$, and $\overline D_{i,\infty}$ instead replaces the $i^{th}$ factor with
$\{\infty\}$.

Let $Z_1,Z_2\subseteq \overline X$ be the curves
\[
Z_1:=\PP^1\times \{0\}\times \{1\} \subseteq \overline D_{2,0},
\quad Z_2:= \overline D_{1,\infty}\cap \overline D_{2,\infty}=
\{\infty\}\times\{\infty\}\times \PP^1.
\]

Let $\pi:X\rightarrow \overline{X}$ be the blow-up with center
$Z_1\cup Z_2$, with exceptional divisors $E_1,E_2$ sitting over
$Z_1$ and $Z_2$ respectively, see Figure \ref{Fig: Expl.1.4}. Let $D_{i,0}$, $D_{i,\infty}$ be
the strict transforms of $\overline D_{i,0}, \overline D_{i,\infty}$.
\begin{figure}
\begin{center}
\input{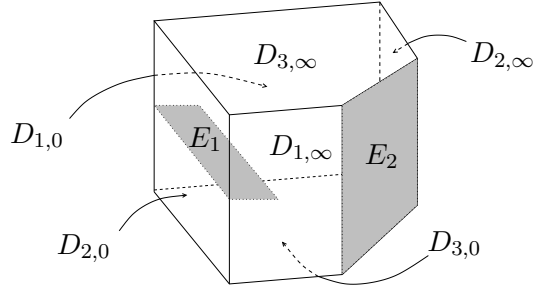}
\caption{\small Sketch of $(X,D)$ in Example~\ref{ex:running example 1}.}
\label{Fig: Expl.1.4}
\end{center}
\end{figure}
Then we take 
\[
D=D_{1,0}+D_{2,0}+D_{3,0}+D_{1,\infty}+D_{2,\infty}+D_{3,\infty}+E_2.
\]
Note that the blow-up along $Z_2$ is a toric blow-up, so if
$\pi$ is factored as $X\rightarrow X' \rightarrow \overline X$
with $X'$ this toric blow-up, $D$ is the strict transform of
the toric boundary of $X'$, and $K_X+D=0$.
Thus all divisors are good, and $(X,D)$ satisfies
Assumptions~\ref{ass:absolute}. 

In this case $(B,\P)$ is piecewise linear isomorphic to the fan
for $X'$.
\end{example}

\subsection{Minimal log Calabi-Yau pairs}
\label{sec:MMP}
We now briefly discuss how restrictive Assumptions~\ref{ass:absolute}
and \ref{ass:relative} might be. Certainly, they impose much stronger
conditions than those imposed in \cite{Assoc}. Indeed, for a simple
example, consider $X$ a rational elliptic surface with $f:X\rightarrow
\PP^1$ the elliptic fibration, and assume the fibre of $f$ over
$p_1\in \PP^1$ is a Kodaira type $I_n$ fibre for some $n\ge 3$.
Take $D=f^*(p_1+p_2+p_3)$ where $p_2,p_3\in \PP^1$ are points over
which the fibre of $f$ is non-singular.
Since $K_X= f^*(-p_1)$, we may in fact write 
\[
K_X+D \sim 2 f^*(p_3).
\]
Thus with this choice of representation \eqref{eq:KXD},
each irreducible component of $f^{-1}(p_1)$ and the 
fibre $f^{-1}(p_2)$ are good divisors. So 
$\partial^{\mathrm{good}}X$ is not connected. Note also 
that $f^{-1}(p_2)$ does not contain a zero-dimensional stratum
even though $\partial^{\mathrm{good}}X$ does.

An obvious problem with this example is that the pair $(X,D)$
is not log Kodaira dimension zero. In particular, if we assume
$(X,D)$ is log Kodaira dimension zero, we conjecturally obtain
the desired hypotheses. The standard conjectures of the minimal model
program in particular would imply the existence of a log Calabi-Yau
minimal model:

\begin{definition}
A \emph{log Calabi-Yau minimal model} is a $\QQ$-factorial
divisorial log terminal (dlt) pair $(X',D')$ such that $K_{X'}+D'\equiv 0$. 
\end{definition}

Let $(X',D')$ be a log Calabi-Yau minimal model. As necessarily each
irreducible component of $D'$ is $\QQ$-Cartier, the argument of
\cite[Thm.\ 4.5]{NXY} shows that
if $C\subseteq D'$ is a one-dimensional
stratum of $D'$, then $X'$ is non-singular and $D'$ is simple normal crossings
in a neighbourhood of $C$.\footnote{The context of \cite{NXY} is in the case
of a degeneration of Calabi-Yau manifolds, but the argument still applies
in our case.} Hence, we may choose a resolution of
singularities $\phi:X\rightarrow X'$ with exceptional divisor $E$,
such that $D=\phi^{-1}_*(D')+E$ is
simple normal crossings, and such that $\phi$ is an isomorphism
in a neighbourhood of each one-dimensional stratum of $D'$.
(Here $\phi^{-1}_*$ is the notation of \cite{Kollar} for the
strict transform.) We may then write
\begin{equation}
\label{eq:discrepancy}
K_X+\phi_*^{-1}D'=\phi^*(K_{X'}+D')+\sum_i a(E_i,X',D')E_i,
\end{equation}
where the $E_i$ are the irreducible components of $E$, and
$a(E_i,X',D')$ is, by definition, the log discrepancy. By
the definition of dlt (see e.g., \cite[Def.\ 2.8]{Kollar}), we have
$a(E_i,X',D')>-1$ for all $i$. Since $K_{X'}+D'\equiv 0$, we may
add $E$ to both sides to obtain
\begin{equation}
\label{eq:choice of KX}
K_X+D\equiv \sum_i a_i D_i
\end{equation}
with $a_i\ge 0$ and $a_i=0$ if and only if $D_i=\phi_*^{-1}(D_j')$
for some $j$. Thus we see that good components of $D$
are precisely the strict transforms of components of $D'$.

\begin{proposition}
\label{prop:dlt CY models work}
Let $(X',D')$ be a log Calabi-Yau minimal model with $X'$
projective, and let
$\phi:(X,D)\rightarrow (X',D')$ be a resolution of singularities
which is an isomorphism in neighbourhoods of one-dimensional strata of $D'$.
Suppose that any intersection of components
of $D'$ is connected\footnote{This requirement is only necessary to
ensure the assumptions on connectivity of strata made throughout this
paper.} and that $D'$ has a zero-dimensional
stratum. Then $(X,D)$ satisfies Assumptions
\ref{ass:absolute} and $(B,\P)$
is the dual intersection cone complex of $(X',D')$.
\end{proposition}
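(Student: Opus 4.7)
The plan is to translate everything into statements about the minimal model $(X',D')$ and then invoke the Koll\'ar--Xu theorem on dual complexes of log Calabi--Yau pairs. The starting point is the identity \eqref{eq:choice of KX}, which identifies the good components of $D$ with the strict transforms $\phi^{-1}_*(D'_j)$ of the irreducible components of $D'$. Thus the vertex set of $\P$ is canonically in bijection with the set of irreducible components of $D'$. I would first establish that this extends to a bijection of cone complexes $(B,\P)\cong \cD(D')$, the dual intersection complex of $(X',D')$. The key point is that any stratum of $D'$ of dimension $\le 1$ lies in the closure of a $1$-dimensional stratum of $D'$ (a $0$-stratum is a limit of a $1$-stratum obtained by intersecting one fewer component), so by the hypothesis on $\phi$ the map $\phi$ is an isomorphism on a neighbourhood of every stratum of $D'$ of dimension $\le 1$. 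For higher-codimensional good strata, the stratum $X_I\subseteq X$ maps onto the corresponding stratum $\bigcap_{j}D'_j$ with connected fibres (since $\phi$ has connected fibres), so the assumed connectivity of intersections of components of $D'$ gives connectivity of good strata of $X$, matching our earlier running assumption that strata be connected and giving the bijection $\P\leftrightarrow \cD(D')$.

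With this identification in hand, conditions (1) and (2) of Assumptions~\ref{ass:absolute} are almost immediate. For (1), the hypothesis that $D'$ has a zero-dimensional stratum $p$ provides an $n$-dimensional cone of $\cD(D')$, and since $\phi$ is an isomorphism near $p$, the corresponding $0$-dimensional good stratum of $X$ exists, producing an $n$-dimensional cone in $\P$. For (2), let $X_\rho$ be a good $1$-dimensional stratum and $\sigma\in\Sigma(X)$ an $n$-dimensional cone with $\rho\subsetneq\sigma$. Then $X_\sigma$ is a $0$-dimensional stratum lying in the closure of $X_\rho$, and by the isomorphism property of $\phi$ near $1$-dimensional strata of $D'$ (and hence near their closures), $X_\sigma$ corresponds to a zero-dimensional stratum of $D'$; the one additional divisor through $X_\sigma$ is then a component of $D'$, hence good, so $\sigma\in\P$.

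The content of the proposition is condition (3), the connectivity of $\partial^{\mathrm{good}}X_\tau$ for $\tau\in\P$ with $\dim X_\tau>1$. Under the identification $\P\cong\cD(D')$, this translates into the assertion that the link of any cone of dimension $\le n-2$ in $\cD(D')$ is connected, which is the codimension-one connectedness that is part of the pseudomanifold property. This is precisely what Koll\'ar--Xu \cite{KX} prove: the dual complex of a dlt log Calabi--Yau pair is a pseudomanifold (in fact a rational homology sphere up to finite cover, but we only need connectedness in codimension one here). Applied to the strata $(X_\tau,\partial X_\tau)$, which are themselves dlt log Calabi--Yau pairs by adjunction, this yields (3) after observing that $\partial^{\mathrm{good}}X_\tau$ is the union of the good strata of $X_\tau$, corresponding to the link of the cone for $\tau$ in $\cD(D')$.

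I expect the main obstacle to be the careful bookkeeping in the first paragraph: verifying that the combinatorial structure on good strata of $X$ really agrees with that of $(X',D')$ even at strata of $D'$ of codimension $\ge 2$ where $\phi$ is not assumed to be an isomorphism. This requires a slightly subtle argument that good strata $X_I$ are irreducible/connected and map surjectively onto the corresponding strata of $D'$, using connectedness of fibres of $\phi$ together with the hypothesis that intersections of components of $D'$ are connected. Once this combinatorial identification is in place, conditions (1)--(3) follow mechanically from the Koll\'ar--Xu pseudomanifold theorem.
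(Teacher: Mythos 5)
Your overall strategy matches the paper's: pass to $(X',D')$, identify $(B,\P)$ with the dual intersection complex of $(X',D')$, and invoke Koll\'ar--Xu. But there are two places where the details as written do not quite close.

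First, the bijection $\P\cong\cD(D')$. You want good strata $X_I$ of $X$ to be identified with the corresponding strata $\bigcap_{j\in I}D'_j$ of $D'$, but saying ``$X_I$ maps onto $\bigcap_j D'_j$ with connected fibres'' is not automatic: for a proper birational $\phi$ one only gets $\phi(X_I)\subseteq\bigcap_j D'_j$, not equality, since images of intersections need not be intersections of images. The paper closes this by first invoking \cite[Thm.~2,(1)]{KX} to show that every stratum of $D'$ has a zero-dimensional stratum; combined with the hypothesis that $\phi$ is an isomorphism near one-dimensional strata, this produces a dense open $U'$ of each stratum $Z'$ over which $\phi$ is an isomorphism, so the corresponding stratum of $X$ can be taken to be the closure of $\phi^{-1}(U')$ and the bijection is clean. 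Your argument would need something like this density step.

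Second, and more substantially, the handling of Assumption~\ref{ass:absolute},(3). You translate it to the statement that the link of every cone of dimension $\le n-2$ in $\cD(D')$ is connected, and then claim this is ``the codimension-one connectedness that is part of the pseudomanifold property.'' That inference is not correct: a pseudomanifold (pure-dimensional, codimension-one faces in at most two top cells, strongly connected) does \emph{not} automatically have connected links at faces of codimension $\ge 2$; that is the stronger \emph{normality} condition, and indeed in the paper's own framework it is listed as a separate condition (condition (5) of \cite[Constr.~1.1]{Theta}) which is \emph{derived from} Assumption~\ref{ass:absolute},(3), not the other way around. So the logic is circular as stated. What is actually needed from Koll\'ar--Xu is more specific: for a dlt log Calabi--Yau stratum $Z'$ with boundary $Z''$ (the union of its proper substrata), $(Z',Z'')$ is again dlt log Calabi--Yau and $Z''$ is connected. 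The paper cites this precisely (\S2, paragraph 16 of \cite{KX} and the discussion preceding \cite[\S2]{KX}). Once you replace the vague appeal to ``the pseudomanifold property'' with this specific connectedness statement, your reduction goes through, since connectedness of $\partial^{\mathrm{good}}X_\tau$ then follows from connectedness of each component (your hypothesis on intersections of components of $D'$) plus the chain-connectivity encoded by $Z''$ being connected.
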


\begin{proof}
Since $D'$ has a zero-dimensional stratum, it follows that every stratum of $D'$
has a zero-dimensional stratum by \cite[Thm.~2,(1)]{KX}. The assumption that
$\phi$ is an isomorphism on neighbourhoods of one-dimensional strata of $D'$
then implies that for any stratum $Z'$ of $D'$, there is a dense open subset
$U'\subseteq Z'$ such that $\phi^{-1}(U')\rightarrow U'$ is an isomorphism. Let
$Z$ be the closure of $\phi^{-1}(U')$: this allows us to pass from a stratum of
$D'$ to a stratum of $D$. In particular, writing $D'=D_1'+\cdots+D'_{s'}$,
$D=D_1+\cdots+D_s$ with $s\ge s'$, we may order the latter divisors so that $D_i
=\phi_*^{-1}D_i'$ for $i\le s'$. Thus if $Z'=\bigcap_{i\in I} D_i'$, then the
corresponding stratum $Z$ coincides with $D_I$. Since the good divisors amongst
the $D_i$ are precisely $D_1,\ldots, D_{s'}$, the claim concerning $(B,\P)$
follows.

Now conditions (1) and (2) of Assumptions~\ref{ass:absolute} hold by assumption,
the first since $D'$ is assumed to have a zero dimensional stratum and the
second because $\phi$ is assumed to be an isomorphism in a neighbourhood of each
one-dimensional stratum of $D'$. Finally (3) holds from results of Koll\'ar and
Xu \cite{KX}. Indeed, if $Z'$ is a stratum of $D'$ of dimension at least two,
and $Z''\subseteq Z'$ is the union of strata properly contained in $Z'$, then
$Z''$ is connected, see \S2, paragraph 16 of \cite{KX}, as well as the
discussion immediately preceding \cite[\S2]{KX}, which shows that $(Z',Z'')$ is
also a dlt log Calabi-Yau pair.
\end{proof}

In dimension larger than three,
the existence of log Calabi-Yau minimal models of pairs $(X,D)$
of log Kodaira dimension zero is expected but currently unknown.
Caucher Birkar, however, has given a criterion for the existence of
minimal models and the necessary connectedness statement of
Assumption~\ref{ass:absolute}. We paraphrase \cite[Thm.~1.4, Cor.~1.5]{Birkar}:

\begin{theorem}
\label{Thm: Birkar}
Let $(X,D)$ be a pair as in \S\ref{subsec:basic trop} with 
$K_X+D\equiv_{\RR} \sum_i a_iD_i$ with $a_i\ge 0$. Suppose
$\P$ has an $n=\dim X$-dimensional cone corresponding to a zero
dimensional stratum consisting of $x\in X$. Let 
$\phi:Y\rightarrow X$ be the blow-up of $X$ at $x$ with exceptional
divisor $E$, and suppose that $\phi^*(K_X+D)-tE$ is not 
pseudo-effective\footnote{i.e., is not a limit of effective $\RR$-divisors.}
for every real number $t>0$. Then $(X,D)$ has a log Calabi-Yau minimal
model, and the connectedness hypothesis Assumptions~\ref{ass:absolute},(3)
holds.
\end{theorem}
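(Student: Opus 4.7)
The plan is to deduce the theorem from Birkar's existence theorem for log minimal models \cite{Birkar}, combined with the Koll\'ar--Xu connectedness machinery \cite{KX} already used in the proof of Proposition~\ref{prop:dlt CY models work}. The statement has two parts: first, the existence of a log Calabi-Yau minimal model for $(X,D)$, and second, the connectedness condition Assumptions~\ref{ass:absolute}(3).

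For the first part, I would run a $(K_X+D)$-MMP on $(X,D)$. Because $K_X+D\equiv_\RR\sum_i a_iD_i$ is effective with $a_i\ge 0$, any minimal model output of such an MMP must satisfy $K_{X'}+D'\equiv 0$: the bad divisors (those with $a_i>0$) are the ones being contracted or driven to numerical triviality, and the resulting pair $(X',D')$ is dlt by standard MMP theory. The critical step is termination, and this is exactly what the non-pseudo-effectiveness hypothesis on $\phi^*(K_X+D)-tE$ is designed to provide. The content of \cite[Thm.~1.4]{Birkar} is a termination statement under precisely this type of boundary behaviour near a fixed zero-dimensional stratum: one reduces via MMP with scaling to a situation in which the remaining flips can be shown to terminate using the non-pseudo-effectiveness threshold at $x$. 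I would invoke that theorem as a black box to obtain a log Calabi-Yau minimal model $(X',D')$.

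For the second part, once the minimal model is in hand, the argument of Proposition~\ref{prop:dlt CY models work} applies almost verbatim. The birational contraction $X\to X'$ produced by the MMP identifies good strata on the two sides, and for any $\tau\in\P$ of dimension at most $n-2$, adjunction gives a dlt log Calabi-Yau pair $(X'_\tau,\partial X'_\tau)$ which inherits a zero-dimensional stratum by \cite[Thm.~2]{KX}. The connectedness theorem of Koll\'ar--Xu then ensures that the union of strata properly contained in $X'_\tau$ is connected, which transports back to connectedness of $\partial^{\mathrm{good}}X_\tau$ on $X$, since the MMP is an isomorphism on a dense open subset of each good stratum.

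The main obstacle is of course Birkar's termination statement itself, which is the substantive mathematical content and the reason the hypothesis on $\phi^*(K_X+D)-tE$ is formulated as it is; once it is granted, the connectedness assertion follows formally by transporting the Koll\'ar--Xu argument from the minimal model back to $X$.
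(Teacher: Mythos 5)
The paper gives no proof of this statement: the sentence immediately preceding it reads ``We paraphrase [Thm.~1.4, Cor.~1.5]{Birkar}'', so both conclusions — the existence of a log Calabi-Yau minimal model \emph{and} the connectedness of Assumptions~\ref{ass:absolute}(3) — are taken as a black box from Birkar's paper, the first from his Theorem~1.4 and the second from his Corollary~1.5 (connectedness being precisely the subject of Birkar's paper, given its title). Your first paragraph correctly identifies Birkar's Theorem~1.4 as supplying the termination/existence content, and your speculation about the role of the non-pseudo-effectiveness hypothesis is plausible, though the further claim that any MMP output automatically satisfies $K_{X'}+D'\equiv 0$ is not self-evident from $K_X+D\equiv_\RR\sum a_iD_i$ with $a_i\ge 0$ alone and would need justification (one has to know that every $D_i$ with $a_i>0$ actually gets contracted).

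Your second paragraph departs from the paper in a way that introduces a genuine gap. You propose to re-derive the connectedness conclusion from the minimal model via the Koll\'ar--Xu argument of Proposition~\ref{prop:dlt CY models work}. But that proposition requires a resolution $\phi:(X,D)\to(X',D')$ which is a \emph{morphism} and, crucially, an isomorphism in neighbourhoods of one-dimensional strata of $D'$, plus the additional hypothesis that intersections of components of $D'$ are connected. The MMP produces only a birational map $X\dashrightarrow X'$ (a composition of divisorial contractions and flips), not such a resolution, and it need not satisfy either condition. ``The MMP is an isomorphism on a dense open subset of each good stratum'' is too weak to transport the Koll\'ar--Xu connectedness of the strata of $D'$ to connectedness of $\partial^{\mathrm{good}}X_\rho$ on $X$: boundary components of strata can be added or removed on the two sides. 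One would need a common log resolution and a further comparison of good strata, which is extra work; the paper sidesteps all of this by citing Birkar's Corollary~1.5 directly, which proves the connectedness statement on $(X,D)$ itself without passing through the minimal model.
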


In the relative case, there is a different kind of result to which one may
appeal. With $S$ as usual, write $S^{\circ}=S\setminus \{0\}$,
and suppose given a $\QQ$-factorial dlt pair $(X^{\circ},D^{\circ})$ equipped
with a flat morphism $g^{\circ}:X^{\circ}\rightarrow S^{\circ}$
which is a relatively minimal log Calabi-Yau, i.e., the intersection
number of $K_{X^{\circ}}+D^{\circ}$ with any curve contracted by
$g^{\circ}$ is zero. Then \cite[Thm.\ 2]{KNX}, generalizing 
the statements of \cite[Thm.\ 1.4]{BirkarOld} and \cite[Thm.\ 1.1]{HX},
states that there is a finite map $\tau:S'\rightarrow S$ totally ramified
over $0\in S$ and a morphism
$g':(X',D')\rightarrow S'$ such that:
\begin{enumerate}
\item 
The restriction of $g'$ to $\tau^{-1}(S^{\circ})$ is isomorphic
to the pull-back morphism 
\[
(X^{\circ},D^{\circ})\times_{S^{\circ}}
\tau^{-1}(S^{\circ})\rightarrow \tau^{-1}(S^{\circ}).
\]
\item 
$(X',D'+F')$ is dlt and relatively log Calabi-Yau, 
where $F'$ is the fibre of 
$g'$ over $0\in S'$.
\end{enumerate}

Given this, if the general fibre of $g^{\circ}$ has a zero-dimensional stratum
then $(X',D'+F')$ has a zero-dimensional stratum over $0\in S'$. In any event,
as long as $(X',D'+F')$ has a zero-dimensional stratum, we may then resolve
singularities to obtain $(X,D) \rightarrow (X',D'+F')$ and apply
Proposition~\ref{prop:dlt CY models work} to obtain the needed connectedness
statement of Assumption~\ref{ass:absolute}, (3). Some additional care may be
necessary, however, to guarantee that $(X,D)\rightarrow S'$ is log smooth. 

In the classical case of a degeneration of genuine Calabi-Yau manifolds,
the issue of log smoothness is not a concern, and the following proposition 
summarizes the above discussion in this case.

\begin{proposition}
In the situation above, suppose $X^{\circ} \rightarrow S^{\circ}$ is a 
relatively minimal family of non-singular Calabi-Yau manifolds, i.e.,
$K_{X^{\circ}/S^{\circ}}\equiv 0$. Then:
\begin{enumerate}
\item
After a possible base-change $S'\rightarrow S$ branched at $0\in S$, 
there is a dlt relatively minimal model $(X',F')\rightarrow S'$
where $F'$ is the fibre over $0\in S'$ and $X'\setminus F'\rightarrow
S'\setminus\{0\}$ is the
base-change of $X^{\circ}\rightarrow S^{\circ}$.  
\item If the monodromy of the family $X^{\circ}\rightarrow S^{\circ}$
is maximally unipotent, 
then there is a resolution of singularities
$\phi:(X,D)\rightarrow (X',F')$ and an expression
\[
K_X+D\equiv \sum a_i D_i
\]
with $a_i\not= 0$ if and only if $D_i$ is exceptional for $\phi$.
Further, possibly after first performing a further base-change,
the data $(X,D)\rightarrow S'$ together with this description
of $K_X+D$ satisfies Assumptions~\ref{ass:relative}.
\end{enumerate}
\end{proposition}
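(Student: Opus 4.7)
I directly apply \cite[Thm.~2]{KNX} to the family $(X^\circ,\emptyset)\to S^\circ$: since $K_{X^\circ/S^\circ}\equiv 0$, the empty boundary makes the family tautologically relatively minimal log Calabi-Yau, and the cited theorem produces a totally ramified base change $\tau:S'\to S$ and a $\QQ$-factorial dlt model $(X',F')\to S'$ with $F'$ the fibre over $0\in S'$, $K_{X'}+F'\equiv 0$, and $X'\setminus F'\to S'\setminus\{0\}$ identified with the pullback of $X^\circ\to S^\circ$.

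\textbf{Part (2).} My plan is to augment the argument of Proposition~\ref{prop:dlt CY models work} by first invoking the theorem of Nicaise-Xu: for CY degenerations with maximally unipotent monodromy the essential skeleton of the degeneration has dimension $\dim X^\circ - 1$. Since for the dlt model $(X',F')$ the essential skeleton coincides with the dual complex of $F'$, this forces $F'$ to contain a zero-dimensional stratum. A further finite base change arranges that all intersections of components of $F'$ are connected. Then \cite[Thm.~4.5]{NXY} ensures that $X'$ is smooth and $F'$ is snc in a neighbourhood of every one-dimensional stratum. I choose a log resolution $\phi:(X,D)\to(X',F')$ with $D=\phi_*^{-1}F'+E$ snc and $\phi$ an isomorphism near each one-dimensional stratum of $F'$. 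Substituting $K_{X'}+F'\equiv 0$ into the discrepancy identity \eqref{eq:discrepancy} yields \eqref{eq:choice of KX} in the form
\[
K_X+D\equiv\sum_i a_iD_i,
\]
with $a_i\ne 0$ iff $D_i$ is $\phi$-exceptional, since the dlt hypothesis forces positive log discrepancies on exceptional divisors.

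At this point Proposition~\ref{prop:dlt CY models work} applies verbatim in the relative setting, establishing conditions (1)--(3) of Assumptions~\ref{ass:absolute} and identifying $(B,\P)$ with the dual intersection complex of $(X',F')$. Condition (2) of Assumptions~\ref{ass:relative} is then automatic: every good divisor is a strict transform of a component of $F'$ and hence lies in $g^{-1}(0)$, so each good stratum $X_\rho$ maps to the single point $0\in S'$ and $g|_{X_\rho}$ has the connected scheme $X_\rho$ as its unique fibre.

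\textbf{The main obstacle} is the asserted log smoothness of $(X,D)\to S'$ (with divisorial log structures). Near the one-dimensional strata of $F'$ this is automatic, since $\phi$ is an isomorphism there and $(X',F')\to S'$ is already toroidal. At deeper singularities of $X'$ an ordinary resolution can destroy the toroidal structure of the degeneration. The standard remedy is semistable reduction: perform a further base change of sufficiently divisible ramification index to absorb the multiplicities of the components of $F'$, then choose a toroidal modification of $X$ compatible with $D$. Keeping the decomposition of $K_X+D$ and the already-verified items of Assumptions~\ref{ass:relative} intact through these subsequent modifications -- i.e., keeping them crepant away from the exceptional locus -- is the point requiring the most care.
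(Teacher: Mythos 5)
Your argument follows essentially the same route as the paper for both parts: part (1) is an application of \cite[Thm.~2]{KNX}, and for part (2) you invoke \cite{NX} (top-dimensional essential skeleton under maximal unipotence) to get a zero-dimensional stratum of $F'$, use \cite[Thm.~4.5]{NXY} for the structure near one-dimensional strata, choose a log resolution, apply the discrepancy computation \eqref{eq:discrepancy}--\eqref{eq:choice of KX}, and conclude via Proposition~\ref{prop:dlt CY models work}. Your observation that Assumptions~\ref{ass:relative},(2) is automatic because every good stratum is vertical is correct.

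The one place you go astray is in declaring log smoothness of $(X,D)\to S'$ to be ``the main obstacle.'' In the setting of the proposition this is a non-issue, and the paper explicitly says so in the paragraph just before the statement. The reason: $D^\circ=\emptyset$, so the general fibre is a smooth Calabi--Yau, the dlt model $X'$ is smooth over $S'\setminus\{0\}$, and the resolution $\phi$ is an isomorphism off $\phi^{-1}(F')$. Thus $D = g^{-1}(0)_{\red}$, $X$ is smooth, and $D$ is snc. In characteristic zero, a flat projective morphism $g:X\to S'$ with $X$ smooth, $S'$ a smooth curve, and $g^{-1}(0)_{\red}$ snc is automatically log smooth for the divisorial log structures (locally it is a toric monomial map $(x_1,\ldots,x_n)\mapsto \prod x_i^{a_i}$). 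So nothing about the resolution needs to be ``toroidal''; the toroidal/log smooth structure of the degeneration comes for free once $X$ is smooth and $D$ snc. The ``additional care'' the paper flags earlier is specifically for the case of a nontrivial horizontal boundary $D^\circ\neq\emptyset$, which is absent here.

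Relatedly, the further base change the paper performs (with the crepant toric modifications of \cite[II,\S4; III, Thm.~4.1]{KKMS}) serves a different purpose than you suggest: it is there to arrange that intersections of components of $D$ are connected, not to secure log smoothness. You do mention the connectedness step, but a finite base change alone does not accomplish it; one needs the base change together with a crepant toroidal resolution as in KKMS. Tightening those two points would make your argument match the paper's.
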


\begin{proof}
(1) follows from the discussion preceding the proposition. 

For (2), first note that by \cite[Thm.~4.5]{NXY}, $X'$ is non-singular
in some neighbourhood of each one-dimensional stratum of $F'$ and
$D'$ is simple normal crossings in that neighbourhood. Thus we may
find a resolution $\phi:(X,D)\rightarrow (X',F')$ with (1) $D$ a 
normal crossings divisor which is the reduction
of $\phi^{-1}(F')$; (2) $\phi$ is an 
isomorphism in neighbourhoods of one-dimensional strata of $F'$; and
(3) $\phi$
induces an isomorphism $X\setminus D\cong X'\setminus F'$. 
After taking further base-change and resolving the resulting 
singularities in a standard crepant
toric way appropriately (see \cite[II,\S 4, III, Thm.~4.1]{KKMS}), 
we may further assume that any intersection of
irreducible components of $D$ is connected. We may then use
\eqref{eq:discrepancy} and \eqref{eq:choice of KX} to obtain the
desired expression for $K_X+D$.

The condition of Assumption~\ref{ass:absolute}, (1) is then implied 
by \cite[Thm.~4.1.10]{NX}
if the family $X\rightarrow S'$ is maximally unipotent, 
The remaining conditions of Assumptions~\ref{ass:absolute} and
\ref{ass:relative} are then immediate by construction and
Proposition~\ref{prop:dlt CY models work}.
\end{proof}

Of course the existence of a maximally unipotent degeneration for a
type of Calabi-Yau manifold has been classically viewed as a 
prerequisite for the existence of a mirror.

The conclusion is that the hypotheses of Assumptions~\ref{ass:absolute}
and \ref{ass:relative} should be expected to hold in the cases of
interest for mirror symmetry.

\subsection{The affine structure}
\label{subsec:the affine structure}
We continue to assume that $(X,D)$ satisfies Assumptions~\ref{ass:absolute}
or \ref{ass:relative} in the absolute or relative cases, with $\dim X=n$.

We set 
\[
\Delta:=\bigcup_{\sigma\in\P\atop\dim\sigma=n-2}\sigma
\]
and 
\[
B_0:=B\setminus\Delta.
\]
We also denote by $\partial B$ the union of those $\rho\in \P^{[n-1]}$
contained in only one top-dimensional cone, and write $\P_{\partial}\subseteq
\P$ for the set of cones contained in $\partial B$.
Set $\scrP_{\inte}=\P\setminus \P_{\partial}$.
By Proposition~\ref{prop:polyhedral pseudo}, in the
absolute case $\partial B=\emptyset$.

We define an integral affine structure 
on $B_0$ as follows.  For $\tau\in\P$, denote
the \emph{open star} of $\tau$ to be
\[
\Star(\tau):=\bigcup_{\tau\subseteq\sigma\in\P} \Int(\sigma).
\]
If $\sigma\in\P_{\max}$, then $\sigma$, hence $\Int(\sigma)$,
is already endowed naturally with an affine coordinate chart arising
from its linear embedding in $\Div_D(X)^*_{\RR}$.
If $\rho\in\P^{[n-1]}_{\partial}$, then $\Star(\rho)$ inherits
an affine structure with boundary from the unique maximal
cone $\sigma$ containing $\rho$. On the other hand, if
$\rho\in\P^{[n-1]}_{\inte}$, then $\rho=\sigma\cap\sigma'$ with
$\sigma,\sigma'\in\P^{[n]}$. We then define an embedding
\begin{equation}
\label{eq:psi rho def}
\psi_{\rho}:\sigma\cup\sigma'\rightarrow \RR^n,
\end{equation}
well-defined up to an element of $\GL_n(\ZZ)$, 
as follows. Let $\rho$ be generated by $D_{i_1}^*,\ldots,D_{i_{n-1}}^*$,
and assume that $\sigma$ is generated by $\rho$ and $D_{i_n}^*$,
while $\sigma'$ is generated by $\rho$ and $D_{i_n'}^*$.
Choose integral bases $e_1,\ldots,e_n$ and $e_1,\ldots,e_{n-1}, e_n'$
of $\RR^n$ subject to the constraint that 
\begin{equation}
\label{eq:invariant form of chart}
e_n+e_n'=-\sum_{j=1}^{n-1} (D_{i_j}\cdot X_{\rho}) e_j.
\end{equation}
We note the intersection numbers are defined: the fact that
$\rho\in \P_{\inte}$ implies $X_{\rho}$ is proper.
We then define $\psi_{\rho}$ to be piecewise linear, linear on
each cone, via 
\[
\hbox{$\psi_\rho(D_{i_j}^*)=e_j$, $1\le j\le n$, and
$\psi_{\rho}(D_{i_n'}^*)=e_n'$.}
\]

This gives rise to affine charts $\psi_{\rho}:\Star(\rho)\rightarrow
\RR^n$, and hence an affine structure on $B_0$. In the language
of \cite[Const.\ 1.1]{Theta}, this gives $(B,\P)$ the structure
of a \emph{polyhedral affine pseudomanifold}.

\begin{remark}
This affine structure was first given in \cite{Utah} in the case
that $K_X+D=0$. More generally, in the relative case $g:X\rightarrow
S$ with $D=g^{-1}(0)$, so that $g$ is a degeneration of Calabi-Yau
manifolds, \cite{NXY} showed that the above affine structure
was the correct affine structure on the base of the non-Archimedean
SYZ fibration, at least in the case that $(X,D)$ is 
a resolution
of a dlt relatively minimal model $(X',D')$ as in \S\ref{sec:MMP}.
In particular, they showed
that if $Z\subseteq D$ was a one-dimensional stratum and $D_i\cdot Z<0$ 
for all $D_i$
containing $Z$, then the formal completion of $X$ along $Z$ is
isomorphic to the formal completion of a one-dimensional
stratum in a toric variety. The fan determining the toric variety
then determines the affine structure above.
\end{remark}

In our situation, the affine structure can be
viewed as natural from the logarithmic point of view.

\begin{lemma}
\label{lem:toric dim one}
Let $\rho\in\P^{[n-1]}_{\inte}$, 
$\rho\subseteq\sigma,\sigma'\in\P^{\max}$,
with chart $\psi_{\rho}:\sigma\cup\sigma'\rightarrow\RR^n$ constructed
above. Let $\Sigma_{\rho}$ be the fan consisting of the cones 
$\psi_{\rho}(\sigma)$, $\psi_{\rho}(\sigma')$
and their faces, and let $X_{\Sigma_{\rho}}$ be the corresponding
toric variety. For $\tau\in\Sigma_{\rho}$, write $X_{\Sigma_{\rho},\tau}$ for
the corresponding stratum of $X_{\Sigma_{\rho}}$. Let
$X_{\rho}\subseteq X$, $X_{\Sigma_{\rho},\psi_{\rho}(\rho)}\subseteq X_{\Sigma_{\rho}}$
be given the log structures making these inclusions strict, with
$X_{\Sigma_{\rho}}$ carrying the standard toric log structure.
Then there is an isomorphism
\[
X_{\rho}\cong X_{\Sigma_{\rho},\psi_{\rho}(\rho)}
\]
as log schemes over $\Spec\kk$.
\end{lemma}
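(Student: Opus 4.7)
By Proposition~\ref{prop:polyhedral pseudo}, in case (a) the underlying scheme $\ul{X_\rho}$ is $\PP^1$ meeting $\partial X_\rho$ in the two zero-dimensional strata $X_\sigma$ and $X_{\sigma'}$; in case (b), $X_\rho$ maps isomorphically to $S$ and meets $\partial X_\rho$ only at the single point $X_\sigma$ lying over $0\in S$. Correspondingly, $X_{\Sigma_\rho,\psi_\rho(\rho)}$ is $\PP^1$ in case (a) or a toric affine line in case (b), when $\Sigma_\rho$ has a single maximal cone, with closed strata $X_{\Sigma_\rho,\psi_\rho(\sigma)}$ and, only in case (a), $X_{\Sigma_\rho,\psi_\rho(\sigma')}$. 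The first step in the plan is to fix an isomorphism of underlying schemes sending the zero-dimensional stratum $X_\tau$ to $X_{\Sigma_\rho,\psi_\rho(\tau)}$.

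Next I would compare the ghost sheaves $\ocM$ on the two sides. At the generic point of $X_\rho$, the stalk of $\ocM_{X_\rho}$ is freely generated by the classes of the defining sections of $D_{i_1},\ldots,D_{i_{n-1}}$, while the stalk on the toric side is freely generated by the dual basis to $e_1,\ldots,e_{n-1}$ in $\psi_\rho(\rho)^\vee\cap M$. The assignment $D_{i_j}^*\mapsto e_j$ matches the two stalks; at $X_\sigma$ we further pair $D_{i_n}^*$ with $e_n$, and at $X_{\sigma'}$ we pair $D_{i_n'}^*$ with $e_n'$. This yields an isomorphism of ghost sheaves.

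The decisive step is to promote this to an isomorphism of log structures, which comes down to matching, for each generator of $\ocM$, the line-bundle-with-section data it represents. On the $X_\rho$ side, the generator $D_i$ corresponds to the pair $(\O_X(-D_i)|_{X_\rho}, s_i)$ consisting of a line bundle of degree $-D_i\cdot X_\rho$ together with the canonical section cutting out $D_i\cap X_\rho$; for the ``new'' generators $D_{i_n}$ and $D_{i_n'}$, the section vanishes to order one only at $X_\sigma$, respectively $X_{\sigma'}$. On the toric stratum, the analogous pairs live on $X_{\Sigma_\rho,\psi_\rho(\rho)}$ and their degrees are read off from the linear relation among the ray generators of $\Sigma_\rho$ via the standard toric intersection formula. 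Equation \eqref{eq:invariant form of chart} is exactly this toric relation, and it is engineered so that the toric divisor associated to $e_j$ meets the toric $\PP^1$ in degree $D_{i_j}\cdot X_\rho$. Since a line bundle on $\PP^1$ is determined by its degree and a section by its divisor of zeros, the pairs on the two sides agree, and a compatible choice of trivialisations lifts the ghost sheaf isomorphism to the claimed isomorphism of log schemes.

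The main obstacle is uniformly handling cases (a) and (b) of Proposition~\ref{prop:polyhedral pseudo}, especially making sure that in case (b) the pullback of $\M_S$ at the boundary point is accounted for on both sides; the essential content of the proof, however, is simply that \eqref{eq:invariant form of chart} is the toric incarnation of the intersection numbers $D_{i_j}\cdot X_\rho$, which is automatic from the construction of $\psi_\rho$.
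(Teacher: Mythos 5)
Your proof follows essentially the same route as the paper's: identify the underlying schemes with $\PP^1$, decompose the restricted log structure into the contribution from the two boundary points of $X_\rho$ and the contributions from the $n-1$ divisors $D_{i_1},\ldots,D_{i_{n-1}}$ containing $X_\rho$, and match degrees of line bundles on both sides via \eqref{eq:invariant form of chart} and the toric intersection formula (the paper's \eqref{eq:degrees agree}). One small correction: the worry about case (b) of Proposition~\ref{prop:polyhedral pseudo} is misplaced, since the lemma's hypothesis $\rho\in\P^{[n-1]}_{\inte}$ with $\rho\subseteq\sigma,\sigma'$ two distinct maximal cells forces $X_\rho$ to be proper, i.e., always in case (a); case (b) only occurs for $\rho\in\P^{[n-1]}_{\partial}$, which is excluded. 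The argument as given for case (a) is sound and coincides with the paper's.
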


\begin{proof}
By Proposition~\ref{prop:polyhedral pseudo}, $X_{\rho}\cong\PP^1$, and certainly
$X_{\Sigma_{\rho},\psi_{\rho}(\rho)}\cong\PP^1$, so the underlying schemes
are isomorphic. Further, $X_{\rho}$ contains precisely two
zero-dimensional strata, $X_{\sigma}$ and $X_{\sigma'}$. 
Let $X_{\rho}'$ denote the log structure on $\ul{X}_{\rho}$
induced by the divisor $X_{\sigma}\cup X_{\sigma'}$. If
$X_{\rho}$ is contained in divisors $D_{i_1},\ldots,D_{i_{n-1}}$ 
corresponding to the edges of $\rho$,
let $\shM_j$ denote the restriction to $\ul{X}_{\rho}$ of the divisorial log
structure on $X$ induced by the divisor $D_{i_j}$. This log
structure is determined by the line bundle $\O_X(D_{i_j})|_{X_{\rho}}$.
Finally,
\[
\shM_{X_{\rho}}=\shM_{X'_{\rho}}\oplus_{\O_{X_{\rho}}^{\times}}
\bigoplus_j \shM_j,
\]
where all pushouts are over $\O_{X_{\rho}}^{\times}$.

On the other hand, $\shM_{X_{\Sigma_{\rho},\psi_{\rho}(\rho)}}$ has
a similar description, and if $D'_{i_1},\ldots,D'_{i_{n-1}}$
are the toric divisors of $X_{\Sigma_{\rho}}$ corresponding to the
edges of $\psi_{\rho}(\rho)$, we have 
\begin{equation}
\label{eq:degrees agree}
\deg\O_X(D_{i_j})|_{X_{\rho}}=\deg\O_{X_{\Sigma_{\rho}}}(D'_{i_j})
|_{X_{\Sigma_{\rho},\psi_{\rho}(\rho)}}
\end{equation}
by \cite{Oda}, pg. 52 and the definition of $\psi_{\rho}$.
Thus $X_{\rho}$ and $X_{\Sigma_{\rho},\psi_{\rho}(\rho)}$ 
are in fact isomorphic as 
log schemes.  
\end{proof}

\begin{example}
\label{ex:running example 2}
We return to Example~\ref{ex:running example 1}. We first
remark that in general for a minimal log Calabi-Yau pair
satisfying Assumptions~\ref{ass:absolute},
the affine structure described on $B_0$,
the complement of the union of codimension two cones of $\P$, in fact extends
across the interior of all cones $\rho\in \P$ such that
$(X_{\rho},\partial X_{\rho})$ is a toric pair.
See \cite[Prop.~2.3]{HDTV} for details. In the case of
Example~\ref{ex:running example 1},
only the strict transforms of boundary divisors of $\overline{X}$
which meet the center $Z_1$ transversally fail to be toric: these are
$D_{1,0}$ and $D_{1,\infty}$.
Hence, after extending the affine structure where possible, the
smaller discriminant locus $\Delta$ is homeomorphic to the line
$\RR_{\ge 0} D_{1,0}^*\cup \RR_{\ge 0} D_{1,\infty}^*$.
The monodromy about this singularity may be calculated, and takes the form
$\begin{pmatrix} 1&1&0\\0&1&0\\ 0&0&1\end{pmatrix}$. See
\cite[\S3.2]{HDTV} for related calculations carried out in a more
general setting. This is the simplest
kind of singularity appearing in affine manifolds in our program.
\end{example}

We recall some standard notation, e.g., from \cite{Theta}.

\begin{definition}
We write
\[
B(\ZZ):= B \cap \Div_D(X)^*,
\]
the set of integral points on $B$.
\end{definition}

\begin{definition}
\label{def:Lambda}
We take $\Lambda$ to be the local system on $B_0=B\setminus\Delta$ consisting of
integral tangent vectors, and write
$\check\Lambda:=\Hom(\Lambda,\ZZ)$.
\end{definition}

\begin{construction}
\label{const:phi canonical}
We now fix a finitely generated abelian group $H_2(X)$ of possible
degree data for curves on $X$, see, e.g.,
\cite{Assoc}, Basic Setup 1.6 for a discussion on this.
We may take, for example, $H_2(X)=H_2(X,\ZZ)$ if working over
$\kk=\CC$, or may take $H_2(X)=N_1(X)$, curve classes modulo numerical
equivalence. We require that every proper curve $C\subseteq X$
gives a curve class $[C]\in H_2(X)$, and that for any
stable map $f:C\rightarrow X$, there is a well-defined 
curve class $f_*[C]\in H_2(X)$. We also require use of
intersection numbers with divisors, i.e., a pairing
$H_2(X)\times \Div(X)\rightarrow \ZZ$.
As in \cite{Assoc}, Basic Setup~1.6, we assume that $f_*[C]$
is torsion if and only if $f$ is a constant map, in which case 
$f_*[C]=0$.

In the relative case, the inclusion $X_s \hookrightarrow X$ for
$s\in S$ should induce a natural map
\begin{equation}
\label{eq:iota def}
\iota:H_2(X_s)\rightarrow H_2(X).
\end{equation}

We further choose a finitely generated monoid
$Q\subseteq H_2(X)$ such that (1) $Q$ contains the classes of all
stable maps to $\ul{X}$; (2) $Q$ is saturated; (3) the group of
invertible elements of $Q$ coincides with the torsion part of
$H_2(X)$.

We may then define a multi-valued piecewise linear (MPL) function
$\varphi$ with values in $Q^{\gp}_{\RR}$, see \cite{Theta}, Definition 1.8. 
Such a function consists of a choice, 
for each $\rho\in\P^{[n-1]}_{\inte}$, of a single-valued PL function
$\varphi_{\rho}:\Star(\rho)\rightarrow Q^{\gp}_{\RR}$, well-defined
up to linear functions. The kink of $\varphi_{\rho}$ is defined as follows. Let
$x\in\Int(\rho)$, and let $\rho\subseteq \sigma,\sigma'\in\P_{\max}$. 
Let $n,n'\in \check\Lambda_x$ be the slopes of 
$\varphi_{\rho}|_{\sigma}$ and $\varphi_{\rho}|_{\sigma'}$. Then
we may write
\begin{equation}
\label{def:kink}
n'-n=\delta\cdot \kappa_{\rho}
\end{equation}
where $\kappa_{\rho}\in Q^{\gp}$ and
$\delta:\Lambda_x\rightarrow \ZZ$ is the surjective map
which vanishes on tangent vectors to $\rho$ and is positive on
tangent vectors pointing into $\sigma'$.
Then $\kappa_{\rho}$ is the kink of $\varphi_{\rho}$.

An MPL function is completely determined by giving its kinks,\footnote{In 
\cite{Theta}, we always worked with a monoid $Q$ which was
torsion free. Of course, if $Q$ has torsion, then the kink as defined
above only lies in $Q^{\gp}/Q^{\gp}_{\tors}$. However, if we wish
to work with a curve class group $H_2(X)$ which has torsion, then
it is more natural to view the MPL function simply as a collection of
kinks in $Q^{\gp}=H_2(X)$. This will only affect the definition of
the sheaf $\shP$, discussed in \S\ref{subsec:recall of monomials}.}
see \cite[Prop.~1.9]{Theta}. Throughout
this paper, we will work with $\varphi$ such that
for $\rho\in\P^{[n-1]}_{\inte}$,
\[
\kappa_{\rho}=[X_{\rho}] \in H_2(X).
\]

Note that this curve class makes sense as $\rho\not\subseteq
\partial B$ implies that $X_{\rho}$ is proper. Further, the class
$[X_{\rho}]$ is then not invertible in $Q$.
\end{construction}

The data of the polyhedral affine manifold $(B,\P)$ and the MPL
function $\varphi$ are the necessary ingredients for the setup
of wall structures of \cite{Theta}, \S2. 

\medskip

We end this subsection by examining, 
in the relative case, some additional structure. As 
tropicalization is functorial, we obtain a map
\[
\Sigma(g):\Sigma(X)\rightarrow \Sigma(S)=\RR_{\ge 0},
\]
which restricts to a map
\[
g_{\trop}:B\rightarrow \RR_{\ge 0}.
\]
Explicitly, viewing $\Sigma(X)$ or $B$ as a subset of $\Div_D(X)^*_{\RR}$,
the (not necessarily reduced) divisor $g^*(0)$ induces 
a linear function on the latter space,
which restricts to $\Sigma(g)$ or $g_{\trop}$ in the two cases.

\begin{proposition}
\label{prop:gtrop affine submersion}
$g_{\trop}$ is an affine submersion, and $\partial B$ is the 
union of all $(n-1)$-dimensional cones contained in $g_{\trop}^{-1}(0)$.
\end{proposition}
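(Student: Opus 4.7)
The overall approach is to exploit the fact that $g_{\trop}$ is the restriction to $B$ of a single linear functional on $\Div_D(X)^*_\RR$, namely $x \mapsto \langle g^*(0), x\rangle$. Writing $g^*(0) = \sum_i c_i D_i$ with $c_i \ge 0$ and $c_i > 0$ iff $D_i \subseteq g^{-1}(0)$, we have $g_{\trop}(D_i^*) = c_i$, and $g_{\trop}$ is automatically linear on each cone of $\P$. To upgrade this to affineness in the charts $\psi_\rho$ for $\rho \in \P^{[n-1]}_\inte$, I would compute the kink directly using the chart relation \eqref{eq:invariant form of chart}: substituting $e_n' = -e_n - \sum_{j=1}^{n-1} (D_{i_j}\cdot X_\rho) e_j$ into the linear form $g_{\trop}|_\sigma$ and comparing with the actual value $g_{\trop}(e_n') = c_{i_n'}$, the kink simplifies to the intersection number $g^*(0)\cdot X_\rho$, using $D_{i_n}\cdot X_\rho = D_{i_n'}\cdot X_\rho = 1$ and $D_i \cdot X_\rho = 0$ otherwise by SNC transversality at the two boundary points of $X_\rho$. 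Since $X_\rho \cong \PP^1$ by Proposition~\ref{prop:polyhedral pseudo}(a) and $S$ is affine or the spectrum of a DVR, the morphism $g|_{X_\rho}: \PP^1 \to S$ must be constant, so $(g|_{X_\rho})^*\O_S(0)$ is trivial and $g^*(0)\cdot X_\rho = 0$. Hence the kink vanishes and $g_{\trop}$ is affine in every chart.

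For both the submersion statement and the boundary characterization, the crucial auxiliary fact is that every good zero-dimensional stratum $X_\sigma$ lies in at least one vertical divisor. To establish this, if $g(X_\sigma) = 0$ the claim is immediate from $g^{-1}(0) \subseteq D$; if $g(X_\sigma) = c \neq 0$, then $X_\sigma$ lies in the smooth fiber $g^{-1}(c)$, and log smoothness of $g$ forces the horizontal divisors through $X_\sigma$ to restrict to an SNC configuration on that fiber. Since $g^{-1}(c)$ has dimension $n-1$ and SNC permits at most $n-1$ components through a point, while $X_\sigma$ lies in $n$ divisors, at least one must be vertical. It follows that $g_{\trop}|_\sigma$ is nontrivial on every maximal cone, and combined with the affineness from the previous step, $dg_{\trop}$ is surjective at every point of $B \setminus \Delta$, yielding the submersion property.

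The boundary characterization then proceeds as follows. The easy direction uses Proposition~\ref{prop:polyhedral pseudo}(b): if $\rho \in \P^{[n-1]}_\partial$, then $g|_{X_\rho}$ is an isomorphism, which forces every $D_{i_j}$ containing $X_\rho$ to be horizontal, and hence $\rho \subseteq g_{\trop}^{-1}(0)$. The converse is the main obstacle. Assuming for contradiction that $\rho \in \P^{[n-1]}_\inte$ with $\rho \subseteq g_{\trop}^{-1}(0)$, all of $D_{i_1}, \ldots, D_{i_{n-1}}$ are horizontal; applying the zero-stratum lemma to both $X_\sigma$ and $X_{\sigma'}$ forces the extra divisors $D_{i_n}$ and $D_{i_n'}$ to be vertical, so $c_{i_n}, c_{i_n'} > 0$. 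But then the kink formula from the first step evaluates to $c_{i_n} + c_{i_n'} > 0$, contradicting the vanishing $g^*(0)\cdot X_\rho = 0$. The subtle point to verify carefully is the SNC restriction claim in the zero-stratum lemma, which relies on using log smoothness of $g$ to control the horizontal part of $D$ on smooth fibers.
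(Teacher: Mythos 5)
Your proof is correct and shares the same computational core as the paper's: both show that the kink of $g_{\trop}$ along any $\rho\in\P^{[n-1]}_\inte$ equals the intersection number $g^*(0)\cdot X_\rho$, which vanishes because $g|_{X_\rho}$ is constant. The paper packages this via Lemma~\ref{lem:toric dim one}, identifying $g_{\trop}|_{\Star(\rho)}\circ\psi_\rho^{-1}$ with the PL function of a toric divisor $D'$ on $X_{\Sigma_\rho}$ and invoking~\eqref{eq:degrees agree}; you instead unwind~\eqref{eq:invariant form of chart} by hand. Be a bit careful in stating the intermediate step: the terms $D_{i_j}\cdot X_\rho$ for $j\le n-1$ need not vanish — they enter the kink through the chart relation with coefficients $c_{i_j}$ and match the corresponding terms in $g^*(0)\cdot X_\rho$, so the identity still holds, but the phrase ``$D_i\cdot X_\rho=0$ otherwise'' should apply only to divisors disjoint from $X_\rho$. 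Where you genuinely diverge: (1) you spell out, via the SNC-on-smooth-fibers argument, that every good zero-dimensional stratum lies over $0\in S$, which the paper takes for granted when asserting $X_\rho\subseteq g^{-1}(0)$; this is worthwhile detail. (2) For the boundary characterization, the paper goes directly through Proposition~\ref{prop:polyhedral pseudo}(b), noting $\rho\in\P^{[n-1]}_\partial$ iff $g|_{X_\rho}$ is surjective iff $g_{\trop}(\rho)=0$, whereas your hard direction runs a reductio against the kink formula. The reductio works, but it is redundant: once you know $X_\rho$ lies over $0$ for interior $\rho$, $X_\rho$ lies in some irreducible component of $g^{-1}(0)$, which must be one of $D_{i_1},\dots,D_{i_{n-1}}$, giving $g_{\trop}(\rho)\ne 0$ immediately with no appeal to the kink.
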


\begin{proof}
For the first statement,
it is sufficient to show that $(g_{\trop}|_{\Star(\rho)})\circ 
\psi_{\rho}^{-1}$ is a surjective linear function 
on the fan $\Sigma_{\rho}$ for each 
$\rho\in \P^{[n-1]}_{\inte}$. Here $\psi_{\rho}$ is the
local chart \eqref{eq:psi rho def} and $\Sigma_{\rho}$ is as defined
in Lemma~\ref{lem:toric dim one}.

If we write the divisor $g^*(0)$ as $\sum_i b_i D_i$, then
$g_{\trop}$ takes the value $b_i$ on $D_i^*$. In particular,
$(g_{\trop}|_{\Star(\rho)})\circ \psi_{\rho}^{-1}$ is the piecewise
linear function on $\Sigma_{\rho}$
corresponding to a toric divisor $D'$ on $X_{\Sigma_{\rho}}$ which
may be written explicitly as follows. Denote by $D'_j$ the toric
divisor of $X_{\Sigma_{\rho}}$ corresponding to the ray $\psi_{\rho}(\RR_{\ge 0}
D^*_j)$, for $j\in \{i_1,\ldots,i_n,i_n'\}$. Then
\[
D'= 
b_{i_n'}D'_{i_n'}+
\sum_{j=1}^n 
b_{i_j}D'_{i_j}.
\]
However, by \eqref{eq:degrees agree}, $D'\cdot 
X_{\Sigma_{\rho},\psi_{\rho}(\rho)}
= g^*(0)\cdot X_{\rho}=0$ as $g^*(0)$ is algebraically equivalent to 
the trivial divisor.
Thus $D'$ is linearly equivalent to zero, so 
$(g_{\trop}|_{\Star(\rho)})\circ \psi_{\rho}^{-1}$ is linear, as claimed.
This latter map is also non-zero, as $X_{\rho}$ is contained in $g^{-1}(0)$,
and hence it is surjective.

For the second statement, by definition $\partial B$
is the union of those codimension one cones $\rho$ only contained in
one maximal cone. But these correspond to the non-compact one-dimensional
good strata $X_{\rho}$, which are precisely the ones for which $g|_{X_{\rho}}$
is surjective. However by the definition of $g_{\trop}$, these are
in one-to-one correspondence with the set of those $\rho\in \P^{[n-1]}$
with $g_{\trop}(\rho)=0$.
\end{proof}

Frequently, in the relative case $g:X\rightarrow S$,
it is more intuitive to work not
with $(B,\P)$ but with the fibre of $g_{\trop}:B
\rightarrow \RR_{\ge 0}$ over $1$. 
This fits more
closely with the earlier point of view of affine manifolds associated
with degenerations and wall structures in \cite{PartI,Annals}. Set
$B':=g_{\trop}^{-1}(1)$, equipped with the polyhedral decomposition
\[
\scrP':=\{\sigma \cap B' \,|\, \sigma\in\P\}.
\]
Note that the elements of $\scrP'$ are no longer, in general, cones.
We then observe:

\begin{proposition}
\label{prop:relative B prime}
Suppose that all good divisors contained in $g^{-1}(0)$ have multiplicity
one in $g^{-1}(0)$. Then $(B',\scrP')$ is a polyhedral affine
pseudomanifold.
\end{proposition}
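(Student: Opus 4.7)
The plan is to transfer the polyhedral affine pseudomanifold structure from $(B,\scrP)$, established in Proposition~\ref{prop:polyhedral pseudo}, to the slice $(B',\scrP')$ at level one of the affine submersion $g_{\trop}:B\to\RR_{\ge 0}$ given by Proposition~\ref{prop:gtrop affine submersion}. The single role of the multiplicity one hypothesis is to force each slice $\sigma\cap B'$ of a cone $\sigma\in\scrP$ to be a lattice polytope: for a maximal cone $\sigma\in\scrP$ with generators $D_{i_1}^*,\ldots,D_{i_n}^*$, the value $g_{\trop}(D_{i_j}^*)=b_{i_j}$ lies in $\{0,1\}$, since either $D_{i_j}\not\subseteq g^{-1}(0)$ and $b_{i_j}=0$, or $D_{i_j}\subseteq g^{-1}(0)$ and $b_{i_j}=1$ by hypothesis. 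The linear form $g_{\trop}|_\sigma$ is therefore primitive, and $\sigma\cap B'$ is a (possibly unbounded) $(n-1)$-dimensional lattice polytope whose faces are slices of the faces of $\sigma$.

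\medskip

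Next I would verify in turn the five conditions of \cite[Constr.~1.1]{Theta} for $(B',\scrP')$. Conditions (1) and (2) follow immediately from the definition of $\scrP'$ and the face description above. For (3), if $\tau\cap B'\neq\emptyset$ then $g_{\trop}|_\tau\not\equiv 0$, and any maximal cone $\sigma\supseteq\tau$ provided by \eqref{eq:induction hypothesis} satisfies $g_{\trop}|_\sigma\not\equiv 0$ by linearity, so $\sigma\cap B'$ is an $(n-1)$-dimensional maximal cell of $\scrP'$ containing $\tau\cap B'$. For (4), a codimension one cell of $\scrP'$ has the form $\tau\cap B'$ with $\tau\in\scrP^{[n-1]}$ and $g_{\trop}|_\tau\not\equiv 0$; Proposition~\ref{prop:gtrop affine submersion} identifies this condition with $\tau\in\scrP^{[n-1]}_{\inte}$, which by Proposition~\ref{prop:polyhedral pseudo} lies in exactly two maximal cones, each giving a maximal cell of $\scrP'$ containing $\tau\cap B'$; in particular $\partial B'=\emptyset$. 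Condition (5) on connectedness of the link data in codimension $\ge 2$ transfers from the analogous connectivity of $\partial^{\mathrm{good}}X_\tau$ in Assumptions~\ref{ass:relative} via the slicing.

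\medskip

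For the integral affine structure on $B'\setminus(B'\cap\Delta)$, each chart $\psi_\rho:\Star(\rho)\to\RR^n$ from \S\ref{subsec:the affine structure} restricts to an integral affine chart $\Star(\rho)\cap B'\to H_\rho$, where $H_\rho\subset\RR^n$ is the affine hyperplane cut out by $g_{\trop}\circ\psi_\rho^{-1}=1$; primitivity of this linear form equips $H_\rho$ with the structure of an $(n-1)$-dimensional integral affine space. Since $\Delta$ has codimension two in $B$ and $g_{\trop}$ is a submersion, $\Delta\cap B'$ has codimension two in $B'$, as required.

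\medskip

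I expect the main technical friction to be verifying the connectivity condition (5), which requires matching the link structure of codimension $\ge 2$ cells in $B'$ with the connectivity of $\partial^{\mathrm{good}}X_\tau$ for cones $\tau\in\scrP$; however, this should follow routinely since $g_{\trop}$ is an affine submersion and each cone of $\scrP$ meets $B'$ transversally in a connected polytope, so the link of $\tau\cap B'$ in $(B',\scrP')$ is obtained by slicing the link of $\tau$ in $(B,\scrP)$. The remainder of the argument is essentially a bookkeeping exercise once the multiplicity one hypothesis pins down the integrality of the slices.
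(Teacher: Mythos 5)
Your proposal is correct and follows essentially the same route as the paper's proof: use Proposition~\ref{prop:gtrop affine submersion} to induce the affine structure on $B'\cap B_0$, use the multiplicity-one hypothesis to show each ray $\rho\in\scrP$ with $g_{\trop}|_\rho\ne 0$ meets $B'$ in an integral point so cells of $\scrP'$ are lattice polytopes, then check conditions (1)--(5) of \cite[Constr.~1.1]{Theta} by slicing. The paper's own proof is more terse (it states that conditions (1)--(5) "follow immediately from the same conditions for $\scrP$" without spelling them out), whereas you verify them one by one; the added observation that $\partial B'=\emptyset$ is correct and a nice byproduct of case (a) of Proposition~\ref{prop:polyhedral pseudo}.
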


\begin{proof}
As $g_{\trop}$ is an affine submersion on $B_0$ by
Proposition~\ref{prop:gtrop affine submersion}, $B'\cap B_0$ acquires
an affine structure. Thus we just need to check that 
$(B',\scrP')$ satisfies the conditions of \cite[Constr.~1.1]{Theta}.
Note that if $\rho\in\P$ is a ray with $g_{\trop}|_{\rho}:\rho
\rightarrow \RR_{\ge 0}$ surjective, the corresponding
divisor $X_{\rho}$ is contained in $g^{-1}(0)$ and is good, hence 
appears with multiplicity one in $g^{-1}(0)$ by assumption. Thus 
$g_{\trop}^{-1}(1)\cap\rho$ is an integral point of $\rho$, so all elements
of $\scrP'$ are in fact lattice polytopes. Conditions (1)--(5)
of \cite[Constr.~1.1]{Theta} now follow immediately from the same
conditions for $\scrP$.
\end{proof}

\begin{remark}
\label{rem:phi restrict}
In the above situation, given an MPL function $\varphi$ on $(B,\scrP)$,
we obtain by restricting representatives of an MPL function
$\varphi|_{B'}$. Note that if $\rho\in\P_{\inte}^{[n-1]}$, then
the kink $\kappa_{\rho}$ of $\varphi$ at $\rho$ then agrees with
the kink $\kappa_{\rho\cap B'}$ of $\varphi|_{B'}$ along $\rho\cap B'$.
This follows immediately from the definition \eqref{def:kink}.
\end{remark}

\begin{remark}
\label{rem:cone}
In \cite[\S4.2]{Theta}, given a polyhedral affine pseudomanifold with
singularities $B$, we defined a polyhedral affine pseudomanifold
$\mathbf{C}B$, the \emph{cone} over $B$. In our current situation,
$B$ is a conical polyhedral affine pseudomanifold, and $B'$ as defined
above satisfies $\mathbf{C}B'=B$. Here we find it more natural
to work with the cone.
\end{remark}

We next consider a special case of the relative situation, satisfying
Assumption~\ref{ass:relative}, when $S$ is an affine curve and
$(X,D)\rightarrow S$ is a degeneration of log Calabi-Yau varieties
which themselves satisfy Asssumption~\ref{ass:absolute}.
Recall from Assumption~\ref{ass:relative} that
in the relative case, for $s\in S$ a closed point, $s\not=0$, 
we have $D_j\cap X_s$ is an irreducible
divisor on $X_s$. We may then write
\[
K_{X_s}+D_s\equiv_{\QQ} \sum_{j} a_j (D_j\cap X_s).
\]
Using this representative for the numerical equivalence class
of $K_{X_s}+D_s$, it then makes sense to ask that the pair
$(X_s,D_s)$ satisfy Assumption~\ref{ass:absolute}. We
write $B_{(X_s,D_s)}$ for the polyhedral affine pseudomanifold
associated to the pair $(X_s,D_s)$: this will be pure $(n-1)$-dimensional,
still with $n=\dim X$. We wish to understand the relationship between
$B_{(X_s,D_s)}$ and $B$, the polyhedral affine pseudomanifold
associated to the pair $(X,D)$. This is described by the following
proposition, which will be of use in \cite{HDTV}.

\begin{proposition}
\label{prop:relative case maximal fibres}
Assume we are in the relative case, with Assumption~\ref{ass:relative} holding. 
Suppose that as described above, for
$s\in S$ a closed point, $s\not=0$, $(X_s,D_s)$ satisfies
Assumption~\ref{ass:absolute}.
Then:
\begin{enumerate}
\item $\partial B=g_{\trop}^{-1}(0)$ is naturally identified with the
polyhedral cone complex $B_{(X_s,D_s)}$.
\item The structure of integral affine manifold with boundary on 
$B_0$ can be extended across the interiors of cells 
$\omega\in\P^{[n-2]}_{\partial}$. Further, the induced polyhedral affine
pseudomanifold structure on $\partial B$ then agrees
with that determined by the pair $(X_s,D_s)$.
\item Let $\iota:H_2(X_s)\rightarrow H_2(X)$ be the map of groups
of curve classes induced by the inclusion of the fibre $X_s\hookrightarrow X$.
Then for each $\omega\in \P^{[n-2]}_{\partial}$, the
MPL function $\varphi$ of Construction~\ref{const:phi canonical} has a 
single-valued representative in $\Star(\omega)$. Further, the restriction
of this single-valued representative to $\Star(\omega)\cap \partial B$
has kink $\iota([X_{\omega}\cap X_s])$. We write the MPL function
on $\partial B$ with such kinks as $\varphi|_{\partial B}$.
\end{enumerate}
\end{proposition}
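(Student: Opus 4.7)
The plan is to prove the three parts in order, making systematic use of the identification $\partial B = g_{\trop}^{-1}(0)$ from Proposition~\ref{prop:gtrop affine submersion}.

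\smallskip
\textbf{Part (1).} The cones $\sigma \in \P$ contained in $\partial B$ are precisely those generated by $D_i^*$'s with $D_i \not\subseteq g^{-1}(0)$. By Assumption~\ref{ass:relative}(2), each such $D_i$ restricts to an irreducible divisor $D_i \cap X_s$ of $X_s$, and for any such $\sigma = \sum_{i \in I}\RR_{\geq 0}D_i^*$ the scheme $X_\sigma \cap X_s = \bigcap_{i\in I}(D_i\cap X_s)$ is a connected stratum of $D_s$, nonempty precisely when $X_\sigma$ is. Sending $\sigma$ to the cone generated by the duals of the $(D_i\cap X_s)$ gives a bijection between cones of $\partial B$ and cones of $\Sigma(X_s)$. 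Goodness is preserved because adjunction on the smooth fibre yields $K_{X_s}+D_s\equiv_\QQ\sum_i a_i(D_i\cap X_s)$ with the \emph{same} coefficients $a_i$, so restricting to the skeleta gives the required identification $\partial B\cong B_{(X_s,D_s)}$.

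\smallskip
\textbf{Part (2).} For $\omega\in\P^{[n-2]}_\partial$, the cell $\omega\subseteq \partial B$ is a codimension-one cell of $B_{(X_s,D_s)}$, hence receives an affine chart $\psi_\omega$ from \eqref{eq:psi rho def} applied to the pair $(X_s,D_s)$. Since $g_{\trop}$ is an affine submersion, it furnishes a linear coordinate transverse to $\partial B$; combining $\psi_\omega$ with $g_{\trop}$ as an $n$-th coordinate yields an affine chart on $\Star(\omega)\subseteq B$. Compatibility with the charts $\psi_\rho$ at adjacent codim-one cells $\rho\in\P^{[n-1]}_\inte$ containing $\omega$ follows from Lemma~\ref{lem:toric dim one}: the intersection numbers on $X_\rho$ that enter \eqref{eq:invariant form of chart} for $(X,D)$ coincide, by adjunction and the assumption that $X_\rho\cong\PP^1$, with those on $X_\omega\cap X_s$ entering the formula for $(X_s,D_s)$. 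The induced structure on $\partial B$ is then, by construction, the polyhedral affine structure of $B_{(X_s,D_s)}$.

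\smallskip
\textbf{Part (3).} With the extended chart of (2), the normal slice of $B$ at an interior point of $\omega$ is a half-plane, whose boundary is the corresponding normal slice of $\partial B$. This domain is simply connected, so any prescribed kinks at the interior codim-one cells $\rho\supseteq\omega$ in $\P^{[n-1]}_\inte$, together with vanishing kink at the boundary codim-one cells (which separate no pair of maximal cells), integrate unambiguously to a single-valued PL function: this is the desired representative of $\varphi$ on $\Star(\omega)$. To compute its kink along $\partial B$ at $\omega$, observe that $X_\omega$ is a projective surface over $S$, and its restriction to a fibre $X_\omega\cap X_s\cong\PP^1$ (by Proposition~\ref{prop:polyhedral pseudo} applied to $(X_s,D_s)$) is rationally equivalent to the sum, with scheme-theoretic fibre multiplicities, of the proper one-dimensional strata $X_\rho\subseteq X_\omega$ with $\rho\in\P^{[n-1]}_\inte$ and $\omega\subseteq\rho$. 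The jump in slope across $\omega$ of the restriction of the single-valued representative to $\partial B$, computed via the chart of (2), is exactly this same weighted sum of the kinks $\kappa_\rho=[X_\rho]$, pushed forward by $\iota$, yielding $\iota([X_\omega\cap X_s])$.

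\smallskip
The main obstacle is the bookkeeping in (3): one must verify that the horizontal projections of the slope-jumps across the radial rays in the normal half-plane at $\omega$ combine with precisely the multiplicities exhibiting $[X_\omega\cap X_s]$ as a sum of interior fibre components in $X_\omega$. I expect this to reduce to the same intersection-theoretic balancing implicit in \eqref{eq:invariant form of chart}, now applied to the ruled surface $X_\omega\to S$ rather than to a one-dimensional stratum.
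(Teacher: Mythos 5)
Part (1) of your proposal is essentially the paper's argument, matching cones of $\partial B$ with cones of $\Sigma(X_s)$ via $Z\mapsto Z\cap X_s$ and using adjunction to preserve goodness, so that part is fine.

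The serious gap is in part (2). You assert that ``combining $\psi_\omega$ with $g_{\trop}$ as an $n$-th coordinate yields an affine chart on $\Star(\omega)$,'' but there is no canonical way to extend the $(n-1)$-dimensional chart $\psi_\omega$ of $(X_s,D_s)$ off $\partial B$, and it is far from obvious that the affine structure on $\Star(\omega)\setminus\omega$ is a product over $B_{(X_s,D_s)}$. Moreover, your justification --- that ``the intersection numbers on $X_\rho$ that enter \eqref{eq:invariant form of chart} for $(X,D)$ coincide, by adjunction,\dots with those on $X_\omega\cap X_s$ entering the formula for $(X_s,D_s)$'' --- is false as stated. There are typically several vertical $\rho\in\P^{[n-1]}_{\inte}$ containing $\omega$, corresponding to a chain of curves $C_1,\ldots,C_{r-1}\subseteq g_\omega^{-1}(0)\subseteq X_\omega$, and none of the individual intersection numbers $D_{i_j}\cdot C_\ell$ equals $(D_{i_j}\cap X_s)\cdot(X_\omega\cap X_s)$. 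What is actually true, and what the chart compatibility really requires, is that the slopes induced at $\omega$ by going around the chain of $\rho_\ell$'s satisfy \eqref{eq:what we need to verify}, which amounts to the aggregate identity $\sum_{\ell=1}^{r-1}\alpha_\ell\,(D_{i_j}\cdot C_\ell) = D_{i_j}\cdot(X_\omega\cap X_s)$ with $\alpha_\ell$ the multiplicity of $C_\ell$ in $g_\omega^{-1}(0)$. Establishing that identity requires first showing that the one-dimensional strata of $X_\omega$ form a chain $C_0,\ldots,C_r$ with $C_0,C_r$ sections of $g_\omega$ and the rest vertical, and then computing the two-dimensional tropical base $B_\omega\cong\RR_{\ge 0}\times\RR$ and the auxiliary PL functions with kinks $D_{i_j}\cdot C_\ell$; this is the content of the paper's Steps 2--4 and is simply missing from your argument.

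Your part (3) has the right intuition --- the rational equivalence $[X_\omega\cap X_s]=\sum_\ell\alpha_\ell[C_\ell]$ on the surface $X_\omega$ is indeed what drives the kink computation --- but you explicitly flag that you have not verified the ``bookkeeping.'' That bookkeeping is exactly the same computation as the missing step in (2): the weighted sum of kinks over the chain of vertical one-dimensional strata of $X_\omega$. Without establishing the chain structure of $X_\omega$ and the resulting cancellation, neither (2) nor (3) is actually proved.
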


\begin{proof}
{\bf Step 1.} \emph{Proof of (1).}
We call a stratum $Z$ of $X$ \emph{vertical} if $g|_Z$ is constant
and otherwise say $Z$ is \emph{horizontal}.
Note that all vertical strata lie over $0\in S$. The rays of $\P$
contained in $g_{\trop}^{-1}(0)$ correspond precisely to the good
horizontal components of $D$. Thus there is a one-to-one
correspondence between cones $\rho\in\P$ with $\rho\subseteq
g_{\trop}^{-1}(0)$ and good horizontal strata of $D$. 
There is also a one-to-one correspondence 
between horizontal strata of $X$ and strata of $X_s$, taking a 
horizontal stratum $Z$ to the stratum $Z\cap X_s$. 
Note that by Assumption~\ref{ass:relative},(2), this is indeed an 
irreducible stratum. Conversely, every stratum of $X_s$ is necessarily
of this form by definition of $D_s$.
This correspondence takes good strata to good strata. 
Thus $\P_{(X_s,D_s)}$
may be identified with the set of cones of $\P$ contained in 
$g_{\trop}^{-1}(0)$. Further, since $(X_s,D_s)$ satisfies Assumptions 
\ref{ass:absolute}, $B_{(X_s,D_s)}$ is pure $(n-1)$-dimensional, and hence
coincides with $\partial B$. This gives (1).

\medskip

{\bf Step 2.} \emph{Analysis of horizontal good two-dimensional strata.}
For (2), we need to give, for $\omega\in\P^{[n-2]}_{\partial}$, 
a chart $\psi_{\omega}:\Star(\omega)\rightarrow \RR^n$ which agrees, up to 
elements of $\GL_n(\ZZ)$, with $\psi_{\rho}$ on 
$\Star(\rho)\subseteq\Star(\omega)$ whenever $\omega\subset\rho\in
\P^{[n-1]}$.

To do so, we first analyze the geometry of the surface $X_{\omega}$. 
As $X_{\omega}$ is
horizontal, we obtain a non-constant map $g_{\omega}:X_{\omega}\rightarrow S$.
We first analyze $\partial X_{\omega}$. Since $X_{\omega}\cap X_s$
is one-dimensional, $X_{\omega}\cap X_s$ is disjoint from any bad
divisor on $X_s$ by Assumptions~\ref{ass:absolute}, (2). Hence
$X_{\omega}$ is disjoint from any horizontal bad divisor. On the
other hand, $X_{\omega}\cap X_s$ necessarily contains precisely
two zero-dimensional strata, and hence $X_{\omega}$ contains two 
horizontal one-dimensional strata $X_{\rho}, X_{\rho'}$ with
$\omega\subseteq\rho,\rho'$. These are the only horizontal one-dimensional
strata of $X_{\omega}$. Note they are necessarily sections
of $g_{\omega}$ by Assumptions~\ref{ass:relative},(2).
However, since $\partial^{\mathrm{good}}X_{\omega}$ is
connected, there must be a chain of vertical good 
one-dimensional strata connecting
$X_{\rho}$ and $X_{\rho'}$. Again, by Assumptions~\ref{ass:absolute},
(2), these are disjoint from any bad divisor. 
Now the union of vertical strata coincides with
$g_{\omega}^{-1}(0)$ and
$g^{-1}_{\omega}(0)$
is connected (again by Assumption~\ref{ass:relative},(2)),
so it follows that all one-dimensional strata of 
$X_{\omega}$ are disjoint from bad divisors. Thus $X_{\omega}$
is disjoint from all bad divisors.

By Proposition~\ref{prop:polyhedral pseudo},
each vertical one-dimensional
stratum in $X_{\omega}$, being good, contains precisely two 
zero-dimensional strata. On the other hand, $g_{\omega}^{-1}(0)$
contains two zero-dimensional strata
given by $X_{\rho}\cap g_{\omega}^{-1}(0)$ and $X_{\rho'}\cap
g_{\omega}^{-1}(0)$, and all other zero-dimensional strata
of $X_{\omega}$ are then intersections of two good vertical one-dimensional
strata. Thus the only possibility for the set of one-dimensional strata
of $X_{\omega}$ is as follows: this set of strata
may be written as $\{C_0,\ldots, C_r\}$, with $C_0=X_{\rho}$,
$C_r=X_{\rho'}$, $C_1,\ldots,C_{r-1}$ vertical strata, and with
$C_i\cap C_j\not=\emptyset$ if and only if $|i-j|\le 1$. Further,
$C_i\cap C_{i+1}$ consists of one point. See Figure~\ref{Fig: Xomega}.
\begin{figure}
\begin{center}
\input{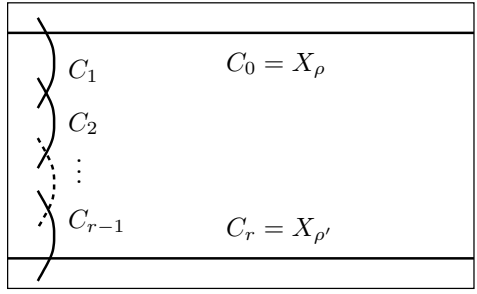}
\caption{\small The two-dimensional stratum $X_\omega$.}
\label{Fig: Xomega}
\end{center}
\end{figure}

Now let $(B_{\omega},\P_{\omega})$ be the two-dimensional
polyhedral affine pseudomanifold
associated to the log Calabi-Yau variety $(X_{\omega},\partial X_{\omega})$
defined over $S$.
We will describe $B_{\omega}$ and $\scrP_{\omega}$ explicitly.
Let $\tau_0,\ldots,\tau_r$ be the rays of
$\scrP_{\omega}$ corresponding to $C_0,\ldots,C_r$. Now define
$n_{\tau_0}=(0,1)\in\ZZ^2$, $n_{\tau_1}=(1,0)\in\ZZ^2$, and inductively
obtain $n_{\tau_2},\ldots,n_{\tau_r}\in\ZZ^2$ by requiring that
\begin{equation}
\label{eq:ntau relation}
n_{\tau_{\ell-1}}+n_{\tau_{\ell+1}}=-C_\ell^2 n_{\tau_\ell},\quad 1\le\ell 
\le r-1.
\end{equation}
We may define a coordinate chart
$\psi_{\tau_\ell}:\Star(\tau_\ell)\rightarrow \RR^2$ by mapping primitive 
generators of $\tau_{\ell-1}, \tau_\ell$ and $\tau_{\ell+1}$ to 
$n_{\tau_{\ell-1}}, n_{\tau_\ell}$ and $n_{\tau_{\ell+1}}$ respectively.
By \eqref{eq:invariant form of chart}, this coordinate chart is
compatible with the affine structure on $B_{\omega}\setminus \{0\}$.
Indeed, in \eqref{eq:invariant form of chart}, we have
$D_{i_j}=X_{\tau_\ell}=C_\ell$.

Now let $g_{\omega,\trop}:B_{\omega}\rightarrow \RR_{\ge 0}$
be the tropicalization of $X_{\omega}\rightarrow S$.
Note that $g_{\omega,\trop}|_{\Star(\tau_1)}\circ \psi^{-1}_{\tau_1}$
is given by $(1,0)\in (\ZZ^2)^*$. Indeed, as $C_0$ is not contained
in $g^{-1}(0)$, $g_{\omega,\trop}$ vanishes on $C_0$. On the other hand,
the multiplicity of $C_1$ in $g_{\omega}^{-1}(0)$ must be $1$, as $C_0
\cdot g_{\omega}^{-1}(0)=1$, $C_0$ being a section. Thus $g_{\omega,\trop}$
takes the value $1$ on the primitive generator of $\tau_1$.

Since $\psi_{\tau_\ell}$ and $\psi_{\tau_{\ell+1}}$ agree on 
$\Star(\tau_\ell)\cap \Star(\tau_{\ell+1})$, in fact $g_{\omega,\trop}$ is
given by $(0,1)$ on all charts. Since $g_{\omega,\trop}$ is positive
on generators of $\tau_1,\ldots,\tau_{r-1}$ and takes the value $0$
on $\tau_r$, we see that all $n_{\tau_\ell}$ lie in $\RR_{\ge 0}\times 
\RR$ and $n_{\tau_r}=(0,-1)$. 
Taking $\Sigma_{\omega}$ to be the fan in $\RR^2$ with one-dimensional
cones generated by $n_{\tau_\ell}$, $0\le \ell \le r$ and two-dimensional cones
spanned by $n_{\tau_\ell}, n_{\tau_{\ell+1}}$, $0\le \ell \le r-1$, we see
that the charts $\psi_{\rho_\ell}$ glue to give an identification of
$B_{\omega}$ with $\RR_{\ge 0}\times\RR$ and $\P_{\omega}$ with
$\Sigma_{\omega}$, so that $B_{\omega}\setminus \{0\}$ is
affine isomorphic to $(\RR_{\ge 0}\times \RR)\setminus \{(0,0)\}$.
Further, under this identification, $g_{\omega,\trop}$ is given by
projection onto $\RR_{\ge 0}$.

\medskip

{\bf Step 3.} \emph{A chart on $\Star(\omega)$.}
We may now build a chart for $\Star(\omega)$ as follows. Let
$\omega$ be generated by $D_{i_1}^*,\ldots,D_{i_{n-2}}^*$, and let
$\psi_j$, $1\le j \le n-2$, be the $\RR$-valued piecewise linear function on
$B_{\omega}$ with kink along $\tau_\ell$ being $D_{i_j}\cdot C_\ell$.
Explicitly, such $\psi_j$ may be constructed by having it take the
value $0$ on some choice of two-dimensional cone in $\P_{\omega}$,
and then using the relation
\begin{equation}
\label{eq:psij}
\psi_j(n_{\tau_{\ell-1}})+\psi_j(n_{\tau_{\ell+1}})=
D_{i_j}\cdot C_\ell  -C_\ell^2 \psi_j(n_{\tau_{\ell}})
\end{equation}
to determine the value of $\psi_j$ on $n_{\tau_{\ell+1}}$ inductively.
Indeed, the reader may easily check that the function defined on
$\Star(\tau_{\ell})$ satisfying \eqref{eq:psij} has kink $D_{i_j}\cdot
C_{\ell}$ using \eqref{def:kink} and \eqref{eq:ntau relation}.

Let $\rho_0,\ldots,\rho_r$ be the codimension one cones containing
$\omega$, with $X_{\rho_\ell}=C_\ell$, and let $D_{k_\ell}^*$ be such that
$\omega$ and $D_{k_{\ell}}^*$ generate $\rho_\ell$, $0\le\ell\le r$.
We may then define
$\psi_{\omega}:\Star(\omega)\rightarrow B_{\omega}\times \RR^{n-2}$ via, with $e_1,\ldots,e_{n-2}$
the standard basis for $\RR^{n-2}$,
\[
\psi_{\omega}(D_{i_j}^*)= (0,e_j), \quad\quad
 1\le j \le n-2
\]
and
\begin{equation}
\label{eq:psi omega def}
\psi_{\omega}(D_{k_{\ell}}^*)=  \left(n_{\tau_{\ell}},-\sum_{j=1}^{n-2} 
\psi_j(n_{\tau_{\ell}})e_j \right), \quad\quad 0\le \ell\le r.
\end{equation}
We then extend $\psi_{\omega}$ linearly on each cone containing $\omega$.
It is easy to see that $\psi_{\omega}$ induces an embedding of
$\Star(\omega)$ into $B_{\omega}\times\RR^{n-2}$. It is thus sufficient
to check that $\psi_{\omega}$ is compatible with each $\psi_{\rho_\ell}$,
$1\le \ell\le r-1$, as defined in \eqref{eq:psi rho def} and
\eqref{eq:invariant form of chart}.
For this, it is enough to verify the equality
\eqref{eq:invariant form of chart}. We have for $1\le \ell\le r-1$,
\begin{align*}
\psi_{\omega}(D_{k_{\ell-1}}^*) + \psi_{\omega}(D_{k_{\ell+1}}^*)
\stackrel{\eqref{eq:psi omega def}}{=} {} & 
\left(n_{\tau_{\ell-1}}+n_{\tau_{\ell+1}},
-\sum_{j=1}^{n-2}(\psi_j(n_{\tau_{\ell-1}})+\psi_j(n_{\tau_{\ell+1}}))e_j
\right)\\
\stackrel{\eqref{eq:ntau relation},\eqref{eq:psij}}{=} {} & 
\left( -C_\ell^2 n_{\tau_{\ell}} , -\sum_{j=1}^{n-2} 
(D_{i_j}\cdot C_{\ell}-C_\ell^2\psi_j(n_{\tau_\ell}))e_j\right)\\
= {} & -(D_{k_{\ell}}\cdot X_{\rho_{\ell}})\psi_{\omega}(D_{k_{\ell}}^*)-
\sum_{j=1}^{n-2} (D_{i_j}\cdot X_{\rho_{\ell}}) \psi_{\omega}(D_{i_j}^*),
\end{align*}
the last equality by $C_{\ell}^2=D_{k_\ell}\cdot X_{\rho_{\ell}}$.
This verifies \eqref{eq:invariant form of chart},
as desired. Thus we obtain an affine chart on $\Star(\omega)$
compatible with the affine structure on $B_0$.
\medskip

{\bf Step 4.} \emph{ Restricting the affine structure on $\Star(\omega)$ 
to $\partial B$.}
To complete the proof of (2), we compare the restriction of this chart
to $\Star(\omega)\cap \partial B$ with the corresponding chart
for $\partial B$ arising from the pair $(X_s,D_s)$. It is enough
to verify that
\begin{equation}
\label{eq:what we need to verify}
\psi_{\omega}(D_{k_0}^*)+\psi_{\omega}(D_{k_r}^*)
= - \sum_{j=1}^{n-2}\big( (D_{i_j}\cap X_s)\cdot (X_{\omega}\cap X_s)\big)
\psi_{\omega}(D_{i_j}^*),
\end{equation}
where the intersection number is calculated on $X_s$. 
We first note that 
\begin{equation}
\label{eq:big kink}
\psi_j(n_{\tau_0})+\psi_j(n_{\tau_r})= D_{i_j}\cdot (X_{\omega}\cap g^{-1}(0))
= D_{i_j}\cdot (X_{\omega}\cap X_s).
\end{equation}
Indeed, let $\sigma_{\ell,\ell+1}\in \P_{\omega}$ be the two-dimensional
cone with boundary rays $\tau_\ell$ and $\tau_{\ell+1}$, and let
$n_{\tau_\ell}=(\alpha_{\ell},\beta_{\ell})$. Then by the definition of kink
\eqref{def:kink},
for $1\le \ell \le r-1$,
\begin{equation}
\label{eq:kink of psij}
(d\psi_j|_{\sigma_{\ell,\ell+1}})(0,-1)+
(d\psi_j|_{\sigma_{\ell-1,\ell}})(0,1) = \alpha_{\ell} (D_{i_j}\cdot 
X_{\rho_{\ell}}).
\end{equation}
Since $\alpha_{\ell}$ is the multiplicity of $D_{k_{\ell}}$ in
$g^{-1}(0)$ by definition of $g_{\trop}$, summing \eqref{eq:kink of psij}
for $1\le \ell \le r-1$ gives \eqref{eq:big kink}.
Thus we obtain
\begin{align*}
\psi_{\omega}(D_{k_0}^*)+\psi_{\omega}(D_{k_r}^*) = {} & 
\left(n_{\tau_0}+ n_{\tau_r}, - \sum_{j=1}^{n-2}
\big(\psi_j(n_{\tau_0})+\psi_j(n_{\tau_r})\big) e_j\right)\\
\stackrel{\eqref{eq:big kink}}{=} {} & 
\left(0, - \sum_{j=1}^{n-2}\big((D_{i_j}\cap X_s)\cdot 
(X_{\omega}\cap D_s)\big) 
e_j\right).
\end{align*}
Noting that $\psi_{\omega}(D_{i_j}^*)=(0,e_j)$, this
gives \eqref{eq:what we need to verify}.

\medskip
{\bf Step 5.} \emph{Proof of (3).}
For (3), note that we may obtain such a single-valued representative
by constructing a single-valued piecewise linear function 
$\bar\varphi_{\omega}:B_{\omega}\rightarrow Q^{\gp}_{\RR}$ with kink
$[X_{\rho_{\ell}}]$ along $\tau_\ell$, just as we constructed
$\psi_j$ as above. Let $\pr_1:B_{\omega}\times \RR^{n-2}\rightarrow B_{\omega}$
be the first projection. We then obtain a single-valued function 
$\varphi_{\omega}=\bar\varphi_{\omega}\circ \pr_1\circ \psi_{\omega}$ on
$\Star(\omega)$ with the correct kinks along the $\rho_{\ell}$.
The kink of the restriction of $\varphi_{\omega}$ to
$\Star(\omega)\cap \partial B$ is then easily calculated
as $\bar\varphi_{\omega}(n_{\tau_0})+\bar\varphi_{\omega}(n_{\tau_r})
=[X_{\omega}\cap g^{-1}(0)]$ as before, and the latter coincides with
$\iota([X_{\omega}\cap X_s])$, as desired.
\end{proof}


\section{Punctured log maps and tropical geometry}
\label{sec:punctured log maps}

Throughout this section we will assume given $g:X\rightarrow S$
satisfying Assumptions~\ref{ass:absolute} and \ref{ass:relative}
in the absolute and relative cases without further comment.

\subsection{Review of notation}
We briefly review notation from \cite{ACGSI,ACGSII} for tropical maps
to $\Sigma(X)$ and punctured maps to $X$. We first summarize the tropical
language as developed in \cite[\S2.5]{ACGSI} and \cite[\S2.2]{ACGSII}.

In what follows, $\mathbf{Cones}$ denotes the category of 
rational polyhedral cones with integral structure, i.e., objects
are rational polyhedral cones $\omega \subseteq 
\Lambda_{\omega}\otimes_{\ZZ}\RR$ for
$\Lambda_{\omega}$
the lattice of integral tangent vectors\footnote{In 
\cite{ACGSI,ACGSII}, we write this lattice
as $N_{\omega}$; here we use the notation $\Lambda_{\omega}$ to fit the
notation of \cite{Theta}.} 
to $\omega$.
Morphisms are maps of cones induced by maps of the corresponding lattices.
We write $\omega_{\ZZ}=\omega\cap \Lambda_{\omega}$ for the set of integral
points of $\omega$.

We consider graphs $G$, with sets of vertices $V(G)$, edges $E(G)$ and legs
$L(G)$. In what follows, we will frequently confuse $G$ with its topological
realisation $|G|$. Legs will correspond to marked or punctured points of
punctured curves, and are rays in the marked case and compact line segments in
the punctured case. We view a compact leg as having only one vertex. An abstract
tropical curve over $\omega\in \mathbf{Cones}$ is data $(G, {\mathbf g}, \ell)$
where ${\mathbf g}:V(G)\rightarrow\NN$ is a genus function and
$\ell:E(G)\rightarrow \Hom(\omega_{\ZZ},\NN)\setminus\{0\}$ determines edge
lengths. Since this paper will only deal with genus $0$ curves, we omit
$\mathbf{g}$ entirely from the sequel. 

Associated to the data
$(G,\ell)$ is a generalized cone complex (a diagram in the
category of $\mathbf{Cones}$ with all morphisms being inclusions
of faces induced by a morphism of corresponding lattices) 
$\Gamma(G,\ell)$ along with a morphism of cone complexes
$\Gamma(G,\ell)\rightarrow \omega$ with fibre over $s\in\Int(\omega)$
being a tropical curve, i.e., a metric graph, with underlying graph
$G$ and affine edge length of $E\in E(G)$ being $\ell(E)(s)\in \RR_{\ge 0}$. 
Associated to each vertex $v\in V(G)$
of $G$ is a copy $\omega_v$ of $\omega$
in $\Gamma(G,\ell)$. Associated to each edge or leg $E\in E(G) \cup L(G)$
is a cone $\omega_E \in \Gamma(G,\ell)$ with
$\omega_E\subseteq \omega\times\RR_{\ge 0}$ and the map to $\omega$
given by projection onto the first coordinate. This projection
fibres $\omega_E$ in compact intervals or rays over $\omega$ 
(rays for legs representing a marked point). 

A \emph{family of tropical maps} 
to $\Sigma(X)$ over $\omega\in \mathbf{Cones}$ is a morphism of cone complexes
\[
h:\Gamma(G,\ell)\rightarrow \Sigma(X).
\]
If $s\in\Int(\omega)$, we may view $G$ as the fibre of
$\Gamma(G,\ell)\rightarrow \omega$ over $s$ as a metric graph, and
write
\[
h_s:G\rightarrow \Sigma(X)
\]
for the corresponding tropical map with domain $G$.
The \emph{type} of such a family consists of the data
$\tau:=(G,\bsigma,\mathbf{u})$ where
\[
\bsigma:V(G)\cup E(G)\cup L(G)\rightarrow \Sigma(X)
\]
associates to $x\in V(G)\cup E(G)\cup L(G)$ the minimal
cone of $\Sigma(X)$ containing $h(\omega_x)$. Further,
$\mathbf{u}$ associates to each (oriented) edge or leg $E\in E(G)\cup L(G)$
the corresponding \emph{contact order} $\mathbf{u}(E)\in \Lambda_{\bsigma(E)}$,
the image of the tangent vector $(0,1)\in \Lambda_{\omega_E}=
\Lambda_{\omega}\oplus\ZZ$ under the map $h$.

As we shall only consider tropicalizations of 
pre-stable punctured curves (see \cite[Def.\ 2.5]{ACGSII},
following \cite[Prop.\ 2.21]{ACGSII} we may assume that for $L\in L(G)$ 
with adjacent vertex $v\in V(G)$
giving $\omega_L,\omega_v\in \Gamma(G,\ell)$, we have
\begin{equation}
\label{eq:leg}
h(\omega_L)=(h(\omega_v)+\RR_{\ge 0}{\bf u}(L))\cap \bsigma(L)
\subseteq \Lambda_{\bsigma(L),\RR}.
\end{equation}

A \emph{decorated type} is data $\btau=(G,\bsigma,\mathbf{u},\mathbf{A})$
where $\mathbf{A}:V(G)\rightarrow H_2(X)$ associates a curve class to
each vertex of $G$. The \emph{total curve class} of ${\bf A}$
is $A=\sum_{v\in V(G)} {\bf A}(v)$.

We also have a notion of a contraction morphism of types
$\phi:\tau\rightarrow \tau'$, see \cite[Def.\ 2.24]{ACGSI}. This
is a contraction of edges on the underlying graphs, and the
additional data satisfies some relations as follows. 
If $x\in V(G)\cup E(G)\cup L(G)$, then $\bsigma'(\phi(x))
\subseteq \bsigma(x)$ (if $x$ is an edge, it may be contracted
to a vertex by $\phi$). Further, if $E\in E(G)\cup L(G)$ 
then $\mathbf{u}(E)=\mathbf{u}'(\phi(E))$ under the inclusion
$\Lambda_{\bsigma'(\phi(E))}\subseteq \Lambda_{\bsigma(E)}$, provided that $E$ is
not an edge contracted by $\phi$.

We also recall the notion of \emph{global contact order},
\cite[Def.\ 2.29]{ACGSII}. If $\sigma\in\Sigma(X)$, 
we set 
\[
\foC_{\sigma}(X):=\colim_{\sigma\subseteq\sigma'} \Lambda_{\sigma'},
\]
where the colimit is taken in the category of sets,
and define 
\[
\foC(X):=\coprod_{\sigma\in\Sigma(X)} \foC_{\sigma}(X).
\]
A \emph{global type of tropical or punctured map} is then 
$\bar\tau=(G,\bsigma,\overline{\mathbf{u}})$, where for each
$E\in E(G)\cup L(G)$, $\overline{\mathbf{u}}(E)\in \foC_{\bsigma(E)}(X)$.
Note that a type $\btau=(G,\bsigma,\mathbf{u})$
determines a global type by replacing $\mathbf{u}(E)$ with
its image under the natural map $\Lambda_{\bsigma(E)}\rightarrow
\foC_{\bsigma(E)}(X)$.

We say a type (or global type) $\tau$ is \emph{realizable} if there exists a 
family of tropical maps to $\Sigma(X)$ of type (or global type) $\tau$.
We also say $\btau=(\tau,{\bf A})$ is realizable if $\tau$ is
realizable. We note that if a global type $\bar\tau$ is realizable, there is
only one type $\tau$ of tropical map realizing it (see 
\cite[Lem.\ 3.5]{ACGSII}). Most of the time in this paper, we will only
deal with realizable types, and hence we will then not distinguish
between a type and a global type in this case.

If a type $\tau$ is realizable, then there is a universal family of
tropical maps of type $\tau$, parameterized by an object of
$\mathbf{Cones}$. Hopefully without confusion, we will generally write 
this cone as $\tau$. Hence we have a cone complex $\Gamma(G,\ell)$ equipped
with a map to $\tau$ and a map of cone complexes $h=h_{\tau}:\Gamma(G,\ell)
\rightarrow \Sigma(X)$. Generally we write $h$ rather than $h_{\tau}$
when unambiguous. Note that for each $x\in E(G)\cup L(G)\cup V(G)$, 
we thus obtain $\tau_x \in \Gamma(G,\ell)$ the corresponding cone.

\medskip
We write $\shX$ for the Artin fan of $X$, see
\cite{ACMW17}, as well as \cite[\S2.2]{ACGSI} for a summary.
We emphasize here that we always mean the absolute Artin fan,
denoted as $\shA_X$ in \cite{ACGSII}, even when we are working in the
relative situation.

We refer to \cite[Defs.\ 2.10, 2.13, 2.14]{ACGSII} for the notion of a family
$\pi:C^{\circ}\rightarrow W$ of punctured curves and pre-stable or 
stable punctured maps $f:C^{\circ}/W\rightarrow X$ or
$f:C^{\circ}/W\rightarrow \shX$. For the most part in this paper, we
work with punctured maps with target $X$ defined over $\Spec\kk$,
and in the relative case, only briefly work with punctured maps
defined over $S$.

Given a punctured map with domain $C^\circ\rightarrow W$ and
$W=\Spec(Q\rightarrow\kappa)$ for $\kappa$ an algebraically closed
field and target $X$ or $\shX$, we obtain by
functoriality of tropicalizations a family of tropical maps
\begin{equation}
\label{eq:trop diag}
\xymatrix@C=30pt
{
\Sigma(C)=\Gamma(G,\ell)\ar[d]\ar[r]& \Sigma(X)\\
\Sigma(W)=\omega=Q^{\vee}_{\RR} &
}
\end{equation}
parameterized by $W$.
The \emph{type} of the punctured map is then the type
$\tau=(G,\bsigma,\mathbf{u})$
of this family of tropical maps. We recall that the punctured
map $f:C^{\circ}/W\rightarrow X$ is \emph{basic} if \eqref{eq:trop diag}
is the universal family of tropical maps of type $\tau$.

Given a decorated global type $\bar\btau$, \cite[Def.\ 3.7]{ACGSII}
defines the notion of a marking of a punctured map by $\bar\btau$.
In particular, this gives rise to moduli spaces
$\scrM(X,\bar\btau)$ (resp.\ $\foM(\shX,\bar\btau)$) 
of stable (resp.\ pre-stable) $\bar\btau$-marked punctured maps 
to $X$ (resp.\ $\shX$). When we are in the relative situation and
wish to work with punctured maps to $X$ defined over $S$, then
we write $\scrM(X/S,\bar\btau)$ for the corresponding moduli space.
We note that while curve classes in
$\shX$ are meaningless, the decoration on $\bar\btau$ affects the
notion of isomorphism in the category $\foM(\shX,\bar\btau)$.
By \cite[Thm.\ 3.10]{ACGSII},
these are algebraic stacks, $\scrM(X,\bar\btau)$ is Deligne-Mumford
and both stacks carry natural log structures, namely the basic
log structure.
Further, there is a natural morphism 
\begin{equation}
\label{def:varepsilon}
\varepsilon:\scrM(X,\bar\btau) \rightarrow \foM(\shX,\bar\btau)
\end{equation}
given by composing a punctured map $C^{\circ}\rightarrow X$ with the
canonical map $X\rightarrow\shX$. \cite[\S4]{ACGSII} then gives a perfect
relative obstruction theory for $\varepsilon$. Most importantly, while the
stacks $\foM(\shX,\bar\btau)$ may be quite poorly behaved globally, in fact they
have a simple local structure coming from the fact that they are idealized log
smooth over $S$, see \cite[Thm.\ 3.24, Rem.\ 3.25]{ACGSII}.

We sometimes work with moduli spaces $\foM(\shX,\bar\tau)$ of
pre-stable $\bar\tau$-marked punctured maps to $\shX$, forgetting the
decorations. The forgetful morphism $\foM(\shX,\bar\btau)\rightarrow
\foM(\shX,\bar\tau)$ is strict \'etale, and we may also write
$\varepsilon:\scrM(X,\bar\btau)\rightarrow \foM(\shX,\bar\tau)$
for the composition of \eqref{def:varepsilon} with this forgetful
morphism.

If there is a contraction morphism between decorated global types
$\phi:\bar\btau\rightarrow \bar\btau'$, we obtain a forgetful
map $\scrM(X,\bar\btau)\rightarrow \scrM(X,\bar\btau')$.
This gives rise to a stratified description of these moduli spaces,
see \cite[Rem.\ 3.29]{ACGSII}.

There is one special choice of decorated type which imposes the minimal
number of conditions on the curve. We always write $\beta$
for a \emph{class of punctured map}, which is a type where the
underlying graph $G$ has only one vertex and no edges, and
$\bsigma$ takes the vertex and every leg to $\{0\}\in\Sigma(X)$.
Put another way, a class of punctured map is the data 
$\beta=(\overline{\mathbf{u}},A)$
where $\overline{\mathbf{u}}$ is a collection of
global contact orders $\{\bar u_i\}\subseteq \foC_0(X)$ and $A\in H_2(X)$
is a curve class. Given a decorated global type $\bar\btau$,
there is always a unique choice of punctured curve class $\beta$
with a contraction morphism $\bar\btau\rightarrow \beta$.
Here $A$ is the total curve class of ${\bf A}$ if 
$\bar\btau=(G,\bsigma,\bar{\mathbf{u}}, \mathbf{A})$. In this case
we say the decorated type $\bar\btau$ is \emph{of class} $\beta$, and there
is a canonical morphism $\scrM(X,\bar\btau)\rightarrow \scrM(X,\beta)$,
and an analogous morphism for maps to Artin fans. The moduli
space $\scrM(X,\beta)$ is the moduli space of punctured maps where
only the curve class and the contact orders of the marked and punctured
points are fixed.

Suppose now given $\tau$ a realizable type equipped with a contraction morphism
$\tau\rightarrow \tau'$ to a (not necessarily realizable) global type $\tau'$,
and let $\btau'$ be a decoration of $\tau'$. As mentioned earlier, we do not
distinguish between the type $\tau$ and its corresponding global type. We denote
by $\foM_{\tau}(\shX,\tau')$ the reduced closed substack of $\foM(\shX,\tau')$
whose underlying closed subspace is the closure of the locus of geometric points
whose corresponding punctured map is of type $\tau$. We may then define
\[
\scrM_{\tau}(X,\btau'):=\scrM(X,\btau')\times_{\foM(\shX,\tau')}
\foM_{\tau}(\shX,\tau').
\]
While $\foM_{\tau}(\shX,\tau')$, assuming non-empty, always carries points
corresponding to curves of type $\tau$, this may not be the case for
$\scrM_{\tau}(X,\btau')$. Nevertheless, the type $\tau''$ of any curve over a
geometric point in $\foM_{\tau}(\shX,\tau')$ always has a (not necessarily
unique) contraction map $\tau''\rightarrow \tau$. We note that provided
$\foM_{\tau}(\shX,\tau')$ is non-empty, there is then a canonical morphism
\[
\foM(\shX,\tau)\rightarrow \foM_{\tau}(\shX,\tau')
\]
of degree $|\Aut(\tau/\tau')|$. Here $\Aut(\tau/\tau')$ denotes the
group of automorphisms of the global type $\tau$ compatible
with the contraction map $\tau\rightarrow\tau'$, i.e., 
automorphisms of the underlying graph $G$ preserving
$\bsigma$ and $\bar{\bf u}$ and the contraction map.

If $\bar\btau=(G,\bsigma,\bar{\bf u},\mathbf{A})$ denotes a 
choice of global type, and 
$I\subseteq
E(G)\cup L(G)$ is a collection of edges and legs, then we write
\[
\foM^{\ev}(\shX,\bar\btau)=\foM^{\ev(I)}(\shX,\bar\btau)
:= \foM(\shX,\bar\btau)\times_{\ul{\shX}^I} \ul{X}^I.
\]
Here $\ul{\shX}^I$ denotes the product of $\#I$ copies of $\ul{\shX}$
over $\ul{S}$, and similarly $\ul{X}^I$; the morphism
$\foM(\shX,\bar\btau)\rightarrow \ul{\shX}^I$ is given by evaluation
at the nodes and punctured points indexed by elements of $I$,
and $\ul{X}^I\rightarrow \ul{\shX}^I$ is induced by the canonical smooth
map $\ul{X}\rightarrow\ul{\shX}$. The map $\varepsilon$ then factors
as 
\[
\xymatrix@C=30pt
{
\scrM(X,\bar\btau)\ar[r]^{\varepsilon^{\ev}} & \foM^{\ev}(\shX,\bar\btau)
\ar[r] & \foM(\shX,\bar\btau).
}
\]
The second morphism is smooth, while $\varepsilon^{\ev}$ also possesses
a relative obstruction theory compatible with the morphism
$\varepsilon$ of \eqref{def:varepsilon}, see 
\cite[\S4.2]{ACGSII}.

\subsection{Some tropical lemmas}

We first observe the following result, which explains why the choice
of affine structure on $B$ given in \S\ref{subsec:the affine structure}
is the correct one. 

\begin{lemma}
\label{lem:balancing}
Let $f:C^{\circ}/W\rightarrow X$ be a stable punctured map to $X$,
with $W=\Spec(Q\rightarrow\kappa)$ a geometric log point. For
$s\in \Int(Q^{\vee}_{\RR})$, let $h_s:G\rightarrow \Sigma(X)$ be
the corresponding tropical map. If $v\in V(G)$ satisfies
$h_s(v)\in B_0$, then $h_s$ satisfies the balancing condition
at $v$. More precisely, if $E_1,\ldots,E_n$ are the legs
and edges adjacent to $v$, oriented away from $v$,
then the contact orders $\mathbf{u}(E_i)$ may be interpreted
as elements of $\Lambda_{h_s(v)}$, the stalk of the
local system $\Lambda$ of Definition~\ref{def:Lambda} at
$h_s(v)$. In this group, the balancing condition
\begin{equation}
\label{eq:balancing}
\sum_{i=1}^m \mathbf{u}(E_i)=0
\end{equation}
is satisfied.
Further, in the relative case, the composition $\Sigma(g)\circ h_s:
G\rightarrow \Sigma(S)=\RR_{\ge 0}$ is a balanced tropical map.
\end{lemma}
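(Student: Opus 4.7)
The plan is to establish the balancing at $v\in B_0$ by a local reduction to the classical balancing for log maps to toric varieties, and the balancing of $\Sigma(g)\circ h_s$ by a degree computation that exploits the constancy of every proper component's image in $S$.

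First I would interpret the contact orders as elements of a common lattice. Since $h_s(v)\in B_0=B\setminus\Delta$ and $\Delta$ is the $(n-2)$-skeleton of $\scrP$, the point $h_s(v)$ lies in the interior of some $\rho\in\scrP$ with $\dim\rho\in\{n-1,n\}$, and hence $\bsigma(v)=\rho$. For every edge or leg $E_i$ adjacent to $v$, the set $h_s(\omega_{E_i})$ meets $\rho$, so $\bsigma(E_i)\supseteq\rho$. By Assumption~\ref{ass:absolute}(2), every $n$-dimensional cone of $\Sigma(X)$ containing $\rho$ already lies in $\scrP$, so the germ of $\Sigma(X)$ at $h_s(v)$ agrees with that of $\scrP$. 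The chart $\psi_\rho:\Star(\rho)\to\RR^n$ of \eqref{eq:psi rho def}, or the tautological chart on $\rho$ when $\dim\rho=n$, then embeds each $\Lambda_{\bsigma(E_i)}$ compatibly into $\Lambda_{h_s(v)}\cong\ZZ^n$, so the sum \eqref{eq:balancing} is well defined.

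Next I would verify $\sum_i\mathbf{u}(E_i)=0$ by reducing to the toric case. When $\dim\rho=n$ the stratum $X_\rho$ is a zero-dimensional good stratum and \'etale locally near $X_\rho$ the pair $(\ul X,D)$ is isomorphic to an affine toric pair; the classical balancing for stable log maps to toric varieties applied to this local model yields \eqref{eq:balancing}. When $\dim\rho=n-1$, Lemma~\ref{lem:toric dim one} identifies $X_\rho$ as a log scheme with the one-dimensional stratum of $X_{\Sigma_\rho}$, and coupled with Assumption~\ref{ass:absolute}(2) this upgrades to an \'etale-local identification, in a neighbourhood of $X_\rho$, of the Artin fan $\shX$ of $X$ with the Artin fan of $X_{\Sigma_\rho}$. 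Since balancing only depends on the composition $C^\circ\to\shX$, toric balancing again gives \eqref{eq:balancing}. I expect the main obstacle to be the clean codimension-one match of \'etale-local log structures; here Lemma~\ref{lem:toric dim one} is the decisive input, replacing what would otherwise require a direct analysis of the log structure along a one-dimensional stratum.

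Finally, for the relative statement, I would observe that $\Sigma(g)$ is the restriction to $\Sigma(X)\subseteq\Div_D(X)^*_\RR$ of the single linear functional induced by the Cartier divisor $g^*(0)\in\Div_D(X)$, hence globally linear on $\Sigma(X)$. By the standard comparison between the tropical and algebraic degrees of a piecewise linear function pulled back along a punctured map, the sum $\sum_i\Sigma(g)(\mathbf{u}(E_i))$ at any vertex $v$ equals $\deg\bigl(f^*\O_X(g^*(0))|_{C_v}\bigr)$. But $C_v$ is a proper curve over $\kappa$ while $S$ is either an affine curve or the spectrum of a DVR, neither admitting a non-constant morphism from a proper $\kk$-variety, so $g\circ f|_{C_v}:C_v\to S$ must be constant. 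Consequently $f^*\O_X(g^*(0))\cong(g\circ f|_{C_v})^*\O_S(0)$ is trivial of degree zero on $C_v$, and the sum vanishes at every vertex, giving the balancing of $\Sigma(g)\circ h_s$.
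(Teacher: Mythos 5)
Your proposal is essentially the paper's proof and hinges on the same key input, Lemma~\ref{lem:toric dim one}, for the codimension-one case. There are two places where you take a slightly different route, both fine but each a little heavier than what the paper actually does. In the codimension-zero case, the paper does not pass through a local toric model at all: since $C_v$ is contracted to a zero-dimensional stratum, \cite[Prop.~2.25]{ACGSII} gives balancing immediately, the correction term $\tau_x$ appearing there vanishing for a contracted component. Your appeal to an \'etale-local toric structure near the point $X_\sigma$ is valid (SNC divisors are locally toric) but is extra machinery for a tautological case. In the codimension-one case the paper splits $f$ at the nodes of $C_v$ to produce $f_v\colon C_v^\circ\to X_\sigma$, notes that $f_v$ factors through the strict inclusion of the stratum, and then composes with the log isomorphism of Lemma~\ref{lem:toric dim one} to land in a toric variety; your ``Artin fan identification'' near $X_\rho$ says the same thing once one accepts that the germ of $\Sigma(X)$ at $\rho$ equals the germ of $\Sigma_\rho$ (which is exactly what Assumption~\ref{ass:absolute}(2) provides), but it is worth making the splitting step explicit, since balancing is a statement about the tropicalization at the vertex $v$ and splitting is what lets you treat $C_v$ in isolation. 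For the relative statement the paper again invokes \cite[Prop.~2.25]{ACGSII} via the observation that $\ul g\circ\ul f$ is constant on components; your explicit computation of $\deg f^*\O_X(g^*(0))|_{C_v}=0$ is a correct, more hands-on version of the same point.
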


\begin{proof}
By definition of $\Delta$, $h_s(v)\in B_0$ implies
that $h_s(v)\in\Int(\sigma)\in\P$ for $\sigma$ of codimension $0$
or codimension $1$. If $\sigma\in\P_{\max}$, then
necessarily $\bsigma(E_i)=\sigma$ and $\mathbf{u}(E_i)\in \Lambda_{\sigma}$ is
a tangent vector to $\sigma$, and hence can be viewed
as an element of $\Lambda_{h_s(v)}$. Necessarily
$f$ contracts the corresponding component $C_v$ of $C$
to the zero-dimensional stratum $X_{\sigma}$, and hence
the balancing condition holds from \cite[Prop.\ 2.25]{ACGSII}, 
keeping in mind that $\tau_x$ given in the statement of that proposition
vanishes because $C_v$ is contracted.

If instead $\sigma\in\P^{[n-1]}$, note that by Assumptions~\ref{ass:absolute},
(2), $\sigma$ is not contained in any cone
of $\Sigma(X)$ which is not also in $\P$. 
Noting also that as $\bsigma(E_i)$ necessarily contains $\sigma$,
we must have $\bsigma(E_i)\in \P$. In particular,
$\mathbf{u}(E_i)\in \Lambda_{\bsigma(E_i)}$
may then be viewed, via parallel transport in $\Lambda$, as a tangent
vector in $\Lambda_{h_s(v)}$.  

We may now split the punctured map $f$ at all nodes of
$C$ contained in $C_v$, and obtain by restriction a punctured
log map $f_v:C^{\circ}_v\rightarrow X$ by \cite[Prop.\ 5.2]{ACGSII}.
Note that the dual graph of $C^{\circ}_v$ consists of a single vertex 
with legs $E_1,\ldots,E_m$. Further, the contact order of $f_v$
at the puncture corresponding to $E_i$ agrees with the
contact order $\mathbf{u}(E_i)$ for the map $f$ of the corresponding edge or 
leg of $G$, oriented away from $v$. 
It is thus sufficient to show balancing for the
tropical map induced by $f_v$.

Note $f_v$ factors through the strict
morphism $X_{\sigma}\hookrightarrow X$. Moreover, by Lemma 
\ref{lem:toric dim one}, $X_{\sigma}$ is isomorphic to a stratum
of the toric variety $X_{\Sigma_{\sigma}}$, and hence we obtain
a punctured map $f_v:C_v^{\circ}\rightarrow X_{\Sigma_{\sigma}}$.
However, tropicalizations of punctured maps to toric varieties
are always balanced when viewed as maps to the corresponding
fans, see \cite[Rem.\ 2.26]{ACGSII}. 
The claimed balancing then follows from the construction of the
affine chart $\psi_{\sigma}$.

Finally, in the relative case, necessarily the underlying map
$\ul{g}\circ\ul{f}$ is constant on irreducible components of $C$
since $S$ is assumed to be affine. Thus balancing of
$\Sigma(g)\circ h_s$ holds again from \cite[Prop.\ 2.25]{ACGSII}.
\end{proof}

Note that if $\tau=(G,\bsigma,\mathbf{u})$ is a type of tropical map
to $\Sigma(X)$, the balancing condition of Lemma~\ref{lem:balancing}
still makes sense at vertices mapping to $B_0$. Indeed, for $v\in V(G)$ with $\bsigma(v)$ a codimension
zero or one cone of $\P$, we may choose any point $x\in \Int(\bsigma(v))$
and \eqref{eq:balancing} makes sense inside $\Lambda_x$. Further, in the
relative case, by composing with $\Sigma(g)$, $\tau$ yields a type of 
tropical map to $\Sigma(S)=\RR_{\ge 0}$, and again it makes sense
to ask that such a type be balanced.

Thus we define:

\begin{definition}
\label{def:balanced type}
A type $\tau$ of tropical map to $\Sigma(X)$ is \emph{balanced} if:
\begin{enumerate}
\item
For each $v\in V(G)$ with $\bsigma(v)\in\P$ a codimension zero or one
cone, the balancing condition \eqref{eq:balancing} holds at $v$.
\item In the relative case, $\tau$ induces a type of a balanced tropical
map to $\Sigma(S)$.
\end{enumerate}
\end{definition}

The following observation shows that certain degree data of maps are
determined by tropical data:

\begin{lemma}
\label{lem:admissible degrees}
Let $\btau=(\tau,\mathbf{A})$ be a decorated type,
and suppose $\scrM(X,\btau)$ is non-empty. Then:
\begin{enumerate}
\item For each $v\in V(G)$
with $\bsigma(v)\in\P_{\max}$, we have
$\mathbf{A}(v)=0$.
\item Assume $v\in V(G)$ with 
$\rho=\bsigma(v)\in \P^{[n-1]}_{\inte}$
contained in $\sigma\in\P_{\max}$. Let $E_1,\ldots,E_r$ be the edges
adjacent to $v$ with $\bsigma(E_i)=\sigma$ for $1\le i\le r$. Let
$\delta:\Lambda_{\sigma}\rightarrow \ZZ$ be the quotient map of
$\Lambda_{\sigma}$ by $\Lambda_{\rho}$ positive on tangent vectors pointing from
$\rho$ into $\sigma$. Then $\mathbf{A}(v)=d [X_{\rho}]$, where 
\[
d=\sum_{i=1}^r \delta(\mathbf{u}(E_i)).
\]
We note that this number is independent of the choice of $\sigma$ containing
$\rho$ by the balancing condition.
\end{enumerate}
\end{lemma}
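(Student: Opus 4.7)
The plan is to exploit the fact that a non-empty moduli space provides an honest punctured map $f:C^\circ/W\to X$ of type $\btau$, then analyze the restricted map on the component $C_v$ using the stratification of $X$ and the identification of $X_\rho$ with a toric stratum from Lemma~\ref{lem:toric dim one}.

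For (1), if $\bsigma(v)=\sigma\in\P_{\max}$, then by construction of the type, $\ul{f}(C_v)\subseteq X_\sigma$. Since $\sigma$ is $n$-dimensional, $X_\sigma$ is a zero-dimensional stratum, so $\ul{f}|_{C_v}$ is constant, and therefore $\mathbf{A}(v)=f_*[C_v]=0$ by the assumption on $H_2(X)$ in Construction~\ref{const:phi canonical}.

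For (2), since $\rho\in\P^{[n-1]}_{\inte}$ is contained in two maximal cones (by the pseudomanifold condition), Proposition~\ref{prop:polyhedral pseudo} gives $X_\rho\cong\PP^1$ with $\partial X_\rho=X_\sigma\sqcup X_{\sigma'}$ a two-point set. In particular, any curve class supported on $X_\rho$ is an integer multiple of $[X_\rho]$, so $\mathbf{A}(v)=d\,[X_\rho]$ for some $d\geq 0$, equal to the degree of the induced map $C_v\to X_\rho\cong\PP^1$. I would then compute $d$ as the intersection number $f_*[C_v]\cdot X_\sigma$ on $X_\rho$, i.e.\ the number of preimages of the point $X_\sigma$ counted with multiplicity. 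The only nodes or punctures on $C_v$ mapping to $X_\sigma$ are those corresponding to edges and legs $E$ adjacent to $v$ with $\bsigma(E)=\sigma$, namely $E_1,\ldots,E_r$.

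The main technical step — and the one I expect will require the most care — is identifying the local intersection multiplicity of $f|_{C_v}$ with $X_\sigma$ at the point corresponding to $E_i$ with $\delta(\mathbf{u}(E_i))$. To do this I would split the punctured map at all nodes contained in $C_v$ using \cite[Prop.~5.2]{ACGSII} (as in the proof of Lemma~\ref{lem:balancing}) to obtain a punctured map $f_v:C_v^\circ\to X$ with the same contact orders on the $E_i$ and with image in $X_\rho$. Then Lemma~\ref{lem:toric dim one} embeds $X_\rho$ strictly as the one-dimensional stratum $X_{\Sigma_\rho,\psi_\rho(\rho)}$ of the toric variety $X_{\Sigma_\rho}$, producing a punctured map to $X_{\Sigma_\rho}$ with the contact orders $\mathbf{u}(E_i)\in\Lambda_\sigma$ now interpreted toricly. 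For punctured maps to toric varieties, the intersection multiplicity with the toric divisor $D'_\sigma$ corresponding to the ray $\psi_\rho(\sigma)$ at a puncture is exactly the pairing of the contact order with the primitive generator of the dual ray, i.e.\ $\delta(\mathbf{u}(E_i))$. Summing over the $E_i$ gives $\sum_{i=1}^r\delta(\mathbf{u}(E_i))$ for the total intersection with $D'_\sigma$, which pulls back to the total intersection of the image of $f_v|_{C_v}$ with the point $D'_\sigma\cap X_{\Sigma_\rho,\psi_\rho(\rho)}=X_\sigma$, giving $d=\sum_{i=1}^r\delta(\mathbf{u}(E_i))$ as required. The asserted independence of the choice of $\sigma$ is then an immediate consequence of Lemma~\ref{lem:balancing} applied at $v$, since $\delta$ vanishes on $\Lambda_\rho$ and the balancing relation among the $\mathbf{u}(E)$ at $v$ forces the sum on the $\sigma$-side to equal the analogous sum on the $\sigma'$-side.
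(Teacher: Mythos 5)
Your strategy matches the paper's: split off $C_v^\circ$ as in the proof of Lemma~\ref{lem:balancing} to obtain a punctured map $f_v$ with image in $X_\rho$, observe that $\mathbf{A}(v)=f_{v*}[C_v]$ is a multiple $d[X_\rho]$, and determine $d$ by intersecting with the divisor $D_{i_n}$ dual to the ray of $\sigma$ not in $\rho$. The difference is in the last step. The paper simply invokes \cite[Cor.~1.14]{Assoc}, which gives for any punctured map the constraint that the curve class intersected with a boundary divisor $D_j$ equals the sum of the $D_j$-components of all contact orders; applied to $f_v$ and $D_{i_n}$ this yields $d=\sum_i\delta(\mathbf{u}(E_i))$ in one line. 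You instead re-derive this identity in the one case needed via the toric model of Lemma~\ref{lem:toric dim one} and a local intersection count. The plan is sound, but the sentence ``summing over the $E_i$ gives $\sum_i\delta(\mathbf{u}(E_i))$ for the total intersection with $D'_\sigma$'' assumes without justification that the pullback divisor $\underline{f_v}^*(D'_{i_n})$ is supported entirely at the special points of $C_v$ corresponding to the $E_i$. That is exactly the nontrivial content the log structure provides and that \cite[Cor.~1.14]{Assoc} encapsulates: at a non-special point of $C_v$ the log structure is pulled back from $W$, so the pullback of the $D'_{i_n}$-divisor section is a constant, and if it vanished it would vanish identically, forcing the whole component into $X_\sigma$ (hence contracted, with zero degree contribution). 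Supplying this point makes your argument a correct, if more laborious, unfolding of the cited corollary in this special case.
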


\begin{proof}
By assumption there exists a punctured map 
$f:C^{\circ}/W\rightarrow X$ over
a geometric log point $W$ marked by $\btau$. Thus for
a vertex $v\in V(G)$, we obtain via splitting as in the proof of
Lemma~\ref{lem:balancing} a subcurve $C^{\circ}_v$ and a punctured
map $f_v:C^{\circ}_v\rightarrow X$.

In case (1), $\mathbf{A}(v)=0$ is obvious because $f_v$ maps
$C_{v}^{\circ}$ to a zero-dimensional stratum.

In case (2), ${\bf A}(v)$ must be a multiple of the class $[X_{\rho}]$.
If $\RR_{\ge 0} D_i^*$ is the ray of $\sigma$ not contained in
$\rho$, then this multiple may be 
determined by intersecting the curve class
of $f_v$ with the divisor $D_i$. The claim now 
follows immediately from 
\cite[Cor.\ 1.14]{Assoc}.
\end{proof}

\begin{definition}
\label{def:spine}
Let $G$ be a graph of genus $0$.
The \emph{spine} of $G$ is the smallest connected subgraph $G'\subseteq G$
containing all legs of $G$.
\end{definition}

The following is the key tropical argument of the paper. In item (1)
we consider the type of tropical map which will contribute to wall
structures. These are tropical maps where the domain has only one
(non-contracted) leg. Item (1) shows that this leg can
sweep out at most a codimension one polyhedral cone: in the
case of codimension
one these will play the role of a wall in the canonical wall structure.
Item (2), on the other hand, considers types which will correspond
to broken lines with respect to the canonical wall structure, and
the main point of (2) is to show that the spine of $G$, in this case,
is mapped into $B_0\subseteq |\Sigma(X)|$. This will be key for the
logarithmic/broken line correspondence theorem, 
Theorem~\ref{thm:main correspondence theorem}.

\begin{lemma}
\label{lem:key tropical lemma}
Fix a balanced realizable type 
$\tau=(G, \bsigma, \mathbf{u})$ of a genus zero tropical map to
$\Sigma(X)$ with a distinguished leg $L_{\out}\in L(G)$ and
$\mathbf{u}(L_{\out})\not=0$. Let $h:\Gamma(G,\ell) \rightarrow \Sigma(X)$ 
be the corresponding universal
family of tropical maps, defined over the cone $\tau$.
For $s\in \tau$,
we write $h_s:G\rightarrow \Sigma(X)$ for the induced map.
\begin{enumerate}
\item Suppose $G$ has only one leg, $L_{\out}$, and let $\tau_{\out}
\in \Gamma(G,\ell)$ be the corresponding cone. Suppose further that
$\bsigma(L_{\out})\in\P$. Then $\dim h(\tau_{\out})\le n-1=\dim X-1$.
\item Suppose $G$ has precisely two legs, $L_{\inc}$ and $L_{\out}$, with 
$\bsigma(L_{\inc}),\bsigma(L_{\out})\in \P$. Suppose further that
$\dim\tau=n-1$ and $\dim h(\tau_{\out})=n$.
Then with $G'$ the spine of $G$ and
$s\in\Int(\tau)$, $h_s(G')\subseteq B\subseteq |\Sigma(X)|$, and
$h_s(G')$ only intersects codimension zero and one cones of $\P$, except
possibly for the non-vertex endpoints of $L_{\inc}$ and $L_{\out}$.
Further, $\dim h(\tau_v)=n-1$ for every vertex $v$ of the spine $G'$.
\end{enumerate}
\end{lemma}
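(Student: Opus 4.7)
Let $v_{\out}\in V(G)$ be the vertex adjacent to $L_{\out}$. Equation~\eqref{eq:leg} gives
\[
h(\tau_{\out})=\bigl(h(\tau_{v_{\out}})+\RR_{\ge 0}\mathbf{u}(L_{\out})\bigr)\cap\bsigma(L_{\out}),
\]
so $\dim h(\tau_{\out})\le\min\{\dim h(\tau_{v_{\out}})+1,\,\dim\bsigma(L_{\out})\}$. We may assume $\bsigma(L_{\out})\in\P_{\max}$ (else $\dim h(\tau_{\out})\le n-1$ is already immediate), and it then suffices to prove $\dim h(\tau_{v_{\out}})\le n-2$. The plan is to induct on $|V(G)|$. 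For the base case $|V(G)|=1$: balancing at $v_{\out}$, if $v_{\out}\in B_0$, would force $\mathbf{u}(L_{\out})=0$, contradicting hypothesis; hence $\bsigma(v_{\out})\in\Delta$, giving $\dim h(\tau_{v_{\out}})\le\dim\bsigma(v_{\out})\le n-2$. For the inductive step, pick a leaf vertex $v\ne v_{\out}$ with unique adjacent edge $E$: if $\bsigma(v)\in\P$ has codim $\le 1$, balancing at $v$ forces $\mathbf{u}(E)=0$ (oriented from $v$), so the edge is constant under $h$ and we may delete both $v$ and $E$ without changing $h(\tau_{v_{\out}})$; otherwise $\bsigma(v)\in\Delta$ and the codim $\ge 2$ constraint $h_s(v)\in\bsigma(v)$ against the single edge-length freedom $\ell(E)$ yields a codim $\ge 1$ reduction of $h(\tau_{v_{\out}})$ after applying induction to the pruned graph.

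\textbf{Part~(2).} Let $v_{\inc},v_{\out}$ be the vertices adjacent to $L_{\inc},L_{\out}$, and let $G'$ be the unique path in the tree $G$ from $v_{\inc}$ to $v_{\out}$ together with the two legs. The complement $G\setminus G'$ decomposes into subtrees $T_1,\ldots,T_k$, each attached to $G'$ by a single edge $E_i$; treating each $E_i$ as a leg of $T_i$ with outgoing contact order $-\mathbf{u}(E_i)$, Part~(1) applied to $T_i$ bounds its image contribution by $n-1$. Under the hypotheses $\dim\tau=n-1$ and $\dim h(\tau_{\out})=n$, equation~\eqref{eq:leg} at $L_{\out}$ forces $\bsigma(L_{\out})\in\P_{\max}$ and $\dim h(\tau_{v_{\out}})=n-1$. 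First, no spine vertex $v$ can have $\bsigma(v)$ of codim $\ge 2$: the constraint $h_s(v)\in\bsigma(v)$ combined with the tree structure would drop the rank of $\tau\to\bsigma(v_{\out})$ below $n-1$, contradicting $\dim h(\tau_{v_{\out}})=n-1$. Hence every spine vertex has codim $\le 1$. Second, to obtain goodness we walk along the spine: $\bsigma(v_{\out})\subseteq\bsigma(L_{\out})\in\P_{\max}$ is good, and inductively, given a spine vertex $v$ with $\bsigma(v)\in\P$ of codim $\le 1$ and next spine edge $E$ with other endpoint $v'$, the minimal cone $\bsigma(E)$ of $\Sigma(X)$ containing $\bsigma(v)$ is an $n$-dimensional cone; by Assumption~\ref{ass:absolute}(2), it must lie in $\P_{\max}$, hence $\bsigma(v')\subseteq\bsigma(E)\in\P_{\max}$ is good. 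This propagates $\bsigma(v)\in\P$ along the entire spine, giving $h_s(G')\subseteq B$. Third, the rank-budget argument---each spine vertex must absorb its share of the $(n-1)$-dimensional deformation carried by $\tau$---gives $\dim h(\tau_v)=n-1$ for every spine vertex $v$. The non-vertex endpoints of $L_{\inc},L_{\out}$ may lie on higher-codim faces of $\bsigma(L_{\inc}),\bsigma(L_{\out})$, accounting for the stated exception.

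\textbf{Main obstacle.} The delicate point is turning the dimension/constraint-propagation intuitions into rigorous bounds: in Part~(1), tracking precisely how a codim $\ge 2$ constraint at a distant $v_*\in\Delta$ reduces the dimension of $h(\tau_{v_{\out}})$ after the path-edge length freedoms are absorbed; in Part~(2), verifying that $\dim h(\tau_v)=n-1$ for every spine vertex---that the rank does not collapse somewhere along the spine because a spine contact order happens to lie tangent to the previously accumulated image. Both rely on careful use of balancing at good vertices to control the span of $\{\mathbf{u}(E)\}$ along the relevant subpaths.
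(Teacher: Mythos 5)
Your high-level intuitions — balancing propagation along the spine, bounding images by non-tangency, a rank budget carried by $\tau$ — are in the right spirit, but the argument has genuine gaps that your ``main obstacle'' paragraph only partially flags, and the overall strategy diverges from what actually makes the lemma go through.

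First, the approach differs structurally from the paper's. You attempt Part~(1) by induction on $|V(G)|$, pruning leaf vertices, and then treat Part~(2) as a separate spine-walk. The paper instead proves both parts simultaneously: it assumes (in Part~(1), toward a contradiction) that $\dim h(\tau_{\out})=n$, first reduces to $\dim\tau=n-1$ by passing to a face of $\tau$, and then constructs inductively a sequence of \emph{distinct} edges emanating from $L_{\out}$ that would have to continue forever — impossible in a finite tree. In Part~(2) the same sequence must terminate at $L_{\inc}$ and sweeps out the spine. This unified path-construction is what makes the finiteness of $G$ do actual work; your pruning argument never exploits it.

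Second, the inductive step of your Part~(1) has a hole you acknowledge but do not resolve: when the leaf vertex $v$ has $\bsigma(v)$ of codimension $\ge 2$ (possibly not even in $\P$), you cannot simply prune $v$ and $E$ — removing $v$ removes the constraint $h_s(v)\in\bsigma(v)$, which changes the universal cone $\tau$ and potentially \emph{enlarges} $h(\tau_{v_{\out}})$. Your phrase ``yields a codim $\ge 1$ reduction of $h(\tau_{v_{\out}})$ after applying induction to the pruned graph'' is circular: the constraint at $v$ may be entirely transverse to the image of $v_{\out}$, in which case pruning yields no reduction at all. The paper avoids this entirely by never deleting vertices.

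Third, and most seriously, your Part~(2) is missing the non-tangency observation that is the engine of the whole proof. When walking from spine vertex $v$ across spine edge $E$ to $v'$, your goodness propagation asserts that $\bsigma(E)$ is an $n$-dimensional cone, so Assumption~\ref{ass:absolute}(2) applies. But $\bsigma(E)$ is by definition the minimal cone containing $h(\tau_E)$, and if $\mathbf{u}(E)$ is tangent to $\bsigma(v)$, then $\bsigma(E)$ may equal $\bsigma(v)$ (codimension one), the argument fails, and $\bsigma(v')$ could then sit in codimension $\ge 2$. The paper establishes that spine edge contact orders are not tangent to the cone of the preceding spine vertex by a detach-and-reglue argument: if $\mathbf{u}(E_i)$ were tangent to $\sigma_{v_i}$, split $G$ at the edge, translate the component not containing $L_{\out}$ by $\epsilon\,\mathbf{u}(E_i)$, and reglue after adjusting $\ell(E_i)$ to obtain a new tropical map of type $\tau$ with the same image of $L_{\out}$ but genuinely different position data — forcing $\dim\tau\ge n$, a contradiction. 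This observation is needed not just for goodness propagation, but also to show every spine vertex has $\dim h(\tau_v)=n-1$: your ``rank-budget'' assertion is precisely the claim that rank does not collapse because a contact order happens to be tangent, and that is exactly what the detach-and-reglue rules out. Without it, both the codimension bound and the $\dim h(\tau_v)=n-1$ statements are unsupported.
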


\begin{proof}
We prove both items simultaneously. We may assume in the first
case that to the contrary $\dim h(\tau_{\out})=n$, and thus
in both cases we have $\dim h(\tau_{\out})=n$.
Further, in the first case we may also assume that $\dim \tau=n-1$.
Indeed, if $\dim \tau>n-1$, there must be an $(n-1)$-dimensional
face $\tau'$ of $\tau$, necessarily corresponding to a type
$\tau'$ which is a contraction of $\tau$, such that $\dim h(\tau'_{\out})
=n$. Thus we may replace $\tau$ by $\tau'$ in this case to 
ultimately achieve a contradiction.

Write $\sigma_v:=h(\tau_v)$.
As $\dim\tau=n-1$, $\dim \sigma_v\le n-1$.

We shall inductively find a sequence of distinct edges and legs
$L_{\out}=E_1,\ldots,E_p$ and vertices $v_1,\ldots,v_{p-1}$ of
$G$ such that $E_1=L_{\out}$ and $v_i$ is a vertex of $E_i$ and
$E_{i+1}$. Further, this sequence will satisfy the following
inductive properties, for $s\in \Int(\tau)$:
\begin{enumerate}
\item The images of the edges $h_s(E_i)$ are all contained in $B$
and only intersect codimension $0$ and $1$ cones of $\Sigma(X)$,
except possibly for the non-vertex endpoint of $L_{\out}$ and,
if $E_p=L_{\inc}$, also the non-vertex endpoint of $L_{\inc}$.
\item $\mathbf{u}(E_i)$ is not tangent to $\sigma_{v_i}$.
\item $\dim\sigma_{v_i}=n-1$. 
\end{enumerate}
We will be able to continue the induction provided $E_p$ is an edge.
Thus in the first case of the lemma, the induction process would continue
forever, a contradiction. In the second case, eventually $E_p=L_{\inc}$,
and $G'$ is the union of $E_1,\ldots,E_p$, giving the desired result.

For the base case, we take $E_1=L_{\out}$, $v_1$ the unique vertex of
$E_1$. Note from \eqref{eq:leg} that 
\begin{equation}
\label{eq:h tau out}
h(\tau_{\out})=(\sigma_{v_1}+\RR_{\ge 0}\mathbf{u}(L_{\out}))\cap
\bsigma(L_{\out}).
\end{equation}
As $\dim h(\tau_{\out})=n$ by assumption, necessarily $\bsigma(L_{\out})
\in\P_{\max}$ and $\dim\sigma_{v_1}= n-1$.
Thus $\bsigma(v_1)\in\P$ is either codimension $0$ or $1$, and in any
case $\sigma_{v_1}$ intersects the interior of $\bsigma(v_1)$. 
Further, $\bsigma(v_1)$ is a face of $\bsigma(L_{\out})$.
It is then clear that for $s\in \Int(\tau)$,
$h_s(v_1)\in\Int(\sigma_{v_1})$ and 
$h_s(L_{\out})$ is contained in $B$ and
only intersects codimension zero and one cones of $\Sigma(X)$ (except
possibly for the non-vertex endpoint of $L_{\out}$).
Further, if $\mathbf{u}(L_{\out})$ were tangent to $\sigma_{v_1}$,
then \eqref{eq:h tau out} implies $\dim h(\tau_{\out})=n-1$, a contradiction.
Thus inductive conditions (1)--(3) are satisfied.

We next observe that given a sequence of edges
$L_{\out}=E_1,\ldots,E_p$ and vertices $v_i$ of $E_i$ and $E_{i+1}$,
item (2) for $2 \le i < p$ holds regardless of the details of
the construction  of this sequence of edges. Indeed, suppose that
$\mathbf{u}(E_i)$ is tangent to $\sigma_{v_i}$. Split the graph
$G$ by detaching $E_i$ from $v_i$ to obtain two connected components
$G_1, G_2$ such that $E_i$ is a leg of $G_1$. Hence $E_1$ is also
contained in $G_1$. Now, with $s\in\Int(\tau)$, necessarily
$h_s(v_i)+\epsilon \mathbf{u}(E_i)\in \sigma_{v_i}$ for
$\epsilon$ such that $|\epsilon|$ is sufficiently small. Thus
there is an $s(\epsilon)\in \Int(\tau)$, depending on $\epsilon$, with
the property that $h_{s(\epsilon)}(v_i)=h_s(v_i)+\epsilon
\mathbf{u}(E_i)$. In particular, by changing the length of the
edge $E_i$, we may glue $h_s|_{G_1}$ to $h_{s(\epsilon)}|_{G_2}$
to obtain a tropical map $h_{s'}:G\rightarrow \Sigma(X)$ of type $\tau$ 
which does not coincide
with $h_s$, but for which $h_{s'}(E_1)=h_s(E_1)$.
Since $h_s(E_1)$ already varies in an $(n-1)$-dimensional family,
necessarily $\dim\tau$ must be at least
$n$, contradicting the assumption on the dimension.

Now assume further that $E_1,\ldots,E_p$ satisfy the inductive
conditions (1)--(3). We wish to construct $E_{p+1}$. By assumption
(1),
$\sigma_{v_p}$ is contained in a codimension one or codimension zero
cone $\sigma=\bsigma(v_p)$ of $\P$, and thus it follows that for $s\in \Int(\tau)$, 
$h_s(v_p)\in B_0$. As $\tau$ is a balanced type, the balancing
condition thus holds at $v_p$. Since $\mathbf{u}(E_i)$ is
not tangent to $\sigma_{v_p}$, there must be at least one other
edge $E$ adjacent to $v_p$ with $\mathbf{u}(E)$ not tangent to
$\sigma_{v_p}$ in order for balancing to hold. Choose one such edge
to be $E_{p+1}$, and let $v_{p+1}$ be the other vertex of $E_{p+1}$
if $E_{p+1}$ is an edge. 

We check the inductive conditions (1) and (3). For (1), note
that as $s\in\tau$ varies, $h_s(E_{p+1})$ varies in an $(n-1)$-dimensional
family, and as $h_s(E_{p+1})$ is not tangent to $\sigma_{v_p}$
by choice of $E_{p+1}$, $h_s(E_{p+1})$ fills out an $n$-dimensional 
subcone $\tau'$ of some
$\sigma'\in\Sigma(X)$ containing $\sigma$. By Assumptions~\ref{ass:absolute},(2),
$\sigma'\in\P$. Further, for any $s\in \Int(\tau)$,
$h_s(E_{p+1})$ may only intersect faces of $\sigma'$ of codimension at most
one, except possibly the non-vertex endpoint of
$E_{p+1}$ if $E_{p+1}=L_{\inc}$. Indeed, this is clear from 
\eqref{eq:leg} in the latter case.
Otherwise, $E_{p+1}$ has another vertex $v_{p+1}$, and if
$h_s(E_{p+1})$ meets a face $\sigma''\subseteq\sigma'$ of codimension
at least $2$, then $h_s(v_{p+1})\in\sigma''$ and $\bsigma(v_{p+1})
\subseteq\sigma''$. However, then $h_s(E_{p+1})$ can't vary in an
$(n-1)$-dimensional family. Thus inductive condition (1) follows,
and (3) is also clear for $\sigma_{v_{p+1}}$.
\end{proof}

\begin{remark}
\label{rem:spine unique}
In fact, the proof of the lemma tells us a bit more. In case (2),
with notation as in the proof, all edges of $G$ adjacent to $v_i$
except for $E_i$ and $E_{i+1}$ are tangent to $\sigma_{v_i}$. 
Indeed, if not, there would be a choice for the edge $E_{i+1}$
and the sequence of edges would not be unique. However, the edges
$E_1,\ldots,E_p$ must be the unique sequence of edges from
$L_{\out}$ to $L_{\inc}$ in the spine $G'$.
\end{remark}

\section{The canonical wall structure and logarithmic broken lines}
\label{sec:wall structures and log broken lines}

We continue with a pair $(X,D)$ satisfying Assumptions~\ref{ass:absolute}
or \ref{ass:relative} in the absolute and relative cases. We recall
also we have fixed data:
\begin{enumerate}
\item A group of degree data $H_2(X)$.
\item A saturated finitely generated monoid $Q\subseteq H_2(X)$ such that
$Q\cap (-Q)=H_2(X)_{\tors}$ which contains the classes of all
effective curves on $X$. We write the monomial maximal ideal of $Q$
\[
\fom:=Q\setminus Q^{\times}.
\]
\end{enumerate}
Throughout this section, we also fix a monoid ideal 
$I\subseteq Q$ such that $\sqrt{I}=\fom$. Equivalently, we require
$Q\setminus I$ to be finite.

\subsection{Recall of monomials on $B$}
\label{subsec:recall of monomials}
We have constructed $(B,\P,\varphi)$ 
with $\varphi$ a $Q^{\gp}_{\RR}$-valued 
MPL function as given in Construction~\ref{const:phi canonical}.
This choice of function then yields a local system $\shP$
\cite[Def.\ 1.15]{Theta} on $B_0=B\setminus\Delta$
fitting into an exact sequence
\[
0\longrightarrow \ul{Q}^{\gp} \longrightarrow \shP \longrightarrow \Lambda
\longrightarrow 0,
\]
where $\ul{Q}^{\gp}$ denotes the constant sheaf with stalk $Q^{\gp}$
on $B_0$.\footnote{In the case that $Q^{\gp}$ has torsion, 
\cite[Def.\ 1.15]{Theta} is not suitable. Rather, one may define $\shP$
using the explicit description of parallel transport given below
using the given kinks $\kappa_{\rho}\in Q^{\gp}$. In the formalism
of \cite{Theta}, such torsion may be accommodated by incorporating it
into the parameter ring $A$, see \cite[Rem.~5.17]{Theta}. However,
we do not need such involved notions here.}
We write the map $\shP_x\rightarrow \Lambda_x$ as 
$p\mapsto \bar p$. 
Further, for each $x\in B_0$, \cite[Def.\ 1.16]{Theta} gives a
submonoid $\shP^+_x\subseteq \shP_x$ of exponents of monomials
defined at $x$.

For our purposes, rather than reviewing the definition of
$\shP$, it is easier to give explicit descriptions of the monoids
$\shP^+_x$ and the effects of parallel transport under these
descriptions. 

For $\sigma\in\P_{\max}$, $x\in\Int(\sigma)$, we have
\begin{equation}
\label{eq:P+x interior}
\shP^+_x= \Lambda_x \times Q.
\end{equation}

For $\rho\in\P^{[n-1]}_{\partial}$, $x\in\Int(\rho)$, we have
\begin{equation}
\label{eq:P+x boundary}
\shP^+_x=\Lambda_{\rho\sigma}\times Q 
\end{equation}
where $\Lambda_{\rho\sigma}$ is the monoid of integral tangent
vectors contained in the tangent wedge $T_{\rho}\sigma$ of $\sigma$
along the face $\rho$.
If $\sigma\in\P_{\max}$ contains $\rho$, parallel transport in the
local system $\shP$ from $x$ to $y\in\Int(\sigma)$ induces the inclusion
$\shP^+_x\hookrightarrow \shP^+_y$ given by $(\lambda,q)\mapsto (\lambda,q)$.

For $\rho\in\P^{[n-1]}_{\inte}$, $x\in \Int(\rho)$, we have
\begin{equation}
\label{eq:P+x alternate}
\shP^+_x=(\Lambda_{\rho}\oplus \NN Z_+\oplus \NN Z_-\oplus Q)/
\langle Z_++Z_-=\kappa_{\rho}\rangle.
\end{equation}
This abstract description requires an ordering
$\sigma,\sigma'\in\P_{\max}$ of the maximal cells containing
$\rho$ and a choice of vector $\xi\in\Lambda_x$ pointing into
$\sigma$ and representing a generator of $\Lambda_{\sigma}/\Lambda_{\rho}$.
Then for $y\in \Int(\sigma)$, $y'\in\Int(\sigma')$, parallel transport
in the local system $\shP$ yields inclusions 
\begin{align}
\label{eq:transport1}
\begin{split}
\fot_{\rho\sigma}:\shP_x^+\hookrightarrow  {} & \shP_y^+\\
(\lambda_{\rho},a Z_+,b Z_-,q)\mapsto {} & 
\big(\lambda_{\rho}+(a-b)\xi, q + b \kappa_{\rho})
\end{split}
\end{align}
and
\begin{align}
\label{eq:transport2}
\begin{split}
\fot_{\rho\sigma'}:\shP_x^+\hookrightarrow  {} & \shP_{y'}^+\\
(\lambda_{\rho},a Z_+,b Z_-,q)\mapsto  {} & 
\big(\lambda_{\rho}+(a-b)\xi, q + a \kappa_{\rho})
\end{split}
\end{align}
respectively. See the discussion of \cite[\S2.2]{Theta}.

\medskip

Given a choice of monoid ideal $I\subseteq Q$, we also introduce
the monoid ideal $I_x\subseteq \shP^+_x$ when $x\in \Int(\sigma)$, $\sigma
\in\P_{\max}$ defined in the description \eqref{eq:P+x interior} as
\begin{equation}
\label{eq:Ix def}
I_x:= \Lambda_x \times I.
\end{equation}

\begin{notation}
\label{not:monomial notation}
For $x\in\Int(\sigma)$, $\sigma\in\P_{\max}$, we will often write a monomial
in $\kk[\shP^+_x]=\kk[Q][\Lambda_x]$ either as $z^m$ for $m\in\shP^+_x$
or as $t^qz^{\bar m}$
for $(\bar m, q)\in\Lambda_x\oplus Q$ via \eqref{eq:P+x interior}.

Similarly, if $x\in\Int(\rho)$ with $\rho\in \scrP^{[n-1]}$, 
we have a canonically defined submonoid
$\Lambda_{\rho}\oplus Q\subseteq \shP^+_x$ via
\eqref{eq:P+x boundary} or
\eqref{eq:P+x alternate}, hence defining a subring
$\kk[Q][\Lambda_{\rho}]\subseteq \kk[\shP^+_x]$. We again write
monomials in this ring as $t^qz^{\bar m}$ for $q\in Q, \bar m
\in\Lambda_{\rho}$. 
\end{notation}

We will frequently need to use parallel transport in
$\shP^+$ from a cell $\sigma\in\P_{\max}$ to a cell
$\sigma'\in\P_{\max}$, either with $\sigma=\sigma'$ or
$\sigma\cap\sigma'=\rho\in \P^{[n-1]}$. Take any $x\in \Int(\sigma)$,
$x'\in \Int(\sigma')$. If $\sigma=\sigma'$, we may define
\[
\fot_{\sigma,\sigma'}:\shP^+_x\rightarrow \shP^+_{x'}
\]
to be given by parallel transport along a path contained in $\Int(\sigma)$;
under the representation \eqref{eq:P+x interior}, this map is the identity.
On the other hand, if $\sigma\cap\sigma'\in \P^{[n-1]}$,
parallel transport along a path contained in $\Star(\rho)$ gives a map
\[
\fot_{\sigma,\sigma'}:\shP^+_x\rightarrow \shP_{x'}.
\]
Noting that at the level of groups, $\fot_{\sigma,\sigma'}
=\fot_{\rho\sigma'}\circ\fot_{\rho\sigma}^{-1}$, it follows that if 
$m\in \shP^+_x$ and $\bar m \in T_\rho\sigma$, then
$\fot_{\sigma,\sigma'}(m)\in \shP^+_{x'}$.
Thus, either in this case or the case $\sigma=\sigma'$,
we may write $\fot_{\sigma,\sigma'}(m)\in \shP^+_{x'}$ whenever
\[
m\in \shP^+_x, \quad \bar m \in T_{\sigma\cap\sigma'}\sigma.
\]

If we let $R'_I$ denote the subring of
$(\kk[Q]/I)[\Lambda_{\sigma}]$ generated by monomials 
of the form $t^q z^{\bar m}$ for $\bar m\in T_{\sigma\cap\sigma'}\sigma$, we
obtain a ring homomorphism
\begin{equation}
\label{eq:fot ring hom}
\fot_{\sigma,\sigma'}:R'_I \rightarrow (\kk[Q]/I)[\Lambda_{\sigma'}]
\end{equation}

\subsection{The canonical wall structure}
\subsubsection{Recall of wall structures}
We recall the notion of \emph{walls} and \emph{wall structures} 
from \cite[Def.~2.11]{Theta}:

\begin{definition}
A \emph{wall} on $(B,\P)$ is a codimension one rational polyhedral subset
$\fop\not\subseteq\partial B$ of some $\sigma\in\P_{\max}$,
along with an element
\[
f_{\fop}=\sum_{m\in \shP^+_x,\bar m\in\Lambda_{\fop}}c_m z^m\in \kk[\shP^+_x],
\]
for $x\in\Int(\fop)$. Identifying $\shP_y$ with $\shP_x$ by
parallel transport inside $\sigma\setminus\Delta$,
we require that $m\in \shP^+_y$
for all $y\in \fop\setminus\Delta$ when $c_m\not=0$. 
We further require that $f_{\fop}\equiv 1\mod \fom$.
\end{definition}

\begin{definition}
A \emph{wall structure} $\scrS$ on $(B,\P)$ is a finite set of walls.
\end{definition}

\begin{remark}
\label{rem:differences}
The above definitions differ in a couple of ways from that of
\cite[Def.~2.11]{Theta}. First, we are less permissive with wall
functions, insisting that $f_{\fop}\equiv 1\mod\fom$. Second,
we are more permissive with the notion of wall structure. In
\cite{Theta}, we insist walls form the codimension one cells of
a rational polyhedral decomposition of $B$ refining $\P$. This
is imposed there to make it easier to describe gluing. However,
a wall structure in the above more liberal sense is equivalent
(in the sense of Definition~\ref{def:wall structure stuff} below)
to one in the sense of \cite[Def.~2.11]{Theta},
and we ignore this issue in this section and the next, 
only returning to the convention of \cite{Theta} in \S\ref{sec:main theorem}.
\end{remark}

\begin{definition}
\label{def:wall structure stuff}
For a wall structure $\scrS$, we define
\begin{align*}
|\scrS| := {} & \bigcup_{\fop\in \scrS} \fop\cup \bigcup_{\rho\in
\P^{[n-1]}} \rho,\\
\Sing(\scrS) := {} & \Delta\cup \bigcup_{\fop\in\scrS} \partial\fop
\cup \bigcup_{\fop,\fop'\in \scrS}(\fop\cap \fop')
\end{align*}
where the last union is over all pairs of walls $\fop,\fop'$ with
$\fop\cap\fop'$ codimension at least two.

If $x\in B\setminus\Sing(\scrS)$, we define
\begin{equation}
\label{eq:fx def}
f_x:= \prod_{x\in\fop\in \scrS} f_{\fop}.
\end{equation}
We say two wall structures are \emph{equivalent 
(modulo $I$)} if
$f_x=f'_x\mod I$ for all $x\in B\setminus (\Sing(\scrS)\cup \Sing(\scrS'))$.
Generally we omit
mention of $I$ if clear from context.
\end{definition}

\subsubsection{The construction}

\begin{definition}
\label{def:wall type}
A \emph{wall type} is a type $\tau=(G,\bsigma,{\mathbf u})$ 
of tropical map to $\Sigma(X)$ such that:
\begin{enumerate}
\item
$G$ is a genus zero graph with $L(G)=\{L_{\out}\}$ with
$\bsigma(L_{\out})\in\P$ and $u_{\tau}:={\bf u}(L_{\out})\not=0$.
\item $\tau$ is realizable and balanced.
\item Let $h:\Gamma(G,\ell)\rightarrow \Sigma(X)$ be the corresponding
universal family of tropical maps, and $\tau_{\out}\in \Gamma(G,\ell)$
the cone corresponding to $L_{\out}$. Then $\dim\tau=n-2$ and
$\dim h(\tau_{\out})=n-1$. Further, $h(\tau_{\out})\not\subseteq
\partial B$.
\end{enumerate}
A \emph{decorated wall type} is a decorated type $\btau=(\tau,{\bf A})$
with $\tau$ a wall type.
\end{definition}

Before using wall types to define the invariants we use
in the canonical wall structure, we first make an observation
in the relative case needed to show properness of the relevant
moduli spaces. For most of the paper, we only work with the absolute
moduli space $\scrM(X,\tau)$, but it turns out that in the relative
case, we may also work with the relative moduli space $\scrM(X/S,\tau)$
when $\tau$ is a wall type. To make this precise, we note that a realizable
type $\tau$ for a punctured map to $X$ is a type for $X/S$ if 
the universal tropical map over $\tau$ to $\Sigma(X)$ fits into a commutative
diagram
\[
\xymatrix@C=30pt
{
\Gamma(G,\ell) \ar[r]^{h_{\tau}}\ar[d]&\Sigma(X)\ar[d]^{\Sigma(g)}\\
\tau\ar[r] & \Sigma(S)
}
\]

\begin{proposition}
\label{prop:relative case over S}
In the relative case, let $\tau$ be a wall type for $X$, and
let $\beta$ be the class of punctured map determined by the data
$u_{\tau}$ and $A\in Q\setminus I$ a non-zero curve class.
Then $\tau$ is a type of punctured map to
$X/S$, and $\scrM(X,\beta)=
\scrM(X/S,\beta)$.
\end{proposition}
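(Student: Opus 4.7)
Both claims reduce to a single tropical observation: in the relative case, a balanced tropical type with only one leg forces the composition with $\Sigma(g)$ to be fibrewise constant, and hence to descend to a linear map from the base cone to $\Sigma(S)=\RR_{\ge 0}$. The plan is as follows.

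For the first assertion, I would start from the balancing hypothesis of Definition~\ref{def:wall type}(2), which in the relative case includes that $\Sigma(g)\circ h_{\tau,s}:G\to\RR_{\ge 0}$ is balanced for each $s\in\tau$. Summing the balancing condition over the vertices of $G$ mapping to $\Int(\RR_{\ge 0})$, edge contributions cancel in pairs, so the sum of leg contact orders in $\Lambda_{\Sigma(S)}=\ZZ$ vanishes; with $L_{\out}$ the only leg this yields $d(\Sigma(g))(u_\tau)=0$. I would then upgrade this to constancy of $\Sigma(g)\circ h_{\tau,s}$ on $G$ by a standard maximum argument: choose $v\in V(G)$ where $\Sigma(g)\circ h_{\tau,s}$ attains its maximum $M$; if $M>0$, then balancing at $v$ holds, all adjacent edges have non-positive contact order, and $L_{\out}$ (if attached to $v$) contributes $0$, forcing every adjacent edge to have contact order zero; propagating through the connected graph $G$ then gives constancy. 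The resulting constant value depends linearly on $s\in\tau$ and provides the linear map $\tau\to\Sigma(S)$ that makes
\[
\xymatrix@C=30pt{\Gamma(G,\ell)\ar[r]^{h_\tau}\ar[d]&\Sigma(X)\ar[d]^{\Sigma(g)}\\ \tau\ar[r]&\Sigma(S)}
\]
commute, thereby realising $\tau$ as a type of punctured map to $X/S$.

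For the equality of moduli, I would construct an inverse to the tautological forgetful morphism $\scrM(X/S,\beta)\to\scrM(X,\beta)$. Given a basic stable punctured map $f:C^\circ/W\to X$ in $\scrM(X,\beta)$ with tropical type $\tau'$ contracting to $\beta$, Lemma~\ref{lem:balancing} supplies the balancing of $\Sigma(g)\circ h$ on $G$; since $\tau'$ has only the single leg $L_{\out}$ with contact order $u_\tau$, the argument of the previous paragraph applies verbatim to yield a canonical linear map $\Sigma(W)\to\Sigma(S)$, hence a log morphism $W\to S$ compatible with $f$, promoting $f$ to an object of $\scrM(X/S,\beta)$. The two constructions are visibly inverse.

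The main obstacle I anticipate is in verifying that the basic log structure on $W$ computed absolutely agrees with the one computed relative to $S$. Since in our setting the compatible map $\Sigma(W)\to\Sigma(S)$ is determined by the target tropical data rather than being an independent parameter, the universal cones representing the absolute and relative tropical types of $\tau'$ coincide, and the associated basic monoids agree; this should yield the equality of moduli stacks together with their obstruction theories.
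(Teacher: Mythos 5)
Your first assertion follows the same line as the paper: wall types are by definition realizable and balanced, so $\Sigma(g)\circ h_s:G\to\RR_{\ge 0}$ is a balanced tropical map with a single leg, hence constant, and the constant value varies linearly in $s$, giving the desired square. Your maximum argument is a reasonable expansion of what the paper dismisses as immediate. (One small caution: the "sum over vertices, edges cancel" step does not directly handle vertices sitting at $0\in\RR_{\ge 0}$, where balancing is not asserted; but your subsequent maximum argument is self-contained and does not need that preliminary step.)

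The second assertion is where there is a genuine gap. You propose to build an inverse to the forgetful map $\scrM(X/S,\beta)\to\scrM(X,\beta)$ by promoting an absolute punctured map $f:C^\circ/W\to X$ to a relative one via a constructed log morphism $W\to S$, and you correctly flag that the crux is whether the \emph{basic} log structure on $W$ is the same whether computed absolutely or relative to $S$. But you do not establish this; you only say it "should" hold because "the universal cones coincide." This is precisely the nontrivial content, and it is not just about cones: one must check that the relative and absolute basic monoids agree (which involves the map on ghost sheaves to $\ol\shM_S$ being controlled by the tropical data rather than adding freedom), and also that the perfect obstruction theories used to form virtual classes coincide. The paper handles exactly this by invoking \cite[Prop.~5.11]{ACGSII}, which is a prepared general statement comparing absolute and relative moduli when the type is defined over $S$. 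Without either proving that comparison or citing it, your second step is incomplete. The fix is short: replace the final hand-wave with the citation, or reproduce the argument of that proposition.
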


\begin{proof}
We first show that the type $\tau=(G,\bsigma,\mathbf{u})$ is in fact a type for
$X/S$. Since $G$ is connected, it is sufficient to show 
that for each $E\in E(G)\cup L(G)$, $\Sigma(g)_*({\bf u}(E))=0$.
Recall that as a wall type, $\tau$ is realizable and balanced.
Since $\tau$ is realizable, we obtain a family of tropical maps
$h_s:G\rightarrow \Sigma(X)$, $s\in \Int(\tau)$. Composing with
$\Sigma(g)$ gives a tropical map $\Sigma(g)\circ h_s:G\rightarrow
\Sigma(S)=\RR_{\ge 0}$, which is balanced by Definition~\ref{def:balanced type}.
But it is then immediate that this map must be constant,
as any tropical map to $\RR_{\ge 0}$ satisfying the balancing condition
and with only one leg must be constant. Thus $\tau$ is a type
defined over $S$.

The equalities of moduli spaces now follow from
\cite[Prop.\ 5.11]{ACGSII}.
\end{proof}

\begin{construction}
\label{const:wall construction}
Fix a wall type $\tau $ and a non-zero curve class $A\in Q\setminus I$.
Let $\beta$ be the class of punctured map determined by the data
$u_{\tau}$ and $A$. Then we obtain a reduced closed stratum
$\foM_{\tau}(\shX,\beta)\subseteq \foM(\shX,\beta)$,
and a moduli space $\scrM_{\tau}(X,\beta)$ along with a morphism
\[
\varepsilon:\scrM_{\tau}(X,\beta)\rightarrow \foM_{\tau}(\shX,\beta).
\]

\begin{lemma}
\label{lem:wall virtual dim}
$\scrM_{\tau}(X,\beta)$ is proper over $\Spec\kk$ and carries
a virtual fundamental class of dimension $0$.
\end{lemma}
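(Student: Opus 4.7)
The plan is to establish both assertions by combining the perfect relative obstruction theory for $\varepsilon\colon\scrM(X,\beta)\to\foM(\shX,\beta)$ from \cite[\S4]{ACGSII} with the standard virtual dimension formula for genus-zero punctured Gromov--Witten invariants, together with properness of the ambient moduli stack. The heart of the argument is a numerical vanishing, $K_X^{\log}\cdot A=0$, forced by the wall type condition $\bsigma(L_{\out})\in\scrP$.

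First I would prove this vanishing. Writing $K_X+D\equiv_{\QQ}\sum a_iD_i$ with $a_i\ge 0$ and $a_i>0$ precisely for bad $D_i$, it suffices to show $A\cdot D_i=0$ for every bad $D_i$. By the tropical intersection formula \cite[Cor.~1.14]{Assoc}, $A\cdot D_i$ is a weighted sum over legs $L$ of $G$ satisfying $\rho_i\subseteq\bsigma(L)$, weighted by the $\rho_i$-component of $\mathbf{u}(L)$. For a wall type the unique leg $L_{\out}$ has $\bsigma(L_{\out})\in\scrP$, and since $\scrP$ is closed under faces while bad rays $\rho_i$ are never in $\scrP$, this sum is empty. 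With the vanishing in hand, the standard dimension formula gives
\[
\text{vdim}\,\scrM(X,\beta)=-K_X^{\log}\cdot A+(n-3)+1=n-2.
\]
Pulling back the perfect obstruction theory along the closed embedding $\foM_{\tau}(\shX,\beta)\hookrightarrow\foM(\shX,\beta)$, which has codimension $\dim\tau$ by the idealized log smoothness of \cite[Thm.~3.24]{ACGSII}, and using $\dim\tau=n-2$ from Definition~\ref{def:wall type}(3), one then obtains
\[
\text{vdim}\,\scrM_{\tau}(X,\beta)=(n-2)-(n-2)=0.
\]

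For properness, the absolute case is immediate: $\scrM(X,\beta)$ is proper over $\Spec\kk$ by the general theory of \cite{ACGSII} since $X$ is projective, and $\scrM_{\tau}(X,\beta)$ is a closed substack. In the relative case, Proposition~\ref{prop:relative case over S} identifies $\scrM(X,\beta)$ with $\scrM(X/S,\beta)$, proper over $S$; the single-leg balancing of $\tau$ together with $\Sigma(g)_*(u_{\tau})=0$ forces the underlying stable map of any point to factor through a single fibre of $g$, and combined with the zero-dimensional virtual class, this pins the image of $\scrM_{\tau}(X,\beta)$ in $S$ to a finite set of closed points, yielding properness over $\Spec\kk$. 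The main obstacle I anticipate is the careful verification of this last step in the relative case, ensuring that no component of $\scrM_{\tau}$ spreads out across $S$; the absolute case and the dimension count itself are essentially formal consequences of \cite{ACGSII} once the vanishing of $K_X^{\log}\cdot A$ has been established.
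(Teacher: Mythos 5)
Your computation of the virtual dimension follows essentially the same route as the paper: the key point in both is the vanishing $(K_X+D)\cdot A=0$ forced by $\bsigma(L_{\out})\in\P$ via \cite[Cor.~1.14]{Assoc}. The two arguments package the dimension count differently — the paper computes $\dim\foM_\tau(\shX,\beta)=-n$ directly from \cite[Prop.~3.28]{ACGSII} and adds the relative virtual dimension $\chi(f^*\Theta_X)=n$, whereas you start from a nominal virtual dimension $n-2$ for $\scrM(X,\beta)$ and subtract a codimension. Your codimension claim needs more care than you give it: $\foM(\shX,\beta)$ is not pure-dimensional in general, so \textquotedblleft codimension $\dim\tau$\textquotedblright\ must really be deduced from the pure-dimensionality of $\foM(\shX,\bar\tau)$, which is exactly the content of \cite[Prop.~3.28]{ACGSII} the paper invokes. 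This is a gap in rigor but not in the underlying idea.

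The genuine gap is in your properness argument in the relative case. You observe that balancing and $\Sigma(g)_*(u_{\tau})=0$ force the underlying map to factor through a single fibre $X_s$, and then you appeal to the zero \emph{virtual} dimension to \textquotedblleft pin the image in $S$ to a finite set.\textquotedblright\ This last step does not work: virtual dimension zero says nothing about the actual dimension of the moduli space or its image, so it cannot be used to conclude finiteness. And without finiteness, \textquotedblleft factoring through a single fibre\textquotedblright\ is not enough, since $S$ is an affine (non-proper) base and $s$ may a priori vary. The paper's argument is sharper and geometric rather than numerical: because $\dim h(\tau_{\out})=n-1$ and $h(\tau_{\out})\not\subseteq\partial B$, Proposition~\ref{prop:gtrop affine submersion} forces $\Sigma(g)|_{h(\tau_{\out})}$ to be surjective onto $\RR_{\ge 0}$; combined with Proposition~\ref{prop:relative case over S} (which says $\Sigma(g)\circ h_s$ is constant), this propagates to surjectivity of $\Sigma(g)|_{h(\tau_v)}$ for every vertex $v$, which in turn forces the image of every component to lie in a vertical stratum, hence in $X_0$. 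Then $\scrM_\tau(X,\beta)$ is a closed substack of $\scrM(X\times_S 0,\beta)$, which is proper. You should replace the virtual-dimension step with this tropical argument showing the image lies specifically over $0\in S$.
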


\begin{proof}
In the absolute case, $\scrM_{\tau}(X,\beta)$ is closed substack
of $\scrM(X,\beta)$, which is proper over $\Spec\kk$
by \cite[Cor.\ 3.17]{ACGSII}. 

We next consider the relative case.
There is a morphism $\shX\rightarrow \shS = [\AA^1/\GG_m]$, induced
by $g$, where $\shS$ is the Artin fan of $S$. Write
$\shX_0:=\shX\times_{\shS} [0/\GG_m]$; this is a closed substack of $\shX$. 
As $\dim h(\tau_{\out})=n-1$ and
$h(\tau_{\out})\not\subseteq\partial B$, it follows that
$\Sigma(g):h(\tau_{\out}) \rightarrow \Sigma(S)=\RR_{\ge 0}$ is
surjective by Proposition~\ref{prop:gtrop affine submersion}.
By Proposition~\ref{prop:relative case over S}, 
$\Sigma(g)\circ h_s$ is constant for each $s\in\tau$, and hence
for any vertex $v\in V(G)$, $\Sigma(g):h(\tau_v)\rightarrow \RR_{\ge 0}$
is also surjective. From this it follows that any
punctured map $C^{\circ}\rightarrow \shX$ in $\foM_{\tau}(\shX,\beta)$
has image lying set-theoretically in $|\shX_0|$. Since $\foM_{\tau}(\shX,
\beta)$ is reduced by construction, any map must thus factor through
$\shX_0$ log-scheme-theoretically, and hence 
any punctured map in $\scrM_{\tau}(X,\beta)$ must factor through
$X_0$ log-scheme-theoretically.
Thus $\scrM_{\tau}(X,\beta)$
is a closed substack of $\scrM(X\times_S 0,\beta)$,
which is again proper over $\Spec\kk$.

We now calculate the virtual dimension. By \cite[Prop.\ 3.28]{ACGSII}, 
if $\bar\tau$ is the global type induced by $\tau$, then $\foM(\shX,
\bar\tau)$ is pure-dimensional and reduced of dimension
\[
-3+|L(G)|-\dim\tau=-3+1-(n-2)=-n.
\]
The same is then true for $\foM_{\tau}(\shX,\beta)$, as
the forgetful map $\foM(\shX,\bar\tau)\rightarrow \foM_{\tau}(\shX,\beta)$
is finite of generic degree $|\Aut(\bar\tau)|$.
The virtual relative dimension of
$\scrM_{\tau}(X,\beta)$ over $\foM_{\tau}(\shX,\beta)$ at
a punctured map $f:C^{\circ}\rightarrow X$ over a geometric point
is $\chi(f^*\Theta_{X})=A\cdot c_1(\Theta_{X})+n$
by Riemann-Roch.
Recall that $c_1(\Theta_X)=-(K_X+D)\equiv_{\QQ}-\sum_i a_i D_i$
by assumption. 
Further, for any generator $D_i^*$ of $\bsigma(L_{\out})$,
it follows that $a_i=0$ as $\bsigma(L_{\out})\in\P$. Thus by 
\cite[Cor.\ 1.14]{Assoc}, $A\cdot D_i=0$ whenever $a_i\not=0$.
Thus the total virtual dimension is $0$ as claimed.
\end{proof}

We now define
\begin{equation}
\label{eq:Ntau def}
W_{\tau,A}:=\deg [\scrM_{\tau}(X,\beta)]^{\virt}.
\end{equation}
In addition, $h|_{\tau_{\out}}:\tau_{\out}\rightarrow \sigma$
induces a morphism
\[
h_*: \Lambda_{\tau_{\out}}\rightarrow \Lambda_{\sigma},
\]
and we define
\begin{equation}
\label{eq:ktau def}
k_{\tau}:=|\coker(h_*)_{\tors}|=
|\Lambda_{h(\tau_{\out})}/h_*(\Lambda_{\tau_{\out}})|.
\end{equation}
Finally, set
\begin{equation}
\label{eq:wall def}
\boxed{\fop_{\tau,A}:=\big(h(\tau_{\out}), \exp(k_{\tau} W_{\tau,A} t^A 
z^{-u_{\tau}})\big).}
\end{equation}
Here we view $t^A z^{-u_{\tau}}$ as a monomial in $\kk[\shP^+_x]$
for $x\in \Int(h(\tau_{\out}))$
as in Notation~\ref{not:monomial notation}. To view the
exponential as a finite sum, note that
$\kk[Q][\Lambda_{h(\tau_{\out})}]\subseteq \kk[\shP^+_x]$, and 
we may truncate the infinite sum by removing all monomials
which are zero in $(\kk[Q]/I)[\Lambda_{h(\tau_{\out})}]$.

We then define:

\begin{definition}
\label{def:undecorated structure} 
\[
\scrS_{\can}^{\mathrm{undec}}:=\{\fop_{\tau,A}\,|\, \hbox{$\tau$ an
isomorphism class of wall type, $A\in Q\setminus
I$, $W_{\tau,A}\not=0$}\}.
\]
\end{definition}

We note the superscript ``$\mathrm{undec}$'' refers to the \emph{undecorated}
wall structure, in distinction with the \emph{decorated} wall structure
we will define in Construction~\ref{const:decorated wall structure},
which will be equivalent to the above wall structure but which will
be more useful in the proof of consistency.
\end{construction}

\begin{proposition}
\label{prop:canonical is wall}
$\scrS^{\mathrm{undec}}_{\can}$ is a wall structure.
\end{proposition}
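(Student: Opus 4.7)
The plan is to verify (i) that $\scrS_\can^{\mathrm{undec}}$ is a finite set, and (ii) that each element $\fop_{\tau,A}$ satisfies the conditions to be a wall in the sense of the definition above.

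For (i), the assumption $\sqrt{I} = \fom$ forces $Q \setminus I$ to be finite, so it suffices to bound the contributing wall types $\tau$ for each fixed $A$. The contact order $u_\tau$ of the unique leg is determined up to finite ambiguity by $A$: each component of $u_\tau$ is controlled by intersection pairings $A \cdot D_i$ as in \cite[Cor.~1.14]{Assoc}, leaving only finitely many classes $\beta = (u_\tau, A)$ compatible with $A$. For each such $\beta$, the moduli space $\scrM(X, \beta)$ is of finite type (indeed proper, by the argument in Lemma~\ref{lem:wall virtual dim}), so the stratification by global types produces only finitely many candidate $\tau$ with nonempty $\scrM_\tau(X, \beta)$, and hence possibly nonzero $W_{\tau, A}$.

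For (ii), set $\fop := h(\tau_\out)$. By Definition~\ref{def:wall type}(3), $\fop$ is rational polyhedral of codimension one in $B$ and not contained in $\partial B$; by \eqref{eq:leg}, $\fop \subseteq \bsigma(L_\out) \in \P$, hence in some $\sigma \in \P_\max$. I would then check the wall function $f_\fop = \exp(k_\tau W_{\tau,A}\, t^A z^{-u_\tau})$ in $\kk[\shP^+_x]/I$ as follows. Each nonzero term is of the form $c_k\, t^{kA} z^{-k u_\tau}$ with $k \geq 1$, and \eqref{eq:leg} shows that $u_\tau$ is tangent to $\fop$, so $-k u_\tau \in \Lambda_\fop$ as required. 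Membership of $(-k u_\tau, kA)$ in $\shP^+_y$ for every $y \in \fop \setminus \Delta$ is immediate via \eqref{eq:P+x interior} when $\bsigma(L_\out) \in \P_\max$, and follows from \eqref{eq:P+x alternate} via the lift $(-k u_\tau, 0, 0, kA)$ when $\bsigma(L_\out) \in \P^{[n-1]}_\inte$; in the latter case, parallel transport to a neighbouring maximal cell via \eqref{eq:transport1} or \eqref{eq:transport2} preserves $\shP^+$-membership since the $Z_\pm$-coefficients of the lift vanish. Since any contributing punctured map is non-constant (as $u_\tau \neq 0$), its curve class $A$ is non-torsion and hence lies in $\fom$, so the exponential begins with $1$; the resulting series is finite modulo $I$ because $Q \setminus I$ is finite, so $t^{kA} \in I$ for all sufficiently large $k$.

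The step most likely to require care is the case analysis for $\shP^+_x$ when $\fop$ lies on a codimension-one cell $\rho \in \P^{[n-1]}_\inte$ instead of in the interior of a maximal cell, because the description \eqref{eq:P+x alternate} and the associated parallel transport formulas are more delicate than their maximal-cell counterparts. However, the tangency of $u_\tau$ to $\fop$, together with the vanishing of the $Z_\pm$-coefficients in the canonical lift, makes both cases routine, so no genuine obstacle is expected.
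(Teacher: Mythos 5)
Your argument for finiteness is correct and follows the same route as the paper. However, your treatment of wall membership has a genuine gap in the case analysis.

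You split on whether $\bsigma(L_\out)\in\P_\max$ or $\bsigma(L_\out)\in\P^{[n-1]}_\inte$, and claim membership is ``immediate via \eqref{eq:P+x interior}'' in the first case. This is false: even when $\bsigma(L_\out)$ is a maximal cell $\sigma$, the cone $\fop=h(\tau_\out)$ can meet a codimension-one face $\rho$ of $\sigma$ in a set of dimension $n-2$ that is \emph{not} contained in $\Delta$ (since $\Delta$ is the union of codimension-\emph{two} cones of $\P$, while $\fop\cap\rho$ sits inside the codimension-\emph{one} cell $\rho$). At such a point $y\in\fop\cap\rho\setminus\Delta$, the description \eqref{eq:P+x interior} does not apply; one must use \eqref{eq:P+x alternate} or \eqref{eq:P+x boundary} and verify that $(-u_\tau,A)$ is in the image of the transport map from $\shP^+_y$.

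The subcase you are missing is when $y\in h(\tau_v)\setminus\partial B$ (where $v$ is the vertex adjacent to $L_\out$), i.e.\ $y$ is the image of the outgoing vertex and lies in $\Int(\rho)$. Here the outgoing direction $u_\tau$ points from $\rho$ \emph{into} $\sigma$, so $-u_\tau$ points \emph{out} of $\sigma$, and the lift of $(-u_\tau,A)$ to $\shP^+_y$ has a nonzero $Z_-$-coefficient equal to $\delta(u_\tau)>0$. Your ``canonical lift $(-ku_\tau,0,0,kA)$ with vanishing $Z_\pm$-coefficients'' does not exist in this situation. To get membership you must instead use the lift $(-u',0,\delta(u_\tau)Z_-,A-\delta(u_\tau)\kappa_\rho)$, and this requires the nontrivial input $A-\delta(u_\tau)\kappa_\rho\in Q$, which follows only from the geometric constraint in Lemma~\ref{lem:admissible degrees}(2) that ${\bf A}(v)=d[X_\rho]$ with $d\ge\delta(u_\tau)$ (hence one needs $W_{\tau,A}\neq 0$ to have a punctured map realizing the marking). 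This is precisely the point at which the choice of MPL function with kinks $\kappa_\rho=[X_\rho]$ becomes essential, and omitting it leaves the wall-membership claim unproved.
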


\begin{proof}
We need to verify that (1) $\fop_{\tau,A}$ is always a wall and (2) 
$\scrS^{\mathrm{undec}}_{\can}$ is finite. 

For the first item, fix $\tau,A$. Write $u:=u_{\tau}$. It is obvious that 
$h(\tau_{\out})$ is a rational polyhedral cone of codimension one by
assumption. We need to check that the parallel transport of $(-u,A)$
to $\shP_y$ lies in $\shP^+_y$ for each point
$y\in \fop_{\tau,A}\setminus\Delta$.
Since $\Delta$ is the union of all codimension two cones of $\P$, this
is only an issue if $y\in\rho\subseteq\sigma$ where $\rho$ is
codimension one and $\fop_{\tau,A}\not\subseteq\rho$. In this case,
$\dim\bsigma(L_{\out})=n$. Let $v\in V(G)$ be the vertex adjacent to $L_{\out}$,
$\tau_v\in \Gamma(G,\ell)$ the corresponding cone. 
We divide the analysis into three cases, depending on the relationship
between $y$ and $h(\tau_v)$.

{\bf Case 1:} $y\not\in h(\tau_{v})$. 
 From \eqref{eq:leg}, necessarily $-u\in T_{\rho}\sigma$. 
If $y\in\partial B$,
then it is immediate from \eqref{eq:P+x boundary} that
$(-u,A)\in \shP^+_y$.
If instead $y\not\in\partial B$, we may choose
$\xi\in\Lambda_y$ pointing into $\sigma$ as in the description of $\shP^+_y$
of \eqref{eq:P+x alternate}, and then write $-u=u'+a\xi$ for some $a>0$
and $u'\in\Lambda_{\rho}$.
Thus by \eqref{eq:transport1}, $(-u,A)$ is identified with the
element $(u',aZ_+,0,A)$ of $\shP^+_y$. 

{\bf Case 2:} $y \in h(\tau_v)\cap \partial B$.
We are thus in the relative case. 
By Proposition~\ref{prop:relative case over S}, $\beta$ must
be defined over $S$, which in particular means that $u$ is
tangent to the fibres of $g_{\trop}$, and hence $u$ is tangent
to $\partial B$. However, by \eqref{eq:P+x boundary}, it then
follows that $(-u,A)\in \shP^+_y$.

{\bf Case 3:} $y\in h(\tau_v)\setminus \partial B$. Here we assume that
$W_{\tau,A}\not=0$, (as otherwise such a wall would not be included in
$\scrS^{\mathrm{undec}}_{\can}$). Thus there is necessarily a punctured map
$f:C^{\circ}/W \rightarrow X$ over a geometric log point with type $\tau'$
equipped with a contraction to $\tau$. Thus we may mark $f$ with $\tau$, and
this leads to a decorated type $\btau=(\tau,{\bf A})$ with ${\bf A}(v)$ given by
the curve class of the map $f_v:C^{\circ}_v\rightarrow X$, where $C^{\circ}_v$
is the subcurve of $C^{\circ}$ corresponding to $v\in V(G)$, as in the proof of
Lemma~\ref{lem:balancing}. In this case, $u$ points into $\sigma$, and in the
notation $\delta$ of Lemma~\ref{lem:admissible degrees}, (2), ${\bf
A}(v)=d[X_{\rho}]$ with $d \ge \delta(u)$. Thus the total curve class $A$
satisfies $A=\delta(u) [X_{\rho}]+A'$ for some $A'\in Q$.

We may now use the description \eqref{eq:P+x alternate} along with
\eqref{eq:transport1} to test whether $(-u,A)\in\shP^+_x$ lies in the image of
$\fot_{\rho\sigma}:\shP^+_y \rightarrow\shP^+_x$. Choosing $\xi\in\Lambda_y$ as
before, we may now write $u=u'+\delta(u)\xi$ for some $u'\in\Lambda_{\rho}$.
Thus $(-u,A)$ equals $\fot_{\rho\sigma}$ applied to
\[
(-u',0,\delta(u) Z_-, A-\delta(u) \kappa_{\rho})
=(-u',0, \delta(u) Z_-, A')\in \shP^+_y.
\]

\medskip

We have now covered all possible cases for the location of $y$,
and hence $\fop_{\tau,A}$ is a wall.

\medskip

To show $\scrS^{\mathrm{undec}}_{\can}$ is finite, we first observe that as
$Q\setminus I$ is finite by assumption on $Q$ and $I$, there are only a finite
number of choices for $A$. For determining $\tau$, there are only a finite
number of possibilities for $\bsigma(L_{\out})\in\P$. Given a choice of $A$ and
wall type $\tau$ with given $\bsigma(L_{\out})$, $W_{\tau,A}\not=0$ implies that
$\scrM_{\tau}(X,\beta)$ is non-empty, and then \cite[Cor.~1.14]{Assoc} shows
$u_{\tau}$ is determined by the choice of $A$ and $\bsigma(L_{\out})$. If
$\beta$ is the punctured curve class determined by $A$ and a given $u_{\tau}$,
then, since $\scrM(X,\beta)$ is finite type, there are only a finite number of
types of tropical maps $\tau$ appearing in the tropicalizations of curves in
$\scrM(X,\beta)$, i.e., there are only a finite number of $\tau$ such that
$\scrM_{\tau}(X,\beta)$ is non-empty.
\end{proof}

The main result of the paper, to be proved in \S\ref{sec:main theorem}, can then
be stated:

\begin{theorem}
$\scrS^{\mathrm{undec}}_{\can}$ is a consistent wall structure in the sense of
\cite[Def.\ 3.9]{Theta}.
\end{theorem}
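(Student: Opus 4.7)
My plan is to follow the strategy outlined in the introduction: reduce consistency to a local statement at each codimension-two joint $\foj \subset B$, translate the local theta functions into sums of punctured Gromov-Witten invariants via Theorem~A, and then derive the required invariance from Wu's numerical gluing formula \cite{Wu}. The first step is to recall from \cite[Def.~3.9]{Theta} that consistency is a property that may be checked joint by joint: it suffices to show, for each codimension-two $\foj \in \P$, that the induced wall structure on $\Star(\foj)$ — consisting of walls $\fop \in \scrS_{\can}^{\mathrm{undec}}$ with $\foj \subseteq \fop$ — produces theta functions $\vartheta_p$, indexed by asymptotic directions $p \in B(\ZZ)$, whose value at a generic basepoint $x \in \Star(\foj)\setminus |\scrS_{\can}^{\mathrm{undec}}|$ is independent of $x$ in the appropriate localized ring modulo $I$. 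By Theorem~A, broken lines for $\scrS_{\can}^{\mathrm{undec}}$ coincide with logarithmic broken lines, so each coefficient of $\vartheta_p(x)$ is a sum of punctured Gromov-Witten invariants indexed by logarithmic broken line types with outgoing direction $p$ and endpoint $x$.

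The heart of the argument is then to show that moving $x$ across a wall $\fop_{\tau,A} \in \scrS_{\can}^{\mathrm{undec}}$ inside $\Star(\foj)$ does not change $\vartheta_p(x)$. Algebraically, crossing the wall multiplies the relevant monomials by the exponentiated wall function $\exp(k_\tau W_{\tau,A} t^A z^{-u_\tau})$; expanding the exponential interprets wall-crossing as the operation of attaching any number $l \ge 0$ of copies of the wall-type bubble (a one-punctured genus zero map of type $\tau$) to the existing broken-line punctured map, connected by a contracted component with $l+2$ punctures as described in the introduction. The identity required for consistency therefore takes the form of a gluing statement: the virtual count obtained after attaching these bubbles equals the product of the original broken-line invariant with the $l$-th symmetric power of $W_{\tau,A}$, divided by $l!$. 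By Proposition~\ref{prop:polyhedral pseudo}, the joint $\foj$ either sits over a zero-dimensional stratum of $D$ or over a one-dimensional stratum $X_\rho \cong \PP^1$ whose boundary consists of at most two toric points, so in every case the gluing stratum is toric. This is exactly the hypothesis under which Wu's gluing formula \cite{Wu} applies, and the exponential structure of the wall function matches the combinatorics of attaching bubbles symmetrically.

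The main technical obstacle, and the reason the decorated refinement $\scrS_{\can}$ (Construction~\ref{const:decorated wall structure}) is introduced, is matching sums of combinatorial broken lines term by term with sums of virtually counted punctured maps. One must track curve classes vertex-by-vertex so that the gluing formula decomposes the virtual count of a punctured map realizing a combined broken-line type precisely as a product over its constituent broken-line and wall-type components, with the correct automorphism and lattice-index factors $k_\tau$, $|\Aut(\btau)|$, etc. This matching is clean for the decorated structure $\scrS_{\can}$ because each decorated broken-line type is rigid enough to pin down one gluing configuration; consistency of $\scrS_{\can}^{\mathrm{undec}}$ then follows because the two wall structures are equivalent modulo $I$ after summing over curve class decorations with fixed total class. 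Once the joint-by-joint invariance has been established, the resulting theta functions furnish the global regular functions required by \cite[Def.~3.9]{Theta}, proving consistency.
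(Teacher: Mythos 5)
You correctly identify the overall architecture — reduce consistency to local checks, use the broken-line correspondence (Theorem A) to express $\vartheta_p(x)$ in terms of punctured invariants, then prove invariance — but the proposal has a substantive gap at the step you describe as "the heart of the argument."

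The key issue is that when you move $x$ across a wall $\fop$, the theta function $\vartheta_p(x)$ decomposes into three pieces $\vartheta_+ + \vartheta_- + \vartheta_0$ according to whether the final monomial points into, out of, or \emph{tangent to} $\fop$. Your bubbling picture — expanding $\exp(k_\tau W_{\tau,A} t^A z^{-u_\tau})$ and attaching wall-type bubbles at the crossing point — correctly describes how the transverse pieces $\vartheta_\pm$ transform under $\theta_\fop$, but that piece is actually handled in the paper by a purely combinatorial perturbation of broken lines (the "standard argument" of \cite[Thm.~4.12]{GrossP2}), not by a gluing formula. The genuinely hard point, which your proposal does not address, is the tangent piece $\vartheta_0$: broken lines whose final monomial is tangent to $\fop$ do not bend, so there is no bubble to attach, yet the set of such broken lines approaching $x$ from the left side of $\fop$ is manifestly different from the set approaching from the right. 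One must show these two sums agree. The paper proves this by degenerating to a moduli space $\foM^{\ev}(\shX,\tau')$ of punctured maps with a degenerate broken line type $\tau'$, constructing a rational function $\psi$ on it from a section of the ghost sheaf, and computing the divisor of $\psi$ to obtain the linear equivalence relation \eqref{eq:equality of classes} in the Chow group. This is a divisor/degeneration argument, not an application of Wu's gluing. Wu's formula is used in this paper to establish Theorem A (the broken line correspondence) and the comparison with \cite{Assoc}, not the patching theorem itself.

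Two smaller points. First, consistency in \cite[Def.~3.9]{Theta} is not a single invariance statement but a triple of conditions on joints of codimension zero, one, and two, with the codimension one case involving the slab rings $R_\fob$ and the localization maps $\chi_{\fob,\fou}$; your proposal does not distinguish these, and the codimension two case requires a translation trick (using $\Lambda_\foj$-invariance of $\scrS_\foj$ and the finiteness of $\scrB$) to push broken lines into $\Star(\foj)$, which must be argued rather than simply asserted. Second, the phrasing "the value of $\vartheta_p$ at $x$ is independent of $x$" is not the right condition to aim for: $\vartheta_p(x)$ genuinely changes from one chamber to the next; what one must show is compatibility under the specific maps $\theta_\fop$ and $\chi_{\fob,\fou}$, which is the content of Theorem~\ref{thm:patching}.
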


The above definition of the canonical wall structure is conceptually
the simplest and most useful in practice (see \cite{HDTV} for some explicit
examples). However, for the proofs of this paper, it is convenient
to replace $\scrS^{\mathrm{undec}}_{\can}$ with an equivalent
wall structure $\scrS_{\can}$ using decorated types as follows.

\begin{construction}
\label{const:decorated wall structure}
Fix a decorated wall type $\btau=(\tau,{\bf A})$, and $A=\sum_{v\in V(G)}
{\bf A}(v)$ the total curve class.
As $\tau$ is realizable, we may view it equivalently as a global type.
Hence we obtain a morphism of moduli spaces $\varepsilon:\scrM(X,\btau)
\rightarrow \foM(\shX,\btau)$. From the proof of
Lemma~\ref{lem:wall virtual dim}, one sees that $\foM(\shX,\btau)$ is
pure-dimensional and $[\scrM(X,\btau)]^{\virt}$ is a zero dimensional
cycle. Hence we may define
\[
W_{\btau}:={\deg [\scrM(X,\btau)]^{\virt}\over |\Aut(\btau)|}.
\]
We may then define a wall
\begin{equation}
\label{eq:refined wall def}
\boxed{\fop_{\btau}:=
\big(h(\tau_{\out}), \exp(k_{\tau} W_{\btau}t^A z^{-u_{\tau}})\big)}
\end{equation}
and
\begin{equation}
\label{eq:Scan refined def}
\scrS_{\can}:=\left\{\fop_{\btau}\,\Big|\,
\substack{\hbox{$\btau$ an isomorphism class of 
decorated wall}\\\hbox{type with total curve class lying in 
$Q\setminus I$}}\right\}.
\end{equation}
We note here we do not exclude walls with $W_{\btau}=0$,
so this wall structure may include an infinite set of 
trivial walls, i.e., with attached function $1$. So technically
this is not a wall structure, but if we remove all trivial walls,
it becomes a finite set as in Proposition~\ref{prop:canonical is wall}.
However, for bookkeeping purposes, it will prove useful to
include these trivial walls.

To see that this gives a wall structure equivalent (in the
sense of Definition~\ref{def:wall structure stuff}) to the previous definition,
fix a wall type $\tau$ and a curve class $A\in Q\setminus I$
such that $W_{\tau,A}\not=0$; this data will
give one wall in the earlier definition of $\scrS_{\can}$. With
$\beta=(\{u_{\tau}\},A)$ the associated class of punctured map, we have the
canonical morphism $\foM(\shX,\tau)\rightarrow \foM_{\tau}(\shX,\beta)$, which
is surjective and finite of degree $|\Aut(\tau)|$. Further, we have a
Cartesian diagram in all categories \cite[Prop.~5.19]{ACGSII}
\[
\xymatrix@C=30pt
{
\coprod_{\btau=(\tau,{\bf A})} \scrM(X,\btau)\ar[r]^>>>>>{j'}
\ar[d]_{\varepsilon_{\tau}}&
\scrM(X,\beta)\ar[d]^{\varepsilon}\\
\foM(\shX,\tau)\ar[r]_j&
\foM(\shX,\beta)
}
\]
Here ${\bf A}$ runs over all decorations of $\tau$ with total curve class $A$.
As $j$ is a map of degree $|\Aut(\tau)|$ onto $\foM_{\tau}(\shX,\beta)$,
we see that
\begin{align}
\label{eq:N tau A decomp}
W_{\tau,A} = {\deg \varepsilon^![\foM_{\tau}(\shX,\beta)]}
= {\deg \varepsilon^!j_*[\foM(\shX,\tau)]\over |\Aut(\tau)|}
= {\deg j'_*\varepsilon_{\tau}^![\foM(\shX,\tau)]\over |\Aut(\tau)|}
= \sum_{\btau=(\tau,{\bf A})} {\deg [\scrM(X,\btau)]^{\virt}\over |\Aut(\tau)|}.
\end{align}
Here the third equality holds by push-pull of \cite[Thm.~4.1]{Man}, and
the last summation runs over all choices of decorations ${\bf A}$
of $\tau$ with total curve class $A$. Note however that
$\Aut(\tau)$ acts on the set of all choices of decoration ${\bf A}$, 
with orbits of this action giving isomorphism classes of decorated
types $\btau$.
The stabilizer of a given $\btau$ is the subgroup $\Aut(\btau)
\subseteq \Aut(\tau)$, and hence the orbit containing $\btau$ is of size
$|\Aut(\tau)|/|\Aut(\btau)|$. Thus the final expression of 
\eqref{eq:N tau A decomp} can be now expressed as a sum over isomorphism
classes of decorations $\btau$ of $\tau$ with total class $A$:
\begin{equation}
\label{eq:N tau A decomp2}
W_{\tau,A}
=\sum_{\btau}{\deg [\scrM(X,\btau)]^{\virt}\over |\Aut(\tau)|}
\cdot {|\Aut(\tau)|\over |\Aut(\btau)|}
=\sum_{\btau}{\deg [\scrM(X,\btau)]^{\virt}\over |\Aut(\btau)|}.
\end{equation}
 From this, we see the collection of walls in the new definition
of $\scrS_{\can}$ coming from all $\btau$ with underlying type $\tau$
and total curve class $A$ is equivalent to the corresponding
single wall in the original definition of $\scrS_{\can}$.
\end{construction}

\begin{example}
The construction of $\scrS_{\can}$ agrees (up to equivalence) 
with the construction of
the canonical scattering diagram of \cite{GHK} when $\dim X=2$ and
$K_X+D=0$, so that $B=\Sigma(X)$. In other words, 
the construction given here is a generalization
of the construction of \cite{GHK}.

To see this, we analyze decorated wall types $\btau$ in dimension 
two. Since we require $\dim\tau=\dim X-2$, in fact such types
will be rigid. Thus there is a unique tropical map $h:G\rightarrow
B$ of type $\tau$, and all vertices of $G$ are mapped to $0\in B$;
otherwise, rescaling $B$ provides a non-trivial deformation of $h$.
Further, we require $\dim h(\tau_{\out})=1$. So
$h(\tau_{\out})$ is a ray in $B$ with endpoint $0$. 
Hence there is no choice but for $u_{\tau}\in B(\ZZ)\setminus
\{0\}$ and $h(\tau_{\out})=\RR_{\ge 0} u_{\tau}$. 
Further, any edge of $G$ contracted
by $h$ has length a free parameter of the tropical curve, and hence
again by rigidity, there are no edges. Thus the only possibility for
$\btau$ is that $G$ has one vertex, with an attached curve class
$A$, and one leg, $L_{\out}$, with ${\bf u}(L_{\out})=u_{\tau}\in
B(\ZZ)$. Note further that $k_{\tau}$ is then just the index of
$u_{\tau}$, i.e., the degree of divisibility of $u_{\tau}$ in $B(\ZZ)$.
Of course $|\Aut(\btau)|=1$. 

To make a comparison with the setup of \cite{GHK}, it is then convenient,
for $u \in B(\ZZ)$ primitive, to define
\[
W_{A,u,k}:=W_{\btau},
\]
where $\btau$ is the decorated wall type as described above with
curve class $A$ and $u_{\tau}=ku$. In \cite{GHK}, only one wall
for each ray $\RR_{\ge 0}u$ of rational slope occurs, and hence it
is then more useful to write the canonical wall structure in the
equivalent form
\[
\scrS'_{\can}:=\left\{\Big(\RR_{\ge 0}u, \exp\Big(\sum_{A,k} k 
W_{A,u,k}t^Az^{-ku}\Big)\Big)\,\Big|\,\hbox{$u\in B(\ZZ)\setminus \{0\}$ primitive}
\right\}.
\]
Here the sum is over all positive integers $k$ and all curve classes
$A$. However, if we write $u=aD_i^*+bD_j^*$ for some non-negative
$a,b$ with $a+b>0$, then by the balancing condition as expressed in
\cite[Cor.~1.14]{Assoc}, we may in fact sum over only those curve
classes with $A\cdot D_i=a$, $A\cdot D_j=b$, $A\cdot D_k=0$ for $k\not=i,j$.

In \cite{GHK}, a wall function is associated to each ray $\fod =\RR_{\ge 0} u$
of rational slope using invariants defined as follows. Assuming $\fod$ does not
coincide with a ray of $\P$, one performs a toric (log \'etale) blow-up
$\pi:\widetilde X\rightarrow X$ by refining $(B,\P)$ along the given ray $\fod$.
Otherwise, if $\fod$ coincides with a ray of $\P$, we may take $\pi$ to be the
identity. Let $\widetilde A$ be a curve class on $\widetilde X$ with
the following properties, depending on whether or not $\pi$ is the identity.
If $\pi$ is not the identity, then we require
that $\widetilde A$ has trivial intersection number with all
boundary components of $\widetilde X$ except for the exceptional divisor $E$ of
$\pi$. If $\pi$ is the identity, let $E$ be the
component of $D$ corresponding to the ray $\fod$. We then require
that $\widetilde A$ has zero intersection number with each component of
$D$ except for $E$. Then \cite{GHK}, following \cite[\S 4]{GPS},
defines a
number $N_{\widetilde A}$. This number is defined as a \emph{relative}
Gromov-Witten invariant for the non-compact pair $(\widetilde
X^{\circ},E^{\circ})$ where $\widetilde X^{\circ}\subseteq \widetilde X$ is
obtained by removing the closure of $\pi^{-1}(D)\setminus E$ from $\widetilde
X$, and $E^{\circ}=E\cap \widetilde X^{\circ}$. This relative invariant counts
rational curves of class $\widetilde A$ with one marked point, with contact
order $k_{\widetilde A}:=\widetilde A\cdot E$ with $E^{\circ}$. 

To compare $N_{\widetilde A}$ with the type of number considered in this paper,
note that the data of $\widetilde A$ and the contact order $k_{\widetilde A}$
also specifies a type $\widetilde\beta$ of logarithmic map to the pair
$(\widetilde X,\pi^{-1}(D))$, and we may compare the logarithmic Gromov-Witten
invariant $\deg [\scrM(\widetilde X,\widetilde\beta)]^{\virt}$ with
$N_{\widetilde A}$. In fact, it follows from \cite{AMW} that these two numbers
will agree provided that every stable log map in the moduli space
$\scrM(\widetilde X, \widetilde\beta)$ factors through $\widetilde X^{\circ}$.
However, it is an elementary exercise in tropical geometry to show that if given
a stable log map $f:C/W\rightarrow \widetilde X$ of type $\widetilde\beta$ with
$W$ a log point, any tropical map in the corresponding family of tropical maps
has image $\fod$. This may be proved using an argument similar to the argument
given in Lemma~\ref{lem:key tropical lemma}, but a similar tropical argument in
two dimensions has already appeared in the proof of \cite[Lem.\ 12]{Bousseau},
which may also be viewed as a tropical interpretation of \cite[Prop.\ 4.2]{GPS}.
On the other hand, any stable log map of type $\widetilde \beta$ which does not
factor through $\widetilde X^{\circ}$ will necessarily have a tropicalization
whose image does not coincide with $\fod$, as follows immediately from the
construction of tropicalization.

Finally, let $A=\pi_*\widetilde A$. Note that in the case $\pi$ is not the
identity, the curve class $\widetilde A$ is uniquely determined by $A$ and the
constraint that $\widetilde A$ has zero intersection number with components of
the strict transform of $D$. Then if $u$ is the primitive generator of $\fod\cap
B(\ZZ)$, we may interpret $k_{\widetilde A}u\in B(\ZZ)$ as a contact order, and
the class $A$ and the contact order $k_{\widetilde A}u$ determines a type of log
map $\beta$ to $X$. It then follows from the birational invariance of log
Gromov-Witten theory of \cite[Thm.~1.1.1]{AW} that $\deg [\scrM(\widetilde
X,\widetilde\beta)]^{\virt}=\deg [\scrM(X, \beta)]^{\virt}$. In particular,
$N_{\widetilde A}=W_{A,u,k_{\widetilde A}}$.

Conversely, given the data of $A$, $u$ and $k$ giving a wall type $\tau$ with
$\scrM(X,\tau)$ non-empty, we obtain via the above blow-up procedure
a curve class $\widetilde A$. Thus we have a one-to-one correspondence between
the set of data contributing to $\scrS'_{\can}$ and the data contributing to the
canonical scattering diagram of \cite{GHK}. Further, we have just observed an
equality of the invariants. The equivalence of $\scrS'_{\can}$ with the
canonical scattering diagram of \cite{GHK} now follows by inspection of the
definition of the canonical scattering diagram in \cite{GHK}.
\end{example}

\begin{example}
Returning to Examples~\ref{ex:running example 1} and 
\ref{ex:running example 2}, we first introduce notation for
the relevant curve classes. Let $e_i$ be the class of a fibre
of $\pi|_{E_i}:E_i \rightarrow Z_i$. Let $f$ be the class of
a curve of the form $\{a\}\times \PP^1 \times \{b\}$ disjoint from
the centers $Z_1,Z_2$.

One can show that the only decorated wall types $\btau$ with
$W_{\btau}\not=0$ are of the following 5 types. In each case,
the underlying graph $G$ of $\btau$ has one vertex $v$, no edges,
and of course a unique leg $L_{\out}$.
\begin{enumerate}
\item For some $k>0$, we have $u_{\tau}=k D_{2,0}^*$, $\bsigma(L_{\out})=
\fop_1:=\RR_{\ge 0} D_{2,0}^*+\RR_{\ge 0} D_{1,0}^*$,
and ${\bf A}(v)= k e_1$.
\item For some $k>0$, we have $u_{\tau}=k D_{2,0}^* $, $\bsigma(L_{\out})=
\fop_2:=\RR_{\ge 0} D_{2,0}^*+\RR_{\ge 0} D_{1,\infty}^*$,
and ${\bf A}(v)= k e_1$.
\item For some $k>0$, we have $u_{\tau}=k(E_2^*-D_{1,\infty}^*)$,
$\bsigma(L_{\out})= \fop_3:=\RR_{\ge} E_2^*+\RR_{\ge 0} D_{1,\infty}^*$,
and ${\bf A}(v)= k(f-e_1-e_2)$.
\item For some $k>0$, we have $u_{\tau}=k D_{2,\infty}^*$,
$\bsigma(L_{\out})=\fop_4:=\RR_{\ge 0} E_2^*+\RR_{\ge 0}D_{2,\infty}^*$,
and ${\bf A}(v)= k(f-e_1)$.
\item For some $k>0$, we have $u_{\tau}=k D_{2,\infty}^*$,
$\bsigma(L_{\out})=\fop_5:=\RR_{\ge 0} D_{1,0}^*+\RR_{\ge 0}D_{2,\infty}^*$,
and ${\bf A}(v)= k(f-e_1)$.
\end{enumerate}
In all cases, $h(\tau_{\out})=\bsigma(L_{\out})$, so walls only have five
possible supports. Further, in every case, $k_{\tau}=k$ and
$W_{\btau}=(-1)^{k-1}/k^2$. After
passing to an equivalent scattering diagram with only one wall with a
given support, we see, for example, that we have a wall
\[
\left(\fop_1, \exp\Big(\sum_{k>0} k {(-1)^{k-1}\over k^2} t^{k e_1}
z^{-kD_{2,0}^*}\Big)\right)=\left(\fop_1, 1+t^{e_1}z^{-D_{2,0}^*}\right).
\]
Similarly, we have four other walls:
\[
\left(\fop_2,1+t^{e_1} z^{-D_{2,0}^*}\right),
\left(\fop_3,1+t^{f-e_1-e_2} z^{D_{1,\infty}^*-E_2^*}\right),
\left(\fop_4,1+t^{f-e_1} z^{-D_{2,\infty}^*}\right),
\left(\fop_5,1+t^{f-e_1} z^{-D_{2,\infty}^*}\right).
\]
Together, these walls cover the affine plane contained in
$B$ which is the union of all two-dimensional cones of $\P$ not
containing $D_{3,0}^*$ or $D_{3,\infty}^*$.
We omit a derivation of these results. Showing (1)--(5) are the only
possibilities is not difficult using \cite[Cor.~1.14]{Assoc} to encode
balancing requirements for punctured maps. A direct calculation of
$W_{\btau}$ has not been carried out, but presumably these multiple
cover calculations can be carried out as in the two-dimensional
case of \cite[Prop.~5.2]{GPS}. Instead, the above formulas for the
wall functions are proved in a more general context in \cite{HDTV}. In
fact, the formulas for these wall functions follows from the consistency
of $\scrS_{\can}$ proved in the present paper.

The walls $\fop_i$ for $i\not=3$ arise from zero-dimensional strata
in one-dimensional moduli spaces of ordinary (non-punctured) stable
log maps. For example, the inclusion of
every fibre of $\pi|_{E_1}:E_1\rightarrow Z_1$ into $X$ may be viewed as a
stable log map. The walls $\fop_1$ and $\fop_2$ capture the curves in
this family which are degenerate with respect to the log structure
on $X$, i.e., fall into $D_{1,0}$ or $D_{1,\infty}$. For $k>1$, we count
multiple covers of these fibres.

The same holds for $\fop_4$ and $\fop_5$, with the curves in question
being, in general, strict transforms of curves of the form
$\{a\}\times \PP^1 \times \{1\}\subseteq \overline{X}$. Note as these
curves intersect $Z_1$ at one point, the class of the strict transform
is indeed $f-e_1$. However, as $a\rightarrow \infty$, this curve
degenerates to a union $C=C_1\cup C_2$ of two irreducible components,
of class $f-e_1-e_2$ (the strict transform $C_1$ of $\{\infty\}\times\PP^1\times
\{1\}$) and of class $e_2$ (the curve $C_2=\pi^{-1}(\infty,\infty,1)$).
It is this degenerate curve and its multiple covers which contribute
to $\fop_4$. Meanwhile the strict transform of
$\{0\}\times\PP^1\times\{1\}$ and its multiple covers
contribute to the wall $\fop_5$.

Finally, $\fop_3$ arises not from a family of curves with one marked point
of positive contact order with the boundary, but from the punctured
log map whose image is $C_1$. This involves a negative contact order with
the divisor $D_{1,\infty}$, which contains $C_1$. This curve is rigid,
and if we did not include a wall for this curve, we would not get
a consistent scattering diagram. This shows how it is essential,
unlike in the case that $\dim X=2$, to take
into account punctured curves rather than just marked curves, as
without $\fop_3$, $\scrS_{\can}$ would not be consistent.
\end{example}

\begin{construction}[The torus action]
As with the canonical wall structure in two dimensions in \cite[\S5]{GHK},
there is a natural torus action on the mirror family constructed
using $\scrS_{\can}$ in all dimensions. We refer the reader to 
\cite[\S4.4]{Theta} for the general setup for torus actions in the
context of families of varieties built via wall structures, and do not
review the notation here.

In this case, there is an action of a torus with character lattice
\[
\Gamma:=\Div_D(X),
\]
i.e., the free abelian group generated by boundary divisors.
In the language of \cite[\S4.4]{Theta}, the base ring $A$ is taken
here to be the ground field $\kk$. To specify the torus action,
we must specify the maps $\delta_Q$ and $\delta_B$ of the diagram
\cite[(4.8)]{Theta}. Note that each irreducible component $D_i$ 
of $D$ defines an $\RR$-valued PL function on $B$; indeed, by 
construction $B\subseteq \Div_D(X)^*_{\RR}$, and the linear functional
$D_i$ on this vector space restricts to a piecewise linear function
on $B$. We continue to denote this PL function as $D_i$. We may then define
\[
\delta_Q:Q\lra\Gamma,\quad\quad \delta_Q(A)= \sum_{i}
(D_i\cdot A) D_i
\]
for $A\in Q$,
and 
\[
\delta_B:\PL(B)^* \lra \Gamma,\quad\quad \delta_B(\beta)=
\sum_{i}\beta(D_i) D_i
\]
for $\beta\in \PL(B)^*$. Commutativity of \cite[(4.8)]{Theta} then
follows by direct computation. Indeed, note that in our current context, 
$Q_0$ is the free monoid 
with generators $e_{\rho}$, $e_{\rho}\in \P^{[n-1]}$, and $h:Q_0\rightarrow
Q$ is defined by
$h(e_{\rho})=[X_{\rho}]$, the kink of our choice of MPL function
$\varphi$ along $\rho$. Then 
\[
\delta_Q(h(e_{\rho}))= \sum_i (D_i\cdot X_{\rho}) D_i.
\]
On the other hand, $g:Q_0 \rightarrow \PL(B)^*$ takes $e_{\rho}$
to the linear functional on $\PL(B)$ taking $\psi\in\PL(B)$ to the
kink of $\psi$ along $\rho$. It is easy to check from toric geometry 
that the kink of
the PL function induced by $D_i$ along $\rho$ is $D_i \cdot X_{\rho}$
(noting that if $D_i$ is not good, it induces the zero function on $B$).
The claimed equality $\delta_Q\circ h = \delta_B\circ g$ then follows.

Thus, by \cite[Thm.~4.17]{Theta}, it follows there is an induced
$\Spec\kk[\Gamma]$-action on the flat family $\check\foX\rightarrow
\Spec(\kk[Q]/I)$ constructed from the wall structure $\scrS_{\can}$
provided that $\scrS_{\can}$ is homogeneous in the sense of
\cite[Def.~4.16]{Theta}. For this, it is enough to check that
if $\btau$ is a decorated wall type with $W_{\btau}\not=0$ and
total curve class $A$, then $\deg_{\Gamma} t^Az^{-u_{\tau}}=0$.
Here $\deg_{\Gamma} t^A = \delta_Q(A)=\sum_i (D_i\cdot A) D_i$.
If we write $u_{\tau}=\sum u_i D_i^*$ as a tangent vector to 
$\bsigma(L_{\out})$ then $\deg_{\Gamma}(z^{-u_{\tau}})$ is
computed from \cite[(4.9)]{Theta} as $-\sum_i u_i D_i$.
It then follows from \cite[Cor.\ 1.14]{Assoc} that necessarily
$u_i=D_i\cdot A$, and hence $\deg_{\Gamma}t^Az^{-u_{\tau}}=0$ as
desired.
\end{construction}

\subsubsection{The relative case}
\begin{construction}
\label{const:asymptotic}
Suppose we are in the situation of Proposition 
\ref{prop:relative case maximal fibres}.
It is useful (see \cite{HDTV}) to compare the wall structures
for $(X,D)$ and $(X_s,D_s)$, which we do as follows. 

Recall from that proposition that after extending the 
affine structure on $B$ across the interior of cells $\omega\in
\P^{[n-2]}_{\partial}$, $\partial B$ may be viewed as the
polyhedral affine pseudomanifold corresponding to the pair $(X_s,D_s)$.
As such, write $(\partial B)_0$ for the complement in $\partial B$
of the union of cones of $\P^{[n-3]}_{\partial}$.
Let $\Lambda_{\partial B}$ denote the sheaf on $(\partial B)_0$
of integral tangent vectors. We may view
$\Lambda_{\partial B}$ as a subsheaf of $\Lambda|_{\partial B}$, where the
latter sheaf is now extended across the interiors of cells
$\omega\in\P^{[n-2]}_{\partial}$. 

Further,
Proposition~\ref{prop:relative case maximal fibres}, (3) then implies
that if $\shP_{\partial B}$ is defined using the MPL function
$\varphi|_{\partial B}$ on $\partial B$, we obtain a natural
inclusion $\shP_{\partial B} \subseteq \shP|_{\partial B}$.
Again, the latter sheaf is viewed as extending across the interiors of
$\omega\in\P^{[n-2]}_{\partial}$. Note that $\shP_{\partial B}$
consists of those sections $m$ of $\shP|_{\partial B}$ such that
$\bar m$ is tangent to $\partial B$.

On the other hand, we also have the MPL function $\varphi_{(X_s,D_s)}$
taking values in $H_2(X_s)\otimes_{\ZZ}\RR$. This leads to a sheaf
$\shP_{(X_s,D_s)}$ on $(\partial B)_0$, and the map $\iota:H_2(X_s)\rightarrow
H_2(X)$ then induces a map
\[
\iota_*:\shP_{(X_s,D_s)} \rightarrow \shP_{\partial B}.
\]
This allows us to define a map, for $x\in (\partial B)_0$,
\[
\iota_*:\kk[\shP_{(X_s,D_s),x}^+]\rightarrow \kk[\shP_{\partial B,x}^+],
\]
where $\shP_{\partial B,x}^+:=\shP_{\partial B,x}\cap \shP^+_x$.
Finally, we may define
\[
\iota(\scrS_{\can,s}):=
\big\{(\fop,\iota_*(f_{\fop}))\,|\, (\fop,f_{\fop})\in \scrS_{(X_s,D_s)}\big\}.
\]
Like $\scrS_{\can,s}$, this is a wall structure on $\partial B$.

On the other hand, we may define the
\emph{asymptotic wall structure} of the wall structure $\scrS_{\can}$
on $B$ by
\begin{equation}
\label{eq:asymp def}
\scrS^{\mathrm{as}}_{\can}:=
\{(\fop\cap \partial B,f_{\fop})\,|\,
\hbox{$(\fop, f_{\fop})\in \scrS$ with $\dim\fop\cap \partial B=n-2$}\}.
\end{equation}
We note that for each such wall, $f_{\fop}\in \kk[\shP^+_{\partial B,x}]
\subseteq \kk[\shP^+_x]$. Indeed, this follows from 
Proposition~\ref{prop:relative case over S}, which implies that for
a wall type $\tau$ for $X$, $u_{\tau}$ is tangent to fibres
of $g_{\trop}$, and in particular tangent to $\partial B$.
Hence $\scrS^{\mathrm{as}}_{\can}$ 
may be viewed as a wall structure on $\partial B$.

We then have:
\end{construction}

\begin{proposition}
Suppose that all good divisors contained in $g^{-1}(0)$ have
multiplicity one in $g^{-1}(0)$. Then
the wall structures $\iota(\scrS_{\can,s})$ and $\scrS^{\mathrm{as}}_{\can}$
are equivalent.
\end{proposition}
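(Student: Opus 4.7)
The plan is to construct a natural bijection between decorated wall types for $(X_s,D_s)$ and decorated wall types for $(X,D)$ in the relative case, and to verify that, under the identifications $\partial B = B_{(X_s,D_s)}$ of Proposition~\ref{prop:relative case maximal fibres} and $\iota_*\colon \kk[\shP^+_{(X_s,D_s),x}] \to \kk[\shP^+_{\partial B,x}]$, corresponding walls have matching supports and matching wall functions. Since equivalence of wall structures is a wall-by-wall condition, this yields the desired equivalence of $\iota(\scrS_{\can,s})$ and $\scrS^{\mathrm{as}}_{\can}$.

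First I would set up the tropical bijection. Given a decorated wall type $\btau=(G,\bsigma,\mathbf{u},\mathbf{A})$ for $(X,D)$, Proposition~\ref{prop:relative case over S} implies $\tau$ is a type over $S$, so each contact order $\mathbf{u}(E)$ is killed by $\Sigma(g)_*$ and hence lies in the subsheaf $\Lambda_{\partial B}\subseteq\Lambda$. The argument of Lemma~\ref{lem:wall virtual dim} further gives that $\Sigma(g)\circ h$ is surjective on every $h(\tau_v)$, so each $\bsigma(v)\in\P$ corresponds to a horizontal stratum of $X$; Proposition~\ref{prop:relative case maximal fibres}(1) then lets me set $\bsigma'(v):=\bsigma(v)\cap\partial B\in\P_{(X_s,D_s)}$. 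I would then check that $\tau':=(G,\bsigma',\mathbf{u}')$ is a wall type for $(X_s,D_s)$ with $h_{\tau'}(\tau'_{\out})=h_\tau(\tau_{\out})\cap\partial B$. For the decoration, any punctured map in $\scrM(X,\btau)$ has image set-theoretically contained in a single fibre of $g$, so $\mathbf{A}(v)\in H_2(X)$ is $\iota(\mathbf{A}'(v))$ for some $\mathbf{A}'(v)\in H_2(X_s)$. The reverse assignment $\btau'\mapsto\btau$ is defined by the same combinatorial recipe composed with $\iota$.

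Next I would match the invariants. By Proposition~\ref{prop:relative case over S}, $\scrM(X,\btau)=\scrM(X/S,\btau)$ is naturally a stack over $S$ with a relatively perfect obstruction theory of relative virtual dimension zero. Its geometric fibre over a closed point $s\ne 0$ is $\scrM(X_s,\btau')$, and over $0\in S$ it recovers $\scrM(X,\btau)$. Deformation invariance of the relative virtual fundamental class along the one-dimensional base $S$ then gives $W_{\btau}=W_{\btau'}$. The automorphism groups $|\Aut(\btau)|$ and $|\Aut(\btau')|$ coincide because the combinatorial data is identical, and $k_\tau=k_{\tau'}$ since $\Lambda_{\partial B}\hookrightarrow\Lambda$ is a saturated inclusion identifying $u_\tau$ with $u_{\tau'}$.

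Assembling these facts, the wall $\fop_{\btau'}$ has support $h_{\tau'}(\tau'_{\out})$ and function $\exp(k_{\tau'}W_{\btau'}t^{A'}z^{-u_{\tau'}})$; applying $\iota_*$ and substituting the equalities just established gives precisely the wall function of the corresponding wall of $\scrS^{\mathrm{as}}_{\can}$ obtained from $\fop_{\btau}$. The multiplicity-one hypothesis on good divisors in $g^{-1}(0)$ is what guarantees via Proposition~\ref{prop:relative B prime} that the integral affine structure on $\partial B$ inherited from $B$ matches cleanly with that of $B_{(X_s,D_s)}$, so that $\iota_*$ is the correct comparison map. The hard part will be the deformation-invariance step: one must verify that $\scrM(X/S,\btau)$ carries a relative virtual class of relative virtual dimension zero compatibly with base change to closed fibres, and identify the log structure on $\scrM(X/S,\btau)_s$ with that of $\scrM(X_s,\btau')$, both tasks requiring a careful use of the relative obstruction theory machinery of \cite{ACGSII}, \S4.
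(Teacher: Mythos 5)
Your plan is built on a false premise: there is no natural bijection between decorated wall types for $(X_s,D_s)$ and for $(X,D)$, and the two sets of invariants are not matched one-to-one. The paper's proof instead constructs a (generically many-to-one) map $\Phi$ from the set $S'$ of wall types $\btau'$ for $(X,D)$ with $\dim\fop_{\btau'}\cap\partial B=n-2$ to the set $S$ of wall types $\btau$ for $(X_s,D_s)$, sending $\btau'$ to its contraction along the face $p^{-1}(0)\subset\tau'$. The substance of the proposition is then the identity
\[
k_{\tau} W_{\btau} \;=\; \sum_{\btau'\in \Phi^{-1}(\btau)} k_{\tau'} W_{\btau'},
\]
which is a genuine sum, not a term-by-term equality.

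Your ``deformation invariance'' step $W_{\btau}=W_{\btau'}$ is where this fails. If $\btau$ is a wall type for $X$, the proof of Lemma~\ref{lem:wall virtual dim} shows that every punctured map of type $\btau$ factors through $X_0$, so the fibre of $\scrM(X/S,\btau)$ over $s\neq 0$ is \emph{empty}, not $\scrM(X_s,\btau')$. Conversely, starting from a wall type $\btau$ for $X_s$, the degeneration of the family to $X_0$ breaks the generic moduli space into strata indexed by more degenerate types $\btau'$, and what deformation invariance actually yields is the decomposition formula of \cite[Thm.~5.23]{ACGSII}: first $\deg[\scrM(X_s/s,\btau)]^\virt = \deg[\scrM(X_0/0,\btau)]^\virt$, and then the right side expands as $\sum_{\btau'} \tfrac{m_{\btau'}}{|\Aut(\btau'/\btau)|}\deg[\scrM(X_0/0,\btau')]^\virt$. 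Your claims $k_\tau=k_{\tau'}$ and $|\Aut(\btau)|=|\Aut(\btau')|$ are also false in general; the correct relations ($m_{\tau'}k_\tau=k_{\tau'}$ and $|\Aut(\btau'/\btau)||\Aut(\btau)|=|\Aut(\btau')|$) are exactly what makes the multiplicities from the decomposition formula cancel. In short, the missing idea is the decomposition formula for punctured invariants along the degeneration $X\to S$; without it, the fibrewise comparison you describe cannot close.
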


\begin{proof}
Note $\iota(\scrS_{\can,s})$ is equal to the canonical wall structure
for $(X_s,D_s)$ constructed using the curve class group $H_2(X)$ rather 
than $H_2(X_s)$. 
In the proof, we generally use $\btau'$ to denote a decorated wall type for
$(X,D)$ and $\btau$ for a decorated wall type for $(X_s,D_s)$, with
curve class decoration ${\bf A}$ taking values in $H_2(X)$.

Given a decorated wall type $\btau'$, the corresponding cone $\tau'$ 
comes with a structure
map $p:\tau'\rightarrow \RR_{\ge 0}$ coming from the fact the type
is defined over $S$. Note $p$ is surjective as $u_{\tau}$
is tangent to fibres of $g_{\trop}$ by 
Proposition~\ref{prop:relative case over S} and 
$h_{\tau'}(\tau'_{\out})\not\subseteq\partial B$
by Definition~\ref{def:wall type},(3).
Then $p^{-1}(0)$ is a proper face of $\tau'$, and hence
corresponds to a contraction morphism $\phi:\btau'\rightarrow\btau$
of decorated types.
The type $\tau$ is realizable provided that the length of $L_{\out}
\in L(G')$
isn't zero on the face $p^{-1}(0)$. Thus $\tau$ realizable and
$\dim\tau=n-3$ is equivalent to 
$\dim \fop_{\btau'}\cap \partial B = n-2$. From 
this it is immediate that $\tau$ is a wall type for $(X_s,D_s)$
if and only if
$\dim \fop_{\btau'}\cap \partial B = n-2$.

Consider the sets
\begin{align*}
S':= {} & \{\btau'\,|\,\hbox{$\btau'$ is a decorated wall type for $X$
with $\dim \fop_{\btau'}\cap\partial B=n-2$}\},\\
S:= {} & \{\btau \,|\,\hbox{$\btau$ is a decorated wall type for $X_s$}\}.
\end{align*}
The discussion of the previous paragraph has constructed a map
$\Phi:S'\rightarrow S$. To show the desired equivalence of wall
structures, it is
enough to show that for $\btau\in S$ we have
\begin{equation}
\label{eq:desired degen formula}
k_{\tau} W_{\btau} = \sum_{\btau'\in \Phi^{-1}(\btau)}
k_{\tau'} W_{\btau'}.
\end{equation}

To do so, we use the decomposition setup of 
\cite[Thm.\ 5.21, Def.\ 5.22, Thm.\ 5.23]{ACGSII}.
Note that for $0\not=s\in S$, $s$ is a trivial log point, so
in fact $\scrM(X_s,\btau)=\scrM(X_s/s,\btau)$.
Thus in particular we have
\[
W_{\btau}=
{\deg [\scrM(X_s/s,\btau)]^{\virt}\over |\Aut(\btau)|}
= {\deg [\scrM(X_0/0,\btau)]^{\virt}\over |\Aut(\btau)|},
\]
the first equality by definition and the second equality by
\cite[Thm.\ 5.23,(1)]{ACGSII}. Further, by 
\cite[Thm.\ 5.23,(2)]{ACGSII},
we have
\begin{equation}
\label{eq:decomp formula}
\deg[\scrM(X_0/0,\btau)]^{\virt} = \sum_{\btau'} {m_{\btau'}
\over |\Aut(\btau'/\btau)|} \deg [\scrM(X_0/0,\btau')]^{\virt},
\end{equation}
where the sum is over codimension one degenerations 
$\btau'$ of $\btau$ in the sense
of \cite[Def.~5.22,(2)]{ACGSII}, and $m_{\btau'}$ is the order of the
cokernel of $p_*:\Lambda_{\tau'}\rightarrow\ZZ$. It is
immediate from that definition that if $\scrM(X_0/0,\btau')\not=\emptyset$ then
such a $\btau'$ is a decorated wall type for $X$. (We need to assume
non-emptiness to guarantee that $\tau'$ is balanced.) 
It also follows from Proposition~\ref{prop:relative case over S}
and the proof of Lemma~\ref{lem:wall virtual dim}
that $\scrM(X_0/0,\btau')=\scrM(X/S,\btau')=\scrM(X,\btau')$. Thus we may
rewrite \eqref{eq:decomp formula} as
\[
k_{\tau}{\deg[\scrM(X_0/0,\btau)]^{\virt}
\over |\Aut(\btau)|} = \sum_{\btau'} {m_{\btau'} k_{\tau}
\over |\Aut(\btau'/\btau)||\Aut(\btau)|} \deg [\scrM(X,\btau')]^{\virt}.
\]
Certainly, $|\Aut(\btau'/\btau)||\Aut(\btau)|=|\Aut(\btau')|$.
Further, $\Lambda_{\tau'_{\out}}=\Lambda_{\tau_{\out}}\oplus\ZZ$ and
$p_*(\Lambda_{\tau'_{\out}})=m_{\tau'}\ZZ$.
Next $\Lambda_{\bsigma'(L_{\out})}=\Lambda_{\bsigma(L_{\out})}\oplus\ZZ$,
with the induced map $g_{\trop,*}:\Lambda_{\bsigma'(L_{\out})}
\rightarrow \ZZ$ being surjective because of the assumption that
$g^{-1}(0)$ is reduced.
Thus an elementary diagram chase gives a short exact sequence
\[
0\rightarrow
\coker(\Lambda_{\tau_{\out}}\rightarrow \Lambda_{\bsigma(L_{\out})})
\rightarrow
\coker(\Lambda_{\tau'_{\out}}\rightarrow \Lambda_{\bsigma'(L_{\out})})
\rightarrow \ZZ/m_{\tau'}\ZZ \rightarrow 0,
\]
so $m_{\tau'}k_{\tau}= k_{\tau'}$, giving
\eqref{eq:desired degen formula}.
\end{proof}

\subsection{Logarithmic broken lines and theta functions}
\label{sec:log broken lines}

Wall types defined in the previous subsection allowed
us to define the canonical wall structure. Consistency
of a wall structure is defined partly via \emph{broken lines}
(reviewed in Definition~\ref{def:broken line}). A key point of
our proof of consistency is a correspondence result that 
associates counts of broken lines to certain punctured invariants.
In this subsection, we shall define these punctured invariants.
We proceed quite analogously with the notion of wall types.

\begin{definition}
\label{def:broken line type}
A \emph{(non-trivial) broken line type} is a 
type $\tau=(G,\bsigma,{\mathbf u})$ 
of tropical map to $\Sigma(X)$ such that:
\begin{enumerate}
\item
$G$ is a genus zero graph with $L(G)=\{L_{\inc},L_{\out}\}$ with
$\bsigma(L_{\out})\in\P$ and 
\[
u_{\tau}:={\bf u}(L_{\out})\not=0, \quad
p_{\tau}:={\bf u}(L_{\inc})\in \bsigma(L_{\inc})\setminus\{0\}
\]
(so that $L_{\inc}$ represents a marked rather than punctured point).
\item $\tau$ is realizable and balanced.
\item Let $h:\Gamma(G,\ell)\rightarrow \Sigma(X)$ be the corresponding
universal family of tropical maps, and let $\tau_{\out}\in \Gamma(G,\ell)$
be the cone corresponding to $L_{\out}$. Then $\dim\tau=n-1$ and
$\dim h(\tau_{\out})=n$. 
\end{enumerate}

We also consider the possibility of a 
\emph{trivial broken line type} $\tau$,
which is not an actual type: the underlying graph $G$ consists
of just one leg $L_{\inc}=L_{\out}$ and no vertices, with the convention
that $u_{\tau}=-p_{\tau}$. Note this does not correspond to
an actual punctured curve.\footnote{Such a broken line type will correspond, as discussed in
\S\ref{sec:correspondence}, to a broken line contained entirely in 
$\bsigma(L_{\inc})$
and which does not bend.}

A \emph{decorated broken line type} is a decorated type 
$\btau=(\tau,\mathbf{A})$ with $\tau$ a broken line type.
In the trivial case, as there are no vertices, this does not
involve any extra information, and in this case, the total curve class
is taken to be $0$.

A \emph{degenerate broken line type} is a type $\tau$ which satisfies
conditions (1) and (2) above and instead of (3),
\begin{itemize}
\item[$(3')$] $\dim \tau=n-2$ and $\dim h(\tau_{\out})=n-1$.
\end{itemize}
\end{definition}

\begin{lemma}
\label{lem:virt dim broken lines}
Let $\btau$ be a non-trivial decorated broken line type. Then
$\scrM(X,\btau)$ is
proper over $\Spec\kk$ and carries a virtual fundamental class of
dimension zero.
\end{lemma}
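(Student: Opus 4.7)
The plan is to adapt the argument of Lemma~\ref{lem:wall virtual dim} almost verbatim, since the two differences between wall types and broken line types --- (i) one extra leg $L_{\inc}$, and (ii) $\dim\tau = n-1$ rather than $n-2$ --- contribute with opposite signs in the dimension formula of \cite[Prop.~3.28]{ACGSII} and therefore cancel. The overall structure of the earlier proof is retained: first establish properness by comparing $\scrM(X,\btau)$ to an ambient moduli of punctured maps of fixed class $\beta$, and then use Riemann-Roch together with the balancing identity of \cite[Cor.~1.14]{Assoc} to compute the virtual dimension.

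For properness, I would take $\beta$ to be the class of punctured map with contact orders $\{u_\tau, p_\tau\}$ and total curve class $A = \sum_{v \in V(G)} \mathbf{A}(v)$, and exhibit $\scrM(X,\btau)$ as finite over a closed substack of $\scrM(X,\beta)$ via the same strict/finite forgetful morphisms used in Construction~\ref{const:decorated wall structure}. Properness of $\scrM(X,\beta)$ over $\Spec\kk$ is \cite[Cor.~3.17]{ACGSII}. In contrast with Lemma~\ref{lem:wall virtual dim}, I do not expect to need to factor through $X_0$ in the relative case, because $\dim h(\tau_{\out}) = n$ (rather than $n-1$) does not force the tropical image to lie in $g_{\trop}^{-1}(0)$.

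The dimension count then runs as follows. By \cite[Prop.~3.28]{ACGSII}, $\foM(\shX,\bar\btau)$ is pure of dimension $-3 + |L(G)| - \dim\tau = -3 + 2 - (n-1) = -n$. By Riemann-Roch on a genus-zero source, the relative virtual dimension of $\varepsilon: \scrM(X,\btau) \to \foM(\shX,\bar\btau)$ at a punctured map $f:C^\circ \to X$ of total class $A$ is $\chi(f^*\Theta_X) = n - A\cdot(K_X+D)$. Hence the total virtual dimension is $-A\cdot(K_X+D) = -\sum_i a_i\,(A \cdot D_i)$, and it remains to show $A \cdot D_i = 0$ whenever $a_i \ne 0$. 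By \cite[Cor.~1.14]{Assoc}, this intersection number equals the sum of the $D_i^*$-coefficients of the contact orders $u_\tau$ and $p_\tau$.

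The main obstacle, and the one point that deserves explicit justification, is that \emph{both} of these coefficients vanish. For $u_\tau$ the hypothesis $\bsigma(L_{\out}) \in \P$ says the generators of $\bsigma(L_{\out})$ are all of the form $D_j^*$ with $a_j = 0$, so the $D_i^*$-coefficient of $u_\tau$ is zero for $a_i \ne 0$. The analogous argument for $p_\tau$ requires $\bsigma(L_{\inc}) \in \P$, which is not literally part of Definition~\ref{def:broken line type} but is forced by the broken-line setup: $p_\tau$ is an asymptotic direction for a broken line in the polyhedral affine pseudomanifold $B$, hence $p_\tau \in B(\ZZ)$ and $\bsigma(L_{\inc}) \in \P$. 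Making this reading of the definition explicit is the one nontrivial point I would spell out in the write-up.
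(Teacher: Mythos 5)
Your dimension count is correct and matches the paper's (the shifts in $|L(G)|$ and $\dim\tau$ indeed cancel to give $-n$ again), and your observation that $A\cdot D_i=0$ for $a_i\neq 0$ needs the vanishing of the bad $D_i^*$-components of \emph{both} $u_\tau$ and $p_\tau$ is apt; the latter vanishing holds simply because $p_\tau\in B(\ZZ)$ and every cone of $\P$ is spanned by good $D_j^*$'s. This much of the proposal is fine.

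However, there is a genuine gap in your treatment of properness in the relative case, and it is precisely the one point the paper flags as requiring modification. You assert that ``Properness of $\scrM(X,\beta)$ over $\Spec\kk$ is \cite[Cor.~3.17]{ACGSII}'' and that one does not need to factor through $X_0$. This is wrong in the relative setting: there $X$ is projective only over $S$, and $S$ is an affine curve or the spectrum of a DVR, so $X$ is not proper over $\Spec\kk$ and $\scrM(X,\beta)$ is not proper over $\Spec\kk$ either. Factoring through the proper fibre $X_0=X\times_S 0$ is unavoidable. For wall types one gets this cheaply: Proposition~\ref{prop:relative case over S} shows the type is defined over $S$, i.e.\ $\Sigma(g)\circ h_s$ is constant, and combined with $\dim h(\tau_{\out})=n-1$, $h(\tau_{\out})\not\subseteq\partial B$ one gets surjectivity of $\Sigma(g)$ on every $h(\tau_v)$, forcing the underlying map into $X_0$. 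For a broken line type this argument breaks down, because a broken line type is in general \emph{not} a type of map over $S$: with two legs, $\Sigma(g)\circ h_s$ need not be constant. The paper replaces it with a balancing argument: using that $\tau$ is balanced, $\Sigma(g)\circ h_s$ is a balanced tropical map to $\RR_{\ge0}$, so $\Sigma(g)_*(p_\tau)\ge 0$ forces $\Sigma(g)_*(u_\tau)\le 0$, and the vertex values $\Sigma(g)\circ h_s(v)$ are squeezed into the interval $[\Sigma(g)\circ h_s(v_{\out}),\Sigma(g)\circ h_s(v_{\inc})]$. Then $\dim h(\tau_{\out})=n$ gives $h(\tau_{\out})\not\subseteq\partial B$, hence surjectivity of $\Sigma(g)$ on $h(\tau_{v_{\out}})$ and thus on all $h(\tau_v)$, which recovers the factoring through $X_0$. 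Your write-up should include this argument rather than dismiss it.
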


\begin{proof}
The argument is essentially identical to that of the proof of
Lemma~\ref{lem:wall virtual dim}, and we leave it to the reader
to make the necessary modifications, except for one point.
In that proof, we appealed to Proposition~\ref{prop:relative case over S}
to argue that in the relative case, for each $v\in V(G)$, we have
$\Sigma(g):h(\tau_v)\rightarrow \RR_{\ge 0}$ surjective. Crucially,
a broken line type is not in general a type of tropical map to
$X/S$, so a different argument is necessary. However, note that by
the assumption that $\tau$ is a balanced type,
for any $s\in \Int(\tau)$, $\Sigma(g)\circ h_s:G\rightarrow \RR_{\ge 0}$
is a balanced tropical map. Necessarily $\Sigma(g)_*(\mathbf{u}(L_{\inc}))
=\Sigma(g)(p_{\tau})\in \RR_{\ge 0}$, and hence by balancing,
$\Sigma(g)_*(u_{\tau})\in \RR_{\le 0}$. This implies, again by
balancing, that if $v_{\inc}$, $v_{\out}$ are the vertices adjacent
to $L_{\inc}$, $L_{\out}$ respectively, then $a=\Sigma(g)\circ h_s(v_{\out})
\le \Sigma(g)\circ h_s(v_{\inc})=b$, and for any other vertex
$v\in V(G)$, $\Sigma(g)\circ h_s(v)\in [a,b]$. Further, since
$\dim h(\tau_{\out})=n$, necessarily $h(\tau_{\out})\not\subseteq\partial
B$, and thus $\Sigma(g):h(\tau_{\out})\rightarrow \RR_{\ge 0}$ is
surjective, and so $\Sigma(g):h(\tau_{v_{\out}})\rightarrow \RR_{\ge 0}$
is also surjective. Putting this together, we see that 
$\Sigma(g):h(\tau_v)\rightarrow \RR_{\ge 0}$ is surjective for
any vertex $v\in V(G)$. This is sufficient to complete the argument
of the proof of Proposition~\ref{lem:wall virtual dim} for properness
of the moduli spaces involved.
\end{proof}

We now define, for $\btau$ a non-trivial decorated broken line type,
\begin{equation}
N_{\btau}:={\deg [\scrM(X,\btau)]^{\virt}\over |\Aut(\btau)|}.
\end{equation}
We also have a map $h_*:\Lambda_{\tau_{\out}}\rightarrow 
\Lambda_{\bsigma(L_{\out})}$, necessarily of finite index, and
define
\begin{equation}
\label{eq:ktau broken line def}
k_{\tau}:= |\coker h_*|=
|\Lambda_{\bsigma(L_{\out})}/h_*(\Lambda_{\tau_{\out}})|.
\end{equation}
For a trivial decorated broken line type $\btau$, we set
\[
N_{\btau} := 1, \quad k_{\btau}:=1.
\]

\begin{definition}
\label{def:log theta function}
Fix $p\in B(\ZZ)\setminus \{0\}$, $\sigma\in\P_{\max}$,
$x\in\Int(\sigma)$ not contained in any rationally defined hyperplane 
in $\sigma$. We then define
\[
\vartheta^{\log}_p(x)= \sum_{\btau} k_{\tau}N_{\btau} t^A z^{-u_{\tau}}
\in \kk[\shP^+_x]/I_x
\]
where the sum is over all isomorphism classes of
decorated broken line types $\btau$ with
$p_{\tau}=p$, $x\in h(\tau_{\out})$. Here 
$A$ is the total curve class of $\btau$, and we require $A\in Q\setminus I$.
\end{definition} 

\begin{lemma}
\label{lem:theta finite}
For $p\in B(\ZZ)\setminus \{0\}$,
the number of decorated broken line types $\btau$ with $p_{\tau}=p$,
total curve class in $Q\setminus I$, and $N_{\btau}\not=0$ is finite. In 
particular, the sum defining $\vartheta^{\log}_p(x)$ is finite.
\end{lemma}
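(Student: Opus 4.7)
The plan is to mimic the finiteness argument from the proof of Proposition~\ref{prop:canonical is wall}. There is at most one trivial decorated broken line type (with $u_\tau=-p$ and total class $0$), so it suffices to bound the non-trivial ones. First, since $Q\setminus I$ is finite by the assumption on $I$, only finitely many total curve classes $A\in Q\setminus I$ are possible. Second, since $\P$ is finite, there are only finitely many choices for the cone $\bsigma(L_{\out})\in\P$.

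Next, I claim that once $A$, $p$, and $\bsigma(L_{\out})$ are fixed, the contact order $u_\tau$ is uniquely determined whenever $N_{\btau}\neq 0$. Indeed, non-vanishing of $N_{\btau}$ forces $\scrM(X,\btau)$ to be non-empty, so there is an actual punctured map realizing these data. Writing $u_\tau=\sum_i u_i D_i^*$ (with $u_i=0$ unless $D_i^*$ is a ray of $\bsigma(L_{\out})$) and $p=\sum_i p_i D_i^*$ analogously, \cite[Cor.~1.14]{Assoc} yields $u_i+p_i=A\cdot D_i$ for each $i$, determining each $u_i$ and hence $u_\tau$.

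Finally, with $A$, $p$, and $u_\tau$ fixed, the class $\beta$ of punctured map is determined. The moduli space $\scrM(X,\beta)$ is of finite type by \cite[Cor.~3.17]{ACGSII}, so the set of tropical types $\tau$ arising from geometric points of $\scrM(X,\beta)$ is finite. For each such $\tau$, those decorations $\mathbf{A}$ with $\scrM(X,\btau)\neq\emptyset$ index a stratification of $\scrM(X,\beta)$ and therefore also form a finite set. Combining, the number of isomorphism classes of decorated broken line types $\btau$ with the prescribed $p_\tau=p$, total curve class in $Q\setminus I$, and $N_{\btau}\neq 0$ is finite, which also shows that the sum defining $\vartheta^{\log}_p(x)$ is finite. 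No step here appears to be a genuine obstacle, as the argument is a direct adaptation of the one already carried out for wall types; the only minor subtlety is checking that the decoration $\mathbf{A}$ is constrained, which is handled by the stratification of the finite-type moduli space $\scrM(X,\beta)$.
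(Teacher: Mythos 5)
Your argument is correct and follows the same approach as the paper's: bound the total curve classes by finiteness of $Q\setminus I$, bound $\bsigma(L_{\out})$ by finiteness of $\P$, deduce from \cite[Cor.~1.14]{Assoc} that $u_\tau$ is then determined, and appeal to the finite type of $\scrM(X,\beta)$ to bound the remaining choices of decorated type. The only difference is that you spell out the balancing formula and handle the trivial type explicitly, both of which the paper leaves implicit.
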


\begin{proof}
As $Q\setminus I$ is finite, there are only a finite number of total
curve classes. Further, there are only a finite number of possibilities
for $\bsigma(L_{\out})$. For each curve class $A$ and choice
of $\bsigma(L_{\out})$, it follows from 
\cite[Cor.\ 1.14]{Assoc} that $u_{\tau}$ is determined by $p$
if $\scrM(X,\btau)$ is non-empty. If $\beta$ is the punctured curve
class given by contact orders $p$ and $u_{\tau}$ and curve
class $A$, then $\scrM(X,\beta)$ is of finite type,
and there are only a finite number of decorated types $\btau$
appearing as the types of tropicalizations of punctured maps
in $\scrM(X,\beta)$. Thus there are only a finite number of possible
choices of $\btau$ with given total curve class $A$ and
$\bsigma(L_{\out})$ satisfying $\mathbf{u}(L_{\inc})=p$
and $\mathbf{u}(L_{\out})=u_{\tau}$.
\end{proof}


\section{A correspondence theorem for broken lines}
\label{sec:correspondence}

\subsection{Broken lines and statement of the correspondence theorem}
\label{subsec:broken lines}

We first recall the definition of \emph{broken lines}
\cite[Def.\ 3.3]{Theta} for a wall structure $\scrS$ on $B$. 
We continue to work with
the specific choice of $(B,\P)$ derived from $(X,D)$, a monoid
$Q\subset H_2(X)$ and monoid ideal $I\subset Q$ with $Q\setminus I$
finite as usual.

\begin{definition}
Let $\gamma:(a,b)\rightarrow B_0$ be a path with
$t\in (a,b)$ such that $\gamma(t)\in |\scrS|\setminus \Sing(\scrS)$.
Let $n\in \check\Lambda_{\gamma(t)}$
be a primitive cotangent vector annihilating the tangent space to
$|\scrS|$ at $\gamma(t)$. Thinking of $n$ as defining a linear function
locally near $\gamma(t)$ which is zero along $|\scrS|$, we assume
further that $n$ is positive on 
$\gamma(t-\epsilon,t)$ and negative on $\gamma(t,t+\epsilon)$ for some
$\epsilon>0$. Suppose $\gamma(t-\epsilon)\in\sigma$, $\gamma(t+\epsilon)
\in\sigma'$ with $\sigma,\sigma'\in\P_{\max}$.
Given an expression $az^{m}$ with $a\in \kk[Q]/I$, $m\in
\Lambda_{\sigma}$, $\langle m,n\rangle >0$, 
we define a \emph{result of
transport along $\gamma$} of $az^m$ to be $a_iz^{m_i}$ 
chosen as follows. Write
\[
f_{\gamma(t)}^{\langle n,m\rangle} \cdot
\fot_{\sigma,\sigma'}(az^m)
= \sum_i a_i z^{m_i}
\]
inside $(\kk[Q]/I)[\Lambda_{\sigma'}]$,
with the $m_i\in\Lambda_{\sigma'}$ mutually distinct.
We then take $a_i z^{m_i}$ to be one of the terms in this sum.
Here $\fot$ is as defined in \eqref{eq:fot ring hom}
and $f_{\gamma(t)}$ is as defined in \eqref{eq:fx def}, which
may be viewed via transport as an element of
$(\kk[Q]/I)[\Lambda_{\sigma'}]$.
\end{definition}

\begin{definition}
\label{def:broken line}
A \emph{broken line} for a wall structure $\scrS$ on $(B,\P)$
is a proper continuous map
\[
\beta:(-\infty,0]\rightarrow B_0
\]
with image disjoint from $\Sing(\scrS)$, along with a sequence
$-\infty=t_0<t_1<\cdots<t_r=0$ for some $r\ge 1$ with $\beta(t_i)
\in |\scrS|$ for $1\le i\le r-1$, and for each $i=1,\ldots,r$ an
expression $a_iz^{m_i}$ with $a_i\in \kk[Q]/I$, $m_i\in \Lambda_{\beta(t)}$
for any $t\in (t_{i-1},t_i)$, subject to the following conditions:
\begin{enumerate}
\item $\beta|_{(t_{i-1},t_i)}$ is a non-constant affine map with
image disjoint from $|\scrS|$, hence contained in the interior
of some $\sigma\in\P_{\max}$, and
$\beta'(t)=-m_i$ for all $t\in (t_{i-1},t_i)$.
\item For each $i=1,\ldots,r-1$ the expression 
$a_{i+1}z^{m_{i+1}}$ is a result of transport of $a_iz^{m_i}$
along $\beta|_{(t_{i-1},t_{i+1})}$.
\item $a_1=1$.
\end{enumerate}
The \emph{asymptotic monomial} of $\beta$ is $m_1$. Note that
since $\beta((-\infty,t_1])$ is an affine half-line contained in
a cone $\sigma\in\P$, we may identify $m_1$ with an integral point
of $\sigma$, and hence view $m_1\in B(\ZZ)$.

Given a broken line $\beta$, we write $a_{\beta}=a_r$, $m_{\beta}=m_r$.
\end{definition}

\begin{remark}
This only differs in one way from \cite[Def.\ 3.3]{Theta}, namely 
the requirement that $a_1=1$.
In \cite{Theta}, broken lines satisfying this additional condition are
called \emph{normalized} broken lines.
As we never use any other type of broken line here, we include
this in the definition.
\end{remark}

Broken lines define theta functions:

\begin{definition}
\label{def:broken line theta function}
Fix $p\in B(\ZZ)\setminus \{0\}$, $\sigma\in\P_{\max}$, $x\in\Int(\sigma)$
not contained in any rationally defined hyperplane in $\sigma$.
We then define
\[
\vartheta_p(x)=\sum_{\beta} a_{\beta}z^{m_{\beta}}\in \kk[\shP_x^+]/I_x,
\]
where the sum is over all broken lines $\beta$ with asymptotic
monomial $p$ and $\beta(0)=x$.
\end{definition}

We note this sum is finite, e.g., by \cite[Lem.\ 3.7]{Theta}.

The main result of this section may now be stated as:

\begin{theorem}
\label{thm:theta correspondence}
For $p\in B(\ZZ)\setminus \{0\}$, $\sigma\in\P_{\max}$, $x\in\Int(\sigma)$
not contained in any rationally defined hyperplane, we have
\[
\vartheta_p(x)= \vartheta^{\log}_p(x).
\]
\end{theorem}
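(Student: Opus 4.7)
The plan is to prove Theorem~\ref{thm:theta correspondence} by establishing, term-by-term, a correspondence between broken lines for $\scrS_\can$ ending at $x$ with asymptotic direction $p$ and decorated broken line types $\btau$ satisfying $p_\tau=p$ and $x\in h(\tau_{\mathrm{out}})$. The geometric source of this correspondence is the \emph{spine/bubble decomposition} of the tropical map underlying $\btau$. Concretely, given a decorated broken line type $\btau=(G,\bsigma,\mathbf{u},\mathbf{A})$, Lemma~\ref{lem:key tropical lemma}(2) asserts that the spine $G'$ of $G$ is mapped into $B_0$ via $h_s$ and meets only codimension zero and one cells, while each spine vertex $v$ has $\dim h(\tau_v)=n-1$. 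Removing the spine from $G$ produces a finite collection of connected subgraphs attached to the spine vertices, and detaching each such subgraph at its spine vertex gives, by the restriction procedure of \cite[Prop.~5.2]{ACGSII}, a decorated wall type $\btau_v$. Thus $\btau$ canonically decomposes into a \emph{broken line trajectory} (the image of $G'$ in $B$, which is an affine piecewise-linear path bending only at codimension one cones) together with, at each bending vertex $v$, a finite multiset of decorated wall types $\{\btau_v^{(1)},\dots,\btau_v^{(\ell_v)}\}$. Conversely, given a broken line $\beta$ for $\scrS_\can$ with bending points $\gamma(t_i)$, each wall $\fop_{\btau_i}\in\scrS_\can$ containing $\gamma(t_i)$ is of the form $\fop_{\btau_{v_i}^{(j)}}$ for some decorated wall type; so a choice of multiset of such walls at each $t_i$ allows us to reconstruct a decorated broken line type by grafting bubbles onto the broken line trajectory.

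The first step is to make this bijection precise: define the \emph{combinatorial type} of a broken line to record, at each bending point, the multiset of decorated wall types whose walls are being crossed, and establish a bijection (analogous to Proposition~4.X in the introduction) between isomorphism classes of decorated broken line types and pairs (broken line trajectory, choice of multiset of decorated wall types at each bending). The second step is the numerical matching. Each bending event contributes to $a_\beta z^{m_\beta}$ a factor extracted from
\[
f_{\gamma(t_i)}^{\langle n_i,m_i\rangle}\cdot\fot_{\sigma_i,\sigma_{i+1}}(a_iz^{m_i}),
\]
where $f_{\gamma(t_i)}=\prod_{\gamma(t_i)\in\fop_{\btau_v^{(j)}}}\exp(k_{\tau_v^{(j)}}W_{\btau_v^{(j)}}t^{A_v^{(j)}}z^{-u_{\tau_v^{(j)}}})$ by \eqref{eq:refined wall def} and \eqref{eq:Scan refined def}. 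Expanding the exponential and the resulting $\langle n_i,m_i\rangle$-th power multinomially produces, for each choice of multiset of bubbles $\btau_v^{(1)},\dots,\btau_v^{(\ell)}$ attached at a spine vertex, a combinatorial coefficient involving $\prod_j k_{\tau_v^{(j)}}W_{\btau_v^{(j)}}/\ell!$ and further symmetry factors from repeated bubbles. The claim is that these coefficients, summed against the contribution of the broken line trajectory itself (treated as a "bubbleless" tropical map of a single chain of spine vertices), reproduce $k_\tau N_\btau$.

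The mechanism of the numerical matching is Yixian Wu's gluing formula \cite{Wu} from Appendix~\ref{App: gluing formula}, applied in the simplified form suited to this setting. Namely, at each spine vertex $v$ lying on a codimension one cone $\rho_v\in\P^{[n-1]}$, one is gluing $\ell_v+2$ legs along the stratum $X_{\rho_v}$ (or $X_{\sigma_v}$ in the codimension zero case), two of which belong to the spine and $\ell_v$ of which belong to the bubbles. Proposition~\ref{prop:polyhedral pseudo} guarantees that $X_{\rho_v}\cong\PP^1$ with a toric boundary in the codimension one case, and in the codimension zero case the gluing stratum is a point; in both cases Wu's formula is applicable because the gluing strata are toric. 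The formula expresses $k_\tau N_\btau$ as a product of $k_{\tau_v^{(j)}}W_{\btau_v^{(j)}}$ factors times a lattice index that matches precisely the contact-order powers produced by the multinomial expansion of $f_{\gamma(t_i)}^{\langle n_i,m_i\rangle}$. After this identification, summing over all choices of decorated wall types at each bending of the broken line realises the full invariant $k_\tau N_\btau$, and summing over all combinatorial types of broken lines with fixed asymptotic data $(p,x)$ realises both sides of the desired equality $\vartheta_p(x)=\vartheta_p^{\log}(x)$.

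The main obstacle will be the numerical matching via the gluing formula when $\langle n_i,m_i\rangle>1$ or when several distinct decorated wall types share the same supporting wall, for then both the wall expansion and the moduli-theoretic gluing carry non-trivial combinatorial factors (multinomial coefficients, automorphism groups $|\Aut(\btau)|$ versus $|\Aut(\btau_v^{(j)})|$, and the lattice indices $k_\tau, k_{\tau_v^{(j)}}$). Reconciling these factors, and in particular verifying that the short exact sequence relating $\coker h_\ast$ for the full type $\btau$ with those for the wall-type bubbles and the spine matches the product of exponents appearing in the expansion of $f_{\gamma(t_i)}^{\langle n_i,m_i\rangle}$, is the delicate computation. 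Once this is carried out at a single bending event, the global correspondence follows by induction along the bending sequence $t_1<\cdots<t_{r-1}$, since both $a_\beta z^{m_\beta}$ and $k_\tau N_\btau t^A z^{-u_\tau}$ are compatible with splitting the broken line and the tropical map at a single wall crossing.
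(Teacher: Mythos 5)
Your proposal is correct and follows essentially the same route as the paper: the spine/bubble bijection you describe is precisely Constructions~\ref{const:broken line to type}--\ref{const:type to broken line} together with Proposition~\ref{prop:one to one}, and the numerical matching (Theorem~\ref{thm:main correspondence theorem}) is proved by exactly your inductive scheme, splitting at the spine vertex adjacent to $L_\out$ and applying Wu's gluing formula, with toricness of the gluing strata coming from Lemma~\ref{lem:toric dim one}. The one technicality your sketch glosses over is that the central one-vertex type produced by the split is not itself $\nu$-transverse, so one must choose a general displacement vector, identify the unique transverse type $\omega_0$ (a chain of trivalent vertices, plus one extra vertex mapping to $\rho$ in the codimension one case), and compute its multiplicity and vertex contribution separately -- but this is precisely the content of Lemmas~\ref{Lem: general types scattering broken lines}--\ref{Lem: vertex contribution}.
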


Before proving this theorem, we first recast it as a tropical correspondence
theorem between broken lines and punctured log curves.
To do so, we need to deal with a certain amount of book-keeping. To
this end, we first refine the notion of broken line. This
refinement keeps track of which wall types cause the bending of the
broken line.

\begin{definition}
\label{def:decorated broken line}
A \emph{decorated broken line} for $\scrS_{\can}$ is a proper
continuous map 
\[
\beta:(-\infty,0]\rightarrow B_0
\]
with image disjoint from $\Sing(\scrS_{\can})$, along with a sequence
$-\infty=t_0<t_1<\cdots<t_r=0$ for some $r\ge 1$ with $\beta(t_i)
\in |\scrS_{\can}|$ for $1\le i\le r-1$, along with additional data:
\begin{itemize}
\item[(i)]
For each $i=1,\ldots,r$ an
expression $a_iz^{m_i}$ with $a_i\in \kk[Q]/I$, $m_i\in \Lambda_{\beta(t)}$
for any $t\in (t_{i-1},t_i)$. 
\item[(ii)] For each $i=1,\ldots,r-1$, let $\scrS_i:=\{\fop_{\btau}\in
\scrS_{\can}\,|\, \beta(t_i)\in \fop_{\btau}\}$.
Then we are given in addition a function
$\mu_i:\scrS_i\rightarrow \NN$. 
\end{itemize}
This data is subject to the following conditions:
\begin{enumerate}
\item $\beta|_{(t_{i-1},t_i)}$ is a non-constant affine map, and
$\beta'(t)=-m_i$ for all $t\in (t_{i-1},t_i)$.
\item Define the \emph{support} of $\mu_i$ to 
be the subset of $\scrS_i$
on which $\mu_i$ takes non-zero value. Then the support of $\mu_i$
must be finite, and non-empty if $\beta(t_i)$ lies in the interior
of a maximal cone of $\P$. Further, if $\beta(t)\in\rho\in\P^{[n-1]}$,
then $t=t_i$ for some $i$.
\item For each $i=1,\ldots,r-1$, if $\beta(t_i-\epsilon)\in\sigma$,
$\beta(t_i+\epsilon)\in\sigma'$, we have
\begin{equation}
\label{eq:parallel transport for correspondence}
a_{i+1}z^{m_{i+1}} = \fot_{\sigma,\sigma'}(a_i z^{m_i})
\prod_{\fop_{\btau}\in \scrS_i} {(\langle n_i, m_i\rangle k_{\tau}
W_{\btau}t^{A} z^{-u_{\tau}})^{\mu_i(\fop_{\btau})}\over 
\mu_i(\fop_{\btau})!}.
\end{equation}
Here $n_i \in \check\Lambda_{\beta(t_i)}$ is primitive,
vanishing on the tangent space to $|\scrS_{\can}|$ at
$\beta(t_i)$, and positive on $m_i$.
\item $a_1=1$.
\end{enumerate}
\end{definition}

\begin{remark}
There are a couple of points of distinction between the above
definition and Definition~\ref{def:broken line}. First, it is possible
that $\beta(t)\in |\scrS_{\can}|$ with $t\not=t_i$ for
any $i$ provided $\beta(t)$ lies in the interior of a maximal cell.
Second, the data of the $\mu_i$ completely determines the change
of monomial. To explain the expression 
\eqref{eq:parallel transport for correspondence},
note that by the description \eqref{eq:Scan refined def} of $\scrS_{\can}$,
\[
f_{\beta(t_i)}^{\langle n_i,m_i\rangle}
=
\exp\left( \sum_{\fop_{\btau}\in \scrS_i} \langle n_i,m_i\rangle k_{\tau}
W_{\btau}t^A z^{-u_{\tau}}\right)=
\sum_{\mu_i}
\prod_{\fop_{\btau}\in \scrS_i} {(\langle n_i, m_i\rangle k_{\tau}
W_{\btau}t^{A} z^{-u_{\tau}})^{\mu_i(\fop_{\btau})}\over 
\mu_i(\fop_{\btau})!}
\]
where the sum is over all functions $\mu_i:\scrS_i\rightarrow\NN$
with finite support. Thus
\eqref{eq:parallel transport for correspondence} arises from one
of the summands, and if $\beta(t_i)$ falls in the interior
of a maximal cell, we do not allow the trivial summand $1$ (with $\mu_i
\equiv 0$). This would correspond, in Definition~\ref{def:broken line},
to a situation where $a_iz^{m_i}=a_{i+1}z^{m_{i+1}}$.

It is then clear that we have a refined description
\[
\vartheta_p(x)=\sum_{\beta} a_{\beta}z^{m_{\beta}} \in \kk[\shP^+_x]/I_x
\]
as a sum over decorated broken lines with asymptotic monomial $p
\in B(\ZZ)$ and $\beta(0)=x$. Each term in the summation
of Definition~\ref{def:broken line theta function} is now
split up into a sum over a number of decorated broken lines.
\end{remark}

\begin{construction}
\label{const:broken line to type}
We may now associate to a decorated broken line $\beta$ an
isomorphism class of a decorated broken line type $\btau_{\beta}=
(G_{\beta},\bsigma_{\beta},{\bf u}_{\beta}, {\bf A}_{\beta})$.
This is done as follows. 

First, we adopt the following notation.
For each $\fop_{\btau}\in \scrS_i$,
write $(G_{\btau},\bsigma_{\btau},{\bf u}_{\btau})$
and ${\bf A}_{\btau}$ for the data giving the decorated type
$\btau$. Let $L_{\out,\btau}\in L(G_{\btau})$ be the unique leg, adjacent
to a unique $v_{\out,\btau}\in V(G_{\btau})$.

%

The graph $G_{\beta}$ has a spine comprising
of the legs $L_{\inc}$ and $L_{\out}$
and the edges $E_1,\ldots,E_{r-2}$ and their vertices
$v_1,\ldots,v_{r-1}$, with the vertices of $E_i$ being $v_i$, $v_{i+1}$
and the vertices of $L_{\inc}$ and $L_{\out}$ being $v_1$
and $v_{r-1}$ respectively. At the vertex $v_i$ we glue 
$\mu_i(\fop_{\btau})$ copies of $G_{\btau}$ to $v_i$ along the unique
leg of $G_{\btau}$.

Next, we define $\bsigma_{\beta}$ as follows.
We define $\bsigma_{\beta}(v_i)$ or $\bsigma_{\beta}(E_i)$ to be the minimal
cone of $\P$ containing $\beta(t_i)$ or $\beta((t_i,t_{i+1}))$ 
respectively, and $\bsigma_{\beta}(L_{\inc})$ and $\bsigma_{\beta}(L_{\out})$
to be the minimal cones containing
$\beta((-\infty,t_1))$ or $\beta((t_{r-1},t_r))$ respectively. 
Further, for each $i$, $\fop_{\btau}\in\scrS_i$, we may view 
each of the $\mu_i(\fop_{\btau})$ copies
of $G_{\btau}$ 
as a subgraph of $G_{\beta}$, and we take $\bsigma_{\beta}$ to
agree with $\bsigma_{\btau}$ on each copy of $G_{\btau}$.

To define $\mathbf{u}_{\beta}$, we again take it to agree with 
$\mathbf{u}_{\btau}$ on any copy of $G_{\btau}$, 
and take $\mathbf{u}_{\beta}(E_i)=-m_{i+1}$ 
with $E_i$ oriented from $v_i$ to $v_{i+1}$. We also set
$\mathbf{u}_{\beta}(L_{\inc})=m_1$, $\mathbf{u}_{\beta}(L_{\out})=-m_r$ (using
the standard convention that legs are oriented away from their vertex).

This defines the global type $\tau_{\beta}$. For the decoration 
${\bf A}_{\beta}$, it similarly agrees with ${\bf A}_{\btau}$
on each copy of $G_{\btau}$. 
If $v=v_i$,
we take ${\mathbf A}_{\beta}(v_i)=0$ if $\beta(t_i)$ lies in the interior
of a maximal cell of $\P$, but if $\beta(t_i)\in\Int(\rho)$
with $\rho \in \P^{[n-1]}$, then we take
${\mathbf A}_{\beta}(v_i)=\langle n_i,m_i\rangle [X_{\rho}]\in H_2(X)$.
\end{construction}

\begin{lemma}
Let $\beta$ be a decorated broken line with $x=\beta(0)\in\Int(\sigma)$,
$\sigma\in\P_{\max}$, $x$ not contained in any rationally defined
hyperplane. Then $\btau_{\beta}$ is a decorated broken line type.
\end{lemma}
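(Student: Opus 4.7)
The plan is to verify conditions (1)--(3) of Definition~\ref{def:broken line type} for $\btau_\beta = (G_\beta, \bsigma_\beta, \mathbf{u}_\beta, \mathbf{A}_\beta)$ as built in Construction~\ref{const:broken line to type}. I will take the three conditions in order, with the dimension count being the main technical point.

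First, for the combinatorial conditions in (1), observe that $G_\beta$ has genus zero: its spine is a tree, and each attached copy of $G_{\btau}$ is genus zero (since $\btau$ is a wall type), glued along its unique leg to a single spine vertex. The legs of $G_\beta$ are exactly $L_{\inc}$ and $L_{\out}$, because each leg of an attached $G_{\btau}$ is consumed by the identification with a spine vertex, becoming an internal edge. By construction $\bsigma_\beta(L_{\out})$ is the minimal cone of $\P$ containing $\beta((t_{r-1},t_r))$, hence the maximal cell $\sigma$ containing $\beta(0)=x$, so $\bsigma_\beta(L_{\out})\in\P_{\max}\subseteq\P$. The contact orders $u_{\tau_\beta}=-m_r$ and $p_{\tau_\beta}=m_1$ are nonzero, since every segment of a broken line has nonzero direction monomial, and $p=m_1\in\bsigma_\beta(L_{\inc})$ by definition of the asymptotic monomial.

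Second, I will verify balancing (Definition~\ref{def:balanced type}) and realizability. Balancing at vertices internal to any attached copy of $G_{\btau}$ is automatic because $\tau$ is itself a balanced wall type. At a spine vertex $v_i$, the adjacent oriented (away from $v_i$) contact orders are $m_i$ (toward $L_{\inc}$ or $v_{i-1}$), $-m_{i+1}$ (toward $L_{\out}$ or $v_{i+1}$), and $u_\tau$ for each of the $\mu_i(\fop_\btau)$ attached copies. When $\beta(t_i)$ lies in the interior of a maximal cell, $\fot_{\sigma,\sigma'}$ is the identity, and reading off exponents in \eqref{eq:parallel transport for correspondence} directly yields $m_{i+1}=m_i-\sum_{\fop_\btau\in\scrS_i}\mu_i(\fop_\btau)u_\tau$, i.e., the balancing identity. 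When $\beta(t_i)\in\Int(\rho)$ for $\rho\in\P^{[n-1]}$, the transport $\fot_{\sigma,\sigma'}$ shifts exponents by a multiple of the kink $\kappa_\rho=[X_\rho]$; by the decoration $\mathbf{A}_\beta(v_i)=\langle n_i,m_i\rangle[X_\rho]$ together with Lemma~\ref{lem:admissible degrees}(2), this discrepancy in $\Lambda_\sigma/\Lambda_\rho$ is exactly what the balancing condition in $\Lambda_{\beta(t_i)}$ demands. In the relative case one also needs $\Sigma(g)\circ h_s$ to be balanced; this follows from the fact that $\Sigma(g)$ kills all contact orders of the spine (since $\beta$ stays in a fiber of $g_{\trop}$ once suitably interpreted) and is already balanced on each wall-type piece. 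For realizability, the decorated broken line $\beta$ itself provides an actual tropical map of type $\tau_\beta$: the spine segments map along $\beta$, and each attached wall-type piece maps via a realization of its own type $\tau$, for which the endpoint of $L_{\out,\btau}$ lies on $\fop_\btau=h_\tau(\tau_{\out,\btau})$. Coherence at gluing vertices is exactly the membership $\fop_\btau\in\scrS_i$, which asserts $\beta(t_i)\in\fop_\btau$.

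Third, I will verify condition (3), $\dim\tau_\beta=n-1$ and $\dim h(\tau_{\beta,\out})=n$. This is the main technical point. The idea is to describe $\tau_\beta$ as a fiber-product gluing of wall-type cones along spine data. Each wall-type cone $\tau_\btau$ contributes dimension $n-2$, with $h_\tau(\tau_{\out,\btau})=\fop_\btau$ of codimension one; on the spine, varying the endpoint $x=\beta(0)$ within $\sigma$ and choosing edge lengths gives further parameters, while the constraint that $v_i$ lies on each wall $\fop_\btau$ attached there cuts down the free variation. For the upper bound, I will show $\dim\tau_\beta\le n-1$ by induction on $r$, using that each bending at a codimension-one wall imposes a one-dimensional constraint that precisely cancels the codimension-one freedom gained from attaching a wall-type cone. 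For the lower bound, I will show that the evaluation $\tau_{\beta,\out}\to\sigma$ (obtained by projecting onto the endpoint of $L_{\out}$) is surjective onto the full $n$-dimensional cone $\sigma$: one lets $x$ vary in $\Int(\sigma)$, traces the broken line backward, and observes that the resulting family of tropical maps has constant combinatorial type $\tau_\beta$ on a full-dimensional open subset of $\sigma$ (since $x$ was chosen away from rationally defined hyperplanes, small perturbations do not alter $\bsigma_\beta$ of any vertex or edge). Thus $\dim h(\tau_{\beta,\out})=n$ and $\dim\tau_\beta\ge n-1$, completing the count. Together with the already-verified $h(\tau_{\beta,\out})\supseteq\sigma\in\P$, this gives condition (3) in its entirety and completes the proof that $\btau_\beta$ is a decorated broken line type.
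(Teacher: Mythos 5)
Your proposal follows the paper's basic structure (verify conditions (1)--(3) of Definition~\ref{def:broken line type}; realizability by explicit gluing; balancing from \eqref{eq:parallel transport for correspondence}; a dimension count), and the realizability argument is essentially the paper's. However, there are a few genuine issues.

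\textbf{Balancing at $v_i$ in codimension one.} You write that when $\beta(t_i)\in\Int(\rho)$, the transport $\fot_{\sigma,\sigma'}$ ``shifts exponents by a multiple of the kink $\kappa_\rho$'' and that, together with the decoration $\mathbf{A}_\beta(v_i)$ and Lemma~\ref{lem:admissible degrees}, this ``discrepancy in $\Lambda_\sigma/\Lambda_\rho$'' accounts for balancing. This conflates the two parts of $\shP_x^+\cong \Lambda_x\oplus Q$. The balancing condition \eqref{eq:balancing} is an identity in $\Lambda_{\beta(t_i)}$ (the contact-order part) only; parallel transport in $\Lambda$ across $\rho$ is the identity once one works in the chart $\psi_\rho$, so there is no discrepancy in $\Lambda_\sigma/\Lambda_\rho$ at all. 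The kink $\kappa_\rho=[X_\rho]$ lives in $Q^\gp$ and only shifts the curve-class part, which is recorded in the decoration $\mathbf{A}_\beta(v_i)$ but plays no role in balancing. The correct argument, as in the paper, is that reading the $\Lambda$-exponents in \eqref{eq:parallel transport for correspondence} gives $m_{i+1}=m_i-\sum_{\fop_\btau\in\scrS_i}\mu_i(\fop_\btau)u_\tau$ uniformly, regardless of the codimension of $\bsigma_\beta(v_i)$.

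\textbf{Relative-case balancing.} The assertion that ``$\Sigma(g)$ kills all contact orders of the spine (since $\beta$ stays in a fiber of $g_\trop$ once suitably interpreted)'' is false: a broken line type is not in general a type defined over $S$, since $\Sigma(g)_*(p_\tau)$ need not vanish (compare the proof of Lemma~\ref{lem:virt dim broken lines}). The correct argument is simply that balancing in $B$ at $v_i$ pushes forward to balancing in $\Sigma(S)$ because $g_\trop$ is an affine submersion (Proposition~\ref{prop:gtrop affine submersion}).

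\textbf{Dimension count.} You only sketch condition (3) (``I will show~\ldots by induction on $r$''), and the framing that ``each bend cancels the codimension-one freedom gained from attaching a wall-type cone'' is not quite right: bends at slabs can occur with $\mu_i\equiv 0$, and wall-type cones attached at a fixed $v_i$ are already fully determined (the evaluation $\tau_{j,\out}\to\fop_{\btau_j}$ is generically finite), so they contribute no extra freedom to cancel. The paper's direct convex-geometry count is cleaner and complete: $\beta(t_1)$ varies in the $(n-1)$-dimensional wall containing it, the remainder of the spine is determined by the directions $m_i$ and the walls (each successive $\beta(t_{i+1})$ is forced as the intersection with the next wall), and $\beta(0)$ contributes one additional parameter; hence $\dim\tau_\beta=n-1$ and $\dim h(\tau_{\beta,\out})=n$. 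Your lower-bound claim that the evaluation maps \emph{onto} $\sigma$ is also too strong; it maps onto a full-dimensional subset, which is what is needed.
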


\begin{proof}
Condition (1) of Definition~\ref{def:broken line type} is
immediate. 

For condition (2), we first show $\tau_{\beta}$ is realizable, i.e., 
there is a tropical map $h:G_{\beta}\rightarrow \Sigma(X)$ of type 
$\tau_{\beta}$. On the spine $G'_{\beta}$ of $G_{\beta}$,
consisting of vertices $v_1,\ldots,
v_{r-1}$, edges $E_1,\ldots, E_{r-2}$ and legs $L_{\inc}, L_{\out}$,
we define $h$ to agree with $\beta$, extending $\beta$
linearly on $L_{\out}$ if necessary. 
For each $i$ and for each $\fop_{\btau}\in \scrS_i$, $\btau$
is a wall type, and hence is realizable with an $n-2$-dimensional
universal family with output leg tracing out the wall $\fop_{\btau}$.
Further, $\beta(t_i)\in\Int(\fop_{\btau})$.
Thus there is a unique tropical map 
$h_{\btau}:G_{\btau}\rightarrow \Sigma(X)$ of type $\tau$ with $\beta(t_i)
\in h_{\btau}(L_{\out,\btau})$. Thus we may define $h$ restricted
to any copy of $G_{\btau}$ in $G_{\beta}$ to be given by $h_{\btau}$,
appropriately truncating the edge $L_{\out,\btau}$ as necessary.
This shows realizability.

Next, we observe that $\tau_{\beta}$ is balanced. Because this is
true for each wall type $\tau$ appearing in $\tau_{\beta}$, this
only needs to be checked at the vertices $v_i$, but such balancing
follows from \eqref{eq:parallel transport for correspondence}.
Indeed, by considering exponents,
that equation tells us that
\[
m_{i+1}=m_i-\sum_{\fop_{\btau}\in\scrS_i}\mu_i(\fop_{\btau}) u_{\tau},
\]
which is the balancing condition at $v_i$ given the specified contact
orders in Construction~\ref{const:broken line to type} of edges
adjacent to $v_i$.
In the relative case, we also need to verify that $\Sigma(g)\circ h$
is balanced as a map to $\Sigma(S)=\RR_{\ge 0}$. However, this again
holds at all vertices of $G_{\beta}$ except for the $v_i$'s because
the same is assumed true of the wall types $\tau$. At a vertex $v_i$,
the result follows from the balancing in $B$ and the fact (Lemma 
\ref{prop:gtrop affine submersion}) that $g_{\trop}$ is an affine submersion.

For condition (3), 
observe that, by elementary convex geometry,
$\beta$ varies in an $n$-dimensional family: each $\beta(t_i)$
is constrained to live in an $n-1$-dimensional subspace, and the
location of $\beta(t_1)$ and the $m_i$ completely determine the
map $\beta$, up to the location of $\beta(0)$, which gives an
extra parameter. In particular, the dimension of the universal
family of tropical maps of type $\tau_{\beta}$ is then clearly
$n-1$, and $h(\tau_{\beta,\out})$ is $n$-dimensional. 
This shows $\tau_{\beta}$ is a broken line type.
\end{proof}

We note that the broken line type $\btau_{\beta}$ had a very
specific form for the curve classes associated to the vertices $v_i$.
We codify this as follows:

\begin{definition}
Let $\btau=(G,\bsigma,{\mathbf u},{\mathbf A})$ 
be a decorated broken line type with
spine $G'\subseteq G$ (see Definition~\ref{def:spine}), 
the latter having vertices
$v_1,\ldots,v_{r-1}$ and edges $E_i$ with vertices $v_i, v_{i+1}$.
By Lemma~\ref{lem:key tropical lemma}, $\bsigma(v_i)\in\P^{[n]}\cup
\P^{[n-1]}$. We say $\btau$ is \emph{admissible}
if $\mathbf{A}(v_i)=0$ whenever $\bsigma(v_i)\in \P^{[n]}$,
and $\mathbf{A}(v_i) = |\langle n_i,\mathbf{u}(E_i)\rangle| 
[X_{\bsigma(v_i)}]$ whenever $\bsigma(v_i)\in \P^{[n-1]}$,
where $n_i\in\check\Lambda_x$ for $x\in \Int(\bsigma(v_i))$ is a choice
of primitive normal vector to $\bsigma(v_i)$.
\end{definition}

By construction, if $\beta$ is a decorated broken line, then
$\btau_{\beta}$ is admissible. The following is immediate from
Lemma~\ref{lem:admissible degrees}:

\begin{lemma}
Let $\btau$ be a decorated broken line type. Then $N_{\btau}\not=0$
implies that $\btau$ is admissible.
\end{lemma}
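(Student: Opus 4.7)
The plan is to combine Lemma~\ref{lem:admissible degrees} with the structural description of the spine provided by Lemma~\ref{lem:key tropical lemma}(2) and Remark~\ref{rem:spine unique}. First, since $N_{\btau}\neq 0$, the moduli space $\scrM(X,\btau)$ is non-empty, so both conclusions of Lemma~\ref{lem:admissible degrees} are available at every vertex of $G$. Second, since $\btau$ is a non-trivial decorated broken line type, the hypotheses of Lemma~\ref{lem:key tropical lemma}(2) are satisfied by $\tau$, so every spine vertex $v_i$ satisfies $\bsigma(v_i)\in\P^{[n]}\cup\P^{[n-1]}$, and the spine meets codimension two cones of $\P$ only possibly at the non-vertex endpoints of the legs.

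For a spine vertex $v_i$ with $\bsigma(v_i)\in\P_{\max}$, Lemma~\ref{lem:admissible degrees}(1) immediately gives $\mathbf{A}(v_i)=0$, which is the first half of admissibility. For a spine vertex $v_i$ with $\bsigma(v_i)=\rho\in\P^{[n-1]}$, I would apply Lemma~\ref{lem:admissible degrees}(2). To do so I use Remark~\ref{rem:spine unique}, which states that at each interior spine vertex every edge of $G$ adjacent to $v_i$ other than the spine edges $E_{i-1}$ and $E_i$ (with the convention $E_0=L_{\inc}$, $E_{r-1}=L_{\out}$) has contact order tangent to $\bsigma(v_i)=\rho$. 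Choose a maximal cone $\sigma\in\P_{\max}$ containing $\rho$ such that the (oriented) image of $E_i$ under $h$ lies in $\sigma$. Among all edges adjacent to $v_i$ whose image lies in $\sigma$, the tangent ones contribute $\delta(\mathbf{u})=0$ (since $\delta$ kills $\Lambda_\rho$), and the other spine edge $E_{i-1}$ maps into the opposite maximal cone. Hence the sum $d$ in Lemma~\ref{lem:admissible degrees}(2) reduces to $\delta(\mathbf{u}(E_i))$, which by the sign conventions on $\delta$ and on the primitive normal $n_i$ equals $|\langle n_i,\mathbf{u}(E_i)\rangle|$. Thus $\mathbf{A}(v_i)=|\langle n_i,\mathbf{u}(E_i)\rangle|[X_\rho]$, completing admissibility.

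The main obstacle to turning this into a full proof is a small amount of boundary bookkeeping: one must verify that whenever $\bsigma(v_i)=\rho\in\P^{[n-1]}$ actually occurs for a spine vertex, one has $\rho\in\P^{[n-1]}_{\inte}$ so that $[X_\rho]\in H_2(X)$ makes sense and so that Lemma~\ref{lem:admissible degrees}(2) is applicable. For interior spine vertices this is immediate from the first conclusion of Lemma~\ref{lem:key tropical lemma}(2). For the endpoint spine vertices $v_1$ and $v_{r-1}$, one uses in addition that $\dim h(\tau_{\out})=n$ forces the image of $L_{\out}$ to sweep out a top-dimensional cone, together with the sign constraints on the spine provided in the proof of Lemma~\ref{lem:key tropical lemma}(2), to rule out the pathological $\P_\partial^{[n-1]}$ case; in the relative case one can additionally invoke Proposition~\ref{prop:relative case over S} on tangency of $u_\tau$ to the fibres of $g_{\trop}$. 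With these boundary cases dispensed with, the argument above suffices.
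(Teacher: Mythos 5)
Your plan is correct and matches the paper's (unstated) argument: the paper simply asserts the result is ``immediate from Lemma~\ref{lem:admissible degrees},'' and your reconstruction---non-emptiness of $\scrM(X,\btau)$, codimension control on the spine from Lemma~\ref{lem:key tropical lemma}(2), tangency of non-spine edges from Remark~\ref{rem:spine unique}, then matching $d=\delta(\mathbf{u}(E_i))=|\langle n_i,\mathbf{u}(E_i)\rangle|$---is exactly the intended chain of reasoning. The one loose end you flag (that a spine vertex $v_i$ with $\bsigma(v_i)=\rho\in\P^{[n-1]}$ must have $\rho\in\P^{[n-1]}_\inte$) is resolved more directly than by the devices you propose: by Remark~\ref{rem:spine unique} the two spine edges at $v_i$ have contact orders not tangent to $\rho$, and the balancing condition (condition (2) of Definition~\ref{def:broken line type}) forces their $\delta$-values to be opposite in sign, so they point into two distinct maximal cones containing $\rho$, whence $\rho$ is interior.
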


We now reverse the procedure of 
Construction~\ref{const:broken line to type}
and go from decorated broken line types
to decorated broken lines.

\begin{construction}
\label{const:type to broken line}
Let $\btau=(G,\bsigma,{\bf u},{\bf A})$ be a decorated broken line type,
$h$ the universal tropical map parameterized by the cone $\tau$, and
$x\in \Int(h(\tau_{\out}))$.
We may construct a broken line $\beta_{\btau,x}$ as follows. There 
exists a unique $s\in \Int(\tau)$ such that
$x\in h_s(L_{\out})$. Let $G'$ be the spine
of $G$. 
Label the vertices of $G'$ as $v_1,\ldots,v_{r-1}$
with $L_{\inc}$ adjacent to $v_1$, $L_{\out}$ adjacent to $v_{r-1}$,
and edges $E_1,\ldots, E_{r-2}$, with $E_i$ having vertices $v_i$ and $v_{i+1}$.
Shrink $L_{\out}$ so that the non-vertex endpoint of $L_{\out}$
maps to $x$ under $h_s$.
We may then choose an identification $\psi:\RR_{\le 0}\rightarrow G'$ 
with the following property. Let $t_i \in \RR_{\le 0}$ satisfy
$\psi(t_i)=v_i$, $1\le i \le r-1$, take $t_r=0$,
and set $\beta_{\btau,x}=h_s|_{G'}\circ\psi$. We require that
condition (1) of Definition
\ref{def:decorated broken line} is satisfied for $\beta_{\btau,x}$ with
$m_i = - \mathbf{u}(E_i)$, with $E_i$ oriented from
$v_i$ to $v_{i+1}$. Note by construction that $\beta_{\btau,x}(0)=x$.

We now specify the decorations, notably the functions $\mu_i$.
Let $G_{i1},\ldots,G_{is_i}$
be the closures of the connected components of $G\setminus \{v_i\}$
not containing $L_{\inc}$ or $L_{\out}$. These are the graphs
attached to $v_i$ but only intersecting $G'$ at $v_i$. We view
the edge of $G_{ij}$ adjacent to the vertex $v_i$ as a leg 
$L_{ij,\out}$ of
$G_{ij}$, so that $v_i$ is not a vertex of $G_{ij}$.
This gives rise to decorated types $\btau_{i1},\ldots,\btau_{is_i}$
by restriction of $\bsigma,\mathbf{u},\mathbf{A}$ to each graph $G_{ij}$. 

We claim that each $\btau_{ij}$ is a wall type.
Indeed, condition (1) of Definition~\ref{def:wall type}
is immediate except for the statement that $\mathbf{u}(L_{ij,\out})\not=0$, 
which we rule out below. For condition (2), note $\btau_{ij}$
is realizable and balanced since $\btau$ is realizable and balanced.

For condition (3), let $h_{ij}:\Gamma(G_{ij},\ell_{ij})
\rightarrow \Sigma(X)$ be the universal tropical map of type
$\btau_{ij}$, defined over the cone $\tau_{ij}$. 
First note that necessarily $h(\tau_{v_i})\subseteq 
h_{ij}(\tau_{ij,\out})$, and by
Lemma~\ref{lem:key tropical lemma}, (2), $\dim h(\tau_{v_i})=n-1$. 
Thus $\dim h_{ij}(\tau_{ij,\out})\ge n-1$. However
by Lemma~\ref{lem:key tropical lemma}, (1), still assuming that
$\mathbf{u}(L_{ij,\out})\not=0$, $\dim h_{ij}(\tau_{ij,\out})\le n-1$
and thus this dimension is $n-1$, as desired.

Suppose now that $\dim \tau_{ij}>n-2$. 
Thus for any fixed $s'\in \Int(\tau)$, 
there is a positive dimensional subset $\omega \subseteq
\tau_{ij}$ such that for $s''\in\omega$, $h_{s'}(v_i)\in 
h_{ij,s''}(L_{ij,\out})$,
where $L_{ij,\out}$ is the unique leg of $G_{ij}$. Thus 
for each point $y\in h(\tau_{v_i})$, there is a positive dimensional
family of tropical maps of type $\tau$ taking $v_i$ to $y$.
Again since $\dim h(\tau_{v_i})=n-1$, this shows $\dim \tau\ge n$,
a contradiction.

We now eliminate the case that ${\bf u}(L_{ij,\out})=0$. Indeed, 
if this is the case and $v$ is the unique vertex of $L_{ij,\out}$
in $G_{ij}$, then $h_{ij}(\tau_{ij,v})=h_{ij}(\tau_{ij,L_{ij,\out}})$,
and this is again $n-1$-dimensional as above. Thus $\dim\tau_{ij}\ge n-1$.
On the other hand, if $E_{ij}$ is the edge of $G$ corresponding to
the leg $L_{ij,\out}$ of $G_{ij}$, we have ${\bf u}(E_{ij})=
{\bf u}(L_{ij,\out})=0$, so the length
of $E_{ij}$ is arbitrary, and thus we obtain again that $\dim\tau
\ge n$. Thus we conclude that $\btau_{ij}$ is a wall type.

We may now define $\mu_i:\scrS_i\rightarrow \NN$ by 
defining, for $\fop_{\btau'}\in \scrS_i$,
$\mu_i(\fop_{\btau'})=|\{j\,|\, \btau_{ij}\simeq\btau'\}|$.

Having defined the $\mu_i$, \eqref{eq:parallel transport for correspondence}
defines the monomials $a_iz^{m_i}$. There is one thing to check:
as we have already defined $m_i$ in terms of ${\bf u}$, we need
to check that \eqref{eq:parallel transport for correspondence} yields
the same values for $m_i$. However, this is immediate inductively
by balancing at the vertices $v_i$.

Hence we have constructed a decorated broken line $\beta_{\btau}$.
\end{construction}

Constructions~\ref{const:broken line to type} and 
\ref{const:type to broken line} now immediately give:

\begin{proposition}
\label{prop:one to one}
Given $x\in B_0$ not
contained in any rationally defined hyperplane
and $p\in B(\ZZ)\setminus \{0\}$,
there is a one-to-one correspondence
\begin{align*}
&\{\beta\,|\,\hbox{
$\beta$ is a decorated broken line with endpoint $x$ and asymptotic
monomial $p$}\}\\ 
\leftrightarrow& \left\{\btau\,\Big|\,\substack{\hbox{$\btau$ is an isomorphism 
class of admissible decorated}\\ \hbox{broken line 
types with $x\in \Int(h(\tau_{\out}))$ and $p_{\tau}=p$
}}\right\}
\end{align*}
given by $\beta\mapsto \btau_{\beta}$, $\btau\mapsto \beta_{\btau,x}$.
\end{proposition}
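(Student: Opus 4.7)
The plan is to verify directly that Constructions~\ref{const:broken line to type} and \ref{const:type to broken line} are mutually inverse bijections, with the main observation being that the spine of a decorated broken line type is canonically identified with the image of a broken line, and the non-spine subgraphs attached at each spine vertex are canonically determined by wall types.

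First, starting from a decorated broken line $\beta$ with bending points $t_1,\ldots,t_{r-1}$ and wall-bending data $\mu_i$, the type $\btau_\beta$ assembled in Construction~\ref{const:broken line to type} clearly has spine of length $r-1$ with image $\beta((-\infty,0])$. Applying Construction~\ref{const:type to broken line} to $\btau_\beta$ with endpoint $x=\beta(0)$ thus recovers the original piecewise affine map $\beta$ (the parameter $s\in\Int(\tau_\beta)$ is uniquely determined by $x$), and the subgraphs $G_{ij}$ at vertex $v_i$ coincide, by construction, with the copies of $G_{\btau}$ attached according to $\mu_i(\fop_\btau)$. Hence the induced function $\scrS_i\to \NN$ agrees with $\mu_i$, and balancing at $v_i$ forces the monomials $a_iz^{m_i}$ produced by \eqref{eq:parallel transport for correspondence} to coincide with those of $\beta$.

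Conversely, starting from an admissible decorated broken line type $\btau$ with $x\in\Int(h(\tau_{\out}))$, Construction~\ref{const:type to broken line} uses Lemma~\ref{lem:key tropical lemma}(2) and Remark~\ref{rem:spine unique} to show that the spine $G'$ is uniquely determined: its vertices $v_1,\ldots,v_{r-1}$ form the unique path in $G$ from $L_{\inc}$ to $L_{\out}$, and $\bsigma(v_i)\in \P^{[n]}\cup\P^{[n-1]}$ with $\dim h(\tau_{v_i})=n-1$. The non-spine pieces $G_{ij}$ attached at each $v_i$ are precisely the decorated wall types (verified in Construction~\ref{const:type to broken line}), so grouping them by isomorphism class of $\btau_{ij}$ recovers $\mu_i$. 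Applying Construction~\ref{const:broken line to type} to $\beta_{\btau,x}$ then rebuilds the same spine with the same attached wall type subgraphs; admissibility of $\btau$ ensures that $\mathbf{A}_\beta(v_i)$ matches $\mathbf{A}(v_i)$ (namely $0$ or $|\langle n_i,\mathbf{u}(E_i)\rangle|[X_{\bsigma(v_i)}]$), and the contact orders on edges agree by construction, yielding an isomorphism of decorated broken line types.

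The main obstacle, though already addressed within Construction~\ref{const:type to broken line}, is the verification that each component $\btau_{ij}$ attached at a spine vertex is genuinely a wall type, particularly the dimension count $\dim\tau_{ij}=n-2$ and the non-vanishing $\mathbf{u}(L_{ij,\out})\neq 0$. These follow from Lemma~\ref{lem:key tropical lemma}, using that $\dim h(\tau_{v_i})=n-1$ and that any positive-dimensional moduli in $\tau_{ij}$ or any tangential contact order at $L_{ij,\out}$ would force $\dim\tau\geq n$, contradicting $\dim\tau=n-1$. Beyond this, the remaining compatibility checks, notably that the monomial transport \eqref{eq:parallel transport for correspondence} is consistent with balancing at $v_i$, are routine and handled by bookkeeping.
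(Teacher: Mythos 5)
Your proof is correct and follows essentially the same route the paper takes: the paper's proof of Proposition~\ref{prop:one to one} consists solely of the remark that Constructions~\ref{const:broken line to type} and \ref{const:type to broken line} "immediately give" the bijection, and your write-up simply makes explicit the verification that the two constructions are mutually inverse. The key points you identify — the unique parameter $s\in\Int(\tau)$ recovering the affine map, the identification of the spine with the broken line's image, the recovery of the multiplicity functions $\mu_i$ from the isomorphism classes of attached wall types $\btau_{ij}$, and admissibility forcing the curve class decorations at spine vertices to agree — are exactly what makes the two constructions inverse to one another. One small point you gloss over is the $r=1$ case, which corresponds to a trivial broken line type (Definition~\ref{def:broken line type}) with no vertices; the construction-based argument formally applies only to $r\ge 2$, and the trivial case must be checked separately (the unique unbent broken line with asymptotic monomial $p$ in the chamber of $x$ corresponds to the unique trivial type with $p_\tau=p$). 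This is a minor omission that the paper's terse proof also leaves implicit.
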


The key result, to be proved via gluing of punctured curves in
the next subsection, is then:

\begin{theorem}
\label{thm:main correspondence theorem}
Let $\btau$ be an admissible decorated broken line type, and let
$\beta_{\btau,x}$ be the corresponding broken line with
endpoint $x\in \Int(h(\tau_{\out}))$. Then we have
\[
a_{\beta_{\btau,x}}z^{m_{\beta_{\btau,x}}}=
k_{\tau} N_{\btau} t^A z^{-u_{\tau}},
\]
where $A$ is the total degree of $\btau$.
\end{theorem}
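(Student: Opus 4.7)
The plan is to match both sides term-by-term via a gluing formula for punctured invariants. First, I would dispose of the easy exponent comparison. By the construction of $\beta_{\btau,x}$ in Construction~\ref{const:type to broken line}, the final monomial direction satisfies $m_{\beta_{\btau,x}} = -\mathbf{u}(L_{\out}) = -u_\tau$ by definition, and the total curve-class contribution picked up across all wall crossings is $\sum_i \sum_{\fop_{\btau'} \in \scrS_i} \mu_i(\fop_{\btau'}) A'$, which together with the zero contributions at spine vertices $v_i$ with $\bsigma(v_i) \in \P^{[n]}$ and the $\langle n_i, m_i \rangle [X_{\bsigma(v_i)}]$ contributions at $v_i$ with $\bsigma(v_i) \in \P^{[n-1]}$ reproduces exactly the total curve class $A$ of $\btau$ (using admissibility and Lemma~\ref{lem:admissible degrees}). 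Therefore it suffices to prove the coefficient identity
\[
a_{\beta_{\btau,x}} = k_\tau N_{\btau}.
\]

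Next, I would unpack the left-hand side. Iterating \eqref{eq:parallel transport for correspondence} with $a_1 = 1$ gives
\[
a_{\beta_{\btau,x}} = \prod_{i=1}^{r-1} \prod_{\fop_{\btau'} \in \scrS_i}
\frac{\bigl(\langle n_i, m_i \rangle\, k_{\tau'}\, W_{\btau'}\bigr)^{\mu_i(\fop_{\btau'})}}{\mu_i(\fop_{\btau'})!}.
\]
On the right-hand side, $N_\btau = \deg[\scrM(X,\btau)]^{\virt}/|\Aut(\btau)|$ is computed by viewing $\btau$ as obtained by gluing the ``spine pieces'' (one for each spine vertex $v_i$, with no attached curve information other than the contracted skeleton map to $X_{\bsigma(v_i)}$) to the wall subtypes $\btau_{ij}$ (the decorated wall types attached to $v_i$). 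The gluing strata are precisely the $0$-dimensional strata $X_{\bsigma(v_i)}$ when $\bsigma(v_i) \in \P^{[n]}$ and the $1$-dimensional strata $X_{\bsigma(v_i)}$ when $\bsigma(v_i) \in \P^{[n-1]}$; by Proposition~\ref{prop:polyhedral pseudo}, these $1$-dimensional strata are isomorphic to $\PP^1$ with two boundary points, hence toric. Thus the numerical gluing formula of Y.~Wu (reviewed in Appendix~\ref{App: gluing formula}) applies in both cases.

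The main step is then to apply this gluing formula. At each $v_i$, the local contribution takes the form of attaching a tuple of wall types $(\btau_{ij})_{j=1}^{s_i}$ to a trivial two-punctured genus-zero component. In the codimension-zero case the bubble is contracted and the only contribution is from the lattice-index multiplicity arising from the tropical transversality data at $v_i$; in the codimension-one case the bubble is a map to $\PP^1$ of degree $\langle n_i, m_i \rangle$, together with the same lattice-index multiplicity. The resulting contribution at $v_i$ from the gluing formula is exactly
\[
\frac{1}{\prod_{\fop_{\btau'} \in \scrS_i} \mu_i(\fop_{\btau'})!} \, \prod_j \langle n_i, m_i \rangle \, k_{\tau_{ij}} \, W_{\btau_{ij}},
\]
where the $\mu_i(\fop_{\btau'})!$ factors track the automorphisms of $\btau$ coming from permuting isomorphic attached wall subtypes (and are the only source of non-triviality in $\Aut(\btau)$ beyond $\prod_{ij} \Aut(\btau_{ij})$). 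The remaining global lattice index, comparing $\Lambda_{\tau_{\out}}$ to $\Lambda_{\bsigma(L_{\out})}$, produces the overall factor $k_\tau$ on the right-hand side once we pass from $N_\btau$ to $k_\tau N_\btau$; this is the main bookkeeping point to verify carefully.

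The hard part will be correctly assembling the various lattice-index factors produced by Wu's formula into the single factor $k_\tau$: the gluing formula produces, at each interior node, an index comparing the tropical deformation cone to the sum of the deformation cones of the pieces, and one must show that the product of all these local indices times the product of $k_{\tau_{ij}}$'s telescopes into $k_\tau \prod_i \prod_j k_{\tau_{ij}}^{-1} \cdot (\text{product appearing in } a_{\beta_{\btau,x}})$. Once this identity is established, comparing with the formula for $a_{\beta_{\btau,x}}$ above yields the theorem.
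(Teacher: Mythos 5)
Your plan is correct in broad outline — the identity is indeed proved by Wu's gluing formula, the geometry of the pieces you describe (contracted bubble in codimension zero, degree-$d$ cover of $\PP^1$ in codimension one) matches what the paper does, and the point that $\Aut(\btau)$ only carries the $\prod \mu_i!$ beyond $\prod \Aut(\btau_{ij})$ is right. But there are genuine gaps that prevent this from being a proof.

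First, you apply the gluing formula in a single global shot, splitting at all edges simultaneously and trying to read off "local contributions at each $v_i$." The paper instead works by induction on the length $r$ of $\beta_{\btau,x}$: it splits $\btau$ only at the edges adjacent to the last spine vertex $v_{r-1}$, applies Wu's formula to that one splitting, and then invokes the inductive hypothesis on the shorter broken line type $\btau_\inc$. This is not merely a presentation difference — Wu's formula outputs a sum over transverse types $\bomega\in\hat\Delta(\nu)$, and the combinatorics of that sum grows with the number of edges you split at once. Localizing to one vertex keeps it tractable.

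Second, and more seriously, you do not address the transverse-type enumeration at all. Wu's formula reads
\[
\delta_*[\scrM(X,\btau)]^{\virt}= \sum_{\bomega\in\hat\Delta(\nu)}
\frac{m(\bomega)}{|\Aut(\bomega/\btau)|}\,(j_{\bomega_v})_*\Big[\prod_v\scrM(X,\bomega_v)\Big]^{\virt},
\]
and a priori $\hat\Delta(\nu)$ contains many types besides the expected one. The paper's Lemma~\ref{Lem: general types scattering broken lines} is precisely the assertion that for a carefully chosen displacement vector $\nu=(\nu_{E_\inc}=0,\ \nu_{E_i}=\nu_i\cdot m)$ with $\nu_1>\cdots>\nu_l>0$, the set $\Delta(\nu)$ is a singleton and the central piece $\omega_0$ is a specific chain of trivalent vertices. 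Without this, your ``local contribution'' formula is unsupported. Third, you openly defer the lattice-index bookkeeping as ``the hard part to verify carefully''; the paper's Lemma~\ref{Lem: multiplicity computation} carries out exactly this computation via diagram chases and the snake lemma, producing $m(\omega)=k_\tau^{-1}k_{\tau_\inc}\prod (dk_{\tau_i})$ (with an extra factor $d$ in codimension one) which, combined with $\deg[\scrM(X,\bomega_0)]^\virt\in\{1,1/d\}$ from Lemma~\ref{Lem: vertex contribution}, assembles into \eqref{Eqn: scattering equation}. Your proposal names the target identity but does not derive it.
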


\begin{proof}[Proof of Theorem~\ref{thm:theta correspondence}
given Theorem~\ref{thm:main correspondence theorem}.]
This follows immediately from the definitions
Definition~\ref{def:broken line theta function} and 
\ref{def:log theta function} of the two types of theta functions, 
Proposition~\ref{prop:one to one},
and Theorem~\ref{thm:main correspondence theorem}.
\end{proof}


\subsection{Proof of the correspondence theorem, Theorem~\ref{thm:main correspondence theorem}}

The proof is by induction on the length of $\beta_{\btau,x}$, that is, the number
$r\ge 1$ in Definition~\ref{def:broken line}. The base case $r=1$ is the case of
a straight broken line ending at some point $x$ in the interior of a maximal
cell. The corresponding broken line type is then a trivial one $\btau_\inc$
without a vertex and trivial curve class $A=0$. Thus $N_\btau=1$, $k_\btau=1$
by definition and the claimed equality holds trivially.

For the inductive step, let $\btau=(\tau,\bf A)$, $\tau=(G,\bsigma,\bf u)$ be an
admissible decorated broken line type as in the theorem, with associated broken
line $\beta_{\btau,x}$ of length $r$. Let $v\in V(G)$ be the vertex adjacent to
$L_\out$, $\tau_v\in\Gamma(G,\ell)$ the corresponding cone and
$\sigma_\fop=\bsigma(v)$ the smallest cell containing $h(\tau_v)$. Here we
denote by $\fop$ the set-theoretic intersection of all walls or codimension one
cells of $\P$ containing $h(\tau_v)$. Thus $\fop$ is a polyhedral subset of $B$
of dimension $n-1$. Note that $\dim\sigma_\fop\in\{n-1,n\}$, and we refer to the
two cases as codimension one and codimension zero, respectively. We work in an
affine chart of $B_0$ containing $\Int\sigma_\fop$, with $\Lambda$ denoting the
lattice of integral tangent vector fields \and by $\Lambda_\fop\subset\Lambda$
the corank one lattice of vectors tangent to $h(\tau_v)$. Let $E_\inc\in E(G)$
be the unique edge adjacent to $v$ and belonging to the spine of $\tau$, and
$E_1,\ldots,E_l$ the remaining edges adjacent to $v$, all oriented toward $v$.
The case $l=0$ means that no such edges are present and then $E_1,\ldots, E_l$
denotes the empty sequence. Denote further $u_\inc=\bu(E_\inc)$,
$u_\out=\bu(L_\out)=u_\tau$, $u_i=\bu(E_i)$, and $\sigma_\inc=\bsigma(E_\inc)$
the maximal cell containing $h_s(E_\inc)$ for all $s\in \tau$.

We now split $G$ at all the edges $E_\inc, E_1,\ldots,E_l$ adjacent to $v$. See
\cite[\S5.1]{ACGSII} for a formal treatment of splitting. Splitting an edge $E$
with vertices $v_1,v_2$ leads to a pair of legs that we denote $(E,v_1)$,
$(E,v_2)$. By Construction~\ref{const:type to broken line} the types obtained
from $\btau$ after splitting are as follows.
\begin{enumerate}
\item
A decorated broken line type $\btau_\inc=(G_\inc, \bsigma_\inc, \bu_\inc,
\bA_\inc)$, with the unique leg $(E_\inc,v_\inc)$ obtained from
splitting and $\bu_\inc(E_\inc,v_\inc)=u_\inc$.
\item
Decorated wall types $\btau_i=(G_i,\bsigma_i,\bu_i,\bA_i)$, $i=1,\ldots,l$, with
legs $(E_i,v_i)\in L(G_i)$ obtained from splitting, and $\bu_i(E_i,v_i)=u_i$.
\item
$\btau_0=(G_0,\bsigma_0,\bu_0,\bA_0)$, the type of a decorated punctured map
with only one vertex $v$ with $\bsigma_0(v)=\sigma_\fop=\bsigma(v)$, no edges,
and legs $(E_\inc,v)$, $(E_1, v),\ldots,(E_l,v)$, $(E_\out, v)=L_\out$ with
$\bA_0(v)= \bA(v)$ and
\[
\bu_0(E_\inc,v)=-u_\inc,\quad \bu_0(E_\out,v)=u_\out,\quad
\bu_0(E_i,v)=-u_i,\ i=1,\ldots,l.
\]
\end{enumerate}
As usual the corresponding undecorated types are denoted $\tau_\inc$ and
$\tau_i$, $i=0,1,\ldots,l$. Grouping together identical $\btau_i$ we may relabel
and assume the $\btau_i$ are pairwise non-isomorphic, but each $\btau_i$ to
occur $\mu_i$-times in the splitting for some $\mu_i\in\NN$.

The propagation rule~\eqref{eq:parallel transport for correspondence} for
monomials in the definition of decorated broken lines
(Definition~\ref{def:decorated broken line}) together with the induction
hypothesis shows that the claimed equality
$a_{\beta_{\btau}}z^{m_{\beta_{\btau}}}= k_{\tau} N_{\btau} t^A z^{-u_{\out}}$
is equivalent to the following
\begin{equation}
\label{Eqn: scattering equation}
k_\tau N_\btau t^A z^{-u_\out} = k_{\tau_\inc} N_{\btau_\inc} t^{A_\inc}
t^{d[X_{\sigma_\fop}]} z^{-u_\inc}\cdot\prod_{i=1}^l \frac{\big(d k_{\tau_i}
W_{\btau_i} t^{A_i}z^{-u_{\tau_i}}\big)^{\mu_i}}{\mu_i!}.
\end{equation}
Note that $[X_{\sigma_\fop}]=0$ in the codimension zero case and
$[X_{\sigma_\fop}]$ is the class of the curve corresponding to the $(n-1)$-cell
containing $\fop$ in the codimension one case. Furthermore,
\begin{equation}
\label{Eqn: constant d}
d=|\delta(u_\inc)|
\end{equation}
for $\delta:\Lambda\arr\ZZ$ the quotient by $\Lambda_\fop$, and $k_{\tau_\inc}$,
$k_{\tau_i}$ are defined in \eqref{eq:ktau broken line def} and \eqref{eq:ktau
def}, respectively.

To prove this equality, we employ the numerical splitting formula for punctured
Gromov-Witten invariants proved by Yixian Wu \cite{Wu} and recalled in the
appendix. Assumption~\ref{Ass: toric} of toric gluing strata is trivially
fulfilled in the codimension zero case, while in codimension one it follows from
Lemma~\ref{lem:toric dim one}. In the following discussion, we freely use the
notation introduced in the appendix, but note that we do not split all edges of
$\tau$, but just the subset $E(G,v)\subseteq E(G)$ of $l+1$ edges adjacent to
$v$. Thus for any $E\in E(G,v)$ the vector space $(\Lambda_E)_\RR$ holding the
$E$-component of the displacement vector equals $\Lambda_\RR$. For the time
being we also revert to the original labelling of the $\btau_i$ with possibly
several identical wall types. The numerator $\mu_i!$ in \eqref{Eqn: scattering
equation} arises only at the very end when taking the automorphism group of the
glued decorated type $\btau$ into account.

To invoke Theorem~\ref{Thm: gluing theorem} we need to choose a general
displacement vector $\nu=(\nu_E)_{E\in E(G,v)}$ and then determine the
corresponding set $\Delta(\nu)$ of decorated transverse types
\[
(\bomega_\inc,\bomega_0,\bomega_1,\ldots,\bomega_l)
\]
for $\nu$, with $\bomega_\bullet$ a decorated type marked by $\tau_\bullet$.
Lemma~\ref{Lem: general types scattering broken lines} below shows that for a
certain choice of $\nu$ there is a unique decorated transverse type with
$\bomega_\inc=\btau_\inc$, $\bomega_i=\btau_i$ for all $i$, and with
\begin{equation}
\label{Eqn: omega_0}
\bomega_0=(\hat G_0,\hat\bsigma_0,\hat\bu_0,\hat\bA_0)=(\omega_0,\hat\bA_0)
\end{equation}
a certain maximal decorated global type marked by $\btau_0$.
\begin{figure}
\begin{center}
\input{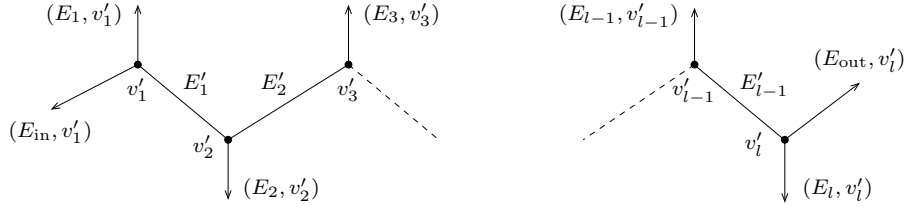}
\caption{\small The tropical type $\omega_0$ (codimension zero case).}
\label{Fig: omega_0}
\end{center}
\end{figure}
If $l=0$ we take $\bomega_0=\btau_0$. To define $\bomega_0$ for $l\ge 1$, assume
first $\sigma_\fop=\sigma_\inc$, that is, we are in the situation of codimension
zero. Take for $\hat G_0$ the graph with $l$ trivalent vertices
$v'_1,\ldots,v'_l$ connected in the given order by $l-1$ edges
$E'_1,\ldots,E'_{l-1}$ to form a chain, and with legs distributed as
\[
(E_\inc,v'_1),\quad(E_\out,v'_l),\quad (E_i,v'_i),\, i=1,\ldots,l,
\]
see Figure \ref{Fig: omega_0}.
The notation $E_\inc$, $E_i$, $E_\out$ indicates that we identify these legs
with the legs of $\tau_0$ via the contraction morphism $\omega_0\arr\tau_0$
contracting all edges of $\omega_0$. All strata are given by $\sigma_\inc$ and
the contact orders for each edge are determined inductively from the contact
orders $\bu_0$ of $\tau_0$ by enforcing the balancing condition
Lemma~\ref{lem:balancing}. Necessarily $\hat\bA_0=0$.

If $\sigma_\fop$ is of codimension one, we further split $v'_l$ into two
vertices $v'_l$ and $v'_\out$ connected by another edge $E'_l$, with $\hat
\bsigma_0(v'_\out)=\sigma_\fop$, and the legs at $v'_l$ and $v'_\out$
distributed as $(E_l,v'_l)$, $(E_\out,v'_\out)$. For $l=1$ the leg
$(E_\inc,v'_1)=(E_\inc,v'_l)$ is also adjacent to $v'_l$. The other strata are
forced to be one of the two maximal cells $\sigma_\inc=\bsigma(E_\inc,v)$ or
$\bsigma(L_\out)$ containing $\sigma_\fop$, namely $\hat\bsigma_0
(E_\out,v'_\out) =\bsigma(L_\out)$ and all other strata equal to $\sigma_\inc$.
The curve classes are determined by the contraction morphism as
\[
\hat\bA(v'_\out)=\bA(v)=d\cdot [X_{\sigma_\fop}]
\]
and all other curves classes trivial. Note that in any
case the curve classes are determined by the contraction morphism. It is
therefore enough to drop the decoration in the following classification of
transverse types according to Definition~\ref{Def: Delta(nu)}.

\begin{lemma}
\label{Lem: general types scattering broken lines}
There exists a displacement vector $\nu=(\nu_E)_E\in\prod_{E\in E(G)}
\Lambda_\RR$ that is general for $\tau$ such that any $\nu$-transverse type
$\omega$ for $\tau$ (Definition~\ref{Def: Delta(nu)},2) with
$\scrM(X,\omega'_0)\neq \emptyset$, for $\omega'_0$ the part of $\omega$ marked
by $\tau_0$, is isomorphic to
\[
\omega=(\tau_\inc,\tau_1,\ldots,\tau_l,\omega_0),
\]
where $\omega_0$ is as described in \eqref{Eqn: omega_0}.
\end{lemma}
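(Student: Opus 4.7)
The plan is to construct an explicit general displacement vector that makes the claimed type $(\tau_\inc,\tau_1,\ldots,\tau_l,\omega_0)$ the unique transverse type, and then use a combination of dimension counting and the balancing condition to rule out all other possibilities. The proof naturally falls into three parts: choosing $\nu$, analyzing the side pieces, and analyzing the central piece.

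First, I would choose $\nu=(\nu_{E_\inc},\nu_{E_1},\ldots,\nu_{E_l})$ by picking $l+1$ pairwise distinct points in $\Int(\sigma_\fop)$ close to $h_s(v)$ (for some fixed $s\in\Int(\tau)$), arranged to be generic in the sense that no non-trivial integer linear relation involving contact orders in $\Lambda$ is satisfied accidentally. Concretely, shift the endpoints of the $E_i$-legs by small vectors parallel to $\sigma_\fop$ so they land at these points, and shift the $E_\inc$-leg so that its displaced endpoint is a generic nearby point. Genericity guarantees that $\nu$ is a general displacement vector in the sense of Definition~\ref{Def: Delta(nu)}.

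Second, for the pieces marked by $\tau_\inc$ and by the $\tau_i$, I would show that no refinement is possible. By Lemma~\ref{lem:key tropical lemma}, $\tau_\inc$ has a universal family of tropical maps of dimension $n-1$ with output leg sweeping an $n$-dimensional region; constraining that output leg to terminate at the prescribed displaced point imposes $n$ independent conditions, leaving no room for further refinement or wandering of vertices off the spine. Similarly, each wall type $\tau_i$ has universal family of dimension $n-2$ with its output leg sweeping the $(n-1)$-dimensional wall $\fop_{\tau_i}=h(\tau_{i,\out})$; requiring its endpoint to be the displaced point on $\fop$ fixes the tropical map uniquely. Any refinement of $\tau_\inc$ or $\tau_i$ would strictly increase the dimension of the associated universal family and thus could not match the point constraints for a generic $\nu$. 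This forces $\omega'_\inc=\tau_\inc$ and $\omega'_i=\tau_i$.

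Third, I would analyze the central piece marked by $\tau_0$. The refinement $\omega'_0$ must be a genus-zero tropical map with legs $(E_\inc,v)$, $(E_1,v),\ldots,(E_l,v)$, $L_\out$ whose contact orders agree with $\bu_0$, with its legs now emerging from the $l+1$ displaced positions (for the $E_\inc, E_i$) and one position for $L_\out$. Since all vertices of $\omega'_0$ must map into $B_0$ and satisfy balancing (Lemma~\ref{lem:balancing}), and since the displaced points of the $E_i$-legs lie at distinct points of $\Int(\sigma_\fop)$, the underlying graph must be a tree connecting these $l+2$ legs. In the codimension-zero case ($\sigma_\fop=\sigma_\inc$), all contact orders lie in $\Lambda_{\sigma_\inc}$ and balancing together with the genericity of $\nu$ forces the tree to be the trivalent chain described for $\omega_0$, with the connecting contact orders on $E'_1,\ldots,E'_{l-1}$ determined inductively by balancing. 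In the codimension-one case, the vector $u_\out$ points from $\sigma_\fop$ into $\bsigma(L_\out)$, so the leg $(E_\out,v)$ cannot terminate at a vertex of $\sigma_\fop$; hence one additional vertex $v'_\out$ with $\hat\bsigma_0(v'_\out)=\bsigma(L_\out)$ is forced, connected to the chain by $E'_l$. Finally, the decoration is forced: non-emptiness of $\scrM(X,\omega'_0)$ together with the admissibility criterion of Lemma~\ref{lem:admissible degrees} shows that $\hat\bA_0(v'_\out)=d[X_{\sigma_\fop}]$ in the codimension-one case and zero otherwise, since curve classes at vertices mapping to maximal cells vanish and those at vertices mapping to codimension-one cells are determined by the contact orders of adjacent edges via the formula $d=|\delta(u_\inc)|$ in \eqref{Eqn: constant d}.

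The main obstacle will be ruling out other combinatorial types for $\omega'_0$ that are a priori compatible with the displacement, in particular configurations with a vertex mapping to a cell $\sigma'\in\P$ with $\sigma_\fop\subsetneq\sigma'$ and $\sigma'\neq\sigma_\inc,\bsigma(L_\out)$, or with extra edges contracted by $h$ providing additional moduli. Here Assumption~\ref{ass:absolute}(2) is essential: a good codimension-one stratum only meets good maximal cells, so the possible cells containing a vertex are strictly limited. The residual check is then a finite case analysis eliminating spurious types via the genericity of $\nu$ and the constraint that $\scrM(X,\omega'_0)\neq\emptyset$.
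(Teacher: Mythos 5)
Your proposal misses the key mechanism that forces $\omega'_0$ to be the specific trivalent chain $\omega_0$, and your proposed displacement vector does not supply a substitute for it. You choose the $\nu_{E_i}$ to be small vectors parallel to $\sigma_\fop$. In the codimension-one case $\sigma_\fop=\rho\in\P^{[n-1]}$, this means $\nu_{E_i}\in\Lambda_\fop=\ker\delta$ for all $i$. The paper's argument that the subtrees of $\omega'_0$ attached to the legs $(E_i,v'_i)$ are pairwise disjoint --- and hence attach at distinct, $\delta$-ordered spine vertices, yielding $r+1\ge l$ (resp.\ $r\ge l$) --- rests on the observation that the perturbed matching condition forces $\delta$ to be identically equal to $\delta(\nu_{E_i})$ on each such subtree, because the outgoing vertex of each wall type $\tau_i$ maps to $\fop$ (where $\delta=0$) and $\delta(u_i)=0$. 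If the values $\delta(\nu_{E_i})$ are not pairwise distinct this argument fails; with your choice they all vanish, so the subtrees all sit at $\delta$-height $0$, nothing prevents several of them from attaching through a common spine vertex, and your step ``balancing together with the genericity of $\nu$ forces the tree to be the trivalent chain'' is an assertion, not an argument. (In the codimension-zero case $\sigma_\fop$ is full-dimensional and your shifts are unconstrained, so a sufficiently generic choice would have distinct $\delta$-values and the argument could be salvaged, but you still do not make the $\delta$-sorting explicit.)

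The missing idea is exactly this $\delta$-sorting: the paper takes $\nu_{E_\inc}=0$ and $\nu_{E_i}=\nu_i m$ with $m\in\Lambda$ satisfying $\delta(m)=1$ (so $m$ is normal to $\fop$) and $\nu_1>\cdots>\nu_l>0$. Disjointness of the subtrees then follows for free, the dimension formula \eqref{Eqn: dimension formula splitting} forces equality, trivial subtrees, and no refinement of $\tau_\inc$ or $\tau_i$, exactly as you anticipate. Your dimension count is a valid ingredient, the determination of the decoration via Lemma~\ref{lem:admissible degrees} is correct, and the observation that in codimension one a separate vertex $v'_\out$ must be inserted is also correct (although you should have $\hat\bsigma_0(v'_\out)=\sigma_\fop$, not $\bsigma(L_\out)$: $\hat\bsigma_0$ records the minimal cone containing the vertex image, which lies on $\rho$). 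But without the $\delta$-sorting these ingredients do not pin down the number or arrangement of spine vertices, and the lemma cannot be closed.
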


\begin{proof}
Take the quotient $\delta:\Lambda\arr\ZZ$ by
$\Lambda_\fop$ to evaluate non-negatively on vectors pointing from $\fop$ to the
incoming direction, and let $m\in\Lambda$ be a vector with $\delta(m)=1$. Then in
particular $\Lambda=\Lambda_\fop\oplus\ZZ m$. Choose $\nu_1,\ldots,\nu_l\in\RR$
with
\[
\nu_1>\ldots> \nu_l>0,
\]
and define
\[
\nu_{E_\inc}=0,\quad
\nu_{E_i}= \nu_i\cdot m,\quad i=1,\ldots,l.
\]
Thus $\nu_{E_i}=(0,\nu_i)$ when identifying $\Lambda$ with
$\Lambda_\fop\oplus\ZZ$. Following the interpretation in Remark~\ref{Rem:
interpretation of Delta(nu)} of $\Delta(\nu)$, we have to show that any
$\nu$-broken tropical punctured map marked by $\tau$ fulfilling
$\scrM(X,\omega'_0)\neq\emptyset$ has components of types
\[
\omega_\inc=\tau_\inc,\ \omega_1=\tau_1,\ldots,\omega_l=\tau_l,\ \omega_0.
\]
We first consider the component marked by $\tau_0$. Denote this component by
$\omega'_0=(G'_0,\bsigma'_0,\bu'_0)$ and by
\[
(E_\inc,v'_\inc),\ (E_\out,v'_\out),\ (E_1,v'_1),\ldots,(E_l,v'_l)
\]
the legs, with some of the vertices $v'_\inc,v'_1,\ldots,v'_l,v'_\out$ possibly
coinciding. Denote further by $E'_1,\ldots,E'_r\in E(G'_0)$ the unique sequence
of edges connecting $v'_\inc$ to $v'_\out$, oriented away from $v'_\inc$. Note
also that $d=-\delta(u_\inc)$ for $d$ as introduced in \eqref{Eqn: constant d}
and that $E_\inc\in E(G)$ is oriented away from $v$. Now since
$\scrM(X,\omega'_0) \neq\emptyset$, Lemma~\ref{lem:balancing} shows that
$\omega'_0$ fulfills the balancing condition. Thus $u_1,\ldots,
u_l\in\Lambda_\fop=\ker(\delta)$ implies that the images under $\delta$ of the
contact orders of all edges are determined as follows:
\[
-\delta(\bu'_0(E))=\begin{cases} d,&E=E'_1,\ldots,E'_r\\
0,&\text{otherwise}.\end{cases}
\]
Moreover, by the perturbed matching condition~\eqref{Eqn: perturbed matching},
the fact that the outgoing vertex for each $\tau_i$ maps to $\fop$ and the
choice of $\nu_1,\ldots,\nu_l$, it follows that $\delta$ is constant on the
subtree connecting the leg $(E_i,v_i)$ to the chain of edges $E'_1,\ldots,E'_r$,
with value $\nu_i$. Since all $\nu_i$ are disjoint, it follows that all of these
subtrees are disjoint. Note that there are $r+1$ vertices on the
path connecting $v'_{\inc}$ and $v'_{\out}$ (including these latter two
vertices). Thus necessarily $r+1 \ge l$. However, if $\dim\sigma_{\fop}=n-1$,
$\bsigma_0'(E_{\inc},v'_{\inc})$ and $\bsigma_0'(E_{\out},v'_{\out})$ are distinct
maximal cones containing $\sigma_{\fop}$, and thus there must be
at least one vertex $v_j'$ on this path contained in $\sigma_{\fop}$. By the
choice of displacement vectors, none of the subtrees connecting
an edge $(E_i,v_i)$ to the chain of edges $E'_1,\ldots,E'_r$ is connected
to $v_j'$. Thus in this case, $r \ge l$.

Now by the dimension formula \eqref{Eqn: dimension formula splitting} we require
\begin{equation}
\label{Eqn: dimension comparison}
\textstyle
\dim\widetilde\omega_\inc+\dim\widetilde\omega'_0
+\sum_{i=1}^l\dim\widetilde \omega_i =
\dim \widetilde\tau+ \sum_E\rk\Lambda= (n+l)+(l+1)\cdot n= ln+2n+l.
\end{equation}
Note that if $\dim \sigma_{\fop}=n$, then $\dim\omega_0' \ge n+r$,
as there is an $n$-dimensional choice of location for one vertex,
and an additional choice of length for the edges $E_1',\ldots,E_r'$.
Thus, in this first case, $\dim\widetilde\omega_0' \ge n+r+l+1$.
On the other hand, if $\dim\sigma_{\fop}=n-1$, then
$\dim\omega_0'\ge n-1+r$, as there is an $(n-1)$-dimensional choice
of location for the vertex $v_j'$. Thus, in this second case,
$\dim\widetilde\omega_0' \ge n+r+l$.
The left-hand side of \eqref{Eqn: dimension comparison} now
has dimension at least
\[
\begin{cases}
n+(n+l+r+1)+ l\cdot(n-1)=ln+2n+r+1, & \dim \sigma_{\fop} =n\\
n+(n+l+r)+ l\cdot(n-1)=ln+2n+r,&\dim \sigma_{\fop}=n-1.
\end{cases}
\]
Note in both cases, this quantity is then $\ge ln+2n+l$,
with equality with the right-hand side of 
\eqref{Eqn: dimension comparison} precisely if $r+1=l$ or $r=l$ in the
two cases, 
$\omega_\inc=\tau_\inc$, $\omega_i=\tau_i$, $i=1,\ldots,l$, and the mentioned
subtrees in $\omega'_0$ trivial. Comparing with the right-hand side of
\eqref{Eqn: dimension comparison} shows that we indeed have equality. In
particular, $G'_0$ agrees with the graph $G_0$ in \eqref{Eqn: omega_0}. Then
also $\bsigma'_0=\hat\bsigma_0$ since each vertex, except $v'_\out$ in the
codimension one case, maps to the maximal cell $\bsigma(E_\inc)$. Furthermore,
the contact orders $\bu'_0$ agree with $\hat\bu_0$ due to the balancing
condition.

Taken together this shows $\omega'_0=\omega_0$, and hence
$(\tau_\inc,\tau_1,\ldots,\tau_l,\omega_0)$ is indeed the only transverse type
for $\nu$.
\end{proof}

It remains to compute the multiplicity $m(\bomega)=m(\omega)$ occurring in
Theorem~\ref{Thm: gluing theorem}.

\begin{lemma}
\label{Lem: multiplicity computation}
The multiplicity according to Definition~\ref{Def: Delta(nu)},(3)
for the single element $\omega=(\tau_\inc,\tau_1,\ldots,\tau_l,\omega_0)$ of
$\Delta(\nu)$ in Lemma~\ref{Lem: general types scattering broken lines} equals
\[
m(\omega)=
\begin{cases}
k_\tau^{-1}k_{\tau_\inc}\prod_{i=1}^l(dk_{\tau_i}),& \dim\sigma_\fop=n\\
d k_\tau^{-1}k_{\tau_\inc}\prod_{i=1}^l(dk_{\tau_i}),& \dim\sigma_\fop=n-1.
\end{cases}
\]
\end{lemma}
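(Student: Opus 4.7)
The plan is to unravel Definition~\ref{Def: Delta(nu)},(3) from the appendix for the single transverse type $\omega=(\tau_\inc,\tau_1,\ldots,\tau_l,\omega_0)$ produced by Lemma~\ref{Lem: general types scattering broken lines}, and to track each lattice index contribution. The multiplicity $m(\omega)$ is built from (i) the indices $k_{\tau_\inc}$, $k_{\tau_i}$, and an analogous index for $\omega_0$ which measures how integrally the universal tropical family of each component sees its outgoing image, and (ii) a correction factor coming from the comparison of the cone $\tau$ with the product cone $\tau_\inc\times\prod_i\tau_i\times\omega_0$ under the evaluation map at the split edges, which accounts for the factor $k_\tau^{-1}$. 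Roughly: $m(\omega)$ equals the order of the cokernel of the combined evaluation/lattice map defining the gluing, divided by $k_\tau$.

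First I would compute the contribution of $\omega_0$. In the codimension zero case $\dim\sigma_\fop=n$, the graph $\hat G_0$ is a chain of $l$ vertices mapping to $\sigma_\inc$, with no edges, no moduli, and trivial curve class; the associated outgoing image is $n$-dimensional with trivial integral index, so its contribution is $1$. In the codimension one case there is an extra edge $E'_l$ with contact order pointing from $\sigma_\fop$ into the maximal cone $\bsigma(L_\out)$; the integral length of $E'_l$ is constrained modulo the sublattice generated by the primitive transverse direction, and by the choice \eqref{Eqn: constant d} of $d=|\delta(u_\inc)|$ this introduces exactly one extra factor of $d$. This is where the case distinction $d$ vs.\ $1$ in the statement originates.

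Next I would compute the contribution of each wall type $\tau_i$. By \eqref{eq:ktau def}, each $\tau_i$ alone contributes $k_{\tau_i}$. However, at the gluing of the leg $(E_i,v_i)$ of $\tau_i$ to the leg $(E_i,v)$ of $\omega_0$, the matching of contact orders $u_i\in\Lambda_\fop$ with the image $h(\tau_{i,\out})\subseteq\fop$ contributes an additional index $d$: this is because, in the universal family of $\omega_0$, the $E_i$-puncture can be translated along $\fop$ freely but the transverse coordinate is locked by $\delta$, so the gluing sees the displacement direction $m$ as having index $d$ in $\Lambda/\Lambda_\fop$ relative to the primitive generator. Thus each wall type contributes $d\cdot k_{\tau_i}$ to $m(\omega)$. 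The incoming type $\tau_\inc$, being a broken line type rather than a wall type, does not have this $d$-factor duplication, since its outgoing image $h(\tau_{\inc,\out})$ already is $n$-dimensional and meets $\fop$ transversally along $h(\tau_v)$, contributing only $k_{\tau_\inc}$.

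Finally, I would combine these contributions and divide by $k_\tau$, which arises from the universal tropical family of $\tau$ seeing its outgoing image with index $k_\tau$ by \eqref{eq:ktau broken line def}; this correction enters because $m(\omega)$ is defined relative to the \emph{unmarked} product cone and must be rescaled to the marking by $\tau$. This yields
\[
m(\omega)=k_\tau^{-1}\cdot k_{\tau_\inc}\cdot \prod_{i=1}^l(d k_{\tau_i})\cdot\begin{cases}1,&\dim\sigma_\fop=n,\\ d,&\dim\sigma_\fop=n-1,\end{cases}
\]
as claimed. The main obstacle is the careful bookkeeping of these lattice indices, especially distinguishing the index arising inside each component (which is $k_\bullet$) from the index arising at the gluing of punctures (which produces the $d$-factors), and seeing that the global rescaling by $k_\tau^{-1}$ from the marking is consistent with both cases.
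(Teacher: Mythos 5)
Your proposal correctly identifies the shape of the answer — a product of component indices $k_{\tau_\inc}$, $dk_{\tau_i}$ and an extra $d$ in codimension one, all corrected by $k_\tau^{-1}$ — and this is indeed the decomposition the paper uses. However, what you have written is an outline of \emph{what} one expects each piece to contribute, not a proof of \emph{why} it does. The content of Lemma~\ref{Lem: multiplicity computation} is precisely the lattice bookkeeping you postpone to "the main obstacle," and every nontrivial index in your narrative is asserted rather than derived.

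Concretely, the paper computes $m(\omega)=|\coker(\Phi^\gp)|$ for the explicit matching map $\Phi=\varepsilon_{\omega_0}$ from \eqref{Eqn: prod_E omega_E -> Lambda_E}. The factor $dk_{\tau_i}$ attached to each wall type comes from a snake lemma applied to a diagram of the form \eqref{Eqn: index in Lambda versus in Lambda_frp}, comparing $\ev_i:(\widetilde\tau_i^\gp)_\ZZ\to\Lambda_\fop$ (cokernel $k_{\tau_i}$) with its extension $\varepsilon_i:\ZZ\times(\widetilde\tau_i^\gp)_\ZZ\to\Lambda$ by the edge-length coordinate mapping to $\Lambda/\Lambda_\fop\cong\ZZ$ with image $d\ZZ$; your phrase "the transverse coordinate is locked by $\delta$" gestures at this but does not produce the exact sequence. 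Similarly, the factor $k_\tau^{-1}$ does not come from a "rescaling to the marking"; it arises because $\Phi^{-1}(0)=\widetilde\tau$ and the snake lemma on the resulting big diagram yields the exact sequence $0\to\widetilde\tau^\gp_\ZZ\xrightarrow{\alpha}\Lambda\times\ZZ^l\to Q\to\coker(\Phi^\gp)\to 0$, with $|\coker(\alpha)|=k_\tau$ — and verifying $|\coker(\alpha)|=k_\tau$ itself requires the observation $|\delta(u_\out)|=|\delta(u_\inc)|=d$. Finally, in the codimension one case, the extra factor of $d$ is not a vague "constraint on the integral length of $E'_l$"; it is the index of an explicit inclusion of lattices $\Psi:\Lambda_\fop\times\ZZ^l\times\ZZ^{l+1}\hookrightarrow\Lambda\times\ZZ^l\times\ZZ^l$ through which the codimension-one $\Phi^\gp$ factors. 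Without exhibiting these maps and their cokernels, the argument has a genuine gap: the formula could just as plausibly have come out with, say, $k_\tau$ in the denominator replaced by $d\cdot k_\tau$, or with the incoming factor also picking up a $d$, and nothing in the proposal rules these out.
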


\begin{proof}
The multiplicity in question is the index of $\im(\Phi^\gp)$, where
\[
\textstyle
\Phi:(\widetilde\tau_\inc)_\ZZ \times(\widetilde\omega_0)_\ZZ\times \prod_{i=1}^l (\widetilde\tau_i)_\ZZ\arr \Lambda\times\Lambda^l
\]
is the map describing the matching at the gluing edges, denoted
$\varepsilon_{\omega_0}$ in \eqref{Eqn: prod_E omega_E -> Lambda_E}. In
particular, $\Phi^{-1}(0)=\widetilde\tau$ is the enlarged basic cone for
the glued type $\tau$. For the case of a trivial incoming broken line type
define $\widetilde\tau_\inc$ as the maximal cone containing the incoming vector
and $\Phi$ on this factor as the inclusion to the first copy of $\Lambda$.

We first treat the case $\dim\sigma_\fop=n$. In this case, by the description of
the type $\omega_0$, we can write
\begin{equation}
\label{Eqn: tilde omega_0^gp}
(\widetilde\omega_0^\gp)_\ZZ = \Lambda\times\ZZ^l\times\ZZ\times\ZZ^{l-1}=
\Lambda\times\ZZ^l\times\ZZ^l,
\end{equation}
with the $\ZZ$-factors holding the lengths
\[
\ell_1,\ldots,\ell_l,\ell_\inc,\ell'_1,\ldots,\ell'_{l-1}
\]
of the legs $(E_1,v'_1),\ldots, (E_l,v'_l)$, $(E_\inc,v'_1)$ and edges
$E'_1,\ldots,E'_{l-1}$, in this order, while the $\Lambda$-factor records the
image of the point on the incoming leg. For uniformity of notation write
$E'_0=(E_\inc,v'_1)$, $u'_0= u_\inc$, $\ell'_0=\ell_\inc$ and
\[
u'_i= \hat\bu_0(E'_i)=u_\inc+u_1+\cdots+u_i,\quad i=1,\ldots,l-1.
\]
Thus the image of $v'_i\in V(\hat G_0)$ under the tropical punctured map of type
$\omega_0$ defined by
\[
(m,\ell_1,\ldots,\ell_l,\ell'_0,\ldots,\ell'_{l-1})\in \Lambda\times\ZZ^l\times\ZZ^l
\]
equals
\[
m+\ell'_0 u'_0+\ell'_1 u'_1+\ldots+ \ell'_{i-1} u'_{i-1}.
\]
Denote further by $\ev_\inc: (\widetilde\tau_\inc^\gp)_{\ZZ}\to\Lambda$ and
$\ev_i: (\widetilde\tau_i^\gp)_{\ZZ}\to \Lambda_\fop\subset\Lambda$ the linear
extensions of the evaluation maps at the point on the respective unique leg.

We first compute the cokernel $Q$ of the restriction of $\Phi^\gp$ to
\[
(\widetilde\tau_\inc)^\gp \times(\{0\}\times\ZZ^l)\times\prod_{i=1}^l (\widetilde\tau_i)^\gp.
\]
The further restriction to the subspace spanned by the last copy of $\ZZ$ in \eqref{Eqn: tilde omega_0^gp} and $(\widetilde\tau_l)^\gp$ leads to the map
\[
\varepsilon_l: \ZZ\times(\widetilde\tau_l^\gp)_{\ZZ}\arr \Lambda,\quad
(\ell'_{l-1}, h_l)\longmapsto \ev_l(h_l)-\ell'_{l-1}\cdot u'_{l-1}
\]
to the last copy of $\Lambda$ in the codomain of $\Phi^\gp$.
Now since $|\delta(u'_{l-1})|=d$ we can fit $\varepsilon_l$ 
into the diagram with exact rows and injective columns
\begin{equation}
\label{Eqn: index in Lambda versus in Lambda_frp}
\vcenter{
\xymatrix{
0\ar[r]&(\widetilde\tau_l^\gp)_{\ZZ}\ar[r]\ar[d]^{\ev_l}&
\ZZ\times(\widetilde\tau_l^\gp)_{\ZZ}\ar[r]\ar[d]^{\varepsilon_l}&
\ZZ\ar[r]\ar[d]^{\cdot d}&0\\
0\ar[r]&\Lambda_\fop\ar[r]&\Lambda\ar[r]&\ZZ\ar[r]&0.
}}
\end{equation}
The cokernel of $\ev_l$ having order $k_{\tau_l}$, the snake lemma implies
$|\coker(\varepsilon_l)|=d\cdot k_{\tau_l}$. An induction on $l$ thus shows that
$|Q|$ equals $\prod_{i=1}^l \big(d k_{\tau_i}\big)$ times the order of the
cokernel of the case $l=0$, which is $|\coker(\ev_\inc)|= k_{\tau_\inc}$. We have thus shown
\begin{equation}
\label{Eqn: |Q|}
\textstyle
\big|Q\big| = k_{\tau_\inc}\prod_{i=1}^l(dk_{\tau_i}).
\end{equation}

To finish the computation of $\big|\coker\Phi^\gp\big|$, consider the following
diagram with exact rows and columns and with the obvious maps.
\[
\small
\xymatrix{
&&0\ar[d]&0\ar[d]&\ar@{-->}[d]\\
&0\ar[r]\ar[d]& (\widetilde\tau^\gp_\inc)_{\ZZ} \times \{0\}\times \ZZ^l\times
\prod_{i=1}^l (\widetilde\tau^\gp_i)_{\ZZ}\ar[r]\ar[r]\ar[d]
&\Lambda\times\Lambda^l\ar[r]\ar[d]& Q\ar[r]\ar[d]& 0\\
0\ar[r]&\widetilde\tau^\gp_{\ZZ}\ar[r]\ar[d]^\alpha
&(\widetilde\tau^\gp_\inc)_{\ZZ} \times(\widetilde\omega^\gp_0)_{\ZZ}
\times \prod_{i=1}^l (\widetilde\tau^\gp_i)_{\ZZ}\ar[r]^(.7){\Phi^\gp}\ar[d]
&\Lambda\times\Lambda^l\ar[r]\ar[d]& \coker(\Phi^\gp)\ar[r]\ar[d]& 0\\
0\ar[r]&\Lambda\times\ZZ^l\ar[r]\ar@{--}[d]&\Lambda\times\ZZ^l\ar[r]\ar[d]
&0\ar[r]\ar[d]&0\\
&&0&0}
\]
The snake lemma yields the exact sequence
\begin{equation}
\label{Eqn: coker Phi^gp sequence}
0 \arr\widetilde\tau^\gp_{\ZZ}\stackrel{\alpha}{\longrightarrow}
\Lambda\times\ZZ^l\arr Q\arr \coker(\Phi^\gp)\arr 0.
\end{equation}
Note also that $\widetilde\tau^\gp_{\ZZ}=\tau^\gp_{\ZZ}\times\ZZ\times\ZZ^l$ with the $\ZZ$-factors from the additional points at the $l+1$ gluing edges,
and the map $\alpha$ is a product of the injection of lattices
\[
\bar\alpha:\tau^\gp_{\ZZ}\times\ZZ\arr\Lambda,\quad
(h,\lambda)\longmapsto V(h)-\lambda\cdot u_\inc
\]
and $\id_{\ZZ^l}$. Here $V:\tau^\gp_{\ZZ}
\arr\Lambda_\fop$ is the evaluation at the
outgoing vertex. Replacing $u_\inc$ in this formula by $u_\out$ yields
$\ev_\out:\tau^\gp_{\ZZ}\times\ZZ\arr \Lambda$ with index of the image defining
$k_\tau$. But $|\delta(u_\out)|=|\delta(u_\inc)|=d$. By considering two diagrams
similar to \eqref{Eqn: index in Lambda versus in Lambda_frp}, it follows that
\[
|\coker(\alpha)| =|\coker(\bar\alpha)|=|\coker(\ev_\out)|=k_\tau.
\]
Together with \eqref{Eqn: |Q|} the exact sequence \eqref{Eqn: coker Phi^gp
sequence} thus yields
\[
\textstyle
m(\omega)= \big|\coker(\Phi^\gp)\big|=
k_\tau^{-1}\cdot |Q|= k_\tau^{-1}k_{\tau_\inc}\prod_{i=1}^l (d k_{\tau_i}),
\]
as claimed.

If $\dim\sigma_\fop=n-1$ we have an additional vertex $v'_\out$ mapping to
$\sigma_\fop$ and an additional edge $E'_l$ connecting $v'_l$ to $v'_\out$, with contact order $u'_l$. Thus
in this case we have
\[
(\widetilde\omega_0)^\gp_\ZZ= \Lambda_\fop\times\ZZ^l\times\ZZ\times\ZZ^l,
\]
with $\Lambda_\fop$ recording the position of $v'_\out$ in $\fop$.
We can now exhibit $(\widetilde\omega_0)^\gp_\ZZ$ as a sublattice of $\Lambda\times\ZZ^l\times\ZZ^l$ from~\eqref{Eqn: tilde omega_0^gp}
by the map
\begin{eqnarray*}
\Psi:\Lambda_\fop\times\ZZ^l\times\ZZ^{l+1}&\lra& \Lambda\times\ZZ^l\times\ZZ^l,\\
(m,\ell_1,\ldots,\ell_l,\ell'_0,\ldots,\ell'_l)&\longmapsto&
\textstyle
\big(m- \sum_{i=0}^l\ell'_i u'_i,\ell_1,\ldots,\ell_l,\ell'_0,..,\ell'_{l-1}\big).
\end{eqnarray*}
Here $u'_0=u_\inc$, $\ell'_0=\ell_\inc$ are defined analogously to the
codimension zero case. Since the images of $u'_i$ in $\Lambda/\Lambda_\fop\simeq
\ZZ$ generate $d\ZZ$, we see that $\Psi$ is an inclusion of lattices of index
$d$. The stated formula in codimension one now follows by observing that
$\Phi^\gp$ is the composition of $\Phi^\gp$ in the codimension zero case with
$\id_{(\tilde\tau_\inc)_\ZZ}\times \Psi \times \id_{\prod_i
(\tilde\tau_i)_\ZZ}$.
\end{proof}
\medskip

Finally we need to compute one simple punctured invariant.

\begin{lemma}
\label{Lem: vertex contribution}
\[
\deg \big[\scrM(X,\bomega_0)\big|^\virt=\begin{cases}1,& \codim\sigma_\fop=n\\
1/d,&\codim\sigma_\fop=n-1.\end{cases}
\]
\end{lemma}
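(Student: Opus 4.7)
My plan is to exploit the very simple structure of $\bomega_0$: in either case, all components of the domain curve map into a toric stratum of dimension at most one, so the moduli problem reduces to a classical computation on a toric target.

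In the codimension zero case, $\sigma_\fop\in\P_{\max}$ and $X_{\sigma_\fop}$ is a single point $p_0$. Every vertex $v'_i$ of $\hat G_0$ has $\hat\bA_0(v'_i)=0$ and $\hat\bsigma_0(v'_i)=\sigma_\fop$, so every punctured map marked by $\bomega_0$ is set-theoretically constant to $p_0$, with combinatorial type fixed by $\bomega_0$. The moduli space $\scrM(X,\bomega_0)$ is then a single reduced point: each component is a $\PP^1$ rigidified by its three special points (a combination of nodes and legs). A direct Riemann-Roch computation for the obstruction complex $f^*\Theta_X$ on the contracted chain (of arithmetic genus zero and Euler characteristic one) gives virtual relative dimension $n$ over $\foM(\shX,\omega_0)$, which cancels against the dimension of that stack and yields a zero-dimensional virtual class equal to the class of that point. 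Hence the degree is $1$.

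In the codimension one case, $X_{\sigma_\fop}\cong\PP^1$ with two zero-dimensional strata by Proposition~\ref{prop:polyhedral pseudo}. All components corresponding to $v'_1,\ldots,v'_l$ contract to the zero-dimensional stratum $X_{\sigma_\inc}$ (say at $0\in\PP^1$), while the component $C_{v'_\out}$ covers $X_{\sigma_\fop}$ with degree $d$. The special points on $C_{v'_\out}$ are the node coming from the edge $E'_l$, which sits over $0$, and the puncture $(E_\out,v'_\out)$ carrying contact order $u_\out$ of absolute $\delta$-value $d$ against the opposite boundary stratum $\{\infty\}$. Riemann-Hurwitz then forces the map to be totally ramified at both $0$ and $\infty$, so it is $z\mapsto z^d$ up to reparametrization. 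The resulting component moduli stack is $B(\ZZ/d\ZZ)$, contributing a factor $1/d$ to the virtual degree, while the remaining contracted chain contributes $1$ as in the codim zero case, giving total degree $1/d$.

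The main technical obstacle will be to extract the $1/d$ factor rigorously within the punctured log framework of \cite{ACGSII}. My plan is to invoke Lemma~\ref{lem:toric dim one} to reduce to a toric target, where both the combinatorial rigidity in the codim zero case and the fully ramified multiple cover of $\PP^1$ in the codim one case are standard (compare \cite[Prop.~5.2]{GPS} for the analogous two-dimensional computation). Alternatively, one may apply the gluing formula of the appendix recursively, splitting $\bomega_0$ into its constant pieces and the single $z\mapsto z^d$ vertex, reducing to the classical multiple cover invariant of $\PP^1$, which is well known to equal $1/d$.
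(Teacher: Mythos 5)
Your argument is essentially the same as the paper's: in both cases you identify the moduli space as a single (stacky) point by rigidity of the underlying stable map, then note that the pullback of $\Theta_X$ is trivial (being a trivial bundle on a genus-zero tree of $\PP^1$'s), giving an unobstructed situation, with the automorphism group $\mu_d$ in codimension one supplying the factor $1/d$. The only place the paper supplies something you merely flag as a technical obstacle is the uniqueness (up to isomorphism) of the basic punctured log enhancement of the underlying stable map over $\Spec\kk$, for which the paper invokes the argument of \cite[Claim~3.22]{Assoc}; your Riemann--Hurwitz justification for the total ramification of the degree-$d$ cover is an equally valid substitute for the paper's appeal to the tropical picture.
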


\begin{proof}
The graph $\hat G_0$ given by $\bomega_0$ is a chain with $l$ or $l+1$ trivalent
vertices of genus~$0$, depending if $\dim\sigma_\fop=n$ or
$\dim\sigma_\fop=n-1$. In any case, $l$ vertices are marked by $\sigma_\inc$,
the maximal cell containing the image of $E_\inc$. Thus if $\dim\sigma_\fop=n$,
the underlying stable maps in $\scrM(X,\bomega_0)$ are contracted trees of $l$
copies of $\PP^1$ with three special points each, with $l-1$ pairs of such
special points identified at the nodes. With the labelling of the $l+2$
remaining points, this is a rigid curve with trivial automorphism
group. Moreover, the pull-back of $\Theta_X$
is trivial. Hence $\scrM(X,\bomega_0)$ is a reduced point, giving the result in
the case $\dim\Lambda_\fop=n$.

In the codimension one case, $\fop$ is contained in the $(n-1)$-cell
$\rho=\sigma_\fop$ with stratum $X_\rho\subset X$ isomorphic to $\PP^1$ and
$\cM_X|_{X_\rho}$ admitting a lift of the torus action. In particular, it holds
$\Theta_X|_{X_\rho} \simeq\cO_{\PP^1}^n$. The incoming and outgoing legs of
$G_0$ are labelled by the maximal cells $\sigma_\inc,
\bsigma(L_\out)\in\Sigma(X)$ containing $\rho$. By the description of
$\bomega_0$, only the outgoing vertex maps to $\rho$, all others map to
$\sigma_\inc$. Thus the stable maps underlying objects in $\scrM(X,\bomega_0)$
are contracted trees of $l$ copies of $\PP^1$ with three special points each as
before, joined at the outgoing point with another copy of $\PP^1$ mapping to
$X_\rho\simeq\PP^1$. The tropical picture shows that the map $\PP^1\to\PP^1$ is
totally branched at the two zero-dimensional strata $X_{\sigma_\inc},
X_{\bsigma(L_\out)}\subset X_\rho$ with branching order $d=|\delta(u_\inc)|$ the
constant from \eqref{Eqn: constant d}. Hence the associated curve class is
$d\cdot [X_\rho]$ (as follows also from Lemma~\ref{lem:admissible degrees}).
Thus there is a unique underlying stable map over $\ul{W}=\Spec\kk$.
Arguing similarly as in \cite[Claim~3.22]{Assoc}, there is then
a unique enhancement, up to isomorphism, of this
underlying stable map to a basic punctured map $f:C^{\circ}/W\rightarrow X$ 
of type $\bomega_0$
with $\ul{W}=\Spec\kk$.
Moreover, by the triviality of $\Theta_X|_{X_\rho}$ this is again a
logarithmically unobstructed and rigid situation, now with automorphism group
$\mu_d$. Hence $\scrM(X,\bomega_0)$ is the quotient of a reduced point by
$\mu_d$, proving the stated result.
\end{proof}

\begin{proof}[Proof of \eqref{Eqn: scattering equation}]
For applying the gluing formula we now use the labelling with pairwise different $\tau_i$ and multiplicities $\mu_i$. Let
\[
\delta: \scrM(X,\btau)\arr \scrM(X,\btau_\inc)\times\scrM(X,\btau_0)\times
\prod_{i=1}^l\scrM(X,\btau_i)^{\mu_i}
\]
be the splitting map from \eqref{Eqn: splitting map} for $\btau$. Applying
Theorem~\ref{Thm: gluing theorem} with the general displacement vector $\nu$
from Lemma~\ref{Lem: general types scattering broken lines} and noting that $\Aut(\bomega/\btau)=\{1\}$ yields
\begin{equation}
\label{Eqn: vfc form of broken line bending}
\delta_*\big[\scrM(X,\btau)\big]^\virt=
m(\omega)\cdot \big[\scrM(X,\btau_\inc)^\virt\big]\times
(j_{\bomega_0})_*[\scrM(X,\bomega_0)]^\virt\times\prod_{i=1}^l \big[\scrM(X,\btau_i)^{\mu_i}\big]^\virt.
\end{equation}
Next observe
\[
\Aut(\btau)\simeq \Aut(\btau_\inc)\times\prod_{i=1}^l\big(\Aut(\btau_i)\rtimes S_{\mu_i}),
\]
with the symmetric group $S_{\mu_i}$ acting by permutation of the $\mu_i$ copies
of $\btau_i$ attached at $v$. Taking degrees in \eqref{Eqn: vfc form of broken
line bending} and applying Lemma~\ref{Lem: vertex contribution}, plugging in
$m(\omega)$ from Lemma~\ref{Lem: multiplicity computation}, multiplying by
$k_{\tau}$, and dividing by $|\Aut(\btau)|$ now leads to
\[
\begin{aligned}
k_\tau N_\btau
&= k_{\tau_\inc}\cdot{\textstyle \prod_{i=1}^l (d k_{\tau_i})^{\mu_i}}
\cdot \frac{\deg[\scrM(X,\btau_\inc)]^\virt}{|\Aut(\btau_\inc)|}
\cdot \prod_{i=1}^l\frac{\deg[\scrM(X,\btau_i)^{\mu_i}]^\virt}{
|\Aut(\btau_i)\rtimes S_{\mu_i}|}\\
&= k_{\tau_\inc} N_{\tau_\inc} \cdot\prod_{i=1}^l
\frac{(d k_{\tau_i}W_{\btau_i})^{\mu_i}}{\mu_i!}.
\end{aligned}
\]
The total curve class and contact orders are computed from the decorations as
\[
A=A_\inc+d[X_{\sigma_\fop}]+\sum_{i=1}^l \mu_i A_i,
\]
and
\[
u_\out= u_\inc+\sum_{i=1}^l \mu_i u_i.
\]
The last three equalities together imply the claimed
scattering equation~\eqref{Eqn: scattering equation}.
\end{proof}


\section{Consistency of the canonical wall structure}
\label{sec:main theorem}

We now prove the main result of the paper, namely that
$\scrS_{\can}$ is a consistent wall structure. The actual
definition of consistency, while summarized in \cite[Def.\ 3.9]{Theta},
is a bit complicated. However, the main point of the definition is
to enable the construction of so-called theta functions. In other
words, consistency guarantees that the local description
$\vartheta_p(x)$ for theta functions
patch together to give global functions on 
a scheme constructed from the wall structure.

Here, we proceed in the opposite direction. We will first show
that the $\vartheta_p(x)$ satisfy the necessary patching criteria, 
and then show this patching implies consistency as defined in 
\cite[Def.\ 3.9]{Theta}.

\subsection{Patching of theta functions}

As the discussion from now on largely follows the notions of
\cite{Theta}, we now adopt the conventions and notation of
that paper with regards to wall structures. In particular, we
now work with a wall structure $\scrS$ satisfying precisely the
properties of \cite[Def.\ 2.11,2]{Theta}, by assuming that the
walls of $\scrS$ comprise the codimension one cells of a polyhedral 
cone
refinement $\P_{\scrS}$ of $\P$. As mentioned in Remark~\ref{rem:differences},
this may be achieved for $\scrS_{\can}$ by subdividing walls, combining
walls with the same support, and adding walls with attached function $1$
if necessary. In particular, as in \cite{Theta}, we use the notation
$\fou$ for a maximal cell of $\P_{\scrS}$ and $\fob$ for a codimension
one cell of $\P_{\scrS}$ contained in $\rho\in\P^{[n-1]}$. We call
$\fou$ a \emph{chamber} and $\fob$ a \emph{slab} or
\emph{codimension one wall}, while we call
those walls intersecting the interior of a maximal cell a
\emph{codimension zero wall}. A \emph{joint} $\foj$ is a
codimension two cell in $\scrP_{\scrS}$, and the codimension of
$\foj$ is the codimension of the smallest cell of $\P$ containing
$\foj$. 

Recalling further notation from \cite{Theta}, if $\fou$ is a chamber
contained in $\sigma\subseteq\P_{\max}$, we set
\[
R_{\fou}=R_{\sigma}:=\kk[\shP_x^+]/I_x = (\kk[Q]/I)[\Lambda_{\sigma}]
\]
for any point $x\in \Int(\fou)$. 

Given a codimension zero wall $\fop$ separating chambers $\fou,\fou'$,
we obtain a $\kk[Q]/I$-algebra homomorphism
\begin{equation}
\label{eq:theta fop def}
\theta_{\fop}:R_{\fou}\rightarrow R_{\fou'}, \quad z^m \mapsto
f_{\fop}^{\langle n_{\fop}, m \rangle} z^m,
\end{equation}
where $n_{\fop}\in\Hom(\Lambda_{\sigma},\ZZ)$ is the 
primitive normal vector
to $\fop$ positive on $\fou$. Since $f_{\fop}\equiv 1 \bmod \fom$
and $\sqrt{I}=\fom$, $f_{\fop}$ is in fact invertible so that $\theta_{\fop}$
is an automorphism.

For a slab $\fob$, after choosing
$\xi$ as chosen for the isomorphism \eqref{eq:P+x alternate},
we define
\[
R_{\fob}:=
(\kk[Q]/I)[\Lambda_{\rho}][Z_+,Z_-]/(Z_+Z_--f_{\fob}t^{\kappa_{\rho}}),
\]
where $f_{\fob}$ is the function attached to $\fob$.
If $\fou,\fou'$ are the adjacent chambers to $\fob$, there 
are natural localization maps at $Z_+$ and $Z_-$ respectively 
\begin{equation}
\label{eq:localization}
\chi_{\fob,\fou}:R_{\fob}\rightarrow R_{\fou},
\quad
\chi_{\fob,\fou'}:R_{\fob}\rightarrow R_{\fou'},
\end{equation}
given by
\[
\chi_{\fob,\fou}(t^Az^mZ_+^aZ_-^b)
=t^{A+b\kappa_{\rho}}z^{m+(a-b)\xi}f_{\fob}^{b},
\quad
\chi_{\fob,\fou'}(t^Az^mZ_+^aZ_-^b)
=t^{A+a\kappa_{\rho}}z^{m+(a-b)\xi}f_{\fob}^{a},
\]
where $A\in Q$ and $m\in\Lambda_\rho$.
Note that these maps differ from those given in 
\eqref{eq:transport1}, \eqref{eq:transport2} only in the powers of
$f_{\fob}$ appearing here.

We may now make precise the notion that the $\vartheta_p(x)$ patch.

\begin{theorem} 
\label{thm:patching}
For the canonical wall structure $\scrS_{\can}$ and any $p\in B(\ZZ)
\setminus \{0\}$, we have
\begin{enumerate}
\item If $x,x'\in \Int(\fou)$ for a chamber $\fou$, then
\[
\vartheta_p(x)=\vartheta_p(x').
\]
\item If $\fop$ is a codimension zero wall separating chambers
$\fou,\fou'$, and $x\in\Int(\fou)$, $x'\in\Int(\fou')$, then
\[
\vartheta_p(x')=\theta_{\fop}\big(\vartheta_p(x)\big).
\]
\item
If $\fob$ is a slab with adjacent chambers $\fou,\fou'$,
$x\in \Int(\fou)$, $x'\in\Int(\fou')$, then
there exists a unique $\vartheta_p(\fob) \in R_{\fob}$ such that
\[
\chi_{\fob,\fou}(\vartheta_p(\fob))=\vartheta_p(x),
\quad
\chi_{\fob,\fou'}(\vartheta_p(\fob))=\vartheta_p(x').
\]
\end{enumerate}
\end{theorem}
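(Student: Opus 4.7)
My plan is to use Theorem~\ref{thm:theta correspondence} to replace $\vartheta_p(x)$ by $\vartheta_p^{\log}(x)$, recasting all three assertions as statements about sums of punctured Gromov--Witten invariants over decorated broken line types. Parts (1) and (2) are then essentially combinatorial, while part (3) is the substantive case requiring a further gluing argument.

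For (1), the codimension zero walls of $\scrS_{\can}$ are by construction the cones $h(\tau_{\out})$ for wall types $\tau$, and these are part of the refinement $\scrP_{\scrS}$ whose maximal cells are the chambers. Hence for $x,x' \in \Int(\fou)$ both avoiding rational hyperplanes, the set of decorated broken line types $\btau$ with $x \in \Int(h(\tau_{\out}))$ coincides with that for $x'$, and the associated monomials $k_\tau N_{\btau} t^A z^{-u_\tau}$ do not depend on the choice. For (2), crossing a codimension zero wall $\fop$ introduces exactly those broken line types whose output cone $h(\tau_{\out})$ lies on only one side of $\fop$; via Construction~\ref{const:broken line to type} these correspond to broken lines bending at $\fop$, and the bending formula~\eqref{eq:parallel transport for correspondence} together with the definition of $\scrS_{\can}$ matches exactly the action of $\theta_\fop(z^m)= f_\fop^{\langle n_\fop, m\rangle} z^m$.

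The main case is (3). Fix a slab $\fob \subseteq \rho \in \P^{[n-1]}_{\inte}$ with adjacent chambers $\fou \subseteq \sigma$, $\fou' \subseteq \sigma'$ for distinct $\sigma,\sigma' \in \P_{\max}$. I will classify decorated broken line types $\btau$ contributing to $\vartheta_p^{\log}$ on either side according to the image of the vertex $v_{\out} \in V(G)$ adjacent to $L_{\out}$. If $\bsigma(v_{\out}) \in \P_{\max}$, then by Lemma~\ref{lem:admissible degrees} the curve class $\bA(v_{\out}) = 0$, and the resulting monomial sits entirely in $R_\fou$ or $R_{\fou'}$ and lifts trivially to $R_\fob$ as $t^A z^{m_\rho} Z_+^a$ or $t^A z^{m_\rho} Z_-^b$. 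If instead $\bsigma(v_{\out}) = \rho$, then admissibility forces $\bA(v_{\out}) = d[X_\rho]$ with $d = |\delta(u_\tau)|$, and the corresponding monomial carries a factor $t^{d\kappa_\rho}$ which in $R_\fob$ equals $(Z_+ Z_-)^d f_\fob^{-d}$; such contributions must be paired with ``bubbles'' attached along the one-dimensional stratum $X_\rho \cong \PP^1$ guaranteed by Proposition~\ref{prop:polyhedral pseudo} and Lemma~\ref{lem:toric dim one}, i.e.\ with wall types $\tau'$ such that $\bsigma(L_{\out,\tau'}) \subseteq \rho$ and $u_{\tau'}$ is tangent to $\rho$, whose product of wall functions is precisely $f_\fob$.

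The hard part, and the main obstacle, will be to verify that the sum of these bubble contributions reproduces the appropriate powers of $f_\fob$ appearing via the relation $Z_+ Z_- = f_\fob t^{\kappa_\rho}$. I expect this to follow from an application of Yixian Wu's numerical gluing formula (Theorem~\ref{Thm: gluing theorem}) to the toric one-dimensional gluing stratum $X_\rho$, in close analogy with the codimension one case of Theorem~\ref{thm:main correspondence theorem} and the multiplicity computation of Lemma~\ref{Lem: multiplicity computation}, with the role of ``bending at a wall'' now played by ``crossing the slab $\fob$.'' Once the gluing identity is established, $\vartheta_p(\fob) \in R_\fob$ is assembled from the resulting lifts, and uniqueness is immediate since $Z_+$ and $Z_-$ are non-zero-divisors in $R_\fob$ modulo $I$ (as $f_\fob \equiv 1 \bmod \fom$ and $\sqrt{I} = \fom$).
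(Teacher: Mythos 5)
Your overall framework is right: pass to $\vartheta_p^{\log}$ via Theorem~\ref{thm:theta correspondence}, and then argue at the level of decorated broken line types. Part~(1) is handled correctly. But there is a genuine gap in parts~(2) and~(3), and it affects exactly what you flagged as ``the hard part.''

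The difficulty you have not addressed is the tangent contributions. Decompose $\vartheta_p^{\log}(x) = \vartheta_+ + \vartheta_- + \vartheta_0$ according to the sign of $\langle n_{\fop}, m\rangle$ for the final monomial $m$. Your classification by $\bsigma(v_{\out})$ and the Wu gluing argument you sketch for the case $\bsigma(v_{\out}) = \rho$ is essentially a re-derivation of the bending formula \eqref{eq:parallel transport for correspondence}, which is already the content of Theorem~\ref{thm:main correspondence theorem} and which takes care of $\vartheta_\pm$ (and their lifts $\vartheta_\pm(\fob)$); this part is routine, as in the proof of \cite[Thm.~4.12]{GrossP2}. The substantive issue is $\vartheta_0$: a broken line type $\btau$ with $u_\tau$ tangent to $\fop$ can have $h(\tau_{\out})$ containing $\fou$ but not $\fou'$ (or vice versa). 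Such a type contributes to $\vartheta_0$ but not $\vartheta_0'$, and there is no a priori bijection between the contributing types on the two sides. When you write, for $\bsigma(v_{\out})\in\P_{\max}$, that the monomial ``lifts trivially to $R_\fob$,'' you are implicitly asserting that these contributions match across $\fob$ — but that is precisely what must be proved, and it is not a statement about a single type. The same issue appears in your part~(2), where you claim the types appearing only on one side ``correspond to broken lines bending at $\fop$'': tangent types can also appear only on one side without any bend.

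This cannot be reached by the splitting formula of Theorem~\ref{Thm: gluing theorem}: there is no edge to split, since the broken line is not bending. The paper's proof handles it (Steps~3--5) by fixing a degenerate broken line type $\tau'$ with $\fop \subseteq h_{\tau'}(\tau'_{\out})$, collecting the types degenerating to $\tau'$ into two sets according to which of $\fou$, $\fou'$ their output cone sweeps over, and proving the required equality \eqref{eq:equality of classes} of weighted sums of virtual classes as a rational equivalence in $\foM^{\ev}(\shX,\tau')$. The rational function comes from $\ev^\flat(s)$ for a section $s$ of $\shM^{\gp}_{X_\sigma}$ corresponding to a linear function on $\Star(\sigma)$ vanishing on $\fop$; the orders of zeros and poles along the codimension-one strata $\foM^{\ev}_\tau(\shX,\tau')$ produce exactly the factors $k_\tau$ up to a common constant. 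That idealized-log-smooth/toric-local-structure argument is the missing ingredient in your proposal.
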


\begin{proof}
{\bf Step 1.} \emph{Establishing the setup.}
We fix $p\in B(\ZZ)\setminus\{0\}$ once and for all. 
We work throughout
with $\vartheta^{\log}_p(x)$ using Theorem~\ref{thm:theta correspondence}.
Let $\scrB$ be the set of isomorphism classes of decorated broken line types
given as
\[
\scrB:=\{\btau=(\tau,\mathbf{A})\,|\,\hbox{$p_{\tau}=p$, the total curve class of
$\mathbf{A}$ lies in $Q\setminus I$ and $N_{\btau}\not=0$}\}.
\]
$\scrB$ is a finite set by Lemma~\ref{lem:theta finite}.
Thus we may choose a polyhedral cone complex $\P_{\scrB}$
refining $\P_{\scrS}$ with the property that for each $\btau\in\scrB$,
$h(\btau_{\out})$ is a union of maximal cells 
$\fou\in\P_{\scrB}$. At the same time, we refine the wall structure
$\scrS$ so that we may take $\P_{\scrS}=\P_{\scrB}$.

It is immediate from the definition of the log theta
functions that if $\fou\in\P_{\scrB}$ is a maximal cone then 
$\vartheta^{\log}_p(x)=\vartheta^{\log}_p(x')$ for any $x,x'\in
\Int(\fou)$. Thus, it is now sufficient to verify that
if $\fou,\fou'\in \scrP_{\scrB}$ are two chambers
separated by a codimension one cell $\fop$, then statements
(2) or (3) of the theorem
hold, depending on whether $\fop$ is a codimension zero wall or a slab.

\medskip

{\bf Step 2.} \emph{Broken lines transversal to $\fop$.}
For $y\in\Int(\fop)$, let $n_{\fop}\in\check\Lambda_y$ be a primitive
normal vector to $\fop$, positive on $\fou$. We may then decompose,
for $x\in\fou$, $x'\in\fou'$,
\begin{align*}
\vartheta^{\log}_p(x) = {} & \vartheta_++\vartheta_-+\vartheta_0\\
\vartheta^{\log}_p(x') = {} & \vartheta'_++\vartheta'_-+\vartheta'_0
\end{align*}
where $\vartheta_+$ (resp.\ $\vartheta_-$, $\vartheta_0$) consists
of a sum of those monomials $a t^A z^m$ appearing in
$\vartheta^{\log}_p(x)$ with $\langle n_{\fop},m\rangle >0$
(resp.\ $\langle n_{\fop},m\rangle<0$ and $\langle n_{\fop},m\rangle=0$).
The terms $\vartheta'_{\pm}$, $\vartheta'_0$ are defined analogously.

Using the broken line description of these theta functions, the
now standard argument of the proof of \cite[Thm.~4.12]{GrossP2} 
or \cite[Lem.~4.9]{CPS},
shows immediately that 
if $\fop$ is a codimension zero wall, then
\begin{equation}
\label{eq:theta plus change}
\theta_{\fop}(\vartheta_+)=\vartheta_+', \quad \theta_{\fop}(\vartheta_-)
=\vartheta_-'.
\end{equation}
We briefly recall the argument. Given a decorated broken line
contributing to $\vartheta_+$ with endpoint in $\fou$ very close
to $\fop$, we may perturb the broken line by moving its endpoint to
a nearby point in $\fop$. We may then add an additional line segment
(or simply extend the final line segment through $\fop$) following
the definition of a decorated broken line to obtain a decorated
broken line with endpoint nearby in $\fou'$. If this is done
in all possible ways, it then follows from the propagation rule
for monomials \eqref{eq:parallel transport for correspondence}
that $\theta_{\fop}(\vartheta_+)=\vartheta_+'$. Finally,
any monomial in $\vartheta_0$ or $\vartheta_0'$ has exponent tangent
to $\fop$, and hence is left invariant under transport from $\fou$ to
$\fou'$. Thus in this case, it is sufficient to show that 
$\vartheta_0=\vartheta_0'$ under transport $\fot_{\sigma,\sigma'}$.

If $\fop=\fob$ is a slab contained in $\rho$, $\fou\subseteq\sigma$,
$\fou'\subseteq\sigma'$, then we may write
\[
\vartheta_+= \sum_i a_iz^{m_i+\alpha_i\xi}, \quad \vartheta_-'=
\sum_i a_i' z^{m_i'-\alpha_i'\xi}
\]
with $a_i, a_i' \in \kk[Q]/I$, $m_i,m_i'\in\Lambda_{\rho}$ and
$\alpha_i,\alpha_i'>0$.
We may then define 
\[
\vartheta_+(\fob):=\sum_{i} a_iz^{m_i}Z_+^{\alpha_i},\quad\quad
\vartheta_-(\fob):=\sum_{i} a'_iz^{m'_i}Z_-^{\alpha'_i}.
\]
By construction, 
$\chi_{\fob,\fou}(\vartheta_+(\fob))=\vartheta_+$
and
$\chi_{\fob,\fou'}(\vartheta_-(\fob))=\vartheta_-'$.
On the other hand, it follows from exactly the same
argument as outlined above and the formulas for 
$\chi_{\fob,\fou'}(Z_+)$ and $\chi_{\fob,\fou}(Z_-)$ of 
\eqref{eq:localization} that
\begin{equation}
\label{eq:theta plus change slab}
\chi_{\fob,\fou'}(\vartheta_+(\fob))=\vartheta'_+,\quad\quad
\chi_{\fob,\fou}(\vartheta_-(\fob))=\vartheta_-.
\end{equation}

As before, any monomial in $\vartheta_0$ or $\vartheta_0'$ 
is left invariant under transport
from $\fou$ to $\fou'$. Thus again it is sufficient to show that
$\vartheta_0=\vartheta_0'$ under
transport $\fot_{\sigma,\sigma'}$, as the expression
\[
\vartheta_p(\fob):=\vartheta_+(\fob)+\vartheta_-(\fob)+\vartheta_0
\]
then satisfies the desired properties.
\medskip

{\bf Step 3.} \emph{Reduction to a non-virtual linear equivalence
relation on moduli spaces.}
Let $\scrB_{\fou}$ (resp.\ $\scrB_{\fou'}$) denote
the collection of isomorphism classes of
decorated broken line types $\btau$ contributing to
$\vartheta_0$ (resp.\ $\vartheta_0'$) such that $\fou\subseteq h(\tau_{\out}),
\fou'\not\subseteq h(\tau_{\out})$ (resp. $\fou'\subseteq h(\tau_{\out}),
\fou\not\subseteq h(\tau_{\out})$). Any other broken line type 
$\btau$ contributing to either $\vartheta_0$ or $\vartheta_0'$ must then
have $\fou\cup \fou' \subseteq h(\tau_{\out})$, and hence contributes
equally to both $\vartheta_0$ and $\vartheta_0'$. Hence it is sufficient
to show that
\[
\sum_{\btau\in \scrB_{\fou}} k_{\tau} N_{\btau} t^A z^{-u_{\tau}}
=
\sum_{\btau\in \scrB_{\fou'}} k_{\tau} N_{\btau} t^A z^{-u_{\tau}}.
\]

Let $\scrD$ be the set of isomorphism classes of degenerate 
decorated broken line types (see
Definition~\ref{def:broken line type})
$\btau'$ for which there exists a $\btau\in \scrB$ and a contraction
morphism $\btau\rightarrow\btau'$.
Recall that for a degenerate broken line type $\tau'$,
$\dim \tau'=n-2$ and $\dim h_{\tau'}(\tau'_{\out})=n-1$. Let 
\[
\scrD_{\fop}:=\{\btau'\in \scrD\,|\, \fop\subseteq h_{\tau'}(\tau'_{\out})\}.
\]
Note by the defining assumption on $\P_{\scrB}$, if $\btau
\in \scrB\cup\scrD$ is a 
broken line type or a degenerate broken line type and
$\dim\fop\cap h(\tau_{\out})=n-1$, then $\fop\subseteq h(\tau_{\out})$.

For any $\btau\in \scrB_{\fou}\cup\scrB_{\fou'}$, since $u_{\tau}$
is tangent to $\fop$ and $\fop$ is contained in a codimension one
face of $h(\tau_{\out})$, necessarily there is a unique choice
of contraction morphism of decorated types
$\phi:\btau\rightarrow \btau'$ with $\btau'\in \scrD_{\fop}$.
Note further the isomorphism class of $\btau'$ only depends
on the isomorphism class of $\btau$.
This gives maps $\Psi:\scrB_{\fou}\rightarrow \scrD_{\fop}$,
$\Psi':\scrB_{\fou'}\rightarrow \scrD_{\fop}$ taking $\btau$ to the
corresponding degenerate decorated broken line type $\btau'$. 
It is now sufficient
to show that for any $\btau'\in \scrD_{\fop}$, we have
\begin{equation}
\label{eq:equality of numbers}
\sum_{\btau \in \Psi^{-1}(\btau')} k_{\tau} N_{\btau}
=
\sum_{\btau \in (\Psi')^{-1}(\btau')} k_{\tau} N_{\btau}.
\end{equation}

To prove this, we now fix $\btau'\in\scrD_{\fop}$ with
underlying type $\tau'$. We consider the moduli space 
\[
\foM^{\ev}(\shX,\tau'):=\foM(\shX,\tau')\times_{\ul{\shX}}\ul{X},
\]
where the morphism $\foM(\shX,\tau')\rightarrow \ul{\shX}$ is
given by schematic evaluation at the section of the universal curve
corresponding to $L_{\out}$.
For any contraction morphism $\tau\rightarrow\tau'$,
we obtain by \cite[Prop.~5.19]{ACGSII} a Cartesian 
diagram
\begin{equation}
\label{eq:cartesian diagram}
\xymatrix@C=30pt
{
\coprod_{\btau=(\tau,{\bf A})} \scrM(X,\btau)\ar[d]_{\varepsilon_{\tau}}
\ar[r]^>>>>>{\iota_{\tau}'}
&
\scrM(X,\btau')\ar[d]^{\varepsilon_{\tau'}}\\
\foM^{\ev}(\shX,\tau)\ar[r]_{\iota_{\tau}}&\foM^{\ev}(\shX,\tau')
}
\end{equation}
Here the disjoint union is over all decorations $\btau=(\tau,{\bf A)}$ of $\tau$
such that the contraction morphism $\tau\rightarrow\tau'$ induces a contraction
morphism $\btau\rightarrow\btau'$. The maps $\iota_{\tau},\iota'_{\tau}$ are
induced by the contraction morphisms $\tau\rightarrow\tau'$ and
$\btau\rightarrow\btau'$. Note that
\begin{align}
\label{eq:first sum}
\begin{split}
\sum_{\btau\in \Psi^{-1}(\btau')} k_{\tau} N_{\btau}
= {} & 
\sum_{\btau\in \Psi^{-1}(\btau')} k_{\tau} {\deg [\scrM(X,\btau)]^{\virt}
\over |\Aut(\btau)|}
\\
= {} & \sum_{\tau\rightarrow\tau' \atop \btau=(\tau,{\bf A})} k_{\tau} {\deg [\scrM(X,\btau)]^{\virt} \over
|\Aut(\btau)| |\Aut(\tau/\tau')||\Aut(\btau/\btau')|^{-1}}\\
= {} & \sum_{\tau\rightarrow\tau' \atop \btau=(\tau,{\bf A})} k_{\tau} {\deg [\scrM(X,\btau)]^{\virt} \over
|\Aut(\btau')| |\Aut(\tau/\tau')|}.
\end{split}
\end{align}
The last two sums are over (1) isomorphism classes of
types $\tau$ which are the underlying type of some element of
$\Psi^{-1}(\btau')$ and (2) decorations $\btau=(\tau,{\bf A})$
of $\tau$ yielding an induced contraction map $\btau\rightarrow\btau'$.
The factor $|\Aut(\tau/\tau')||\Aut(\btau/\btau')|^{-1}$ arises in
the third summation because 
we are now summing over multiple representatives for an
isomorphism class in $\Psi^{-1}(\btau')$. Indeed, giving
a fixed representative $\tau$ equipped with its unique contraction
morphism $\tau\rightarrow\tau'$,
$\Aut(\tau/\tau')$ now acts on the set of
decorations $\btau=(\tau,\mathbf{A})$ of $\tau$ 
such that $\tau\rightarrow \tau'$ induces a contraction morphism
$\btau\rightarrow\btau'$. The stabilizer of this action is
the subgroup $\Aut(\btau/\btau')\subseteq \Aut(\tau/\tau')$, and hence
each $\btau\in \Psi^{-1}(\tau')$ appears
$|\Aut(\tau/\tau')|/|\Aut(\btau/\btau')|$ times in the last two sums.
For the last equality, we use
$|\Aut(\btau)|=|\Aut(\btau/\btau')||\Aut(\btau')|$. 

Using \eqref{eq:cartesian diagram}, if we fix $\tau\rightarrow\tau'$,
we may write
\[
\sum_{\btau=(\tau,{\bf A})} \deg [\scrM(X,\btau)]^{\virt}=
\deg (\varepsilon_{\tau})^![\foM^{\ev}(\shX,\tau)].
\]
Thus, with 
all sums below being over isomorphism classes of broken line types
$\tau$ which are the underlying type of elements of
$\Psi^{-1}(\btau')$,
the quantity of \eqref{eq:first sum} may now be written as
\begin{align*}
\sum_{\tau} k_{\tau} {\deg \iota'_{\tau,*}(\varepsilon_{\tau})^![\foM^{\ev}(\shX,\tau)]
\over |\Aut(\btau')||\Aut(\tau/\tau')|}
= {} & 
\sum_{\tau} k_{\tau} {\deg \varepsilon_{\tau'}^!\iota_{\tau,*}[\foM^{\ev}(\shX,\tau)]
\over |\Aut(\btau')||\Aut(\tau/\tau')|}\\
= {} &  \sum_{\tau} k_{\tau} { |\Aut(\tau/\tau')|\deg
 \varepsilon_{\tau'}^![\foM^{\ev}_{\tau}(\shX,\tau')]
\over |\Aut(\btau')||\Aut(\tau/\tau')|}\\
= {} &  \sum_{\tau} k_{\tau}
{\deg \varepsilon_{\tau'}^![\foM^{\ev}_{\tau}(\shX,\tau')]
\over |\Aut(\btau')|}.
\end{align*}
Here the first equality follows from \cite[Thm.\ 4.1]{Man},
while the second equality follows from the morphism
$\iota_{\tau}:\foM^{\ev} (\shX,\tau)\rightarrow \foM^{\ev}_{\tau}(\shX,\tau')$
being of degree $|\Aut(\tau/\tau')|$.
Thus it will be sufficient to show the following relation
in the codimension one Chow group of $\foM^{\ev}(\shX,\tau')$
\begin{equation}
\label{eq:equality of classes}
\sum_{\tau\rightarrow\tau'\atop \fou\subseteq h_{\tau}(\tau_{\out})}
k_{\tau}[\foM^{\ev}_{\tau}(\shX,\tau')]
=
\sum_{\tau\rightarrow\tau'\atop \fou'\subseteq h_{\tau}(\tau_{\out})}
k_{\tau}[\foM^{\ev}_{\tau}(\shX,\tau')],
\end{equation}
where the sums are over isomorphism classes of broken line
types $\tau$ with a contraction morphism $\tau\rightarrow\tau'$
and with the stated inclusion.

\medskip

{\bf Step 4.} \emph{Evaluation maps and local structure of moduli spaces.}
We continue with $\btau'\in
\scrD_{\fop}$ fixed as in the previous step. If
$\foC^{\circ}\rightarrow\foM(\shX,\tau')$ is the universal punctured curve over
the moduli space $\foM(\shX,\tau')$, with section $x_{\out}$ corresponding to
the leg $L_{\out}$, we write 
\[
\widetilde\foM(\shX,\tau'):=(\ul{\foM(\shX,\tau')}, x_{\out}^*
\shM_{\foC^{\circ}})^{\sat},
\]
the saturation of the log structure of $\foC^{\circ}$ pulled back via
$x_{\out}$: see \cite[\S5.2]{ACGSII}. 
Denote the reduction by
\[
\overline{\foM}(\shX,\tau'):=\widetilde\foM(\shX,\tau')_{\red}.
\]
By \cite[Prop.\ 5.5]{ACGSII}, the composition
$\overline{\foM}(\shX,\tau')\rightarrow\widetilde\foM(\shX,\tau')
\rightarrow\foM(\shX,\tau')$ induces an isomorphism on underlying
stacks, as $\foM(\shX,\tau')$ is already reduced by 
\cite[Prop.~3.28]{ACGSII}.

By composing the universal morphism
$f:\foC^{\circ}\rightarrow\shX$ with the section $x_{\out}$,
we obtain an evaluation map $\ev:\overline\foM(\shX,\tau')\rightarrow\shX$.
As the former space is reduced, this evaluation map
factors through the stratum $\shX_{\sigma}$ of $\shX$
where $\sigma\in\P$ is the minimal cell containing $\fop$. 

We recall from \cite[Def.\ 3.22]{ACGSII} that
$\foM(\shX,\tau')$ carries an idealized structure given
by a coherent sheaf of monoid ideals $\shI\subseteq \shM_{\foM(\shX,\tau')}$. 
This sheaf may be described by giving the stalks of the
image sheaf $\overline{\shI}$ of $\shI$ in the ghost sheaf
$\overline{\shM}_{\foM(\shX,\tau')}$.
By \cite[Prop.\ 3.23]{ACGSII}, because $\tau'$ is realizable, for
a geometric point $\bar w$ of $\foM(\shX,\tau')$, the monoid ideal
$\overline{\shI}_{\bar w}$
has the following description. Let $Q_{\bar w}$ be the stalk of
the ghost sheaf at $\bar w$, and $Q_{\tau'}$ the stalk of the ghost
sheaf at a generic point of $\foM(\shX,\tau')$. Then there is
a well-defined generization map 
$\chi_{\bar w}:Q_{\bar w}\rightarrow Q_{\tau'}$, and 
$\overline{\shI}_{\bar w}=\chi_{\bar w}^{-1}(Q_{\tau'}\setminus \{0\})$.

Similarly, we may put an idealized structure on
$\overline{\foM}(\shX,\tau')$, with ideal sheaf $\shJ$. 
For a geometric point $\bar w$,
let $Q^{\circ,\sat}_{\bar w}$ be the stalk of the ghost sheaf of
this latter log structure; necessarily, this monoid is
contained in $Q_{\bar w}\oplus\ZZ$
and contains $Q_{\bar w}\oplus\NN$. Then we take the stalk of the corresponding
monoid ideal $\overline{\shJ}_{\bar w}$ 
to be the monoid ideal generated by $\overline{\shI}_{\bar w}\oplus 0$,
$Q^{\circ,\sat}\setminus (Q\oplus\NN)$ and $(0,1)$. 
It is straightforward to check that $\shJ$ yields an idealized structure
on $\overline{\foM}(\shX,\tau')$. Further, the natural morphism
$\overline{\foM}(\shX,\tau')\rightarrow \foM(\shX,\tau')$
is idealized log \'etale.

\cite[Cor.~2.9]{Wu} now shows that the map $\ev:\overline{\foM}(\shX,\tau')
\rightarrow \shX$ is idealized log smooth. As this map factors through the
idealized log \'etale strict closed embedding $\shX_{\sigma}\rightarrow \shX$,
we write $\ev:\overline{\foM}(\shX,\tau') \rightarrow \shX_{\sigma}$, also
idealized log smooth.

After a base-change, we also obtain
\[
\overline{\foM}^{\ev}(\shX,\tau'):=\overline{\foM}(\shX,\tau')
\times_{\shX_{\sigma}} X_{\sigma} \rightarrow X_{\sigma}
\]
idealized
log smooth, with the underlying stack of $\overline{\foM}^{\ev}(\shX,
\tau')$ isomorphic to the underlying stack of
\[
\foM^{\ev}(\shX,\tau')=\foM(\shX,\tau')\times_{\ul{\shX}_{\sigma}}
\ul{X}_{\sigma}.
\]
\medskip

{\bf Step 5.} \emph{Proof of the linear equivalence relation
\eqref{eq:equality of classes}}.
Continuing with $\sigma\in\P$ the minimal cone containing $\fop$,
denote by $\sigma_{\fou},\sigma_{\fou'}$ the cones containing 
$\fou$ and $\fou'$ respectively. We write $X_{\fou}, X_{\fou'}$ for
the corresponding zero-dimensional strata of $X$, and
write $P_{\sigma},P_{\fou},P_{\fou'}$ for the stalks
of the ghost sheaf of $X_{\sigma}$, $X_{\fou}$ and $X_{\fou'}$
at their generic points.

Our goal is to construct a rational function $\psi$ on
$\overline{\foM}^{\ev}(\shX,\tau')$ and determine its divisor of zeros and
poles. We note that as $\overline{\foM}^{\ev}(\shX,\tau')$ is idealized log
smooth over $\Spec\kk$, it is isomorphic, locally in the smooth topology, to a
subscheme of a toric variety defined by a monomial ideal, see
\cite[Prop.~B.2]{ACGSII}. Further, by the cited proposition and the explicit
description of the idealized structure of $\overline\foM(\shX,\tau')$, hence of
$\overline\foM^{\ev}(\shX,\tau')$, in fact $\overline\foM^{\ev}(\shX,\tau')$ is
smooth locally isomorphic to a toric stratum of a toric variety, and hence is
normal.

Let $\bar s\in \Gamma(X_{\sigma},\overline\shM^{\gp}_{X_{\sigma}})$ be defined
as follows. Note that giving such a section is equivalent to giving an integral
piecewise linear function on $\Star(\sigma) \subseteq B$. Using the affine
structure on $\Star(\sigma)$ induced by that on $B$, we take this function in
fact to be linear in this affine structure, primitive, vanishing on $\fop$ and
positive on $\fou$. We may then choose a lift of $\bar s$ to $s\in
\Gamma(X_{\sigma}, \shM^{\gp}_{X_{\sigma}})$. The existence of a lift is obvious
when $\dim X_{\sigma}=0$. When $\dim X_{\sigma}=1$, one needs to check that the
corresponding torsor is trivial. However, using the isomorphism of
Lemma~\ref{lem:toric dim one}, we may identify the line bundle associated to the
corresponding torsor with the restriction of a line bundle on
$X_{\Sigma_{\sigma}}$. The line bundle on $X_{\Sigma_{\sigma}}$ 
is defined by the linear function $\bar
s$ on $\Sigma_{\sigma}$. However,
it is a standard fact of toric geometry that the line bundle corresponding to a
linear (as opposed to piecewise linear) function is in fact trivial.

Let $U\subseteq \overline{\foM}^{\ev}(\shX,\tau')$ be the dense
open stratum, i.e., the open substack whose geometric points are
precisely those whose corresponding type is $\tau'$. 
Note that the image of $\ev^{\flat}(s)$ in 
$\overline{\shM}_{\overline{\foM}^{\ev}(\shX,\tau')}^{\gp}$ vanishes on
$U$. Indeed, this follows from the fact that
$h_{\tau'}(\tau'_{\out})\subseteq \fop$ and $h_{\tau'}|_{\tau'_{\out}}$ 
is dual to $\bar\ev^{\flat}:P_{\sigma}\rightarrow Q^{\circ,\sat}_{\tau'}$.
Thus, viewing $\O_{\overline{\foM}^{\ev}(\shX,\tau')}^{\times}$ as
a subsheaf of $\shM_{\overline{\foM}^{\ev}(\shX,\tau')}^{\gp}$ 
via the inverse of the structure map $\alpha$, we see that $\ev^{\flat}(s)$ 
restricts to an invertible function on $U$, and hence defines
a rational function $\psi$ on $\overline{\foM}^{\ev}(\shX,\tau')$.
We complete the proof
by determining the divisor of zeros and poles of $\psi$.

We now use the identification of underlying stacks of
$\overline{\foM}^{\ev}(\shX,\tau')$ and $\foM^{\ev}(\shX,\tau')$.
Since the complement of $U$ is a union of strata of 
$\foM^{\ev}(\shX,\tau')$, it is enough to
check the order of vanishing of $\psi$ along each codimension
one stratum of $\foM^{\ev}(\shX,\tau')$. These strata are given by
$\foM^{\ev}_{\tau}(\shX,\tau')$ with $\phi:\tau\rightarrow\tau'$
a contraction morphism with $\dim\tau=n-1$, see \cite[Rem.\ 3.28]{ACGSII}.
We write $\overline{\foM}^{\ev}_{\tau}(\shX,\tau')$ for
the corresponding stratum of $\overline{\foM}^{\ev}(\shX,\tau')$.

Given such a $\phi:\tau\rightarrow\tau'$, we have several cases:

\smallskip

{\bf Case 1.} $\dim h_{\tau}(\tau_{\out})=n-1$. In this case, 
necessarily $\bar s$ still vanishes on $h_{\tau}(\tau_{\out})$, so as above,
$\psi$ extends as an invertible function across $\foM^{\ev}_{\tau}(\shX,
\tau')$. 

\smallskip

{\bf Case 2.} \emph{$h_{\tau}(\tau_{\out})$ intersects the interior of $\fou$.} 
We adopt the notation from \cite[App.\ B]{ACGSII} that if $P$ is a monoid, 
$K\subseteq P$ a monoid ideal, then $A_{P,K}:=\Spec\kk[P]/K$ and 
$\shA_{P,K}:=[A_{P,K}/\Spec \kk[P^{\gp}]]$.

Let $Q_{\tau}^{\circ,\sat}$ be the stalk of the ghost sheaf of
$\overline{\foM}^{\ev}(\shX,\tau')$ at a geometric generic point $\bar w$ of
$\overline{\foM}^{\ev}_{\tau}(\shX,\tau')$, and let $J \subseteq
Q_{\tau}^{\circ,\sat}$ be the monoid ideal arising from the idealized structure.
As $\ev$ is idealized log smooth, smooth locally in a neighbourhood of $\bar w$,
$\ev$ concides with the morphism $\shA_{Q^{\circ,\sat},J}\rightarrow
\shA_{P_{\fou},K}$, by \cite[Prop.\ B.4]{ACGSII}. Here $K=P_{\fou}\setminus
\chi_{\sigma\fou}^{-1}(0)$ where $\chi_{\sigma\fou}:
P_{\fou}\rightarrow P_{\sigma}$ is the generization map. In particular, the
stratum $X_{\sigma} \subseteq X$, smooth locally in a neighbourhood of the point
$X_{\fou}$, is isomorphic to $A_{P_{\fou},K}$.

Let $\chi_{\tau'\tau}:Q^{\circ,\sat}_{\tau}\rightarrow Q^{\circ,\sat}_{\tau'}$
be the generization map, necessarily a localization along a face $F$ of
$Q^{\circ,\sat}_{\tau}$. As $\dim\tau=\dim\tau'+1$ we see that $F\cong\NN$. Note
that $\bar s$ is non-negative on $\fou$. Since $h_{\tau}(\tau_{\out})$ contains
$\fou$, on which $\bar s$ is non-negative, and $h_{\tau'}(\tau'_{\out})$ is a
face of $h_{\tau}(\tau_{\out})$, on which $\bar s$ vanishes, it follows that
$\bar s$ is non-negative on $h_{\tau}(\tau_{\out})$. Dually, it follows that the
stalk of $\bar\ev^{\flat}(\bar s)$ at $\bar w$ lies in $Q^{\circ,\sat}_{\tau}
\subseteq Q^{\circ,\gp}_{\tau}$.
Further, the stalk necessarily lies in $\chi_{\tau'\tau}^{-1}(0)=F$. The order
of vanishing of $\psi$ along the stratum determined by $\tau$ is then, by
standard toric geometry, equal to the germ of $\bar\ev^{\flat}(\bar s)$ under
the identification of $F$ with $\NN$. Dually, since by construction $\bar s$ generates
$P_{\fou}^{\gp}\cap \Lambda_{\fop}^{\perp}$, this number is the same as
\[
d_{\tau}:=|\coker(\Lambda_{\tau_{\out}}/\Lambda_{\tau'_{\out}}\rightarrow
\Lambda_{\fou}/\Lambda_{\fop})|.
\]
This is the order of vanishing of $\psi$ along the stratum
$\foM_{\tau}(\shX,\tau')$.

Note an elementary diagram chase shows that
\[
k_{\tau} = d_{\tau} |\coker (\Lambda_{\tau'_{\out}}\rightarrow
\Lambda_{\fop})_{\tors}|.
\] 

{\bf Case 3.} \emph{$h(\tau_{\out})$ intersects the interior of $\fou'$.} 
The same analysis as in Case 2 applies once $\bar s$ is replaced
by $-\bar s$. Hence $\psi$ has a pole of order $d_{\tau}$ along
$\foM_{\tau}(\shX,\tau')$.

\medskip

Putting these three cases together, we see that
the divisor of zeros and poles of $\psi$ gives the relation in
the Chow group of $\foM^{\ev}(\shX,\tau')$:
\[
\sum_{\tau\rightarrow\tau'\atop \fou\subseteq h_{\tau}(\tau_{\out})}
d_{\tau}[\foM^{\ev}_{\tau}(\shX,\tau')]
=
\sum_{\tau\rightarrow\tau'\atop \fou'\subseteq h_{\tau}(\tau_{\out})}
d_{\tau}[\foM^{\ev}_{\tau}(\shX,\tau')].
\]
If we then multiply both sides by 
$|\coker (\Lambda_{\tau'_{\out}}\rightarrow
\Lambda_{\fop})_{\tors}|$, we get the desired
\eqref{eq:equality of classes}.
\end{proof}

\subsection{Consistency from theta functions}

\begin{theorem}
\label{thm:consistency}
$\scrS_{\can}$ is a consistent wall structure.
\end{theorem}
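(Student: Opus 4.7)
The plan is to deduce consistency from the patching property of theta functions established in Theorem~\ref{thm:patching}. Consistency in the sense of \cite[Def.~3.9]{Theta} is a purely local condition at each joint $\foj$ of $\scrS_{\can}$: it amounts to saying that, after localizing to the star of $\foj$, the gluing data prescribed by the walls containing $\foj$ produce a scheme whose structure sheaf admits enough global sections near the stratum corresponding to $\foj$. Equivalently (cf.\ the formulation around \cite[Thm.~3.24]{Theta}), one must produce, for each $p\in B(\ZZ)$ and each joint $\foj$, an element of the inverse limit of the local rings $R_{\fou}$ (for chambers $\fou$ adjacent to $\foj$) and $R_{\fob}$ (for slabs containing $\foj$) compatible with the transition maps $\theta_{\fop}$ and $\chi_{\fob,\fou}$ from \eqref{eq:theta fop def} and \eqref{eq:localization}, and such that these elements behave correctly under change of joint. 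The theta functions $\vartheta_p(x)$ and $\vartheta_p(\fob)$ furnished by Theorem~\ref{thm:patching} are precisely these elements.

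First I would fix a joint $\foj\subseteq B$ and extract from $\scrS_{\can}$ the restricted wall structure $\scrS_{\can}^{\foj}$ consisting of all walls of $\scrS_{\can}$ that contain $\foj$; this is a wall structure on the star of $\foj$, which after projecting out the tangent space to $\foj$ becomes a wall structure on a two-dimensional polyhedral affine pseudomanifold (with possibly one singularity at the image of $\foj$, when $\foj$ is of codimension two in $B$). Consistency of $\scrS_{\can}$ at $\foj$ is equivalent to consistency of $\scrS_{\can}^{\foj}$ at the vertex image of $\foj$. By Theorem~\ref{thm:patching} applied to $\scrS_{\can}$, for each $p\in B(\ZZ)\setminus\{0\}$ one obtains a compatible family $\{\vartheta_p(x)\}_{x}$ together with slab-lifts $\vartheta_p(\fob)\in R_{\fob}$, satisfying items (1)--(3) of that theorem in a neighbourhood of $\foj$. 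These data provide the element of the relevant inverse limit that consistency demands.

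The key verification step will be translating the three patching statements of Theorem~\ref{thm:patching} into the precise language of \cite[Def.~3.9]{Theta}. In that definition, consistency is expressed as the requirement that certain wall-crossing compositions around a joint act as the identity on a suitable ring, and this equivalence was essentially made explicit in \cite[\S3]{Theta}: a wall structure is consistent if and only if, for every $p$, the local theta function data assemble into a well-defined section of $\mathcal{O}$ on the scheme built from the slab and chamber rings near each joint. What Theorem~\ref{thm:patching} delivers is exactly such a well-defined section, obtained enumeratively from punctured invariants via logarithmic broken lines, hence from the $p$-independent viewpoint one sees that all wall-crossing automorphisms around $\foj$ must preserve the entire collection $\{\vartheta_p\}_p$. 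Since the $\vartheta_p(x)$ for $p$ ranging over $B(\ZZ)$ generate $R_{\fou}$ as a $\kk[Q]/I$-module (this follows from the fact that each monomial $z^m$ is the leading term of $\vartheta_m(x)$ for $x$ in the chamber containing $m$, a standard observation for broken-line theta functions), consistency follows.

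The hard part, conceptually, is not this final assembly but rather what has already been done in Theorem~\ref{thm:patching}: it is the gluing/correspondence theorem of Theorem~\ref{thm:main correspondence theorem} together with the linear equivalence argument of Step~5 in the proof of Theorem~\ref{thm:patching} that does the real work. Once those are granted, the present theorem is formal. The remaining book-keeping consists in (i) checking that the refinement $\P_{\scrB}$ used in Theorem~\ref{thm:patching} can be chosen simultaneously for all $p\in Q\setminus I$ with $\vartheta_p$ supported at bounded order (using finiteness of $Q\setminus I$ and Lemma~\ref{lem:theta finite}), so that a single refined wall structure equivalent to $\scrS_{\can}$ controls all theta functions at once; and (ii) verifying that the definition of consistency in \cite[Def.~3.9]{Theta} is indeed equivalent to the existence of a compatible family of local theta functions, which is the content of \cite[Thm.~3.24 and \S3.2]{Theta}. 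With those two ingredients in hand, consistency of $\scrS_{\can}$, and hence equivalently of $\scrS_{\can}^{\mathrm{undec}}$, is immediate.
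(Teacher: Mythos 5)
Your outline for codimension zero and codimension one joints matches the paper's approach: at such a joint $\foj$, one reduces the wall-crossing identity to invariance of theta functions under Theorem~\ref{thm:patching}, and then invokes the fact that the $\vartheta_p(x)$ (respectively $\vartheta_p(\fob)$), together with the units $\vartheta_p(x)^{-1}$, generate $R_\sigma$ (respectively $R_\fob$) as a $\kk[Q]/I$-algebra. That part of your plan is sound and is what the paper does.

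However, your treatment of codimension-two joints contains a genuine gap. The definition of consistency in \cite[Def.~3.9]{Theta} at an interior codimension-two joint $\foj$ is \emph{not} a statement about the theta functions on $B$. It is a statement about the theta functions $\vartheta^{\foj}_p(x)$ defined by broken lines on a different polyhedral affine pseudomanifold $(B_{\foj},\P_{\foj})$ (the tangent-cone picture along $\foj$), with its induced wall structure $\scrS_{\foj}$. You assert that Theorem~\ref{thm:patching} applied to $(B,\scrS_{\can})$ provides the needed compatible data, implicitly assuming that the $\vartheta^{\foj}_p$ on $B_{\foj}$ are restrictions of (or directly controlled by) the $\vartheta_p$ on $B$. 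That assumption is exactly what fails a priori: broken lines on $B_{\foj}$ are not the same objects as broken lines on $B$, and the asymptotic direction $p$ of a broken line on $B_{\foj}$ can even lie outside the image of $\Star(\foj)\hookrightarrow B_{\foj}$. The paper must bridge this with a substantive argument: translation invariance of $\scrS_{\foj}$ under $\Lambda_{\foj}$, the identity $\vartheta^{\foj}_{p+v}(x)=z^v\vartheta^{\foj}_p(x)$ for $v\in\Lambda_{\foj}$, and a compactness argument to ensure that after translating far enough in the $\foj$-direction, all contributing broken lines on $B_{\foj}$ lie entirely inside $\Star(\foj)$ and can be identified with broken lines on $B$ supported there. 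Only after that reduction does the machinery of Theorem~\ref{thm:patching} (particularly Step~5, the linear equivalence argument) apply locally. Your assertion that "once Theorem~\ref{thm:patching} is granted the present theorem is formal" is therefore incorrect for interior codimension-two joints. You also do not address boundary joints $\foj\subseteq\partial B$, which in the paper are handled by a convexity criterion from \cite[Thm.~3.13]{Theta} (using that $u_\tau$ is tangent to $\partial B$) rather than via theta functions at all.

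Finally, your item (ii) — that \cite[Thm.~3.24 and \S3.2]{Theta} asserts an equivalence between consistency and the existence of compatible local theta functions — mischaracterizes that reference: \cite[Thm.~3.24]{Theta} describes structure constants \emph{assuming} consistency, and \cite[\S3.2]{Theta} defines codimension-two consistency in terms of the localized picture on $B_{\foj}$, which is precisely the point your sketch elides.
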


\begin{proof}
Consistency as defined in \cite[Def.\ 3.9]{Theta} involves checking
consistency in codimensions zero, one and two. We check each case in turn.

\medskip

{\bf Consistency in codimension zero}. Let $\foj$ be a codimension zero joint,
$\fop_1,\ldots,\fop_r$ the walls containing $\foj$, taken in cyclic order, with
$\fop_i$ contained in chambers $\fou_i$ and $\fou_{i+1}$, with indices taken
modulo $r$. With $\theta_{\fop_i}:R_{\fou_i}\rightarrow R_{\fou_{i+1}}$ as
defined in \eqref{eq:theta fop def}, consistency at the joint means \cite[Def.\
2.13]{Theta} that
\begin{equation}
\label{eq:codim zero consistent}
\theta:=\theta_{\fop_r}\circ\cdots\circ\theta_{\fop_1}=\id
\end{equation}
as an automorphism of $R_{\sigma}$, where $\sigma\in\P$ is the
cell containing $\foj$. Note that for $p \in \sigma_{\ZZ}
\setminus \{0\}$ and $x\in \Int(\sigma)\setminus |\scrS_{\can}|$, 
$\vartheta_p(x)=z^p \mod \fom_x$, where $\fom_x$ is as in
\eqref{eq:Ix def}. Indeed, the trivial broken line
with no bends contributes the term $z^p$, and any other broken line
$\beta$ must have $a_{\beta}\in \fom_x$. Since the ideal $\fom_x$ is
nilpotent in $R_{\sigma}$ and $z^p$ is invertible in $R_{\sigma}$,
it follows that
$\vartheta_p(x)$ is invertible in $R_{\sigma}$.
Working inductively modulo powers of $\fom_x$, one then sees
that any element of $R_{\sigma}$ may be written
as a $\kk[Q]/I$-linear combination of ratios 
$\vartheta_p(x)/\vartheta_{p'}(x)$ for 
$p,p'\in \sigma_{\ZZ}$. By Theorem~\ref{thm:patching},(2),
applied successively for the walls $\fop_1,\ldots,\fop_r$, we
obtain $\theta\big(\vartheta_p(x)\big)=\vartheta_p(x)$, and hence
$\theta$ is the identity on $R_{\sigma}$. 

Consistency in codimension zero follows, as this just means each
codimension zero joint is consistent.

\medskip

{\bf Consistency in codimension one.} Let $\foj$ be a codimension
one joint, contained in slabs $\fob_1,\fob_2\subseteq\rho\in
\P^{[n-1]}_{\inte}$, maximal cones
$\sigma,\sigma'$, and codimension zero walls $\fop_1,\ldots,\fop_r\subseteq
\sigma$ and $\fop'_1,\ldots,\fop'_s\subseteq \sigma'$. We order these
so that 
\[
\fob_1,\fop_1,\ldots,\fop_r,\fob_2,\fop'_1,\ldots,\fop'_s
\]
are cyclically ordered about $\foj$. Then consistency at the
joint $\foj$ is expressed in \cite[Def.~2.14]{Theta} as follows.
We set
\begin{eqnarray*}
\theta:= \theta_{\fop_r}\circ\theta_{\fop_{r-1}}
\circ\ldots\circ\theta_{\fop_1}:&& R_\sigma\to R_\sigma\\
\theta':= \theta_{\fop_1'}\circ\theta_{\fop_2'}\circ\ldots\circ
\theta_{\fop_s'}:&&
R_{\sigma'}\to R_{\sigma'}.
\end{eqnarray*}
We use the notation $\chi_{\fob_i,\sigma}, \chi_{\fob_i,\sigma'}$
for the localization maps of \eqref{eq:localization}. 
Then consistency is the statement that
\[
(\theta\times\theta')\big((\chi_{\fob_1,\sigma},\chi_{\fob_1,\sigma'})
(R_{\fob_1})\big)= (\chi_{\fob_2,\sigma},\chi_{\fob_2,\sigma'})
(R_{\fob_2}).
\]

To show this, let $x_i\in\sigma$ be a point in the chamber adjacent
to $\fob_i$, and similarly $x_i'\in\sigma'$.
If $p\in \rho_{\ZZ}\setminus \{0\}$, then 
$\vartheta_p(x_i)=z^p \mod \fom_{x_i}$, and similarly for $x_i'$, 
as in the codimension zero case.
Thus $\vartheta_p(\fob_i)$, which exists by
Theorem~\ref{thm:patching}, (3), satisfies 
$\chi_{\fob_i,\sigma}(\vartheta_p(\fob_i)-z^p)$ nilpotent,
and the same for $\chi_{\fob_i,\sigma'}$. By the 
discussion preceding \cite[Def.\ 2.14]{Theta},
the map $(\chi_{\fob_i,\sigma},\chi_{\fob_i,\sigma'}):
R_{\fob_i}\rightarrow R_{\sigma}\times R_{\sigma'}$ is injective.
Thus we conclude that $\vartheta_p(\fob_i)-z^p$ is nilpotent in
$R_{\fob_i}$, and hence 
$\vartheta_p(\fob_i)$ is invertible, as $z^p$ is invertible. Further,
if $p\in \sigma_{\ZZ}\setminus \{0\}$, we may write $p=p'+a\xi$ with
$a\ge 0$
$p'\in\Lambda_{\rho}$, and then $\vartheta_p(\fob_1)=z^{p'}Z_+^a 
\mod \fom$, and similarly if $p\in\sigma'_{\ZZ}\setminus \{0\}$,
$p=p'-a\xi$, we have $\vartheta_p(\fob_1)=z^{p'}Z_-^a\mod \fom$.
 From this we see that any element of $R_{\fob_1}$ may be
written as a $\kk[Q]/I$-linear combination of Laurent monomials in theta
functions of the form 
$(\prod \vartheta_{p_i}(\fob_1)^{a_i})/\vartheta_p(\fob_1)$ with 
$p\in \rho_{\ZZ}$, $p_i\in \sigma_{\ZZ}\cup\sigma'_{\ZZ}$.
Note here we only have control over the expressions 
$\vartheta_p(\fob_1)$ for $p\in \sigma_{\ZZ}\cup\sigma'_{\ZZ}$, so we need such
Laurent monomials in order to get all elements of $R_{\fob_1}$.

Now it follows from Theorem~\ref{thm:patching},(2) that for $p\in
B(\ZZ)\setminus\{0\}$ \begin{align*} &(\theta\times
\theta')\big((\chi_{\fob_1,\sigma},\chi_{\fob_1,\sigma'})
(\vartheta_p(\fob_1))\big)=(\theta\times\theta')\big(\vartheta_p(x_1),
\vartheta_p(x'_1)\big)\\ = {} & \big(\vartheta_p(x_2),\vartheta_p(x_2')\big)
=(\chi_{\fob_2,\sigma}\times\chi_{\fob_2,\sigma'})(\vartheta_p(\fob_2)).
\end{align*} Combined with the generation statement of the previous paragraph,
we obtain the desired equality.

This shows consistency at codimension one joints, noting there
is no condition for codimension one joints contained in $\partial B$.
Thus we obtain consistency in codimension one.
\medskip

{\bf Consistency in codimension two.} Consistency at codimension
two joints is defined in \cite[\S3.2]{Theta}. If $\foj$ is
a boundary joint, i.e., contained in $\partial B$, then it is
easy to check that $\scrS_{\can}$ is convex at $\foj$
in the sense of \cite[Def.\ 3.12]{Theta}. Indeed, if
$\partial B\not=\emptyset$, it follows
from Propositions~\ref{prop:gtrop affine submersion} 
and~\ref{prop:relative case over S} that if $\tau$ is a wall type,
then $u_{\tau}$ is tangent to $\partial B$.
Thus by \cite[Thm.\ 3.13]{Theta},
the joint is consistent.

In case $\foj$ is an interior joint, we refer the reader to
\cite[\S3.2]{Theta} for the definition of a corresponding polyhedral 
affine manifold $(B_{\foj},\P_{\foj})$,
with $\P_{\foj}$ consisting of tangent cones to elements of $\P$
containing $\foj$ along $\foj$. Further, $\scrS_{\can}$ induces
a wall structure on $B_{\foj}$ we denote as $\scrS_{\foj}$.
For $p\in B_{\foj}(\ZZ)\setminus \{0\}$, broken lines on $B_{\foj}$
then define for general $x\in B_{\foj}$ theta functions 
$\vartheta^{\foj}_p(x)$ as before. Consistency is then the statements
(1)--(3) of Theorem~\ref{thm:patching} for $B_{\foj}$ and $\scrS_{\foj}$
rather than $B$ and $\scrS_{\can}$.

We do not prove consistency directly from Theorem~\ref{thm:patching}
as it is not clear what the relationship between theta functions
on $B$ and on $B_{\foj}$ is. Instead, we proceed as follows. 
Write $\Star(\foj)$ for the open star of $\foj$ with respect to
the polyhedral cone decomposition $\scrP_{\scrB}$. Then there is a natural
embedding of $\Star(\foj)$ in $B_{\foj}$, identifying a cone of
$\scrP_{\scrB}$ containing $\foj$ and contained in $\sigma\in\P$
with a cone in the tangent wedge of $\sigma$ along $\foj$, with
vertex at the origin of the tangent wedge.
Note the walls of
$\scrS_{\foj}$ are in one-to-one correspondence with the walls of
$\scrS_{\can}$ intersecting $\Star(\foj)$. 

The tangent space $\Lambda_{\foj,\RR}$ to 
$\foj$ acts by translation on $B_{\foj}$ and
$\Lambda_{\foj}$ acts on $B_{\foj}(\ZZ)$. Further, all walls of
$\scrS_{\foj}$ are invariant under this translation, and translation
takes broken lines to broken lines. In particular, 
if $p\in B_{\foj}(\ZZ)$, $v\in \Lambda_{\foj,\RR}$, it is immediate
that
\[
\vartheta^{\foj}_p(x)=\vartheta^{\foj}_p(x+v)
\]
for general $x$. Further, since the monomials $z^v$ for $v\in\Lambda_{\foj}$
are invariant under crossing all walls in $\scrS_{\foj}$, we in fact
have
\[
\vartheta_{p+v}^{\foj}(x)= z^v \vartheta_p^{\foj}(x).
\]
Note that the broken lines defining the two theta functions are
not precisely the same; rather, if $a_i z^{m_i}$ is a monomial
attached to a segment of a broken line contributing to
$\vartheta_p^{\foj}(x)$, then the corresponding broken line
contributing to $\vartheta_{p+v}^{\foj}(x)$ has attached monomial
$a_i z^{m_i+v}$, and thus the actual maps $\beta$ are different
as they have different derivatives. However, there is clearly a
one-to-one correspondence between such broken lines.
In particular, if items (1)--(3) of Theorem~\ref{thm:patching}
hold for a given choice of $p$, for the functions 
$\vartheta_p^{\foj}(x)$ rather than $\vartheta_p(x)$,
they will also hold for $p+v$ for any $v\in \Lambda_{\foj}$.

We will now show items (1)--(3) of Theorem~\ref{thm:patching} for the
functions $\vartheta^{\foj}_p(x)$ for arbitrary non-zero $p\in B_{\foj}(\ZZ)$,
which is fixed throughout the following discussion. Because we may
replace $p$ with $p+v$ for any $v\in\Lambda_{\foj}$, we may assume, as we do
from now on in this proof, that $p\in \Star(\foj)$. Indeed, for any point $x \in
B_{\foj}(\ZZ)$, there is some $v\in \foj\cap B(\ZZ)$ such that $x+v \in
\Star(\foj)$ under the canonical embedding of $\Star(\foj)$ in $B_{\foj}$. 

More generally,
it follows easily from this observation that if $Z\subseteq B_{\foj}$ 
is any compact subset,
then there is a $v\in \foj\cap B(\ZZ)$ such that $Z+v \subseteq \Star(\foj)$.

Now suppose given an open subset $U\subseteq B_{\foj}$ whose closure $Z$ is
compact. For each $x\in Z$ not contained in $|\scrS_{\foj}|$, we have a finite
set of broken lines contributing to $\vartheta^{\foj}_p(x)$, and these all have
asymptotic direction $p$ contained in $\Star(\foj)$. Thus if $\beta$ is a broken
line contributing to this theta function, $\beta([t_1,0])$ is a compact subset
of $B_{\foj}$ and we may translate $x$ by some $v\in \foj$ so that
$\beta([t_1,0])\subseteq \Star(\foj)$. As the asymptotic direction $p$
necessarily lies in a cone of $\scrP_{\scrB}$ containing $\beta(t_1)$, we see
also that $\beta((-\infty,t_1])\subseteq \Star(\foj)$. By summing over the $v$'s
for the finite number of broken lines contributing to $\vartheta^{\foj}_p(x)$ we
may thus translate $x$ by some $v\in \foj$ to guarantee that all broken lines
contributing to $\vartheta^{\foj}_p(x)$ have image contained in $\Star(\foj)$.
Noting that broken lines can be deformed continuously inside some polyhedron
without changing their type (see \cite[Prop.~3.5]{Theta}), we may further in
fact assume that all broken lines contributing to $\vartheta^{\foj}_p(x)$ for
$x\in Z\setminus |\scrS_{\foj}|$ have image in $\Star(\foj)$.

 From the construction of $\scrS_{\foj}$,
it is now clear that provided $x$ has been translated
sufficiently, $\vartheta^{\foj}_p(x)$ is the sum of contributions
of those broken lines contributing to $\vartheta_p(x)$ with image
wholly contained in $\Star(\foj)$.

One now checks that the proof of Theorem~\ref{thm:patching} applies
equally well when $p\in\Star(\foj)$ if one restricts attention to 
endpoints $x\in \Star(\foj)$ translated ``sufficiently far'' by
an element $v\in \foj$. Indeed, there are two key points of
the argument in the proof of Theorem~\ref{thm:patching} where one needs
to control the behaviour of the broken lines being considered.

First, in Step 2, the proof of \eqref{eq:theta plus change} and 
\eqref{eq:theta plus change slab} works for any wall structure (not
necessarily a consistent one) and hence works for $\scrS_{\foj}$.

Second, in Steps 3 and following, we focus on a single degenerate broken
line type $\tau'$ and the result follows from the relation 
\eqref{eq:equality of classes}. However, if we fix a point $x\in
h_{\tau'}(\tau'_{\out})$ and a neighbourhood $U$ of $x$ with compact closure
$Z$,
we then may translate $Z$ by some $v\in \foj$ so that all broken lines
with endpoints in $Z\setminus|\scrS_{\foj}|$ lie in $\Star(\foj)$,
and thus in \eqref{eq:equality of classes}, we only need to consider
those types $\tau$ with contraction
morphisms $\tau\rightarrow \tau'$ whose corresponding broken lines
with endpoint in $Z$ lie entirely in $\Star(\foj)$.
\end{proof}

We end this section with a brief observation in the relative case
$g:X\rightarrow S$,
following on from Proposition~\ref{prop:relative B prime} 
and Remarks~\ref{rem:phi restrict},\ref{rem:cone}. Assume that the hypotheses of
Proposition~\ref{prop:relative B prime} hold, so that $(B',\scrP')$
is a polyhedral affine pseudomanifold. In this case, the canonical
wall structure $\scrS_{\can}$ induces a wall structure
$\scrS'_{\can}$ on $(B',\scrP')$. Indeed, first note that
$\Lambda_{B'}$, the local system of integral tangent vectors on 
$B'$, is a subsheaf of $\Lambda_B|_{B'}$ consisting of those tangent
vectors $v$ with $(g_{\trop})_*(v)=0$. Using $\varphi$ the MPL function
on $B$ given by Construction~\ref{const:phi canonical}, we obtain
also an MPL function $\varphi|_{B'}$ as in Remark~\ref{rem:phi restrict}.
This induces a local system $\shP_{B'}$, and 
$\shP_{B'}\subseteq \shP_B|_{B'}$ is the subsheaf consisting of
those sections $m$ of $\shP_B|_{B'}$ such that $(g_{\trop})_*(\bar m)=0$.

Next, given a wall $(\fop,f_{\fop})\in\scrS_{\can}$, we define
the \emph{index} of $\fop$ to be
\[
\operatorname{ind}(\fop):=|\coker(g_{\trop,*}:\Lambda_{\fop}\rightarrow \ZZ)|.
\]
We may now define 
\[
\scrS_{\can}':=\{(\fop\cap B',f_{\fop}^{\operatorname{ind}(\fop)})\,|\,(\fop,
f_{\fop})\in\scrS_{\can}\},
\]
noting that for any $x\in B'\setminus\Delta$,
$f_{\fop} \in \kk[\shP^+_{B',x}]\subseteq \kk[\shP^+_{B,x}]$
by Proposition~\ref{prop:relative case over S} and the construction
of the canonical wall structure. 

\begin{proposition}
Assuming $g:X\rightarrow S$ satisfies the hypotheses of 
Proposition~\ref{prop:relative case over S}, then $\scrS_{\can}'$
is a consistent wall structure.
\end{proposition}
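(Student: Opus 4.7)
The plan is to reduce consistency of $\scrS'_{\can}$ on $(B',\scrP')$ to the consistency of $\scrS_{\can}$ on $(B,\scrP)$ established in Theorem~\ref{thm:consistency}, by identifying broken lines in $B'$ with appropriate broken lines in $B$.

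The key input is a wall-crossing compatibility. For any wall $\fop\in\scrS_{\can}$ with $\fop\cap B'$ of codimension one in $B'$, a short diagram chase using the exact sequence $0\to\Lambda_{B'}\to\Lambda\to\ZZ\to 0$ (with the second map $g_{\trop,*}$) and the definition $\operatorname{ind}(\fop)=|\coker(g_{\trop,*}|_{\Lambda_\fop})|$ shows that the primitive conormal $n_\fop$ to $\fop$ restricts to $\operatorname{ind}(\fop)\cdot n'$ on $\Lambda_{B'}$, where $n'$ is primitive conormal to $\fop\cap B'$ in $B'$. Hence for every $m\in\Lambda_{B'}$,
\[
f_\fop^{\langle n_\fop,m\rangle}\;=\;\big(f_\fop^{\operatorname{ind}(\fop)}\big)^{\langle n',m\rangle},
\]
which is precisely the wall-crossing factor attached to $\fop\cap B'$ in $\scrS'_{\can}$. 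Moreover, by Proposition~\ref{prop:relative case over S}, $u_\tau\in\Lambda_{B'}$ for every wall type $\tau$. Taken together, these facts yield a broken line correspondence: any broken line $\beta$ in $(B,\scrS_{\can})$ with asymptotic monomial $p\in B'(\ZZ)\setminus\{0\}$ and endpoint $x\in B'$ stays inside $B'$, since its initial direction $-p$ lies in $\Lambda_{B'}$ and each wall-crossing adjusts the direction by vectors in $\Lambda_{B'}$; moreover $\beta$ only meets walls $\fop\in\scrS_{\can}$ for which $\fop\cap B'$ has codimension one in $B'$, since a wall $\fop=h(\tau_{\out})$, being a cone through the origin, cannot lie entirely inside $B'=g_{\trop}^{-1}(1)$. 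The attached monomials agree in both settings, so we obtain the identification $\vartheta^{B'}_p(x)=\vartheta^B_p(x)$ as elements of the subring of $R_\sigma$ generated by monomials with exponents in $\Lambda_{B'}$.

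With this identification, the patching statements (1)--(3) of Theorem~\ref{thm:patching} for $\scrS_{\can}$ transfer to analogous patching statements for $\scrS'_{\can}$, using for the slab case that the kink $\kappa_{\rho\cap B'}$ of $\varphi|_{B'}$ agrees with $\kappa_\rho$ (Remark~\ref{rem:phi restrict}). Consistency of $\scrS'_{\can}$ in codimensions zero, one, and two then follows by the same generation-and-reduction arguments used in Theorem~\ref{thm:consistency}, now applied to $\vartheta^{B'}_p$; codimension-two joints are again handled by translating endpoints into a star neighborhood using integral vectors tangent to the joint.

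The main obstacle lies in the slab analysis: the rings $R_\fob$ and $R'_{\fob\cap B'}$ are cut out by different relations $Z_+Z_-=f_\fob t^{\kappa_\rho}$ and $Z'_+Z'_-=f_\fob^{\operatorname{ind}(\fob)}t^{\kappa_\rho}$ respectively, and one must verify that the slab lift $\vartheta^B_p(\fob)\in R_\fob$ of Theorem~\ref{thm:patching}(3) corresponds, via the appropriate rescaling of the $Z_\pm$ variables by the index $\operatorname{ind}(\fob)$, to a well-defined element of $R'_{\fob\cap B'}$ whose chamber localizations match $\vartheta^{B'}_p$ on both adjacent chambers of $\scrP'$.
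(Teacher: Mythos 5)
Your argument takes a genuinely different route from the paper's. The paper's proof is essentially a direct restriction argument: it observes that each of the rings $R_{\sigma\cap B'}$, $R'_{\fob\cap B'}$ used to define consistency of $\scrS'_{\can}$ is a subring of the corresponding ring for $\scrS_{\can}$, and that the wall-crossing automorphisms $\theta_{\fop}$ of $R_{\sigma}$ restrict to the automorphisms $\theta'_{\fop\cap B'}$ of $R_{\sigma\cap B'}$ (using precisely the normal-vector index computation you carried out); the consistency relations for $\scrS'_{\can}$ are then instances of the relations for $\scrS_{\can}$ in a subring. No broken lines are needed. Your route instead establishes a correspondence between broken lines for $\scrS'_{\can}$ on $B'$ and broken lines for $\scrS_{\can}$ on $B$ with asymptotic monomial and endpoint in $B'$, then transfers the patching statements of Theorem~\ref{thm:patching} and re-runs the generation arguments from Theorem~\ref{thm:consistency}. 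This works, and the key lemma you use---that $n_{\fop}|_{\Lambda_{B'}}=\operatorname{ind}(\fop)\cdot n'$ and that $u_\tau\in\Lambda_{B'}$---is also the technical heart of the paper's argument. But the broken line comparison is more machinery than necessary; the paper's ring-restriction argument is shorter.

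One remark on the ``main obstacle'' you flag but do not resolve. Under the standing hypothesis of Proposition~\ref{prop:relative B prime} (all good divisors in $g^{-1}(0)$ have multiplicity one), every $\rho\in\P^{[n-1]}$ is generated by rays $D_i^*$ with $g_{\trop}(D_i^*)\in\{0,1\}$, so $g_{\trop,*}:\Lambda_{\rho}\to\ZZ$ is already surjective whenever $\rho\not\subseteq g_{\trop}^{-1}(0)$. Hence $\operatorname{ind}(\fob)=1$ for every slab $\fob$, the slab function $f_{\fob}^{\operatorname{ind}(\fob)}=f_{\fob}$ is unchanged, a primitive $\xi$ transverse to $\rho$ can be chosen in $\Lambda_{B'}$, and $R'_{\fob\cap B'}$ is literally a subring of $R_{\fob}$ (with $Z'_\pm=Z_\pm$). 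The nontrivial index arises only for codimension-zero walls, and there your $n_{\fop}$ computation already handles it. So the slab concern dissolves, and with this observation your proposal becomes complete.
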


\begin{proof}
This follows from the definition of consistency and
the consistency of $\scrS_{\can}$. Indeed, checking consistency
of $\scrS_{\can}'$ in codimensions zero, one and two involves
checking certain properties in rings which are subrings of
the corresponding rings for $B$. For example, for consistency
in codimension zero, to verify \eqref{eq:codim zero consistent}
in the ring $R_{\sigma\cap B'}$ defined using the data
$(B',\scrP',\varphi|_{B'})$, it is enough to note that
$R_{\sigma\cap B'}$ is a subring of $R_{\sigma}$ and that
the ring automorphism $\vartheta_{\fop}$ of $R_{\sigma}$ restricts to
an automorphism of $R_{\sigma\cap B'}$ given by the wall
$(\fop\cap B',f_{\fop}^{\operatorname{ind}(\fop)})$. This is 
straightforward: see the discussion of \cite[\S4.2]{Theta} as
to why the power of $f_{\fop}$ is necessary. Indeed, 
\cite[\S4.2]{Theta} goes the opposite way, from a wall structure
for $B'$ to a wall structure for $\mathbf{C}B'=B$, see Remark~\ref{rem:cone}.
Thus there one must pass to a $\operatorname{ind}(\fop)^{th}$ root
rather than power. We leave the remaining
details to the reader.
\end{proof}


\section{Comparison with intrinsic mirror symmetry}
\label{sec:intrinsic mirror symmetry}

We continue with $g:X\rightarrow S$ in the absolute or relative cases satisfying
Assumptions \ref{ass:absolute} or \ref{ass:relative}. Having now constructed a
consistent wall structure $\scrS_{\can}$ on $B$, we observe that the data
$(B,\P)$ and $\scrS_{\can}$ are \emph{conical} in the sense of \cite[Def.\
3.20]{Theta}. Thus this data first specifies a scheme\footnote{This scheme is
written as $\foX^{\circ}$ in \cite{Theta}; we include the check here to indicate
it is the mirror to $(X,D)$.} $\check\foX^{\circ}$ flat over $\kk[Q]/I$, as
constructed in \cite[Prop.\ 2.4]{Theta} by gluing together copies of spectra of
rings $R_{\fou}$ and $R_{\fob}$, as well as some additional rings corresponding
to $n-1$ dimensional cells of $\scrP_{\scrS}$ contained in $\partial B$. We
refer the reader to the cited result for details. Most importantly for us,
\cite[Prop.\ 3.21]{Theta} tells us that $R:=\Gamma(\check\foX^{\circ},
\O_{\check\foX^{\circ}})$ is freely generated as a $\kk[Q]/I$-module by a basis
$\{\vartheta_p\,|\,p\in B(\ZZ)\}$. Further, $\check\foX:=\Spec R$ contains
$\check\foX^{\circ}$ as a dense open subscheme and is also flat over $\Spec
\kk[Q]/I$. The theta functions $\vartheta_p$ restrict on standard pieces $\Spec
R_{\fou}$ to the expressions $\vartheta_p(x)$ for $x\in \Int(\fou)$ defined in
Definition \ref{def:broken line theta function}.

In the relative case, $R$ is naturally a graded ring via
$\deg(\vartheta_p)=g_{\trop}(p)\in \NN$. Thus to obtain the mirror
in the relative case, one considers instead the flat family 
$\Proj R \rightarrow \Spec \kk[Q]/I$; see \cite[\S4]{Theta}
and \cite[\S1]{Assoc} for more details on this point of view.

\cite[Thm.\ 3.24]{Theta} gives a description of the structure constants
for $R$. Write, for $p_1,p_2\in B(\ZZ)\setminus\{0\}$,
\[
\vartheta_{p_1}\cdot \vartheta_{p_2}
=
\sum_{r\in B(\ZZ)} \alpha^{\trop}_{p_1p_2r}\cdot \vartheta_r
\]
with $\alpha^{\trop}_{p_1p_2r}\in \kk[Q]/I$. These structure constants,
written as $\alpha_r(p_1,p_2)$ in \cite{Theta}, 
are defined tropically as follows. Let $\fou$ be a
chamber of $\scrS_{\can}$ such that $r\in \fou$, and let $x\in \fou$
be general. Then
\begin{equation}
\label{def:alpha trop}
\alpha^{\trop}_{p_1p_2r}=\sum_{(\beta_1,\beta_2)} a_{\beta_1}a_{\beta_2}
\end{equation}
where the sum is over all pairs $(\beta_1,\beta_2)$ of broken lines
with asymptotic monomials $p_1, p_2$ respectively, with endpoint $x$,
and such that $m_{\beta_1}+m_{\beta_2}=r$, viewed as an equation in
$\Lambda_{\fou}$. We note that the restriction that $p_1,p_2\not=0$
is not important as $\vartheta_0$ was defined as the unit in the ring $R$.

On the other hand, \cite{Assoc} defines structure constants on
the free $\kk[Q]/I$-module with basis $\{\vartheta_p\,|\,p\in B(\ZZ)\}$
directly in terms of punctured Gromov-Witten invariants. Following
the notation of \cite{Assoc}, we have a potentially different
product rule
\[
\vartheta_{p_1}\cdot \vartheta_{p_2}
=
\sum_{r\in B(\ZZ)} \alpha^{\log}_{p_1p_2r}\cdot \vartheta_r.
\]
These structure constants are written simply as $\alpha_{p_1p_2r}$
in \cite{Assoc}. They are given by a formula
\begin{equation}
\label{eq:alpha log def}
\alpha^{\log}_{p_1p_2r}= \sum_{A} N^A_{p_1p_2r} t^A \in \kk[Q]/I,
\end{equation}
where the numbers $N^A_{p_1p_2r}$ are defined in 
\cite[Def.\ 3.21]{Assoc} as certain punctured invariants with contact orders $p_1,p_2$ and $-r$.

In this section, we show that in fact we obtain the same algebra,
i.e.,

\begin{theorem}
\label{thm:assoc comparison}
For all $p_1,p_2\in B(\ZZ)\setminus\{0\}$, $r\in B(\ZZ)$,
\[
\alpha^{\trop}_{p_1p_2r}=\alpha^{\log}_{p_1p_2r}.
\]
\end{theorem}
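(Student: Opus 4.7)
The plan is to compare the two sides by expressing each as a sum indexed by pairs of decorated broken line types and invoking the gluing formula of Wu at a zero-dimensional stratum.

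First I would convert $\alpha^{\trop}_{p_1p_2r}$ into a punctured-invariant expression using the results of \S\ref{sec:correspondence}. By Proposition~\ref{prop:one to one} and Theorem~\ref{thm:main correspondence theorem}, the sum \eqref{def:alpha trop} equals
\[
\alpha^{\trop}_{p_1p_2r}=\sum_{(\btau_1,\btau_2)} k_{\tau_1}k_{\tau_2}\,N_{\btau_1}N_{\btau_2}\,t^{A_1+A_2},
\]
where the sum runs over ordered pairs of (possibly trivial) admissible decorated broken line types with $p_{\tau_i}=p_i$, with $x\in\Int(h_i(\tau_{i,\out}))$ and with $-u_{\tau_1}-u_{\tau_2}=r$ in $\Lambda_{\fou}$; the total curve class is $A_1+A_2\in Q\setminus I$.

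Next I would decompose $\alpha^{\log}_{p_1p_2r}$ according to tropical type. For each $A$, the invariant $N^A_{p_1p_2r}$ is, following \cite[Def.~3.21]{Assoc}, a degree on a moduli space of genus zero punctured maps with three special legs of contact orders $p_1,p_2,-r$, the third being pinned by a point constraint. Stratifying by the global type $\tau$ of the tropicalization and applying the argument of Construction~\ref{const:decorated wall structure} to pass to decorated global types $\btau$, I write
\[
N^A_{p_1p_2r}=\sum_{\btau}\frac{\deg\varepsilon^![\foM^{\ev}_{\tau}(\shX,\btau)]}{|\Aut(\btau)|}.
\]
The central step is a tropical dimension count analogous to Lemma~\ref{lem:key tropical lemma}: for any $\btau$ contributing to this sum, the vertex $v_0$ adjacent to the bounded leg (the one of contact order $-r$) has $h(v_0)$ equal to the image of the point constraint, and its removal splits $G$ into exactly two connected components $G_1, G_2$, each of which, after promoting the cut edge to a leg, realises an admissible decorated broken line type $\btau_i$ satisfying $p_{\tau_i}=p_i$ and $h_i(\tau_{i,\out})\ni r$.

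I would then invoke Yixian Wu's numerical gluing formula (Theorem~\ref{Thm: gluing theorem} of the appendix) applied to the splitting of $\btau$ at $v_0$. Because the gluing stratum at $v_0$ is zero-dimensional (the point constraint) and the tropical matching is transverse once $v_0$ is fixed, the gluing formula specialises to a product formula
\[
\frac{\deg[\scrM(X,\btau)]^{\virt}}{|\Aut(\btau)|}
=m(\btau)\cdot N_{\btau_1}\,N_{\btau_2},
\]
in which $m(\btau)$ is a lattice index controlling the matching at $v_0$. A computation parallel to Lemma~\ref{Lem: multiplicity computation} identifies $m(\btau)$ with $k_{\tau_1}k_{\tau_2}$ divided by any symmetry coming from the exchange of $G_1$ and $G_2$. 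Summing first over global types $\btau$ with a fixed induced pair $(\btau_1,\btau_2)$, then over $(\btau_1,\btau_2)$ and over $A$, yields exactly the expression from the first step.

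The main obstacle is the bookkeeping of combinatorial and lattice factors: matching $|\Aut(\btau)|$ against $|\Aut(\btau_1)||\Aut(\btau_2)|$ (with care when $\btau_1\cong\btau_2$), verifying that the lattice index $m(\btau)$ coming from Wu's formula equals $k_{\tau_1}k_{\tau_2}$, and treating the degenerate cases where one or both $\btau_i$ are trivial broken line types and the case $r=0$ (where the image of $v_0$ lies in a positive-dimensional stratum, necessitating the codimension-one analysis analogous to Lemma~\ref{Lem: vertex contribution}). Once these ingredients are in place, the comparison of the two sums is termwise and completes the identification $\alpha^{\trop}_{p_1p_2r}=\alpha^{\log}_{p_1p_2r}$.
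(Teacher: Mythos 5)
Your high-level plan matches the paper's own: express $\alpha^{\trop}_{p_1p_2r}$ via Theorem~\ref{thm:main correspondence theorem} as a sum over pairs of decorated broken line types, stratify $\alpha^{\log}_{p_1p_2r}$ by tropical type into a sum over (what the paper calls) decorated product types, show each product type splits at $v_{\out}$ into two broken line types (the paper's Lemma~\ref{lem:splitting product type}), and finish with Wu's gluing formula at $v_{\out}$ (the paper's Lemma~\ref{lem:product gluing}).

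However, your Step 2 is where the real work lies, and the formula you write there is not correct as stated: $N^A_{p_1p_2r}$ does not decompose as $\sum_{\btau}\deg\varepsilon^![\foM^{\ev}_{\tau}(\shX,\btau)]/|\Aut(\btau)|$. The invariant $N^A_{p_1p_2r}$ from \cite{Assoc} is a virtual degree on a moduli space with a point constraint at the output leg, realized as a fibre product with a log point $W$ over the evaluation stack $\scrP(X,r)$. Passing from this to a sum over product types requires (i) checking via the transversality machinery of \cite{Assoc} that the relevant types contributing are exactly the product types; (ii) computing, for each such type, the \emph{multiplicity} $\mu_\tau$ of the corresponding component of $\foM^{\ev}(\shX,\beta)_W$ (this is nontrivial because fs fibre products change the scheme structure, see Step~3 of the paper's Theorem~\ref{thm:alpha log description}); and (iii) a second lattice index coming from the degree of the projection $k_{\red}:\foM^{\ev}(\shX,\tau)_{W,\red}\to\foM^{\ev}(\shX,\tau)$ (Step~4). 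Only after combining these does the factor $k_\tau$ materialize, yielding $\alpha^{\log}_{p_1p_2r}=\sum_{\btau}k_\tau N_\btau\,t^A$. Your proposal drops $k_\tau$ in Step 2 and then compensates by asserting $m(\btau)=k_{\tau_1}k_{\tau_2}$ in the gluing step; the paper in fact computes $m(\btau)=k_{\tau_1}k_{\tau_2}/k_\tau$ in Lemma~\ref{lem:product gluing}, and the $k_\tau$ in the numerator cancels the one coming from Theorem~\ref{thm:alpha log description}. Nothing forces your version of the accounting to close up without the $k_\tau$ from Theorem~\ref{thm:alpha log description}.

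Two smaller points. Your parenthetical on the $r=0$ case is misleading: for a product type, $\bsigma(v_{\out})$ is always a maximal cone because Definition~\ref{def:product type}(3) requires $\dim h(\tau_{v_{\out}})=n$, so the gluing at $v_{\out}$ always happens at a zero-dimensional stratum and the codimension-one analysis of Lemma~\ref{Lem: vertex contribution} is not needed; the subtlety of $r=0$ is entirely absorbed in the choice of $W$ in Theorem~\ref{thm:alpha log description}, Step~1, not in the gluing. Also, the symmetry factor when $\btau_1\cong\btau_2$ is accounted for through $\Aut(\btau)$ versus $\Aut(\btau_1)\times\Aut(\btau_2)$, not through $m(\btau)$; in the product situation these automorphism groups actually agree because the two halves are attached to a marked trivalent output vertex, so no extra factor arises.
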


Before proving this theorem, let us introduce some additional terminology.

\begin{definition}
\label{def:product type}
A \emph{product type} with inputs $p_1,p_2\in B(\ZZ)$ and output $r\in B(\ZZ)$
is a type $\tau=(G,\bsigma,{\mathbf u})$ of tropical map to $\Sigma(X)$ 
such that:
\begin{enumerate}
\item
$G$ is a genus zero graph with $L(G)=\{L_1,L_2,L_{\out}\}$ with
$p_i\in\bsigma(L_i)$, $\mathbf{u}(L_i)=p_i$, $\bsigma(L_{\out})\in\P$, 
$r\in \bsigma(L_{\out})$ and 
${\bf u}(L_{\out})=-r$.
\item $\tau$ is realizable and balanced.
\item Let $h:\Gamma(G,\ell)\rightarrow \Sigma(X)$ be the corresponding universal
family of tropical maps, and let
$\tau_{v_{\out}}\in \Gamma(G,\ell)$ be the cone corresponding to the vertex
$v_{\out}$ of $G$ adjacent to $L_{\out}$. Then $\dim\tau= \dim
h(\tau_{v_{\out}})=n$. 
\end{enumerate}
A \emph{decorated product type} is a decorated type 
$\btau=(\tau,\mathbf{A})$ with $\tau$ a product type.
\end{definition}

Identically to Lemmas \ref{lem:wall virtual dim} and 
\ref{lem:virt dim broken lines}, we have

\begin{lemma}
Let $\btau$ be a decorated product type. Then $\scrM(X,\btau)$ is proper
over $\Spec\kk$ and carries a virtual fundamental class of dimension zero.
\end{lemma}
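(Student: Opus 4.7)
The plan is to follow the template of Lemmas \ref{lem:wall virtual dim} and \ref{lem:virt dim broken lines} with essentially cosmetic changes. Let $\beta$ be the class of punctured map determined by the contact orders $p_1,p_2,-r$ and the total curve class $A = \sum_{v\in V(G)} \mathbf{A}(v)$; then $\scrM(X,\btau)$ is a closed substack of $\scrM(X,\beta)$. In the absolute case, properness is then immediate from \cite[Cor.~3.17]{ACGSII}.

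In the relative case, $\tau$ is not in general defined over $S$, so I would adapt the argument of Lemma \ref{lem:virt dim broken lines}. By balancedness of $\tau$ (condition (2) of Definition \ref{def:product type}), the composition $\Sigma(g)\circ h_s$ is a balanced tropical map to $\RR_{\ge 0}$ for each $s\in\Int(\tau)$, with leg orders $g_{\trop}(p_1),g_{\trop}(p_2)\ge 0$ and $-g_{\trop}(r)\le 0$, confining the value $g_{\trop}\circ h_s(v)$ at every vertex $v$ to a bounded interval. Condition (3) forces $\dim h(\tau_{v_{\out}}) = n$, so $h(\tau_{v_{\out}})$ is not contained in the $(n-1)$-dimensional set $\partial B$; Proposition \ref{prop:gtrop affine submersion} then makes $\Sigma(g)$ surjective on $h(\tau_{v_{\out}})$, and propagating through the graph via the bounded-interval argument as in the earlier lemma gives surjectivity of $\Sigma(g)$ on $h(\tau_v)$ for every $v$. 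Consequently every punctured map in $\scrM(X,\btau)$ factors set-theoretically, and by reducedness log-scheme-theoretically, through $X\times_S\{0\}$, placing $\scrM(X,\btau)$ as a closed substack of the proper space $\scrM(X\times_S 0,\beta)$.

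For the virtual dimension, \cite[Prop.~3.28]{ACGSII} gives $\dim \foM(\shX,\btau) = -3 + |L(G)| - \dim\tau = -3+3-n = -n$, while Riemann--Roch supplies virtual relative dimension $n + A\cdot c_1(\Theta_X)$ for $\varepsilon:\scrM(X,\btau)\to\foM(\shX,\btau)$. With $c_1(\Theta_X)\equiv_{\QQ} -\sum_i a_i D_i$, it suffices to show that $A\cdot D_i = 0$ whenever $a_i>0$. By \cite[Cor.~1.14]{Assoc}, $A\cdot D_i$ equals the sum over legs of the $D_i^*$-components of their contact orders; since $p_1,p_2,r \in B(\ZZ)$ lie in the cone generated by the good dual generators $\{D_j^*:a_j=0\}$, these components vanish for bad $i$. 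Hence $A\cdot c_1(\Theta_X) = 0$ and the virtual dimension is $-n+n=0$.

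The only real point of friction is the relative-case properness; once one verifies that condition (3), formulated here in terms of the vertex $v_{\out}$ rather than the leg, still gives $h(\tau_{v_{\out}})\not\subseteq\partial B$ (immediate by the dimension count $n > n-1$), the argument of Lemma \ref{lem:virt dim broken lines} transfers verbatim, and no genuinely new tropical input is required.
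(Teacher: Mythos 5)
Your proof is correct and follows exactly the route the paper intends (the paper gives no explicit proof for this lemma, simply asserting it is "identical" to Lemmas~\ref{lem:wall virtual dim} and~\ref{lem:virt dim broken lines}). The only substantive adaptation — propagating surjectivity of $\Sigma(g)$ from $v_{\out}$ to all other vertices using balancing of the three-legged tree with $\Sigma(g)_*(p_1),\Sigma(g)_*(p_2)\ge 0$ and $\Sigma(g)_*(-r)\le 0$ — is handled correctly, and the dimension count $-3+3-n + (n + A\cdot c_1(\Theta_X)) = 0$ via \cite[Cor.~1.14]{Assoc} mirrors the wall-type computation.
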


We now define
\[
N_{\btau}:= {\deg [\scrM(X,\btau)]^{\virt}\over |\Aut(\btau)|}
\]
as usual. We also have a map $h_*:\Lambda_{\tau_{v_{\out}}}
\rightarrow \Lambda_{\bsigma(v_{\out})}$, necessarily of finite index,
and define
\[
k_{\tau}:=|\coker h_*|=
|\Lambda_{\bsigma(v_{\out})}/h_*(\Lambda_{\tau_{v_{\out}}})|.
\]

We split the proof of Theorem \ref{thm:assoc comparison} into
several steps.

\begin{theorem}
\label{thm:alpha log description}
Fix $p_1,p_2,r\in B(\ZZ)$, and let $\fou$ be a chamber of
$\scrP_{\scrB}$ containing $r$. Then there exists a top-dimensional
subcone $\fou'\subseteq\fou$ containing $r$ such that
\[
\alpha^{\log}_{p_1p_2r} = \sum_{\btau=(\tau,{\bf A})} k_{\tau} N_{\btau} t^A
\in\kk[Q]/I,
\]
where the sum is over all isomorphism classes of
decorated product types with inputs $p_1,p_2$,
output $r$, and $\fou'\subseteq h(\tau_{v_{\out}})$. Here $A$ is the
total class of ${\bf A}$.
\end{theorem}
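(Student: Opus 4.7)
The plan is to decompose the invariants $N^A_{p_1p_2r}$ defining $\alpha^{\log}_{p_1p_2r}$ according to tropical types, and to match the nonzero contributions with decorated product types. Recall that in \cite[Def.~3.21]{Assoc}, $N^A_{p_1p_2r}$ is defined as a virtual degree on the moduli space of basic punctured maps with contact orders $p_1,p_2,-r$ and curve class $A$, cut down by an evaluation-type constraint at the output puncture $L_{\out}$ whose codimension matches the virtual dimension. First I would apply the tropical stratification of \cite[Prop.~5.19, Thm.~5.23]{ACGSII} to write
\[
N^A_{p_1p_2r} \;=\; \sum_{\btau} \frac{1}{|\Aut(\btau)|}\,
\deg\!\big(c_{\btau}\cap [\scrM(X,\btau)]^{\virt}\big),
\]
where the sum runs over decorated global types $\btau=(\tau,\mathbf{A})$ with the given contact orders and total curve class $A$, and $c_{\btau}$ denotes the restriction of the evaluation constraint cycle to the closed stratum indexed by $\btau$.

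Next, by a dimension count entirely parallel to Lemma~\ref{lem:key tropical lemma}, I would show that for $r$ chosen generically in a sufficiently small top-dimensional subcone $\fou'\subseteq \fou$, only decorated product types contribute nontrivially. The key observation is that the codimension of the evaluation constraint at $L_{\out}$ forces the tropical image $h(\tau_{v_{\out}})$ at the vertex $v_{\out}$ adjacent to $L_{\out}$ to be of dimension $n$, which is precisely condition (3) of Definition~\ref{def:product type}. The subcone $\fou'$ is chosen so that every contributing $\btau$ satisfies $\fou'\subseteq h(\tau_{v_{\out}})$; this is possible because only finitely many $\btau$ can contribute, and the union of their images is a finite union of top-dimensional polyhedral subsets containing~$r$.

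Finally, for each decorated product type $\btau$ I expect the contribution to equal $k_{\tau}N_{\btau}t^A$, where $k_{\tau}=|\Lambda_{\bsigma(v_{\out})}/h_*(\Lambda_{\tau_{v_{\out}}})|$ is the tropical evaluation multiplicity appearing in \eqref{eq:ktau def} and \eqref{eq:ktau broken line def}. Identifying this factor amounts to computing the degree of the log-evaluation map at $L_{\out}$ over the image stratum, in the same spirit as the evaluation-map analysis in the proof of Theorem~\ref{thm:patching}, Step~4, where a local piecewise linear function on $B$ governs the order of vanishing along strata indexed by contracting types. The hard part will be to match precisely the evaluation constraint used in \cite{Assoc} with the tropical transversality condition $\fou'\subseteq h(\tau_{v_{\out}})$, and to verify that the resulting multiplicity on each contributing stratum is exactly $k_{\tau}$; once this is in place, summing $k_{\tau}N_{\btau}t^A$ over isomorphism classes of decorated product types with inputs $p_1,p_2$, output $r$ and $\fou'\subseteq h(\tau_{v_{\out}})$ yields the claimed equality.
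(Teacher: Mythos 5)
Your outline identifies the right shape of the argument — decompose the invariant tropically, show only product types contribute, and identify the multiplicity $k_\tau$ on each stratum — but there are genuine gaps at both ends of the chain, and the hard parts are deferred rather than addressed.

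First, the starting decomposition you write down is not established by the references you invoke. The number $N^A_{p_1p_2r}$ is \emph{not} a virtual degree on $\scrM(X,\beta)$, which has positive virtual dimension; it is defined in \cite{Assoc} via a virtual pullback against the auxiliary evaluation stack $\scrP(X,r)$. Theorem~5.23 of \cite{ACGSII} is the decomposition formula for a degeneration over a curve and does not apply; Proposition~5.19 gives the Cartesian diagram for a single contraction $\tau\to\tau'$, not a partition-of-unity-style decomposition of a constrained invariant, and certainly not with the mysterious "evaluation constraint cycle" $c_\btau$ already factored through each stratum. The paper instead introduces an explicit idealized log point $W$ with ghost sheaf $R=\NN^n$, together with a transverse map $W\to\scrP(X,r)$ determined by a choice of monoid homomorphism $\psi:P_\fou\to R$, so that $N^A_{p_1p_2r}=N^{A,W}_{p_1p_2r}=\deg[\scrM(X,\beta)_W]^\virt$ by \cite[Thm.~7.10]{Assoc}. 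The subcone $\fou'$ is not chosen after the fact by shrinking around $r$; it is $\psi^t(R^\vee_\RR)$, and both the containment condition $\fou'\subseteq h(\tau_{v_\out})$ and the dimension statement $\dim\tau = n$ fall out of the computation $Q^\vee_{\bar\eta,\RR}=\tau\times_{\sigma_\fou}R^\vee_\RR$ and the integrality and equal log/fibre dimension of $\foM^{\ev}(\shX,\beta)_W\to W$. This is not a transversality argument in the style of Lemma~\ref{lem:key tropical lemma}; it is a dimension count on the fs fibre product of basic cones.

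Second, and more seriously, you defer exactly the step that constitutes the theorem when you write that "the hard part will be to match precisely the evaluation constraint... and to verify that the resulting multiplicity on each contributing stratum is exactly $k_\tau$." In the paper's argument the factor $k_\tau$ does not come from a single degree computation; it splits as a product of two lattice indices. One, $\mu_\tau = |\coker(R^\gp\to Q^\gp_{\bar\eta})|$, is the scheme-theoretic multiplicity of a component of $\foM^{\ev}(\shX,\beta)_W$, computed using the idealized log smoothness of the projection to $W$ and the local toric model $\shA_{Q_{\bar\eta},K}\to\shA_{R,R\setminus\{0\}}$. The other, $|\coker(P_\fou^\gp\to Q_\tau^\gp\oplus R^\gp)_\tors|$, is the degree of the reduced finite map $k_{\red}:\foM^{\ev}(\shX,\tau)_{W,\red}\to\foM^{\ev}(\shX,\tau)$, read off from the fs push-out of monoids. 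Equation~\eqref{eq:lattice index} is the identity assembling these into $k_\tau = |\coker(Q_\tau^*\to P_\fou^*)|$. Without this mechanism, or an alternative one, the appearance of $k_\tau$ is unexplained, and the proposal amounts to a restatement of the goal together with a plausible but uncited dimension heuristic.
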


\begin{proof}
Throughout the proof, we assume familiarity with the notation of
\cite{Assoc}, and do not review it, giving only references where
appropriate. 

{\bf Step 1.} \emph{Alternative description of
$N^A_{p_1p_2r}$.} Fix a curve class $A\in Q\setminus I$, 
and let $\beta$ be the type of punctured map of curve class $A$,
with three punctured points $x_1,x_2,x_{\out}$ with contact orders given by
$p_1,p_2$ and $-r$ respectively. Thus we obtain moduli spaces of punctured
maps $\scrM(X,\beta)$ and $\foM^{\ev}(\shX,\beta)$, where
evaluation is at $x_{\out}$ only.

Let $\sigma_{\fou}\in\P$ be the maximal cell containing $\fou$. Let $P_{\fou}$
be the stalk of $\overline{\shM}_X$ at the zero-dimensional stratum
$X_{\sigma_{\fou}}$. Let $R:=\NN^n$ be generated by $e_1,\ldots,e_n$, with dual
monoid generated by $e_1^*,\ldots,e_n^*$. Choose a monoid homomorphism
$\psi:P_{\fou}\rightarrow R$ in such a way that the transpose
$\psi^t:R^\vee\rightarrow P_{\fou}^{\vee}=\sigma_{\fou}\cap \Lambda_{\fou}$ is
(1) injective; (2) has image contained in $\fou$. Further, if $r\not=0$, we also
require (3) $\psi^t(e_1^*)=r$.

If $r\not=0$, define $W$ as the log stack quotient of 
$\Spec (R\rightarrow \kk)$ by the $\GG_m$ action which acts
on the torsor associated to $m \in R$ with weight $\langle e_1^*,m\rangle$.
Put another way, $\ul{W}=B\GG_m$, $\overline\shM_W=R$, and the
torsor associated to $m\in R$ is $\shU^{\otimes \langle e_1^*,m\rangle}$
where $\shU$ is the universal torsor on $B\GG_m$. If, on the other hand,
$r=0$, we set 
\[
W:=\Spec (R\rightarrow\kk)\times B\GG_m^{\dagger},
\]
where $B\GG_m^{\dagger}$ denotes the log structure on $B\GG_m$
induced by the inclusion as a divisor $B\GG_m \subseteq [\AA^1/\GG_m]$.

We may then define a morphism $g:W\rightarrow \scrP(X,r)$, where the latter
stack is as defined in \cite[\S3.1]{Assoc}. Using the notation of \cite{Assoc},
write $Z_r$ for the stratum $X_{\sigma}$ where $\sigma\in\P$ is the minimal cone
containing $r$. Then $\scrP(X,r)$ may be described as $[Z_r/\GG_m]$, where
$\GG_m$ acts trivially on $\ul{Z}_r$ and acts on the log structure on $Z_r$ as
described in \cite[Rem.~3.1]{Assoc}. Note that since $r\in \sigma_{\fou}$,
$X_{\sigma_{\fou}}\subseteq Z_r$. There then exists a
morphism $g':\Spec(R\rightarrow\kk) \rightarrow X_{\sigma_{\fou}}$
which is an isomorphism on underlying schemes and such that
$\overline{(g')}^{\flat}=\psi$. We also denote by $g'$ the composition with the
inclusion into $Z_r$. If $r\not=0$, then the action of $\GG_m$ on
$\Spec(R\rightarrow\kk)$ used to define $W$ is compatible with the action of
$\GG_m$ on $Z_r$ using the fact that $\psi^t(e_1^*)=r$ and the description of
\cite[Rem.~3.1]{Assoc}, and hence $g'$ descends to the desired map
$g:W\rightarrow \scrP(X,r)$. If, on the other hand, $r=0$, $\scrP(X,r)=X\times
B\GG_m$, and hence we obtain the desired morphism $g:W\rightarrow \scrP(X,r)$ as
the product of $g'$ with the morphism $B\GG_m^{\dagger}\rightarrow B\GG_m$ which
is an isomorphism on underlying stacks.

We have canonical morphisms $\ev_{\shX}:\foM^{\ev}(\shX,\beta)
\rightarrow \P(X,r)$ and $\ev_X:\scrM(X,\beta)\rightarrow\P(X,r)$,
see \cite[Def.\ 3.5]{Assoc}. We may then define
\[
\foM^{\ev}(\shX,\beta)_W= W \times_{\scrP(X,r)}^{\fs}\foM^{\ev}(\shX,\beta),
\quad
\scrM(X,\beta)_W= W \times_{\scrP(X,r)}^{\fs}\scrM(X,\beta).
\]
By choosing the
image of $\psi^t$ to be sufficiently small, we may assume that
$g$ is transverse to $\ev_{\shX}$ in the sense of \cite[Def.\ 2.6]{Assoc},
see \cite[Thm.\ 2.9]{Assoc} for the necessary tropical criterion.
By \cite[Def.\ 7.9]{Assoc}, we then obtain a number
\[
N^{A,W}_{p_1p_2r}=\deg [\scrM(X,\beta)_W]^{\virt}
\]
with the virtual fundamental class defined via an obstruction theory
for 
\[
\varepsilon_W:\scrM(X,\beta)_W \rightarrow \foM^{\ev}(X,\beta)_W
\]
pulled back from that for $\scrM(X,\beta)\rightarrow \foM^{\ev}(X,\beta)$. By 
\cite[Lem.\ 7.8]{Assoc}, $\foM^{\ev}(X,\beta)_W$
is pure-dimensional of dimension zero, so that the virtual pull-back
of its fundamental class gives the virtual fundamental class
of $\scrM(X,\beta)_W$.

The hypotheses of \cite[Thm.\ 7.10]{Assoc} now hold, and we
obtain $N^{A,W}_{p_1p_2r} = N^A_{p_1p_2r}$. Thus by
\eqref{eq:alpha log def}, to prove the theorem it is sufficient to take
$\fou':=\psi^t(R^{\vee}_{\RR})$ and show that 
\[
N^{A,W}_{p_1p_2r} = \sum_{\btau} k_{\tau}N_{\btau}
\]
where the sum is over all isomorphism classes of decorated product types 
with total class $A$ as in the statement of the theorem.

We now make a slight change in the definition of $W$, $\foM^{\ev}(\shX,\beta)_W$
and $\scrM(X,\beta)_W$ in the case that $r=0$, to make the remainder
of the proof more uniform. Note that in this case,
\begin{align*}
\foM^{\ev}(\shX,\beta)_W={} & (\Spec(R\rightarrow\kk)\times B\GG_m^{\dagger})
\times^{\fs}_{X\times B\GG_m} \foM^{\ev}(\shX,\beta)\\
= {} & B\GG_m^{\dagger}\times_{B\GG_m} ((\Spec(R\rightarrow \kk)\times B\GG_m)
\times^{\fs}_{\scrP(X,r)} \foM^{\ev}(\shX,\beta)).
\end{align*}
The stack $\foM^{\ev}(\shX,\beta)_W$ then has the same underlying stack
as 
\[
(\Spec(R\rightarrow\kk)\times B\GG_m)\times_{\scrP(X,r)}^{\fs} 
\foM^{\ev}(\shX,\beta),
\]
but with the ghost sheaf on the former being a sum of the ghost sheaf of the
latter and the constant sheaf $\ul{\NN}$. As we are only concerned about virtual
fundamental classes, which are defined using the underlying morphism of stacks
$\ul{\scrM(X,\beta)}_W\rightarrow \ul{\foM^{\ev}(\shX,\beta)}_W$, we may thus
replace $W$ with $\Spec(R\rightarrow \kk)\times B\GG_m$, which also changes
$\foM^{\ev}(\shX,\beta)_W$ and $\scrM(X,\beta)_W$. Note that now the definition
of $W$ is uniform across the two cases, and whether or not $r=0$, the ghost
sheaf of $W$ is $R$.
\medskip

{\bf Step 2.} \emph{The structure of $\foM^{\ev}(X,\beta)_W$.}
We recall from \cite[Lem.\ 7.8]{Assoc} and its proof that the projection 
$\foM^{\ev}(\shX,\beta)_W\rightarrow W$ is integral, log smooth
and of fibre dimension and
log fibre dimension one. For the latter notion, see the review of
\cite[Def.\ A.5]{Assoc}. 

Let $\bar\eta$ be a generic point of an irreducible component 
$\foM_{\bar\eta}$ of 
$\foM^{\ev}(\shX,\beta)_W$. We view the former as an integral closed
substack of the latter, so that
\[
[\foM^{\ev}(\shX,\beta)_W]=\sum_{\bar\eta} \mu_{\bar\eta} [\foM_{\bar\eta}]
\]
in the Chow group of $\foM^{\ev}(\shX,\beta)_W$. Here the sum is over
all generic points $\bar\eta$, and $\mu_{\bar\eta}$ is the multiplicity
of $\foM_{\bar\eta}$ in $\foM^{\ev}(\shX,\beta)_W$.

Let $Q_{\bar\eta}$ be the stalk of the
ghost sheaf of $\foM^{\ev}(\shX,\beta)_W$ at $\bar\eta$. 
We then have
\begin{equation}
\label{eq:Q rank}
\rk Q^{\gp}_{\bar\eta}=\rk \overline\shM_{W}^{\gp}=n;
\end{equation}
the first equality comes from \cite[(A.1)]{Assoc}
and the above-mentioned equality of log fibre dimension and fibre dimension,
while the second equality is from the definition of $W$, as $R=\NN^n$.
Let $\tau=(G,\bsigma,{\bf u})$ be the type of the punctured map corresponding 
to $\bar\eta$.

Note that $\varepsilon_W^![\foM_{\bar\eta}]$ is a zero-dimensional cycle
in the Chow group of $\scrM(X,\beta)_W$. Assume that
$\deg \varepsilon_W^![\foM_{\bar\eta}]\not=0$.
We claim that in this case, $\tau$ is a product type.
Definition~\ref{def:product type},(1) is immediate by the definition of 
$\beta$. For Condition~(2) of the definition, note that
$\tau$ is necessarily realizable as it is the type of a punctured map
to $\shX$. On the other hand, the assumption of non-zero virtual degree
implies that for some decoration $\btau$ of
$\tau$ with total curve class $A$, $\scrM(X,\btau)$ is non-empty.
In particular, one may find a punctured map $C^{\circ}\rightarrow X$
defined over a geometric point whose type is $\tau'$ with a contraction
$\tau'\rightarrow\tau$. Hence by Lemma \ref{lem:balancing}, $\tau'$
is a balanced type, and hence $\tau$ is also balanced. 

For Condition (3) of the definition of product type, let
$Q_{\tau}$ be the basic monoid associated to the type $\tau$, so that
$Q^{\vee}_{\tau,\RR}$ coincides with the cone $\tau$. Note that in
general $Q_{\bar\eta}\not=Q_{\tau}$, as the fibre
product construction of $\foM^{\ev}(\shX,\beta)_W$ changes the
log structure. We wish to show that $\dim\tau=n$.

 From \cite[Prop.\ 5.2]{ACGSI}, we have
$Q_{\bar\eta,\RR}^{\vee}=\tau\times_{\sigma_{\fou}} R^{\vee}_{\RR}$
where the map $R^{\vee}_{\RR}\rightarrow \sigma_{\fou}$ is just the
inclusion
$\psi^t$ and the map $\tau\rightarrow\sigma_{\fou}$, induced
by the morphism $\ev_{\shX}$, is given
by evaluation at the vertex $v_{\out}$ adjacent to $L_{\out}$, i.e.,
is $h|_{\tau_{v_{\out}}}$. See \cite[Lem.\ 3.23]{Assoc} for this latter
statement.

Note the images of the two projections 
$Q_{\bar\eta,\RR}^{\vee}\rightarrow \tau, R^{\vee}_{\RR}$ intersect
the interiors of $\tau$ and $R^{\vee}_{\RR}$ respectively.
This is because these morphisms of cones are dual to local morphisms
of monoids, being induced by the two projections of log stacks
$\fM^{\ev}(\shX,\beta)_W\rightarrow \fM^{\ev}(\shX,\beta), W$.
 From this we may calculate the dimension of $Q^{\vee}_{\bar\eta,\RR}$
by computing its tangent space at a point in the interior
of $Q^{\vee}_{\bar\eta,\RR}$ mapping to the interiors of $\tau$ and
$R^{\vee}_{\RR}$ via the two projections.
This tangent space is $Q^*_{\bar\eta,\RR}=\tau^{\gp}\times_{\sigma_{\fou}^{\gp}}
R_{\RR}^*=\tau^{\gp}$ as $R^*_{\RR}\rightarrow \sigma^{\gp}_{\fou}$ is
an isomorphism of real vector spaces. Thus 
$\dim\tau=\dim Q^{\vee}_{\bar\eta,\RR}=n$, the latter equality
by \eqref{eq:Q rank}. Hence $\dim\tau=\dim h(\tau_{v_{\out}})=n$, as
desired.

\medskip

{\bf Step 3.} \emph{The multiplicity of $\foM_{\bar\eta}$ in
$\foM^{\ev}(\shX,\beta)_W$.} We use the log smoothness of the projection
$\foM^{\ev}(\shX,\eta)_W\rightarrow W$ already mentioned in Step 2. Note that
$W$ carries a natural idealized log structure given by the ideal
$R\setminus\{0\} \subseteq R$. This ideal pulls back to give an idealized
structure on $\foM^{\ev}(\shX,\beta)_W$, and an idealized strict
morphism\footnote{We recall a morphism of idealized log schemes
$f:(X,\shM_X,\shK_X)\rightarrow (Y,\shM_Y,\shK_Y)$ is idealized strict
if $\shK_X$ agrees with the monoid ideal generated by $f^{\flat}(\shK_Y)$.} 
is idealized log smooth if and only if it is log smooth, see \cite[IV,Variant
3.1.22]{Ogus}. Thus $\foM^{\ev}(\shX,\beta)_W\rightarrow W$ is idealized log
smooth, and hence, smooth locally at $\bar\eta$, is given as
$\shA_{Q_{\bar\eta},K}\rightarrow \shA_{R,R\setminus\{0\}}$ by \cite[Prop.\
B.4]{ACGSII}, where $K\subseteq Q_{\bar\eta}$ is the monoid ideal generated by
the image of $R\setminus \{0\}$ in $Q_{\bar\eta}$ under the map
$R\rightarrow Q_{\bar\eta}$. 

The multiplicity $\mu_{\bar\eta}$ of the component $\foM_{\bar\eta}$ may now be
calculated as the multiplicity of $\shA_{Q_{\bar\eta},K}$,
or equivalently, $\dim_{\kk}\kk[Q_{\bar\eta}]/K$. Note that
$Q^{\vee}_{\bar\eta,\RR}\rightarrow R^{\vee}_{\RR}$
is surjective, as follows from the integrality of
$\foM^{\ev}(\shX,\beta)_W\rightarrow W$ and \cite[Prop.~2.4]{Assoc}.
Further, 
$Q^{\vee}_{\bar\eta,\RR}$ and $R^{\vee}_{\RR}$
are of the same dimension. It thus follows that
$R_{\RR}\rightarrow Q_{\bar\eta,\RR}$ is an isomorphism of real cones.
Identifying $Q^{\gp}_{\bar\eta}$ with a super-lattice of 
$R^{\gp}=\ZZ^n$, the multiplicity is then the number of
points of $Q^{\gp}_{\bar\eta}$ contained in $[0,1)^n\subseteq R_{\RR}$.
This may be expressed as
\begin{equation}
\label{eq:multiplicity}
\mu_{\tau}:=\mu_{\bar\eta}=|\coker(R^{\gp}\rightarrow Q^{\gp}_{\bar\eta})|,
\end{equation}
noting this expression only depends on the type $\tau$.

\medskip

{\bf Step 4.} \emph{Comparison with the $N_{\btau}$.}
Consider the commutative diagram
\[
\xymatrix@C=30pt
{
\foM^{\ev}(\shX,\tau)_W\ar[r]^{j'_{\tau}} \ar[d]_k &
\foM^{\ev}_{\tau}(\shX,\beta)_W\ar[d]\ar[r]^{\iota_{\tau}'}&
\foM^{\ev}(\shX,\beta)_W\ar[d]\\
\foM^{\ev}(\shX,\tau)\ar[r]_{j_{\tau}} & 
\foM^{\ev}_{\tau}(\shX,\beta)\ar[r]_{\iota_{\tau}}&
\foM^{\ev}(\shX,\beta)
}
\]
where 
\[
\foM^{\ev}(\shX,\tau)_W=W\times^{\fs}_{\scrP(X,r)} \foM^{\ev}(\shX,
\tau),
\quad
\foM^{\ev}_{\tau}(\shX,\beta)_W=W\times^{\fs}_{\scrP(X,r)} 
\foM^{\ev}_{\tau}(\shX, \beta).
\]
In this diagram, all squares are Cartesian in both the log and fs log
categories, as all horizontal morphisms are strict. The morphism $\iota_{\tau}$,
hence also $\iota'_{\tau}$, is a closed embedding,
while
$j_{\tau}$, hence also $j'_{\tau}$, is a finite morphism of degree
$|\Aut(\tau)|$. Further, as $W\rightarrow \scrP(X,r)$ is a finite morphism, so
is $k$.

For a stack $M$, denote by $M_{\red}$ the reduction. Then
\begin{equation}
\label{eq:M tau to M eta}
\iota'_{\tau,*}[\foM^{\ev}_{\tau}(\shX,\beta)_{W,\red}]=
\sum_{\bar\eta}[\foM_{\bar\eta}]
\end{equation}
where the sum is over all generic points $\bar\eta$ of
$\foM^{\ev}(\shX,\beta)_W$ whose type is $\tau$. Indeed, any
such generic point $\bar\eta$ corresponds to a punctured map
$C^{\circ}/\bar\eta\rightarrow\shX$ in $\foM^{\ev}(\shX,\beta)$
of type $\tau$, hence lies in $\foM_{\tau}^{\ev}(\shX,\beta)$.
Combining \eqref{eq:M tau to M eta} with $\deg j'_{\tau}=|\Aut(\tau)|$ gives
\begin{equation}
\label{eq:another equality of classes}
(\iota_{\tau}'\circ j'_{\tau})_* [\foM^{\ev}(\shX,\tau)_{W,\red}]
=|\Aut(\tau)|\sum_{\bar\eta}[\foM_{\bar\eta}].
\end{equation}

Let 
$k_{\red}:\foM^{\ev}(\shX,\tau)_{W,\red}\rightarrow \foM^{\ev}(\shX,\tau)$ 
be the morphism induced by $k$. Bearing in mind that
$\foM^{\ev}(\shX,\tau)$ is reduced,
we may now calculate the
degree of the finite morphism $k_{\red}$ as follows. We take a 
(strict) geometric generic point 
$\Spec \kappa \rightarrow \foM^{\ev}(\shX,\tau)$.
The desired degree is the length of the reduction of the fibre
\begin{align}
\label{eq:fibre one}
\begin{split}
\Spec\kappa \times_{\foM^{\ev}(\shX,\tau)} \foM^{\ev}(\shX,\tau)_W = {} &
\Spec\kappa \times_{\foM^{\ev}(\shX,\tau)} (\foM^{\ev}(\shX,\tau)
\times^{\fs}_{\scrP(X,r)} W)\\
= {} & \Spec(Q_{\tau}\rightarrow\kappa)\times^{\fs}_{\scrP(X,r)} W.
\end{split}
\end{align}

We may replace $\scrP(X,r)$ with the strict closed substack $Z$
with underlying closed substack $\ul{X}_{\sigma_{\fou}}\times B\GG_m$
as the morphism $\Spec\kappa \rightarrow \scrP(X,r)$ factors through
this closed substack. Further, the morphism
$\Spec\kappa \rightarrow Z$ factors through $X_{\sigma_{\fou}}$
as all line bundles on $\Spec\kappa$ are trivial. Thus we may rewrite
\eqref{eq:fibre one} as
\begin{equation}
\label{eq:fibre two}
\Spec(Q_{\tau}\rightarrow\kappa)\times^{\fs}_{X_{\sigma_{\fou}}}
\Spec(R\rightarrow \kk)
=
(A_{Q_{\tau},Q_{\tau}\setminus \{0\}} \times^{\fs}_{A_{P_{\fou},
P_{\fou}\setminus \{0\}}} A_{R, R\setminus \{0\}})\times_{\Spec\kk}
\Spec\kappa,
\end{equation}
again following the notation of \cite[\S B]{ACGSII}.
By \cite[Prop.~A.4,(4)]{Assoc}, \eqref{eq:fibre two} may then be expressed as
\[
A_{Q_{\tau}\oplus^{\fs}_{P_{\fou}}R,J}\times_{\Spec\kk}\Spec\kappa
\]
where $J$ is the ideal generated by the image of
$Q_{\tau}\setminus \{0\}$ and $R\setminus \{0\}$.
We pass to the reduction
by replacing $J$ with its radical, which, by the explicit
description of the fs push-out of \cite[Prop.~A.4,(3)]{Assoc},
is the complement of the group of
invertible elements of $Q_{\tau}\oplus_{P_{\fou}}^{\fs} R$.
Further, this group of invertible elements
is the torsion part of the cokernel of
the map $(\varphi_1,-\varphi_2):
P_{\fou}^{\gp}\rightarrow Q_{\tau}^{\gp}\oplus R^{\gp}$,
where $\varphi_1:P_{\fou}^{\gp}\rightarrow Q_{\tau}^{\gp}$
and $\varphi_2:P_{\fou}^{\gp}\rightarrow R^{\gp}$ are induced
by $\foM^{\ev}(\shX,\tau)\rightarrow \scrP(X,r)$ and
$W\rightarrow \scrP(X,r)$ respectively.
Thus we see that the reduced fibre is
\[
\Spec
\kappa[\coker(P_{\fou}^{\gp}\rightarrow Q_{\tau}^{\gp}\oplus R^{\gp})_{\tors}],
\]
whose length of course is given by the order of this group of invertible
elements.

In conclusion, we see that
\begin{equation}
\label{eq:k pushforward}
k_{\red,*}[\foM^{\ev}(\shX,\tau)_{W,\red}] =
|\coker(P_{\fou}^{\gp}\rightarrow Q_{\tau}^{\gp}\oplus R^{\gp})_{\tors}|
[\foM^{\ev}(\shX,\tau)].
\end{equation}

Bearing in mind that
$Q_{\bar\eta}=(Q_{\tau}\oplus^{\fs}_{P_{\fou}}R)/\tors$, it
again follows from the explict description of the fs pushout in
\cite[Prop.~A.4,(3)]{Assoc} that
\[
Q_{\bar\eta}^{\gp}=\coker 
(P_{\fou}^{\gp}\rightarrow Q_{\tau}^{\gp}\oplus R^{\gp})/\tors,
\]
and then a simple diagram chase gives
\begin{align}
\label{eq:lattice index}
\begin{split}
|\coker(P_{\fou}^{\gp}\rightarrow Q_{\tau}^{\gp}\oplus R^{\gp})_{\tors}|
|\coker(R^{\gp}\rightarrow Q_{\bar\eta}^{\gp})|
= {} & |\coker(P_{\fou}^{\gp}\rightarrow Q_{\tau}^{\gp})|\\
= {} & |\coker(Q_{\tau}^*\rightarrow P_{\fou}^*)|
=k_{\tau}.
\end{split}
\end{align}

We next consider the diagram
\[
\xymatrix@C=30pt
{
\coprod_{\btau=(\tau,{\bf A})}\scrM(X,\btau)\ar[d]_{\varepsilon_{\tau}}&
\coprod_{\btau=(\tau,{\bf A})}\scrM(X,\btau)_W\ar[d]_{\varepsilon_{\tau,W}}
\ar[l]_{k'}\ar[r]&
\scrM(X,\beta)_W\ar[d]_{\varepsilon_W}\\
\foM^{\ev}(\shX,\tau)&\foM^{\ev}(\shX,\tau)_W\ar[l]^k\ar[r]_{\iota'_{\tau}\circ
j'_{\tau}}&\foM^{\ev}(\shX,\beta)_W
}
\]
where the disjoint unions are over all decorations of $\tau$ with total
degree $A$.
Now observe that
\begin{align*}
N^{A,W}_{p_1p_2r} = \deg [\scrM(X,\beta)_W]^{\virt}
= {} & \deg \varepsilon_W^![\foM^{\ev}(\shX,\beta)_W]\\
= {} & \deg \sum_{\bar\eta} \mu_{\bar\eta} \varepsilon_W^!
[\foM_{\bar\eta}].
\end{align*}
Here we are summing over all generic points $\bar\eta$ of 
$\foM^{\ev}(\shX,\beta)_W$, but of course we may restrict the
sum to those $\bar\eta$ such that 
$\deg\varepsilon^!_{W}[\foM_{\bar\eta}]\not=0$.
Thus by \eqref{eq:multiplicity}, \eqref{eq:another equality of classes} and Step 2, we may rewrite this equation as
\[
N^{A,W}_{p_1p_2r}= 
\deg\sum_{\tau}{\mu_{\tau} \over |\Aut(\tau)|}
\varepsilon_W^!(\iota'_{\tau}\circ j'_{\tau})_*[\foM^{\ev}(\cX,\tau)_{W,\red}]
\]
where, by Step 2, the sum is now over all product types 
$\tau$ with inputs $p_1,p_2$,
output $r$, and $\fou'\subseteq h_{\tau}(\tau_{v_{\out}})$. Now
using the push-pull formula for virtual pull-back of \cite[Thm.\ 4.1]{Man} for
the first and third equality, as well as \eqref{eq:multiplicity},
\eqref{eq:k pushforward} and \eqref{eq:lattice index} for the third, we obtain
\begin{align*}
N^{A,W}_{p_1p_2r}= {}  
&\deg\sum_{\tau}{\mu_{\tau} \over |\Aut(\tau)|}
\varepsilon_{\tau,W}^![\foM^{\ev}(\cX,\tau)_{W,\red}]\\
= {} &\deg\sum_{\tau}{\mu_{\tau} \over |\Aut(\tau)|}
k'_*\varepsilon_{\tau,W}^![\foM^{\ev}(\cX,\tau)_{W,\red}]\\
= {} &\deg\sum_{\tau}
{k_{\tau} \over |\Aut(\tau)|}
\varepsilon_{\tau}^![\foM^{\ev}(\cX,\tau)]\\
= {} & \sum_{\btau=(\tau,{\bf A})} k_{\tau}N_{\btau},
\end{align*}
as desired.
\end{proof} 

\begin{lemma}
\label{lem:splitting product type}
Let $\tau=(G,\bsigma,{\mathbf u})$ be a product type with
$N_{\btau}\not=0$ for some decoration $\btau$ of $\tau$, and let $G'$
be the spine of $G$ in the sense of Definition~\ref{def:spine}.
Then $v_{\out}$ is the unique trivalent vertex of $G'$.
Further, $v_{\out}$ is also a trivalent vertex of $G$, and if
$v_{\out}$ and $L_{\out}$ are removed from $G$, then $\tau$ splits into
two broken line types $\tau_1, \tau_2$.
\end{lemma}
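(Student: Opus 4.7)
The plan is to split $G$ at $v_{\out}$ and analyze the resulting components using Lemma~\ref{lem:key tropical lemma} together with stability of the underlying stable map and a dimension count. First I would note that $\dim h(\tau_{v_{\out}})=n$ forces $\bsigma(v_{\out})=\bsigma(L_{\out})\in\P_{\max}$, and hence every edge $E$ of $G$ incident to $v_{\out}$ satisfies $\bsigma(E)=\bsigma(v_{\out})\in\P$. Let $E_1,\ldots,E_k$ denote these edges and $G_1,\ldots,G_k$ the connected components of $G\setminus\{v_{\out}\}$ (well defined since $G$ is a tree), each inheriting a realizable, balanced type $\tau_i$ with $E_i$ becoming a leg $L_i'$ with $\bsigma(L_i')\in\P$. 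Since $N_{\btau}\neq 0$, Lemma~\ref{lem:admissible degrees} gives $\mathbf{A}(v_{\out})=0$, so the corresponding component of any stable punctured map in $\scrM(X,\btau)$ is contracted; stability of that component forces $v_{\out}$ to have valence at least $3$, i.e., $k\ge 2$.

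Next I would show each $G_i$ contains at least one of the legs $L_1, L_2$. Suppose some $G_i$ carries only the split leg $L_i'$. If $u(E_i)\neq 0$, Lemma~\ref{lem:key tropical lemma}(1) applied to $\tau_i$ gives $\dim h(\tau_{i,\out})\le n-1$; and since the position of $v_{\out}$ as $s$ varies in $\tau$ lies on the ray swept by $L_i'$, we obtain $h(\tau_{v_{\out}})\subseteq h(\tau_{i,\out})$, hence $\dim h(\tau_{v_{\out}})\le n-1$, contradicting Definition~\ref{def:product type}(3). The degenerate case $u(E_i)=0$ is handled by a preliminary contraction: all zero-contact-order edges at $v_{\out}$ are absorbed into $v_{\out}$, which by admissibility (the merged vertices all lie in $\bsigma(v_{\out})\in\P_{\max}$) and stability keeps the merged super-vertex contracted with valence at least $3$, and the boundary edges of the contracted subtree carry nonzero contact orders to which the previous argument applies. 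Since $L_1,L_2$ are only two legs, this forces $k\le 2$, and combined with $k\ge 2$ we obtain $k=2$, with each $G_i$ containing exactly one of $L_1,L_2$. In particular, $v_{\out}$ is trivalent in $G$.

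Each $\tau_i$ now has precisely two legs $L_i$ and $L_i'$, is realizable and balanced, and has $\bsigma(L_i')\in\P$ and $u(L_i')=u(E_i)\neq 0$ (by balancing at $v_{\out}$, if both were zero $r$ would have to vanish along with the other leg, and an independent check rules out the pathological case). A dimension count against the fiber-product description in which $\tau$ is obtained from $\bsigma(v_{\out})\times \prod_i(\tau_i\times\RR_{\ge 0})$ by imposing the matching constraints $v_i'=v_{\out}+\ell_i u(E_i)$ yields $\dim\tau_i=n-1$; moreover the endpoint of $L_i'$ in $\tau_i$ sweeps out precisely the $n$-dimensional locus of $v_{\out}$, so $\dim h(\tau_{i,\out})=n$. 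Thus each $\tau_i$ is a broken line type. Finally, the spine $G'$ of $G$ is the union of the spines of $\tau_1$ and $\tau_2$ (each a path from $L_i$ to the vertex $v_i'$ adjacent to $E_i$) together with $E_1, E_2, v_{\out}$ and $L_{\out}$, so $G'$ is a ``Y'' with unique trivalent vertex $v_{\out}$. The main obstacle will be the zero-contact-order case, requiring either the careful contraction argument sketched above or a stability-based bound on the internal edge lengths of a fully contracted $G_i$ to exclude over-dimensional contributions to $\dim\tau_i$.
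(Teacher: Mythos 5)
Your overall strategy matches the paper's: stability (via $\mathbf{A}(v_{\out})=0$ from Lemma~\ref{lem:admissible degrees}) gives $v_{\out}$ valence at least three, Lemma~\ref{lem:key tropical lemma}(1) rules out additional dangling subtrees at $v_{\out}$, and a dimension count against $\dim\tau=n$ shows the split types are broken line types. The reorganization (analyzing connected components of $G\setminus\{v_{\out}\}$ rather than first pinning down the unique trivalent vertex of the spine) is immaterial.

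The genuine gap is in the treatment of edges $E$ at $v_{\out}$ with $\mathbf{u}(E)=0$, which is exactly when Lemma~\ref{lem:key tropical lemma}(1) does not apply. The proposed contraction of zero-contact-order edges into a super-vertex does not resolve the issue: the contracted type corresponds to a proper face of the basic cone $\tau$ (the locus where the contracted edges have length zero), so it has dimension strictly less than $n$ and is therefore not a product type, and the crucial hypothesis $\dim\tau=n$ is no longer available. Likewise, the balancing parenthetical that you offer for $\mathbf{u}(E_i)\neq 0$ only says something when both contact orders vanish and $r\neq 0$; it is silent when exactly one of them vanishes, and $r=0$ is permitted for a product type. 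The correct and much simpler observation, which the paper uses and which your last sentence only gestures at, is this: if $\mathbf{u}(E)=0$ for an edge $E$ adjacent to $v_{\out}$, then $\ell(E)$ is an unconstrained parameter of the basic cone $\tau$ (the matching condition along a contracted edge places no condition on its length) and varying it does not move $h_s(v_{\out})$; since the evaluation $\tau\to h(\tau_{v_{\out}})$ is already dominant onto an $n$-dimensional image, this extra one-parameter family in every fiber forces $\dim\tau>n$, contradicting Definition~\ref{def:product type}(3). Substituting this for both the contraction argument and the balancing parenthetical closes the gap; the rest of your argument then goes through.
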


\begin{proof}
Since $G'$ has three legs and no univalent vertices, $G'$ has a unique trivalent
vertex. Suppose that $v_{\out}$ is not this trivalent vertex. Necessarily
$\bsigma(v_{\out})$ is a maximal cell $\sigma\in\P^{\max}$ since $\dim
h(\tau_\out)=n$, and thus for any punctured map $f:C^{\circ}\rightarrow X$ in
the non-empty moduli space $\scrM(X,\btau)$, the union of irreducible components
of $C^{\circ}$ corresponding to $v_{\out}$ are mapped to the zero-dimensional
stratum $X_{\sigma}$. Hence by stability, $v_{\out}$ must be at least trivalent
in $G$, and thus there is an edge $E$ adjacent to $v_{\out}$ which is not
contained in $G'$. Let $\bar\tau=(\bar G,\bar{\bsigma},\bar{\bf u})$ be the type
obtained by cutting $G$ at the edge $E$ and taking the connected component of
the resulting graph not containing $L_1,L_2$ and $L_{\out}$. 
Thus $E$ becomes the unique leg of
$\bar G$. Let $h_{\bar\tau}$ be the universal tropical map of type $\bar\tau$.
If $\bar\tau_E\in \Gamma(\bar G,\bar\ell)$ is the cone corresponding to $E$,
then by Lemma~\ref{lem:key tropical lemma},(1), 
$\dim h_{\bar\tau}(\bar\tau_E)\le
n-1$. But since $\dim h(\tau_{v_{\out}})=n$, we obtain a contradiction. Thus
$v_{\out}$ is a trivalent vertex of $G'$.

A similar argument applies to show that $v_{\out}$ is also trivalent in
$G$: if the valency is higher than three, with an adjacent edge $E$
not an edge of $G'$, we may again cut at $E$ to obtain a contradiction
as above.

Thus by removing $v_{\out},L_{\out}$ from $G$, we obtain, for $i=1,2$, types
$\tau_i$ with legs $L_i, L_{i,\out}$, where $L_{1,\out}, L_{2,\out}$ arise from
the edges (or legs)
$E_1, E_2$ adjacent to $v_{\out}$
other than $L_{\out}$. We only need to show that $\tau_i$ is a broken line type. 

First consider Condition (1) of Definition~\ref{def:broken line type}.
The only thing to check here is that $\bsigma(L_{i,\out})\in\P$
and that ${\bf u}(L_{i,\out})\not=0$. For the first statement, note that
as $\bsigma(v_{\out})=\sigma$ and $\bsigma(v_{\out})
\subseteq \bsigma(E_i)=\bsigma(L_{i,\out})$, 
necessarily $\bsigma(L_{i,\out})=\sigma$
also. For the second statement, note that if ${\bf u}(L_{i,\out})=0$,
then we may vary $s\in \tau$ by changing only
the affine length of the edge $E_i$ of $G$, so that $h_s(v_{\out})$
remains unchanged. Since $\dim h(\tau_{v_{\out}})=n$, this would
imply that $\dim \tau > n$, a contradiction.

Condition (2) of Definition~\ref{def:broken line type} is immediate.
Finally, for Condition (3), it follows from 
$\dim h(\tau_{v_{\out}})=n$ that $\dim h_{\tau_i}(\tau_{i,\out})=n$,
and in particular $\dim\tau_i \ge n-1$. However, if $\dim \tau_i>n-1$
for $i=1$ or $2$, it then immediately follows by gluing together
tropical maps in the families $\tau_1,\tau_2$ that $\dim\tau>n$.
Thus we get $\dim\tau_i=n-1$, as desired.
\end{proof}

\begin{lemma}
\label{lem:product gluing}
Let $\btau$ be a decorated product type with $N_{\btau}\not=0$, and let
$\btau_1,\btau_2$ be the decorated broken line types obtained from splitting
$\btau$ at $v_{\out}$. Then
\[
k_{\tau}N_{\btau}=k_{\tau_1}k_{\tau_2}N_{\btau_1} N_{\btau_2}.
\]
\end{lemma}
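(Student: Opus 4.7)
The plan is to apply the numerical gluing formula of Wu (Theorem~\ref{Thm: gluing theorem}) to the splitting of $\btau$ at the vertex $v_{\out}$. By Lemma~\ref{lem:splitting product type}, $v_{\out}$ is trivalent in $G$, with adjacent edges $L_{\out}$, $E_1$, $E_2$; I will split at $E_1$ and $E_2$, producing three pieces: the two broken line types $\btau_1, \btau_2$ (each having the leg from splitting as its $L_{i,\out}$), together with a central decorated type $\btau_{v_{\out}}$ consisting of a single vertex mapped to the maximal cell $\sigma=\bsigma(v_{\out})\in\P_{\max}$, trivial curve class (forced by Lemma~\ref{lem:admissible degrees},(1)), and three legs with contact orders $u_1,u_2,-r$, where $r=u_1+u_2$ by balancing at $v_{\out}$. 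The toric gluing Assumption~\ref{Ass: toric} is trivial here since the gluing stratum $X_{\sigma}$ is zero-dimensional.

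The next step is to pick a small generic displacement vector $\nu=(\nu_{E_1},\nu_{E_2})\in\Lambda_\RR^2$ and show, by an argument modeled on Lemma~\ref{Lem: general types scattering broken lines}, that the only $\nu$-transverse decorated type for $\tau$ with non-empty moduli is $(\btau_1,\btau_{v_{\out}},\btau_2)$. Here the relevant dimension count is
\[
\dim\widetilde\tau_1+\dim\widetilde\omega_{v_{\out}}+\dim\widetilde\tau_2
=\dim\widetilde\tau+2n,
\]
and the assumption $\dim h(\tau_{v_{\out}})=n$ combined with the perturbed matching condition at $E_1,E_2$ forces the central component to have no extra edges and the two outer components to be precisely $\btau_1,\btau_2$.

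I then compute the multiplicity $m(\omega)$ by a diagram chase analogous to Lemma~\ref{Lem: multiplicity computation}, now in the simpler purely codimension-zero setting (no $d$-factors, since $\sigma\in\P_{\max}$). Writing $(\widetilde\omega_{v_{\out}})^\gp_\ZZ=\Lambda\times\ZZ^2$ (position of $v_{\out}$ and the two new leg-lengths) and assembling the matching map $\Phi^\gp$ with evaluation maps $\ev_i:(\widetilde\tau_i)^\gp_\ZZ\to\Lambda$ whose cokernels have order $k_{\tau_i}$, a short snake-lemma argument should yield
\[
m(\omega)= k_\tau^{-1}\cdot k_{\tau_1}\cdot k_{\tau_2},
\]
the factor $k_\tau$ coming from the cokernel of evaluation at $v_{\out}$ inside $\Lambda_\sigma$ in the glued type. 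This multiplicity computation is where the real work sits and is the main obstacle in carrying out the proof.

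Finally, the central moduli space $\scrM(X,\btau_{v_{\out}})$ parameterizes three-pointed genus-zero curves contracted to the zero-dimensional stratum $X_\sigma$; it is a reduced point with trivial automorphism group, and $\Theta_X|_{X_\sigma}$ is trivial, so (as in the codimension-zero case of Lemma~\ref{Lem: vertex contribution}) $\deg[\scrM(X,\btau_{v_{\out}})]^\virt=1$. Since $L_1$ and $L_2$ are distinguishable labeled legs lying in different components of $G\setminus\{v_{\out},L_{\out}\}$, we have $\Aut(\btau)\cong\Aut(\btau_1)\times\Aut(\btau_2)$. Taking degrees in Wu's formula
\[
\delta_*[\scrM(X,\btau)]^\virt = m(\omega)\cdot[\scrM(X,\btau_1)]^\virt\times (j_{\bomega_{v_{\out}}})_*[\scrM(X,\btau_{v_{\out}})]^\virt\times[\scrM(X,\btau_2)]^\virt,
\]
dividing by $|\Aut(\btau)|$, and multiplying by $k_\tau$ then yields the desired identity $k_\tau N_\btau=k_{\tau_1}k_{\tau_2}N_{\btau_1}N_{\btau_2}$.
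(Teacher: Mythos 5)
Your proposal matches the paper's argument in all essentials: split $\btau$ at the two edges $E_1,E_2$ adjacent to $v_\out$, identify the central piece $\btau_0$ as a single vertex with three legs mapping to the $0$-dimensional stratum $X_\sigma$ (hence a reduced point of virtual degree one, via the argument of \cite[Claim~3.22]{Assoc}), verify the dimension formula \eqref{Eqn: dimension formula splitting} so only one transverse type contributes, compute $m(\omega)=k_{\tau_1}k_{\tau_2}/k_\tau$, and use $\Aut(\btau)\cong\Aut(\btau_1)\times\Aut(\btau_2)$. Two minor remarks. First, the paper avoids the perturbation analysis entirely by noting directly that $\varepsilon^\gp_\tau$ is already surjective — this follows from the tropical rigidity established in the preceding discussion — so the trivial displacement vector $\nu=0$ is general for $\tau$; you do not need an argument in the style of Lemma~\ref{Lem: general types scattering broken lines}. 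Second, the multiplicity computation you flag as ``the main obstacle'' is in fact routine: it is a $3\times 3$ commutative diagram with exact rows and columns, simpler than Lemma~\ref{Lem: multiplicity computation} precisely because $\sigma\in\P_{\max}$ removes the extra $d$-factors, and the snake lemma on the bottom row gives $m(\btau)=k_{\tau_1}k_{\tau_2}/k_\tau$ immediately. You should also spell out the degenerate cases where $\tau_1$ or $\tau_2$ is a trivial broken line type (no vertex), which the paper handles by running the same argument with the trivial factor omitted.
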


\begin{proof}
The proof is by a straightforward application of Yixian Wu's gluing formula
Theorem~\ref{Thm: gluing theorem}. We first assume that neither $\tau_1$ nor
$\tau_2$ is a trivial broken line, that is, has no vertex. Then $v_\out$ has
two adjacent edges $E_1$,$E_2$. Splitting $\btau$ at these two edges leads to,
by Lemma~\ref{lem:splitting product type},
the decorated broken line types $\btau_1$, $\btau_2$, and a third decorated type
$\btau_0=(\tau_0,\bA_0)$, $\tau_0=(G_0,\bsigma_0,\bu_0)$ with only one
vertex $v_\out$ and three adjacent legs $(E_1,v_\out)$, $(E_2,v_\out)$ and
$L_\out$, the outgoing leg of $\tau$. Moreover, $\tau_0$ is a type entirely
contained in the maximal cell $\sigma$ containing $h(\tau_{v_\out})$, that is,
$\bsigma_0(v_{\out})=\sigma$. In particular, the curve
class $\bA_0(v_\out)$ is trivial. Thus any punctured map of
type $\btau_0$ is constant on underlying schemes and
has domain a $\PP^1$ with three punctured points.
It then follows as in \cite[Claim~3.22]{Assoc} that there is a 
unique basic punctured map $f:C^{\circ}/W\rightarrow X$ of type $\btau_0$
with $\ul{W}=\Spec\kk$. Since further $\scrM(X,\btau_0)$
is unobstructed over $\foM(\shX,\btau_0)$, $\scrM(X,\btau_0)$ is a reduced
point.

It is now clear by stability of the tropical situation under small
perturbations, and can be checked also explicitly, that the tropical gluing map
$\varepsilon_\omega^\gp$ in \eqref{Eqn: prod_E omega_E -> Lambda_E} is
surjective already for $\omega=\tau$. Thus the trivial displacement vector
$\nu=0$ in Definition~\ref{Def: Delta(nu)} is general for $\tau$.
Moreover, the dimension formula~\eqref{Eqn: dimension formula splitting} for
$\omega=\tau$ holds:
\[
\textstyle
\dim\widetilde\tau_0+\dim \widetilde\tau_1+\dim\widetilde\tau_2=
(n+2)+n+n= (n+2)+2n= \dim\widetilde\tau+\sum_E \rk\Lambda_E.
\]
Thus the set of transverse types $\Delta(\nu)$ consist only of the one element
$\tau$.

To compute the splitting multiplicity $m(\btau)$ (Definition~\ref{Def:
Delta(nu)},3) consider the following commutative diagram with exact rows and
columns.
\[
\xymatrix{
&0\ar[d]&0\ar[d]&0\ar[d]\\
0\ar[r]&\widetilde\tau^\gp_{\ZZ}\ar[r]\ar[d]&
(\widetilde\tau_0^\gp)_{\ZZ}\times(\widetilde\tau_1^\gp)_{\ZZ}\times
(\widetilde\tau_2^\gp)_{\ZZ}
\ar[r]^(.65){(\varepsilon_\tau^\gp)_{\ZZ}}\ar[d]&\im(\varepsilon_\tau^\gp)_{\ZZ}\ar[r]\ar[d]&0\\
0\ar[r]&\Lambda\times\ZZ^2\ar[r]\ar[d]&
(\Lambda\times\ZZ^2)\times\Lambda\times\Lambda\ar[d]\ar[r]&
\Lambda\times\Lambda\ar[d]\ar[r]&0\\
0\ar[r]&\ZZ/k_\tau\ZZ\ar[d]\ar[r]&\ZZ/k_{\tau_1}\ZZ\times\ZZ/k_{\tau_2}\ZZ\ar[r]\ar[d]&
\coker(\varepsilon_\tau^\gp)_{\ZZ}\ar[r]\ar[d]&0\\
&0&0&0
}
\]
Here $\Lambda=\sigma^\gp_\ZZ$, the first column is the exact sequence defining
$k_\tau$, along with a trivial $\ZZ^2$ factor in the first
two entries, the middle column the product of the exact sequences defining
$k_{\tau_1}$ and $k_{\tau_2}$, and the isomorphism 
$(\widetilde \tau_0^\gp)_{\ZZ}\to \Lambda\times\ZZ^2$. The third row now shows
\[
m(\btau)=|\coker((\varepsilon_\tau^\gp)_{\ZZ})|= \frac{k_{\tau_1} k_{\tau_2}}{k_\tau}.
\]
Applying Theorem~\ref{Thm: gluing theorem} we conclude
\[
\deg\big[\scrM(X,\btau)\big]^\virt = \frac{k_{\tau_1} k_{\tau_2}}{k_\tau}\cdot
\deg\big[\scrM(X,\btau_1)\big]^\virt\cdot \deg\big[\scrM(X,\btau_2)\big]^\virt.
\]
The claimed equality $k_{\tau}N_{\btau}=k_{\tau_1}k_{\tau_2}N_{\btau_1} N_{\btau_2}$ now follows by the definition of $N_{\btau}$, $N_{\btau_1}$, $N_{\btau_2}$ and noting that $\Aut(\btau)= \Aut(\btau_1)\times\Aut(\btau_2)$.

If one of $\tau_1$ or $\tau_2$ is a trivial broken line, the same analysis
omitting the trivial broken line factor applies to prove the result in this
case. If both $\tau_1,\tau_2$ are trivial broken lines the statement is
trivially true.
\end{proof}

\begin{proof}[Proof of Theorem~\ref{thm:assoc comparison}.]
Take $x\in \Int(\fou)$ in the definition of $\alpha^{\trop}_{p_1p_2r}$;
by consistency of $\scrS_{\can}$, \eqref{def:alpha trop} is 
independent of this choice of $x$. Further, in 
\eqref{def:alpha trop}, we may sum over decorated broken lines,
rather than broken lines, and get the same value. Thus by
Theorem~\ref{thm:main correspondence theorem}, we may instead 
write
\[
\alpha^{\trop}_{p_1p_2r} = \sum_{\btau_1=(\tau_1,{\bf A}_1),
\btau_2=(\tau_2,{\bf A}_2)} k_{\tau_1}k_{\tau_2}
N_{\btau_1}N_{\btau_2} t^{A_1+A_2}
\]
where now the sum is over admissible decorated broken line types
$\btau_1,\btau_2$ with 
$\fou\subseteq h_{\btau_i}(\tau_{i,\out})$,
${\bf u}_{\btau_i}(L_{i,\inc})=p_i$, and $u_{\tau_1}+u_{\tau_2}=r$.
(Note that by definition of $\scrS_{\scrB}$, if $x\in h_{\btau_i}(\tau_{i,\out})$
then $\fou \subseteq h_{\btau_i}(\tau_{i,\out})$.)

For such a pair $\btau_1,\btau_2$, we obtain a decorated product type by taking
a vertex $v_{\out}$ and gluing $G_1$ and $G_2$ to $v_{\out}$ via
the legs $L_{1,\out}$, $L_{2,\out}$, and add an additional leg
$L_{\out}$ adjacent to $v_{\out}$ with ${\bf u}(L_{\out})=-r$. 
The data $\bsigma, {\bf u},{\bf A}$ are then determined by
$\btau_1$, $\btau_2$, with ${\bf A}(v_{\out})=0$. 

We see $\btau$ is a decorated product type. Indeed, we only need
to check conditions (2) and (3) of Definition~\ref{def:product type}.
Balancing just needs to be checked at $v_{\out}$, which follows
from $r={\bf u}_1(L_{1,\out})+{\bf u}_2(L_{2,\out})=u_{\tau_1}+u_{\tau_2}$. 
For realizability and the dimension conditions (3), note that by
the corresponding dimension conditions for broken line types, for
each point $y\in \Int(\fou)$, there exists a unique $s_i\in\tau_i$ such that
$y \in h_{\tau_i,s_i}(L_{i,\out})$. Thus by gluing together 
$h_{\tau_1,s_1}$ and $h_{\tau_2,s_2}$, we obtain a tropical map realizing
$\tau$ which takes $v_{\out}$ to $y$. Furthermore, this map is necessarily
the unique such tropical map of type $\tau$, giving both realizability
and the dimension statements of (3).

Conversely, by Lemma~\ref{lem:splitting product type}, all product
types $\btau$ arise in this way. Thus we obtain using Lemma
\ref{lem:product gluing} that
\[
\alpha^{\trop}_{p_1p_2r} = \sum_{\btau=(\tau,{\bf A})} k_{\tau}N_{\btau}
t^A,
\]
which coincides with $\alpha^{\log}_{p_1p_2r}$ by 
Theorem~\ref{thm:alpha log description}.
\end{proof}


\begin{appendix}
\section{The gluing formula with toric gluing strata}
\label{App: gluing formula}

Let $B=\Spec(Q_B\arr \kk)$ be a logarithmic point, $X$ a Zariski log scheme and
$X\arr B$ a logarithmically smooth, integral, 
projective morphism. We assume $\Sigma(X)$
monodromy-free for simplicity. As before we write
$\cX$ for the Artin fan of $X$. Let $\btau=(\tau,\bA)$ with
$\tau=(G,\bsigma,\bar\bu)$ be a realizable decorated global type of a punctured
map of genus~$0$.\footnote{The restriction to genus~$0$ is for convenience,
since this is the case relevant to us and the general statement is slightly
more complicated.} Recall from \cite[Def.~3.8]{ACGSII} the stacks
$\scrM(X,\btau)$ and $\fM(\cX,\btau)$ of punctured stable maps to $X$ and to
$\cX$ marked by $\btau$.

For each vertex $v\in V(G)$ denote by $\btau_v= (\tau_v,\bA_v)$,
$\tau_v=(G_v,\bsigma_v,\bar\bu_v)$ the decorated global type with $V(G_v)=\{v\}$
and $E(G_v)=\emptyset$ obtained by splitting $G$ at all
edges.\footnote{Splitting at only a subset of edges works the same way, with a
little more bookkeeping notation necessary. For simplicity of presentation we
only discuss splitting at all edges.} Note that each $\tau_v$ is realizable as
well. Then $\fM(\cX,\tau)$ is locally pure-dimensional, and hence defines a
virtual fundamental class for $\scrM(X,\btau)$ by means of the obstruction theory
for $\scrM(X,\btau)$ over $\fM(\cX,\tau)$.

According to \cite[Cor.~5.13, Prop.~5.15]{ACGSII}, the splitting
morphism\medskip
\begin{equation}
\label{Eqn: splitting map}
\delta:\ul\scrM(X,\btau)\ \lra\ \textstyle\prod_{v\in V(G)}\ul\scrM(X,\btau_v)
\end{equation}
is finite and representable.\footnote{We suppress here the log structure which
is irrelevant for the formulation of the statement, although it is central for
its proof.} The gluing formula expresses $\delta_*[\scrM(X,\btau)]^\virt$ in
terms of the virtual fundamental classes of strata of $\scrM(X,\btau_v)$. A
stratum is given by a $\btau_v$-marked decorated global type $\bomega_v$ as the
image of the finite map
\[
j_{\bomega_v}: \scrM(X,\bomega_v)\arr \scrM(X,\btau_v)
\]
defined by changing the marking \cite[Rem.~3.28]{ACGSII}.

We now explain some more notation needed to state the gluing formula. For each
edge $E\in E(G)$ we obtain a gluing stratum $\sigma_E=\bsigma(E)\in\Sigma(X)$
defined by $\tau$. Note that if $v,v'\in V(G)$ are the vertices adjacent to $E$
and $L\in L(G_v)$, $L'\in L(G_{v'})$ are the legs in $G_v, G_{v'}$ obtained by
splitting $E$, then by the definition of $\tau_v$,
\[
\bsigma_v(L)=\bsigma_{v'}(L')= \sigma_E.
\]
We write $L=(E,v)$, $L'=(E,v')$ in the following, where the notation indicates
that $v\in V(G_v)\subseteq V(G)$ and $v'\in V(G_{v'})\subseteq V(G)$ are the
vertices adjacent to $E$.

Now the restriction of a $\tau_v$-marked basic tropical
punctured map, defined say over $\omega_v\in\Cones$, to the leg $(E,v)\in
L(G_v)$ defines a map of cones
\begin{equation}
\label{Eqn: omega_{E,v}->sigma_E}
\omega_{E,v}=\big\{(h,\lambda)\in \omega_v\times \RR_{\ge 0}\,\big|\, \lambda\le \ell(E,v)(h)\big\} \arr \sigma,
\end{equation}
for some $\sigma\in\Sigma(X)$ containing $\sigma_E=\bsigma_v(E,v)$ as a face.
Recall also that $\omega_v$ is the basic cone, or real cone with integral points
the dual of the basic monoid \cite[Def.~2.36]{ACGSII}, associated to $\omega_v$.
We conflate notations for types and associated basic cones and hence write
$\omega_v$ also for this type unless there is a danger of confusion. Similarly,
$\tau_v$ denotes both a global type and the basic cone
$(Q_{\tau_v})_\RR^\vee$ associated to the type. Note also that the marking by
$\tau_v$ induces a face embedding
\begin{equation}
\label{Eqn: tau_v->omega_v}
\tau_v=(Q_{\tau_v})_\RR^\vee \arr \omega_v.
\end{equation}
of basic cones.

Letting $(E,v)$ in \eqref{Eqn: omega_{E,v}->sigma_E} run over all the gluing
legs, we arrive at the following enlargement of the cone $\omega_v$ that
records a point on each puncturing leg:
\begin{equation}
\label{Eqn: tilde omega_v}
\widetilde\omega_v=\big\{(h,\lambda_{E,v})\in\omega_v\times\RR_{\ge0}^{L(G_v)}
\,\big|\, \lambda_{E,v}\le\ell(E,v)(h)\big\}.
\end{equation}
For each leg $(E,v)\in L(G_v)$, the projection forgetting all
$\lambda$-components but $\lambda_{E,v}$ defines a map of cones
\begin{equation}
\label{Eqn: omega_v -> omega_{E,v}}
\widetilde\omega_v\arr \omega_{E,v}.
\end{equation}
Similarly, for a $\tau$-marked tropical punctured map defined over
$\omega\in\Cones$, recording a point on each edge $E\in E(G)$ leads to
\begin{equation}
\label{Eqn: tilde omega}
\widetilde\omega=\big\{(h,\lambda_E)\in\omega\times\RR_{\ge0}^{E(G)}
\,\big|\, \lambda_E\le\ell(E)(h)\big\}.
\end{equation}

We now add the following assumption:

\begin{assumption}
\label{Ass: toric}
Assume all gluing strata $\ul Z_E$, $E\in E(G)$, are complete toric varieties,
with $\ocM_X|_{\ul Z_E}$ invariant under the torus action.
\end{assumption}

Under this assumption there is an embedding of the star of
$\sigma_E\in\Sigma(X)$ as a face-fitting complex of cones in the real vector
space associated to a lattice that we denote $\Lambda_E$. Moreover, this
embedding identifies the integral affine structures on each cone, and the
composition with the quotient by the image of the real subspace with lattice
$(\sigma_E)_\ZZ^\gp\cap \Lambda_E$ maps the face-fitting complex of cones to a
complete fan. If $Z_E$ is strictly embedded as a toric stratum of a toric
variety, $\Lambda_E$ is the lattice underlying the describing fan. We now view
each of the cones $\sigma\in\Sigma(X)$ containing $\sigma_E$ as embedded in
$(\Lambda_E)_\RR$.

Let $\omega=(G_\omega,\bsigma_\omega,\bu_\omega)$ be a not necessarily
realizable global type of punctured map marked by $\tau$. Splitting $\omega$ at
the edges not contracted by the marking leads to a collection $(\omega_v)_{v\in
V(G)}$ of types of tropical punctured maps, with $\omega_v$ marked by $\tau_v$
and with a leg $(E,v)\in L(G_{\omega_v})$ for each edge $E\in E(G)$ adjacent to
$v$. For each $E\in E(G)$ with $v,v'$ the adjacent vertices, the composition of
the embeddings $\bsigma_\omega(E,v),\bsigma_\omega(E,v')\to (\Lambda_E)_\RR$
with the quotient by the diagonal embedding $(\Lambda_E)_\RR\arr
(\Lambda_E)_\RR\times (\Lambda_E)_\RR$ defines a map
\[
\varepsilon_E:\bsigma_\omega(E,v)\times \bsigma_\omega(E,v')\arr
(\Lambda_E)_\RR\times (\Lambda_E)_\RR \arr (\Lambda_E)_\RR.
\]
Explicitly, the arrow on the right can be taken as the difference map
$(a,b)\mapsto a-b$.

Taking the product over all $E\in E(G)$ of the composition of $\varepsilon_E$
with the maps \eqref{Eqn: omega_v -> omega_{E,v}} and \eqref{Eqn:
omega_{E,v}->sigma_E} yields the map
\begin{equation}
\label{Eqn: prod_E omega_E -> Lambda_E}
\textstyle
\varepsilon_\omega:
\prod_{v\in V(G)} \widetilde\omega_v \arr \prod_{E,v} \omega_{E,v}
\arr \prod_E(\Lambda_E)_\RR\times(\Lambda_E)_\RR
\arr \prod_E (\Lambda_E)_\RR.
\end{equation}
Thus $\varepsilon_\omega$ measures the failure of a collection of tropical
punctured maps of types $\omega_v$, together with points on the legs arising
from splitting, to patch to a tropical punctured map of type $\omega$ together
with a choice of point on each of the splitted edges. We refer to an
element of $\prod_E (\Lambda_E)_\RR$ as a \emph{displacement vector}.

\begin{definition}
\label{Def: Delta(nu)}
1)\ We call a displacement vector $\nu=(\nu_E)_{E\in E(G)}\in\prod_E
(\Lambda_E)_\RR$ \emph{general} for $\tau$ if for each global
type $\omega$ marked by $\tau$, either $\varepsilon_\omega^\gp$ from \eqref{Eqn:
prod_E omega_E -> Lambda_E} is surjective or $\nu\not\in
\im(\varepsilon_\omega^\gp)$.\\[1ex]
2)\ Let $\nu=(\nu_E)_E\in \prod_E \Lambda_E$ be general for $\tau$. Define
the \emph{set $\Delta(\nu)$ of transverse types for $\nu$} as the set of
isomorphism classes of global types $\omega$ marked by $\tau$ with
$\nu\in\im(\varepsilon_\omega)$, and such that
\begin{equation}
\label{Eqn: dimension formula splitting}
\textstyle
\sum_v\dim\widetilde\omega_v=\dim\widetilde \tau+
\sum_E \rk \Lambda_E.
\end{equation}
The set of decorated global types $\bomega=(\omega,\bA)$ marked by $\btau$ and
with $\omega\in\Delta(\nu)$ is denoted by $\hat\Delta(\nu)$. We confuse a
transverse type $\omega$ with its splitting $(\omega_v)_{v\in V(G)}$, and
similarly in the decorated case.\\[1ex]
3) For a general displacement vector $\nu$ and $\bomega=(\omega,\bA)\in\hat\Delta(\nu)$
define the \emph{splitting multiplicity} by
\[
m(\bomega)= m(\omega):=\big[\textstyle\prod_E \Lambda_E:
\im (\varepsilon_\omega^\gp)_\ZZ\big].
\]
\end{definition}

\begin{remark}
\label{Rem: interpretation of Delta(nu)}
The set $\Delta(\nu)$ has an interpretation in terms of types of ``broken''
tropical punctured maps, in the sense that the matching condition along an edge
$E$ is replaced by matching translated by $\nu_E$. Specifically, let $\omega$ be
a global type marked by $\tau$ and $(\omega_v)$ the collection of global types
marked by $\tau_v$ obtained by splitting. Then $\omega\in \Delta(\nu)$ iff 
$\omega$ satisfies \eqref{Eqn: dimension formula splitting} and
there
exist $h_v\in\omega_v$ and $\lambda_v,\lambda_{v'}\in\RR_{>0}$ with
\begin{equation}
\label{Eqn: perturbed matching}
V(h_v)+\lambda_v \cdot u_E= V'(h_{v'})-\lambda_{v'}\cdot u_E+ \nu_E,
\end{equation}
as an equation in $(\Lambda_E)_\RR$. For the signs we assume
$E$ oriented from $v$ to $v'$. Here $V:\omega_v\arr (\Lambda_E)_\RR$ is the
restriction of the map in \eqref{Eqn: omega_{E,v}->sigma_E} to the face
$\lambda=0$, that is, the image of the vertex $v$ under the tropical punctured
map $h_v$, and analogously for $V'$ and $v'$.

 From this description it is also obvious that $\Delta(\nu)$ does not change by
rescaling $\nu$ by a positive constant. Moreover, if $\widetilde \tau$ is
defined analogously to $\widetilde\omega$, with $\lambda_E$ recording the
position of a point on the edge $E$, then $\Delta(\nu)$ is also unchanged by
adding to $\nu$ an element of the image of the map $\widetilde\tau \arr
\prod_E\Lambda_E$ evaluating at all points on the splitting edges.

The dimension formula \eqref{Eqn: dimension formula splitting} is equivalent to
requiring $\omega=(\omega_v)_v$ to be a minimal type of punctured map with
$\varepsilon_\omega^\gp$ surjective. 
\end{remark}

We are now in position to state the gluing formula.

\begin{theorem}{\cite{Wu}}
\label{Thm: gluing theorem}
Let $\btau=(\tau,\bA)$, $\tau=(G,\bsigma,\bar\bu)$ be a realizable decorated
global type of punctured maps of genus~$0$ to $X$, and $\btau_v$ the associated
decorated global type defined at $v\in V(G)$ obtained from splitting all edges.
Assume that all gluing strata are toric in the sense of Assumption~\ref{Ass:
toric}. Let $\nu=(\nu_E)_{E\in E(G)}$ be a general displacement vector for
$\tau$, and $\hat\Delta(\nu)$ the set of decorated transverse types for $\nu$
(Definition~\ref{Def: Delta(nu)}). Denote by $\delta$ the splitting morphism
from \eqref{Eqn: splitting map}.

Then the following equality in the Chow group of $\prod_{v\in
V(G)}\ul\scrM(X,\btau_v)$ holds:
\[
\delta_*[\scrM(X,\btau)]^\virt= \sum_{\bomega=(\bomega_v)_v\in{\hat\Delta}(\nu)}
\frac{m(\bomega)}{|\Aut(\bomega/\btau)|}\cdot (j_{\bomega_v})_*
\big[\textstyle\prod_v\scrM(X,\bomega_v)\big]^\virt.
\] 
\end{theorem}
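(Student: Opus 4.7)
The plan is to establish the formula by carrying out a perturbative virtual gluing at the toric strata $Z_E$, reducing to an equivariant diagonal computation. First I would reduce to the level of Artin fans: the perfect obstruction theory of $\varepsilon_{\btau}:\scrM(X,\btau)\to\foM(\cX,\btau)$ pulls back compatibly to the obstruction theory for each $\varepsilon_{\bomega_v}:\scrM(X,\bomega_v)\to\foM(\cX,\bomega_v)$, so by virtual pull-back \cite{Man} and base change it suffices to prove the analogous identity at the level of the Artin-fan moduli stacks, i.e.\ to express $\delta_*[\foM(\cX,\tau)]$ as the stated weighted sum of $\prod_v(j_{\bomega_v})_*[\foM(\cX,\omega_v)]$ in the Chow group of $\prod_v \foM(\cX,\tau_v)$.

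Next, I would describe $\foM(\cX,\tau)$ as an iterated fs-logarithmic fibered product of the $\foM(\cX,\tau_v)$ over the diagonals $Z_E\hookrightarrow Z_E\times Z_E$, one for each edge $E\in E(G)$. The fs-pushout on ghost sheaves at the nodes is exactly what produces the lattice-index multiplicities: for any global type $\omega$ marked by $\tau$, the monoid pushout gluing the $\omega_v$ along the edge contact data has torsion in its cokernel of order $m(\omega)=[\prod_E\Lambda_E:\im(\varepsilon_\omega^\gp)_\ZZ]$, by the same diagram chase that appears in the proof of Theorem~\ref{thm:main correspondence theorem} (cf.\ equations \eqref{Eqn: coker Phi^gp sequence}--\eqref{eq:lattice index} in the main body). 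Assumption~\ref{Ass: toric} ensures that each $Z_E$ is a complete toric variety with a torus $T_E$ whose cocharacter lattice is $\Lambda_E$, and that $T_E$ acts compatibly with $\ocM_X|_{Z_E}$. Therefore the diagonal $[\Delta_{Z_E}]\in A_*^{T_E\times T_E}(Z_E\times Z_E)$ is rationally equivalent to its translate under any generic element $(t_E,e)\in T_E\times T_E$, and $\nu\in\prod_E(\Lambda_E)_\RR$ parametrizes such translations via the exponential map from cocharacters.

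The core computation is then the following: after translating each diagonal by $\nu_E$ for a general $\nu$ in the sense of Definition~\ref{Def: Delta(nu)}(1), the resulting refined intersection
\[
\textstyle\prod_v\foM(\cX,\tau_v)\ \times_{\prod_E(Z_E\times Z_E)}\ \prod_E(\Delta_{Z_E}+\nu_E)
\]
decomposes as a disjoint union of cycles indexed by $\omega\in\Delta(\nu)$. Indeed, a geometric point of this intersection is precisely a collection of punctured maps marked by the $\tau_v$ whose tropicalization $\nu$-broken-matches along the edges, which by the interpretation in Remark~\ref{Rem: interpretation of Delta(nu)} exactly picks out the types in $\Delta(\nu)$. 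The dimension identity \eqref{Eqn: dimension formula splitting} ensures the contribution of each transverse type is a fundamental cycle of $\prod_v\foM(\cX,\omega_v)$ rather than a virtual one, while the multiplicity $m(\omega)$ arises as the local intersection degree of the image of $\varepsilon_\omega$ against the affine subspace parallel to $\nu$; the automorphism factor $|\Aut(\bomega/\btau)|$ then corrects for the generic degree of the change-of-marking morphism $j_{\bomega_v}$, which as in \eqref{eq:N tau A decomp}--\eqref{eq:N tau A decomp2} relabels an orbit of decorations into a single isomorphism class.

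The hardest step will be justifying the ``moving the diagonal'' operation rigorously in the logarithmic/Artin fan setting: one needs an equivariant refined Gysin pull-back on the stacks $\foM(\cX,\tau_v)$ that is compatible with the $T_E$-actions lifted from $Z_E$ through the evaluation morphisms, together with a verification that the $\nu$-perturbation is genuinely realized geometrically — for instance by logarithmic expansion (bubble accretion along the gluing strata, so that $\nu$ becomes a choice of point in the expanded space) or by Kresch's equivariant cobordism of cycles on toric stacks. Once that framework is in place, the identity becomes a proper-pushforward statement for the finite and representable morphism $\delta$ of \eqref{Eqn: splitting map}, and combining it with virtual pull-back along $\varepsilon_{\btau}$ transfers the identity from $\foM(\cX,\tau)$ to $\scrM(X,\btau)$, yielding the stated formula.
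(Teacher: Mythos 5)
The paper does not prove this theorem at all: it is quoted verbatim from Yixian Wu's thesis \cite{Wu} and the appendix exists only ``for the reader's convenience and to fix notations.'' So your blind proposal is not being compared against an argument in the paper but against the actual content of \cite{Wu}, and I will review it on that basis.

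Your broad outline — work relative to the Artin fan, realize $\foM(\cX,\tau)$ as a fibered product of the $\foM(\cX,\tau_v)$ over diagonal-type gluing conditions at the strata $Z_E$, perturb those conditions, and read off the transverse types with multiplicities — is the right conceptual skeleton and does match the strategy of \cite{Wu}. But there is a genuine gap right at the spot you flag as ``the hardest step,'' and it is more serious than a missing technical lemma. You identify $\Lambda_E$ with the cocharacter lattice of the dense torus $T_E$ of the toric stratum $Z_E$, and therefore model the $\nu_E$-perturbation as a rational equivalence $\Delta_{Z_E}\sim(1\times t_E)\cdot\Delta_{Z_E}$. This misreads the setup. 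In the statement of Assumption~\ref{Ass: toric} and the surrounding discussion, $\Lambda_E$ is the lattice of the ambient toric variety in which $Z_E$ sits as a stratum; the star of $\sigma_E$ embeds in $(\Lambda_E)_\RR$, and it is the \emph{quotient} $\Lambda_E/((\sigma_E)^\gp_\ZZ\cap\Lambda_E)$ that is the cocharacter lattice of $T_E$. The directions along $(\sigma_E)^\gp_\ZZ$ in the displacement vector do not move the diagonal in $Z_E\times Z_E$ at all: they perturb the tropical matching condition in the Artin fan (equivalently, the log structure along the node), not the schematic matching in $Z_E$. Your proof as written silently drops these directions, yet they are essential — without them the combinatorial set $\Delta(\nu)$ of Definition~\ref{Def: Delta(nu)} and the integer $m(\omega)=[\prod_E\Lambda_E:\im(\varepsilon^\gp_\omega)_\ZZ]$ simply would not involve the full lattice $\Lambda_E$, and the dimension count \eqref{Eqn: dimension formula splitting} would not balance. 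What is actually being perturbed is the diagonal in a product of toric Artin stacks covering the star of $\sigma_E$, and the argument has to track both the rational equivalence on $\ul{Z}_E$ \emph{and} the perturbation of the fs-fibered product of ghost monoids; the index $m(\omega)$ arises from the cokernel of the combined integral map, not from an intersection number on $Z_E\times Z_E$ alone.

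A secondary gap: you assert that the relative obstruction theory for $\varepsilon_{\btau}$ ``pulls back compatibly'' to those of the $\varepsilon_{\bomega_v}$ and that this reduces the statement to an Artin-fan identity. This compatibility is true but not automatic; it is one of the main technical inputs (going back to the gluing constructions in \cite{ACGSII}, \S5) and requires that the fs-fibered product of the $\foM(\cX,\tau_v)$ over the evaluation/Artin-fan diagonals actually recovers $\foM(\cX,\tau)$ as an idealized log smooth stack, compatibly with the universal obstruction theories. As stated, your reduction step assumes the hard part of the bookkeeping rather than proving it.

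In short: your proposal reproduces the intended architecture of Wu's argument and correctly isolates where the difficulty lies, but the central perturbation step is carried out in the wrong ambient lattice (only the quotient $\Lambda_E/(\sigma_E)^\gp_\ZZ$ rather than all of $\Lambda_E$), so the claimed identification of the perturbed intersection with $\Delta(\nu)$ with multiplicities $m(\omega)$ does not go through as written.
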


\end{appendix}


\end{document}